\newtheorem{thm}{Theorem}[section]
\newtheorem{lem}[thm]{Lemma}
\newtheorem{cor}[thm]{Corollary}
\newtheorem{prop}[thm]{Proposition}
\theoremstyle{definition}
\newtheorem{defn}[thm]{Definition}
\theoremstyle{remark}
\newtheorem{rem}[thm]{\textbf{Remark}}
\newtheorem{rems}[thm]{\textbf{Remarks}}
      \def\@makefnmark{%
         \leavevmode
            \raise.9ex\hbox{\check@mathfonts
                \fontsize\sf@size\z@\normalfont%
                            \@thefnmark}%
       }
\newcommand{\dd}{\textrm{d}}
\begin{document}

\title[]{Existence of homogeneous Euler flows of degree $-\alpha\notin [-2,0]$}
\author[]{Ken Abe}
\date{}
\address[K. ABE]{Department of Mathematics, Graduate School of Science, Osaka Metropolitan University, 3-3-138 Sugimoto, Sumiyoshi-ku Osaka, 558-8585, Japan}
\email{kabe@omu.ac.jp}

\subjclass[2020]{35Q31, 35Q35}
\keywords{Homogeneous solutions, Grad--Shafranov equation, minimax theorems}
\date{\today}

\begin{abstract}
We consider ($-\alpha$)-homogeneous solutions to the stationary incompressible Euler equations in $\mathbb{R}^{3}\backslash\{0\}$ for $\alpha\geq 0$ and in $\mathbb{R}^{3}$ for $\alpha<0$. Shvydkoy (2018) demonstrated the \textit{nonexistence} of ($-1$)-homogeneous solutions $(u,p)\in C^{1}(\mathbb{R}^{3}\backslash \{0\})$ and ($-\alpha$)-homogeneous solutions in the range $0\leq \alpha\leq 2$ for the Beltrami and axisymmetric flows. Namely, no ($-\alpha$)-homogeneous solutions $(u,p)\in C^{1}(\mathbb{R}^{3}\backslash \{0\})$ for $1\leq \alpha\leq 2$ and $(u,p)\in C^{2}(\mathbb{R}^{3}\backslash \{0\})$ for $0\leq \alpha< 1$ exist among these particular classes of flows other than irrotational solutions for integers $\alpha$. The nonexistence result of the Beltrami ($-\alpha$)-homogeneous solutions $(u,p)\in C^{2}(\mathbb{R}^{3}\backslash \{0\})$ holds for all $\alpha<1$. We show the nonexistence of axisymmetric ($-\alpha$)-homogeneous solutions without swirls $(u,p)\in C^{2}(\mathbb{R}^{3}\backslash \{0\})$ for $-2\leq \alpha<0$.

The main result of this study is the \textit{existence} of axisymmetric ($-\alpha$)-homogeneous solutions in the complementary range $\alpha\in \mathbb{R}\backslash [0,2]$. More specifically, we show the existence of axisymmetric Beltrami ($-\alpha$)-homogeneous solutions $(u,p)\in C^{1}(\mathbb{R}^{3}\backslash \{0\})$ for $\alpha>2$ and $(u,p)\in C(\mathbb{R}^{3})$ for $\alpha<0$ and axisymmetric ($-\alpha$)-homogeneous solutions with a nonconstant Bernoulli function $(u,p)\in C^{1}(\mathbb{R}^{3}\backslash \{0\})$ for $\alpha>2$ and $(u,p)\in C(\mathbb{R}^{3})$ for $\alpha<-2$, including axisymmetric ($-\alpha$)-homogeneous solutions without swirls $(u,p)\in C^{2}(\mathbb{R}^{3}\backslash \{0\})$ for $\alpha>2$ and $(u,p)\in  C^{1}(\mathbb{R}^{3}\backslash \{0\})\cap C(\mathbb{R}^{3})$ for $\alpha<-2$. This is the first existence result on ($-\alpha$)-homogeneous solutions with no explicit forms.

The level sets of the axisymmetric stream function of the irrotational ($-\alpha$)-homogeneous solutions in the cross-section are the Jordan curves for $\alpha=3$. For $2<\alpha<3$, we show the existence of axisymmetric ($-\alpha$)-homogeneous solutions whose stream function level sets are the Jordan curves. They provide new examples of the Beltrami/Euler flows in $\mathbb{R}^{3}\backslash\{0\}$ whose level sets of the proportionality factor/Bernoulli surfaces are nested surfaces created by the rotation of the sign $``\infty"$.
\end{abstract}

\maketitle

\tableofcontents

\section{Introduction}
In this study, we investigate ($-\alpha$)-homogeneous solutions to the Euler equations in $\mathbb{R}^{3}\backslash \{0\}$ for $\alpha \geq 0$ and in $\mathbb{R}^{3}$ for $\alpha< 0$:

\begin{equation}
\begin{aligned}
u\cdot \nabla u+\nabla p&=0,\\
\nabla \cdot u&=0.
\end{aligned}
\end{equation}\\
We say that $u=(u^{1},u^{2},u^{3})$ is ($-\alpha$)-\textit{homogeneous} if there exists $\alpha\in \mathbb{R}$ such that $u(x)= \lambda^{\alpha} u(\lambda x)$ for all $\lambda>0$ and $x=(x_1,x_2,x_3)$. We say that $(u,p)$ is a ($-\alpha$)-homogeneous solution to (1.1) if ($-\alpha$)-homogeneous $u$ and $(-2\alpha)$-homogeneous $p$ satisfy (1.1).

The well-known $(-1)$-homogeneous solutions to the Navier--Stokes equations are the Landau solutions \cite{Landau}, \cite{Squire51}, \cite[p.81]{LL}, \cite[p.205]{Bat}, \cite{TX98}, \cite{CK04}. They are explicit solutions, smooth away from the origin, and axisymmetric without swirls. Tian and Xin \cite{TX98} showed that all axisymmetric $(-1)$-homogeneous solutions $u\in C^{2}(\mathbb{R}^{3}\backslash \{0\})$ to the Navier--Stokes equations are the Landau solutions. \v{S}ver\'{a}k \cite{Sverak2011} demonstrated that all $(-1)$-homogeneous solutions $u\in C^{2}(\mathbb{R}^{n}\backslash \{0\})$ for $n=3$ are Landau solutions, as well as the nonexistence of $(-1)$-homogeneous solutions for $n\geq 4$ and their rigidity for $n=2$ under the flux condition. The Landau solutions are relevant to the regularity of stationary solutions \cite{Sverak2011} and their asymptotic behavior as $|x|\to\infty$ \cite{Sverak2011}, \cite{KorolevSverak2011}, \cite{MiuraTsai}, \cite{KMT12}.

It is conjectured in the work of \v{S}ver\'{a}k \cite{Sverak2011} that the Landau solutions are rigid among all smooth solutions in $\mathbb{R}^{3}\backslash \{0\}$ satisfying the following: 

\begin{align*}
|u(x)|\leq \frac{C}{|x|},\quad x\in \mathbb{R}^{3}\backslash \{0\}.
\end{align*}\\
Korolev and \v{S}ver\'{a}k \cite{KorolevSverak2011} and Miura and Tsai \cite{MiuraTsai}, \cite[8.2]{Tsaibook} demonstrated that this conjecture holds for small constant $C$. Li et al. \cite{YanYanLi} discovered explicit axisymmetric $(-1)$-homogeneous solutions without swirls smooth away from the negative part of the $x_3$-axis, i.e., $u|_{\mathbb{S}^{2}}\in C^{\infty}(\mathbb{S}^{2}\backslash\{\textrm{S}\})$ for the South pole S. The work \cite{YanYanLi} also demonstrates the existence of axisymmetric $(-1)$-homogeneous solutions with swirls $u|_{\mathbb{S}^{2}}\in C^{\infty}(\mathbb{S}^{2}\backslash\{\textrm{S}\})$. The subsequent works \cite{YanYanLi18} and \cite{YanYanLi19} show the existence of axisymmetric $(-1)$-homogeneous solutions with swirls smooth away from the $x_3$-axis, i.e., $u|_{\mathbb{S}^{2}}\in C^{\infty}(\mathbb{S}^{2}\backslash\{\textrm{S}\cup \textrm{N}\})$. Kwon and Tsai \cite{KwonTsai} explored the bifurcations of the Landau solutions in the class of axisymmetric and discrete homogeneous (self-similar) solutions.\\

Luo and Shvydkoy \cite{LuoShvydkoy} and Shvydkoy \cite{Shv} investigated homogeneous solutions to the Euler equations. The work by Shvydkoy \cite{Shv} is motivated by the Onsager conjecture \cite{Onsager}, \cite{Shv10}, \cite{DS13}, \cite{Isett13}, \cite{CS14} and demonstrates the nonexistence of ($-\alpha$)-homogeneous solutions to the Euler equations in the following cases.\\

\noindent
\textbf{Case 0}: Irrotational flows $\nabla \times u=0$. $(-\alpha)$-homogeneous solutions $(u,p)\in C^{1}(\mathbb{R}^{3}\backslash \{0\})$ exist if and only if $\alpha\in \mathbb{Z}\backslash \{1\}$. They are given by spherical harmonics. \\
\noindent
\textbf{Case 1}: $\alpha=1$. No $(-1)$-homogeneous solutions $(u,p)\in C^{1}(\mathbb{R}^{3}\backslash \{0\})$ exist.\\
\noindent
\textbf{Case 2}: $\alpha>1$. For $1< \alpha\leq 2$, no ($-\alpha$)-homogeneous solutions $(u,p)\in C^{1}(\mathbb{R}^{3}\backslash \{0\})$ exist other than the irrotational solution among the following:\\

\noindent
(A) Beltrami flows $(\nabla \times u)\times u=0$ or \\
(B) Axisymmetric flows.  \\

\noindent
\textbf{Case 3}: $\alpha<1$. In classes (A) for $\alpha<1$ and (B) for $0\leq \alpha<1$, no $(-\alpha)$-homogeneous solutions $(u,p)\in C^{2}(\mathbb{R}^{3}\backslash \{0\})$ exist other than the irrotational solutions.\\ 

These rigidity results are based on the homogeneous solution's equations on the sphere \cite{Sverak2011}, \cite{Shv} and do not assume their continuity at $x=0$ for $\alpha<0$ (Section 2). We include them \cite{Shv} in the main statements of this study ((i) and (ii) of Theorems 1.1, 1.4, and 1.5, except (ii) of Theorem 1.4 for $-2\leq \alpha<0$). \\

On the existence side, only explicit homogeneous solutions to (1.1) are known \cite{LuoShvydkoy}, \cite{Shv} (Remarks 1.6). This study aims to show the existence of axisymmetric homogeneous solutions. We use the cylindrical coordinates $(r,\phi,z)$ defined by the following: 

\begin{align*}
x_1=r\cos\phi,\quad x_2=r\sin\phi,\quad x_3=z,
\end{align*}\\
and the associated orthogonal flame 

\begin{equation*}
e_{r}=\left(
\begin{array}{c}
\cos \phi \\
\sin \phi \\
0
\end{array}
\right),\quad 
e_\phi=\left(
\begin{array}{c}
-\sin\phi \\
\cos\phi \\
0
\end{array}
\right),\quad 
e_{z}=\left(
\begin{array}{c}
0 \\
0 \\
1
\end{array}
\right).
\end{equation*}\\
For axisymmetric $u=u^{r}e_r+u^{\phi}e_{\phi}+u^{z}e_z$, we denote the poloidal component by $u^{P}=u^{r}e_r+u^{z}e_z$ and the toroidal component by $u^{\phi}e_{\phi}$. We say that $u$ is axisymmetric without swirl if $u^{\phi}=0$. 

\vspace{3pt}

\subsection{Statements of the main results}

We consider continuously differentiable ($-\alpha$)-homogeneous solutions $(u.p)\in C^{1}(\mathbb{R}^{3}\backslash \{0\})$ for $\alpha>2$ in $\mathbb{R}^{3}\backslash\{0\}$ and continuous ($-\alpha$)-homogeneous solutions $(u.p)\in C(\mathbb{R}^{3})$ for $\alpha<0$ in $\mathbb{R}^{3}$ satisfying (1.1) in the distributional sense. We say that a ($-\alpha$)-homogeneous solution $(u.p)\in C(\mathbb{R}^{3})$ for $\alpha<0$ is a Beltrami flow in $\mathbb{R}^{3}$ if the Bernoulli function $\Pi=p+|u|^{2}/2$ vanishes.

\vspace{3pt}

\begin{thm}
The following holds for rotational Beltrami $(-\alpha)$-homogeneous solutions to (1.1):

\noindent
(i) For $1\leq \alpha\leq 2$, no solutions $(u,p)\in C^{1}(\mathbb{R}^{3}\backslash \{0\})$ exist.\\
\noindent
(ii) For $\alpha< 1$, no solutions $(u,p)\in C^{2}(\mathbb{R}^{3}\backslash \{0\})$ exist.\\
\noindent
(iii) For $\alpha>2$, axisymmetric solutions $(u,p)\in C^{1}(\mathbb{R}^{3}\backslash \{0\})$ such that $u^{P}, p\in C^{2}(\mathbb{R}^{3}\backslash \{0\})$ and $u^{\phi}e_{\phi}\in C^{1}(\mathbb{R}^{3}\backslash \{0\})$ exist. \\
\noindent
(iv) For $\alpha<0$, axisymmetric solutions $(u,p)\in C(\mathbb{R}^{3})$ such that $u^{P}, p\in  C^{1}(\mathbb{R}^{3}\backslash \{r=0\})\cap C(\mathbb{R}^{3})$ and $u^{\phi}e_{\phi}\in C(\mathbb{R}^{3})$ exist.
\end{thm}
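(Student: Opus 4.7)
Parts (i) and (ii) are exactly the Shvydkoy rigidity results that the excerpt incorporates as inputs; the new content is (iii) and (iv). My plan is to reduce the problem to a singular semilinear ODE on $(-1,1)$ via a Grad--Shafranov ansatz and then attack it variationally.

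\textbf{Reduction.} For axisymmetric flows I introduce the Stokes stream function $\psi$ in the meridional half-plane by $u^{r}=-r^{-1}\partial_{z}\psi$, $u^{z}=r^{-1}\partial_{r}\psi$. The Beltrami condition $\nabla\times u=\mu u$ forces $ru^{\phi}=F(\psi)$, $\mu=F'(\psi)$, and the Grad--Shafranov equation $-L\psi=F(\psi)F'(\psi)$ with $L=\partial_{r}^{2}-r^{-1}\partial_{r}+\partial_{z}^{2}$. Imposing $(-\alpha)$-homogeneity makes $\psi$ $(2-\alpha)$-homogeneous, and scaling consistency with the structural relation pins down $F(\psi)=c\,\psi^{\beta}$ with $\beta=(1-\alpha)/(2-\alpha)$. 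Writing $\psi(r,z)=\rho^{\,2-\alpha}\tilde{\eta}(s)$ with $\rho=|x|$ and $s=z/\rho$, a direct spherical-coordinate computation reduces the Grad--Shafranov equation to
\begin{equation*}
(1-s^{2})\tilde{\eta}''(s)+(2-\alpha)(1-\alpha)\tilde{\eta}(s)+\kappa\,\tilde{\eta}(s)^{q}=0,\qquad s\in(-1,1),
\end{equation*}
with $q=\alpha/(\alpha-2)$ and $\kappa>0$ a free scaling parameter. Continuity of $u$ on the symmetry axis requires $\tilde{\eta}(\pm 1)=0$. Crucially, $q>1$ in regime (iii) while $0<q<1$ in regime (iv).

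\textbf{Variational construction.} I would work in the weighted Sobolev space $H=H_{0}^{1}((-1,1);(1-s^{2})\,ds)$, which embeds compactly into every $L^{p}$, with the energy
\begin{equation*}
\mathcal{J}(\eta)=\int_{-1}^{1}\!\Bigl[\tfrac{1}{2}(1-s^{2})(\eta')^{2}-\tfrac{1}{2}(2-\alpha)(1-\alpha)\eta^{2}-\tfrac{\kappa}{q+1}(\eta_{+})^{q+1}\Bigr]ds.
\end{equation*}
In regime (iii) the nonlinearity is superlinear and, being one-dimensional, automatically subcritical, so a mountain-pass argument yields a nontrivial critical point once the mountain-pass geometry and the Palais--Smale condition are verified. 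In regime (iv) the sublinear term dominates and I would produce a nontrivial solution by constrained minimization, e.g.\ minimizing the quadratic part on a Nehari-type constraint $\int_{-1}^{1}(\eta_{+})^{q+1}ds=1$. The truncation $(\eta_{+})^{q+1}$ compensates for the non-Lipschitz character of the nonlinearity; a maximum-principle argument on the ODE then converts $\tilde{\eta}\geq 0$ into strict positivity in $(-1,1)$.

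\textbf{Regularity, reconstruction, and main obstacle.} Standard ODE bootstrap plus Frobenius analysis at the regular singular points $s=\pm1$ (whose indicial exponents are $0$ and $1$) give $\tilde{\eta}\in C^{\infty}(-1,1)$ with linear vanishing $\tilde{\eta}(s)\sim A_{\pm}(1\mp s)$ at the poles; this is exactly enough to produce smooth $u^{P}$ on $\mathbb{R}^{3}\setminus\{0\}$, while the swirl inherits only $C^{1}$ regularity away from the origin because of the fractional exponent $\beta$ in $F$. The pressure is recovered from $p=-|u|^{2}/2$, since the Bernoulli function of a Beltrami flow is constant and must vanish by the nonzero homogeneity. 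For (iv), the positive degree $-\alpha>0$ of $u$ yields continuity at $x=0$, and the degree $2-\alpha>2$ of $\psi$ together with the vanishing rate of $\tilde\eta$ at $s=\pm 1$ yields the stated axis regularity of $u^{P}$. The principal difficulty lies in the variational step: the weight $(1-s^{2})$ degenerates at the boundary, so one must carefully set up $H$ and verify Palais--Smale; and in the sublinear regime (iv), ruling out that the minimizer collapses to the trivial one requires a scaling argument exploiting the free parameter $\kappa$ to place the infimum of $\mathcal{J}$ strictly below zero.
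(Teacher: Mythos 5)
Your reduction to a one--dimensional Dirichlet problem on $(-1,1)$ is essentially the paper's: the Grad--Shafranov ansatz with $\psi=\rho^{2-\alpha}\tilde\eta(s)$ and $F(\psi)=c\,\psi^{(\alpha-1)/(\alpha-2)}$ reproduces equation (1.13) with $c_1=0$, and your exponents check out. The variational step, however, has two genuine gaps.

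First, the functional $\mathcal{J}$ you write down is not a functional for your reduced ODE. Its Euler--Lagrange equation is the divergence-form Legendre equation
\begin{equation*}
-\bigl((1-s^{2})\eta'\bigr)'=(2-\alpha)(1-\alpha)\eta+\kappa(\eta_{+})^{q},
\end{equation*}
which carries the extra first-order term $+2s\eta'$ compared with the target $-(1-s^{2})\tilde\eta''=(2-\alpha)(1-\alpha)\tilde\eta+\kappa\tilde\eta^{q}$. The correct weak formulation divides by $1-s^{2}$ rather than multiplying: one works in the unweighted $H^{1}_{0}(-1,1)$ with quadratic part $\int|w'|^{2}-\beta(\beta+1)\int w^{2}/(1-t^{2})$ and nonlinear terms weighted by $1/(1-t^{2})$, the singular potential being controlled by Hardy's inequality. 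So the "principal difficulty" you identify (the degenerating weight) is an artifact of the wrong formulation; the real difficulty lies elsewhere.

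Second, and more decisively, your strategy of producing a \emph{positive} solution by mountain pass (regime (iii)) or constrained minimization (regime (iv)) cannot succeed outside $2<\alpha<3$. Pairing the equation with the first Dirichlet eigenfunction-type test function $(1-t^{2})/2$ gives
\begin{equation*}
\Bigl(1-\tfrac{\beta(\beta+1)}{2}\Bigr)\int_{-1}^{1}w\,\dd t
=\int_{-1}^{1}\frac{c_2}{1-t^{2}}\,w|w|^{\frac{2}{\beta}}\,\frac{1-t^{2}}{2}\,\dd t,
\end{equation*}
which forces $0<\beta<1$ for a positive solution; for $\alpha\geq 3$ and for $\alpha<0$ no nonnegative nontrivial solutions exist, so your truncated functional has no useful critical points there. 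Relatedly, for $\beta\geq 1$ and $\beta<-2$ the principal eigenvalue of $-\partial_{t}^{2}-\beta(\beta+1)/(1-t^{2})$ is nonpositive, so the mountain-pass geometry at the origin fails, and minimizing an indefinite quadratic form on a Nehari constraint that controls only $\|\eta_{+}\|_{L^{q+1}}$ does not bound the functional below. The paper's resolution is to decompose $H^{1}_{0}=Y\oplus Z$ along the nonpositive spectrum (located via the Chandrasekhar transform $w\mapsto w/\sin^{2}\theta$ onto $\mathbb{S}^{4}$) and to use a linking theorem for $\beta\geq1$ and a saddle point theorem for $\beta<-2$, accepting sign-changing $w$; positivity, and hence your picture of the level sets, is only available for $2<\alpha<3$ (this is exactly Theorem 1.10). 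Finally, your claim of smooth $u^{P}$ from linear vanishing of $\tilde\eta$ at $s=\pm1$ overstates what Frobenius gives: the axis regularity requires the more careful analysis of $\chi=w/\sin^{2}\theta$ in the geodesic coordinate carried out in Section 4, and yields only $C^{2}$ for $\alpha>2$.
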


  \begin{figure}[h]
  \begin{minipage}[b]{0.45\linewidth}
\hspace{-108pt}
\includegraphics[scale=0.180]{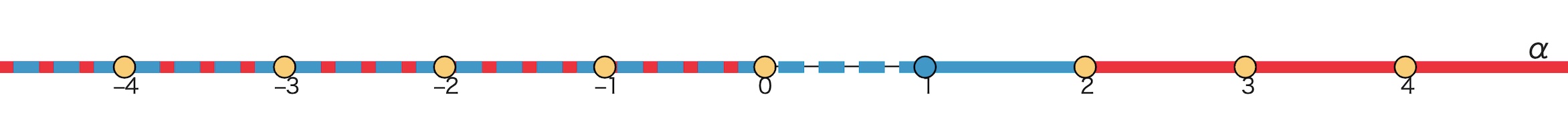}
    \subcaption{Beltrami ($-\alpha$)-homogeneous solutions in Theorem 1.1.}
  \end{minipage}\\
  \begin{minipage}[b]{0.45\linewidth}
\hspace{-108pt}
\includegraphics[scale=0.165]{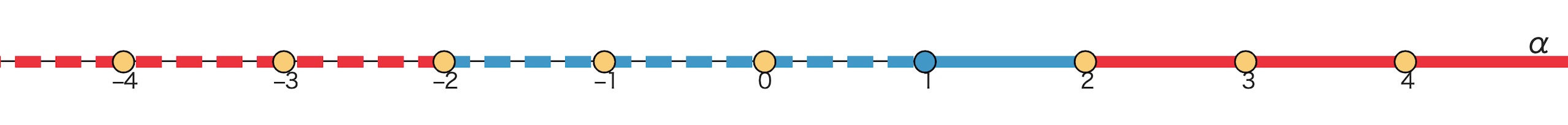}
    \subcaption{Axisymmetric ($-\alpha$)-homogeneous solutions without swirls in Theorem 1.4.}
  \end{minipage}\\
  %\vspace{5pt}
    \begin{minipage}[b]{0.45\linewidth}
\hspace{-108pt}
\includegraphics[scale=0.165]{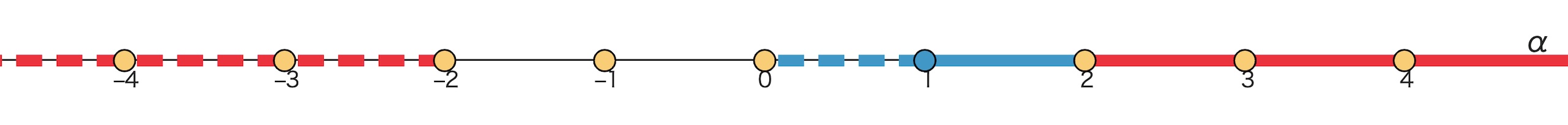}
    \subcaption{Axisymmetric ($-\alpha$)-homogeneous solutions with a non-constant Bernoulli function in Theorem 1.5.}
  \end{minipage}\\
    %\vspace{5pt}
      \begin{minipage}[b]{0.45\linewidth}
\hspace{-108pt}
\includegraphics[scale=0.165]{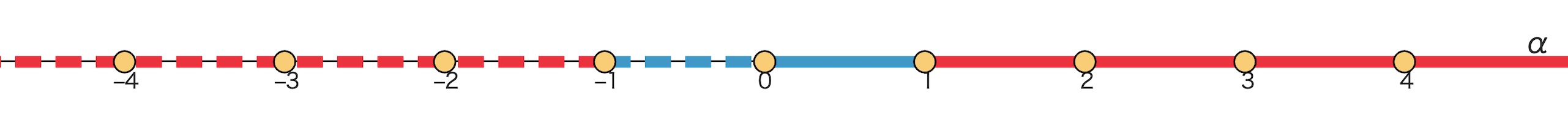}
    \subcaption{2D reflection symmetric ($-\alpha$)-homogeneous solutions in Theorem 1.7.}
  \end{minipage}
  %\vspace{5pt}
  \caption{The existence and nonexistence ranges of ($-\alpha$)-homogeneous solutions to (1.1) in Theorems 1.1, 1.4, 1.5, and 1.7. The yellow dots represent irrotational solutions. The blue dot and line represent the nonexistence range (i). The blue dashed line represents the nonexistence range (ii). The red line represents the existence range (iii). The dashed red line represents the existence range (iv).}
\end{figure}

\begin{rems}
(i) A vector field $u\in C^{1}$ is a Beltrami flow if there exists a proportionality factor $\varphi$ such that 

\begin{align}
\nabla \times u=\varphi u,\quad 
\nabla \cdot u=0.
\end{align}\\
The factor $\varphi$ is a first integral of $u$, i.e., $u\cdot \nabla \varphi=0$, and streamlines of $u$ are constrained on the level sets of $\varphi$. It is known \cite{EP12}, \cite{EP15} that there exists a smooth Beltrami flow with a constant factor $\varphi\equiv \textrm{const.}$ in $\mathbb{R}^{3}$ having arbitrary knotted and linked vortexlines and decaying by the order $u=O(|x|^{-1})$ as $|x|\to\infty$.

\noindent
(ii) It is known \cite{Na14}, \cite{CC15}, \cite{CW16} that the locally square integrable Beltrami flows in $\mathbb{R}^{3}$ do not exist if $u=o(|x|^{-1})$ as $|x|\to\infty$, cf. \cite{EP12}, \cite{EP15}. 

\noindent
(iii) It is also known \cite{EP16} that the smooth Beltrami flows in a domain do not exist if the proportionality factor $\varphi\in C^{2+\gamma}$ ($0<\gamma<1$) admits a level set diffeomorphic to a sphere, e.g., radial or having local extrema. 

\noindent
(iv) Constantin et al. \cite{CDG22} demonstrate that the axisymmetric Beltrami flows in a hollowed out periodic cylinder are translation invariant if the poloidal component $u^{P}$ has no stagnation points in the cross-section, cf. \cite{HN17}, \cite{HN19}. 

\noindent
(v) There exist asymptotically constant axisymmetric Beltrami flows in $\mathbb{R}^{3}$ with a nonconstant factor $\varphi\nequiv \textrm{const.}$ whose level set is a ball \cite{Moffatt}, a solid torus \cite{Tu89}, and nested tori \cite{A8}.

\noindent
(vi) Axisymmetric Beltrami $(-\alpha)$-homogeneous solutions $u\in C^{1}(\mathbb{R}^{3}\backslash \{0\})$ for $\alpha>2$ in Theorem 1.1 (iii) possess the axisymmetric stream function $\psi(z,r)$ and the proportionality factor 

\begin{align}
\varphi=C\left(1+\frac{1}{\alpha-2}\right)|\psi|^{\frac{1}{\alpha-2}}.
\end{align}\\
This solution is not square integrable at $x=0$ and decaying faster than $o(|x|^{-1})$ as $|x|\to\infty$, cf.\cite{Na14}, \cite{CC15}, \cite{CW16}. The level sets of the proportionality factor $\varphi$ are nested surfaces created by the rotation of multifoils, cf. \cite{EP16} (Remark 1.12). The solution $u\in C(\mathbb{R}^{3})$ for $\alpha<0$ in Theorem 1.1 (iv) is growing as $|x|\to\infty$.  
\end{rems}

\vspace{3pt}

A simple class of rotational flows with a nonconstant Bernoulli function is as follows:\\

\noindent
(C) Radially irrotational flows $\nabla \times u\cdot x=0$.\\

We remark that the tangentially irrotational homogeneous flows $(\nabla\times u)\times x=0$ are irrotational (Remark 2.14). The radially irrotational flows include axisymmetric flows without swirls. On the contrary, we demonstrate the following:\\

\begin{thm}
All radially irrotational $(-\alpha)$-homogeneous solutions $(u,p)\in C^{2}(\mathbb{R}^{3}\backslash \{0\})$ for $\alpha\in \mathbb{R}$ with a nonconstant Bernoulli function to (1.1) are axisymmetric without swirls.
\end{thm}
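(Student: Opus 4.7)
The plan is to reduce the problem to an isoparametric rigidity statement on the unit sphere $\mathbb{S}^{2}$. First, introduce spherical coordinates $(r,\theta,\phi)$ and decompose $u=u_{r}e_{r}+u_{T}$ with $u_{r}=r^{-\alpha}\tilde{u}_{r}(\theta,\phi)$, $u_{T}=r^{-\alpha}\tilde{u}_{T}(\theta,\phi)$, and $\Pi=r^{-2\alpha}\tilde{\Pi}(\theta,\phi)$. Radial irrotationality $(\nabla\times u)\cdot x=0$ says that $\tilde{u}_{T}$ is a curl-free tangent field on $\mathbb{S}^{2}$, so by simple connectedness there is a spherical potential $\Phi\in C^{3}(\mathbb{S}^{2})$ with $\tilde{u}_{T}=\nabla_{S}\Phi$, where $\nabla_{S}$ and $\Delta_{S}$ denote the spherical gradient and Laplace--Beltrami operator on $\mathbb{S}^{2}$. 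Incompressibility combined with homogeneity reduces to $(2-\alpha)\tilde{u}_{r}+\Delta_{S}\Phi=0$, giving $\tilde{u}_{r}=-\Delta_{S}\Phi/(2-\alpha)$ for $\alpha\neq 2$; the edge case $\alpha=2$ forces $\Phi$ harmonic (hence constant), producing only a radial monopole with $\Pi\equiv 0$, which is excluded by the nonconstancy hypothesis.

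Next, I will exploit the Lamb form of Euler, $\omega\times u=-\nabla\Pi$. Taking the cross product with $x$ and using $\omega\cdot x=0$ yields $(x\cdot u)\,\omega=-x\times\nabla\Pi$. Expanding both sides in spherical coordinates and matching with the direct formulas for $\omega=\nabla\times u$ (applying homogeneity), a straightforward manipulation yields the key identity
\begin{equation*}
\nabla_{S}\!\left(\tfrac{1}{2}\tilde{u}_{r}^{2}-\tilde{\Pi}\right)=(1-\alpha)\,\tilde{u}_{r}\,\nabla_{S}\Phi.
\end{equation*}
For $\alpha\neq 1$, the gradient of $F:=\tfrac{1}{2}\tilde{u}_{r}^{2}-\tilde{\Pi}$ is parallel to $\nabla_{S}\Phi$, so $F$ and $\Phi$ share level sets, whence $F=H(\Phi)$, $\tilde{u}_{r}=h(\Phi)$, and $\tilde{\Pi}=\tilde{\Pi}(\Phi)$ are all functions of $\Phi$ alone. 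The exponent $\alpha=1$ is vacuous by Shvydkoy's Case~1.

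Finally, the radial component of Euler reduces, after homogeneity, to $\tilde{u}_{T}\cdot\nabla_{S}\tilde{u}_{r}-(1-\alpha)|\tilde{u}_{T}|^{2}=2\alpha\tilde{\Pi}$; inserting the preceding expressions gives
\begin{equation*}
\bigl(h'(\Phi)-(1-\alpha)\bigr)\,|\nabla_{S}\Phi|^{2}=2\alpha\,\tilde{\Pi}(\Phi).
\end{equation*}
If $h'\equiv 1-\alpha$ on the range of $\Phi$, a direct computation forces $\tilde{\Pi}$ to be constant, contradicting nonconstancy; hence $|\nabla_{S}\Phi|^{2}=K(\Phi)$ is a function of $\Phi$, with continuity bridging the at most isolated level values where the coefficient vanishes. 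Combined with $\Delta_{S}\Phi=-(2-\alpha)h(\Phi)=G(\Phi)$, the function $\Phi$ is \emph{isoparametric} on $\mathbb{S}^{2}$. I then invoke the classical classification of isoparametric functions on $\mathbb{S}^{2}$: equivalently, $\nabla_{S}\cdot(\nabla_{S}\Phi/|\nabla_{S}\Phi|)=G(\Phi)/\sqrt{K(\Phi)}-K'(\Phi)/(2\sqrt{K(\Phi)})$ depends only on $\Phi$, so each regular level set has constant geodesic curvature and is a planar circle, and the family forms a concentric parallel foliation sharing a common axis. Aligning this axis with the $x_{3}$-axis yields $\Phi=\Phi(\theta)$, so $\tilde{u}_{\phi}=(\sin\theta)^{-1}\partial_{\phi}\Phi=0$ and $u$ is axisymmetric without swirl. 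The main obstacle is this concluding step: globalizing $|\nabla_{S}\Phi|^{2}=K(\Phi)$ across the exceptional set and applying the isoparametric classification rigorously; the degenerate exponents $\alpha\in\{0,1,2\}$ are handled separately by showing no nonconstant-Bernoulli solutions exist there.
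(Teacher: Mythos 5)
Your local analysis is essentially the paper's: the identity $\nabla_{S}\bigl(\tfrac{1}{2}\tilde{u}_{r}^{2}-\tilde{\Pi}\bigr)=(1-\alpha)\tilde{u}_{r}\nabla_{S}\Phi$ is the tangential Euler equation $(2.18)_1$ rewritten via $\Pi=p+\tfrac12|v|^2+\tfrac12 f^2$, the relation $\bigl(h'(\Phi)-(1-\alpha)\bigr)|\nabla_{S}\Phi|^{2}=2\alpha\tilde{\Pi}(\Phi)$ is exactly the paper's Proposition 2.9, and the transnormality $|\nabla_{S}\Phi|^{2}=K(\Phi)$ is equivalent to the paper's Lemma 2.10 (integral curves of $\nabla_{S}\Phi$ are geodesics, since $\nabla_{\nabla\Phi}\nabla\Phi=\tfrac12\nabla|\nabla\Phi|^{2}$ is then parallel to $\nabla\Phi$). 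So up to the point of obtaining the isoparametric/geodesic structure, the two arguments coincide in substance.

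The genuine gap is the step you yourself flag as the "main obstacle," and it is not a technicality. All of your functional relations ($F=H(\Phi)$, $\tilde{u}_r=h(\Phi)$, $\tilde{\Pi}=\tilde{\Pi}(\Phi)$, $|\nabla_S\Phi|^2=K(\Phi)$) require $\nabla_S\Phi\neq 0$ and regular connected level sets, which the paper only guarantees on $\{\Pi\neq 0\}$ (via $(2.18)_2$, which gives $v\neq0$ there). The set where your coefficient $h'(\Phi)-(1-\alpha)$ vanishes is not "at most isolated level values": it can be a union of level sets filling an open region, and on that region $\tilde{\Pi}\equiv 0$, the solution is locally irrotational with $(1-\alpha)v=\nabla f$ and $-\Delta_{S}f=(\alpha-2)(\alpha-1)f$, and a general local solution of that equation is \emph{not} axisymmetric. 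So the isoparametric classification only yields a common axis on each connected component of $\{\Pi\neq 0\}$, and nothing in your argument forces the axes of different components to agree, nor controls the solution on the interior of $\{\Pi=0\}$. The paper closes precisely this hole in Propositions 2.12--2.13: on a zonal band where $\Pi=0$ it expands $f$ in Fourier modes in $\phi$, observes that axisymmetry on the adjacent band gives vanishing Cauchy data $f_{m}(\theta_2)=f_{m}'(\theta_2)=0$ for all $m\neq0$, and invokes uniqueness for the associated Legendre ODE to propagate axisymmetry across the band; a sup/continuation argument then covers all of $\mathbb{S}^{2}$. Without an argument of this kind your proof establishes axisymmetry only locally, not globally, so as written it does not prove the theorem.
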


\vspace{3pt}

The existence and nonexistence ranges of axisymmetric $(-\alpha)$-homogeneous solutions without swirls are split into $\alpha\in \mathbb{R}\backslash [-2,2]$ and $\alpha\in [-2,2]$.

\vspace{3pt}

\begin{thm}
The following holds for rotational axisymmetric $(-\alpha)$-homogeneous solutions without swirls to (1.1):

\noindent
(i) For $1\leq \alpha\leq 2$, no solutions $(u,p)\in C^{1}(\mathbb{R}^{3}\backslash \{0\})$ exist.\\
\noindent
(ii) For $-2< \alpha< 1$, no solutions $(u,p)\in C^{2}(\mathbb{R}^{3}\backslash \{0\})$ exist. For $\alpha=-2$, no solutions $(u,p)\in C^{2}(\mathbb{R}^{3}\backslash \{0\})$ exist provided that $\nabla \times u\cdot e_{\phi}/r$ vanishes on the $z$-axis.\\
\noindent
(iii) For $\alpha>2$, solutions $(u,p)\in C^{2}(\mathbb{R}^{3}\backslash \{0\})$ exist. \\
\noindent
(iv) For $\alpha<-2$, solutions $(u,p)\in  C^{1}(\mathbb{R}^{3}\backslash \{0\})\cap C(\mathbb{R}^{3})$ exist.
\end{thm}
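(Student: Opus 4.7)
The plan is to reduce axisymmetric swirl-free $(-\alpha)$-homogeneous solutions to a semilinear ODE for the angular profile of the Stokes stream function on the sphere, then treat (i)--(ii) by Pohozaev-type multiplier identities and (iii)--(iv) by variational (minimax) methods. For the reduction, write $u = -(\psi_{z}/r)e_{r} + (\psi_{r}/r)e_{z}$; the $(-\alpha)$-homogeneity of $u$ forces $\psi$ to be $(2-\alpha)$-homogeneous, and the Euler system collapses to the Grad--Shafranov equation $\mathcal{L}\psi = r^{2}\Pi'(\psi)$ with $\mathcal{L} = \partial_{zz} + \partial_{rr} - (1/r)\partial_{r}$ and Bernoulli function $\Pi = p + |u|^{2}/2$. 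Matching degrees of homogeneity forces $\Pi(\psi) = c|\psi|^{\mu}$ with $\mu = 2\alpha/(\alpha-2)$ on each connected component of $\{\psi \neq 0\}$. Writing $\psi = \rho^{2-\alpha}g(\theta)$ in spherical coordinates gives
\begin{equation*}
g'' - \cot\theta\,g' + (2-\alpha)(1-\alpha)g = C\sin^{2}\theta\,|g|^{\mu-1}g,\qquad \theta \in (0,\pi),
\end{equation*}
with $g(0) = g(\pi) = 0$ imposed by continuity of $u$ on the $z$-axis and boundary behavior $g \sim \theta^{2}$, $g \sim (\pi-\theta)^{2}$ read off from the indicial equation at the regular singular endpoints.

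For (i) and the sub-range $0 \leq \alpha < 1$ of (ii) I would invoke Shvydkoy \cite{Shv}, whose argument multiplies the ODE by $g$ (or by a weighted combination such as $\sin\theta\,g'$) and integrates over $(0,\pi)$ to obtain a sign-definite identity forcing $g \equiv 0$. For the new range $-2 < \alpha < 0$ of (ii), the same multiplier strategy still has favorable sign in the interior since $(2-\alpha)(1-\alpha) > 0$; I would control the endpoint contributions through a Hardy-type inequality on $(0,\pi)$ with weight $1/\sin\theta$. At the borderline $\alpha = -2$ the relevant Hardy constant is saturated, so the boundary term no longer absorbs into the interior; the hypothesis that $(\nabla\times u)\cdot e_{\phi}/r$ vanish on the $z$-axis is designed precisely to kill this surviving boundary term.

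For (iii) and (iv) the reduced ODE is the Euler--Lagrange equation of
\begin{equation*}
\tilde J(g) = \frac{1}{2}\int_{0}^{\pi}\frac{(g')^{2} - (2-\alpha)(1-\alpha)g^{2}}{\sin\theta}\,\dd\theta - \frac{C}{\mu}\int_{0}^{\pi}\sin\theta\,|g|^{\mu}\,\dd\theta
\end{equation*}
on the weighted Sobolev space $H_{0}^{1}((0,\pi), \sin^{-1}\theta\,\dd\theta)$. For $\alpha > 2$ the exponent $\mu$ lies in $(2,\infty)$, the nonlinearity is superlinear at infinity, and $\tilde J$ exhibits Mountain Pass geometry (local minimum at $g \equiv 0$ and an element $g_{1}$ with $\tilde J(g_{1})<0$); the Palais--Smale condition is automatic in this one-dimensional setting, so the Mountain Pass theorem yields a nontrivial critical point. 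For $\alpha < -2$ the exponent $\mu \in (1,2)$ makes the quadratic part dominate at infinity, and I would instead obtain a nontrivial solution by minimizing the quadratic part under the Nehari-type constraint $\int \sin\theta\,|g|^{\mu}\,\dd\theta = 1$, then recovering the original equation by Lagrange-multiplier rescaling. Standard ODE regularity at the regular singular points $\theta = 0,\pi$ upgrades the weak solution $g$ to the smoothness needed to recover the announced regularity of $(u,p)$.

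The hardest step will be handling the degenerate weight $1/\sin\theta$ at the poles, which simultaneously (a) enforces the homogeneous Dirichlet condition, (b) controls the sign of the boundary contributions in the multiplier identities of Step 2, and (c) governs Palais--Smale/minimizing-sequence compactness in Step 3; sharp weighted Hardy inequalities on $(0,\pi)$ are the central technical input. Within Step 2, the endpoint $\alpha = -2$ of (ii) is the most delicate: tracking the exact boundary term in the multiplier identity is what makes the additional axis hypothesis both necessary and sufficient, and I expect this bookkeeping to be the technical heart of the nonexistence part of the proof.
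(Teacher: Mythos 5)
Your overall architecture (sphere/first-integral arguments for nonexistence, Grad--Shafranov reduction plus minimax for existence) matches the paper, and your reduced ODE and functional are the paper's equation (1.13)/(3.4) rewritten in the variable $\theta$. But the two steps that carry the new content of this theorem both have genuine gaps.

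For the nonexistence range $-2\leq \alpha<0$ in (ii) -- the only part of (i)--(ii) not already in Shvydkoy -- the multiplier/Pohozaev strategy you sketch would fail. First, your sign claim is backwards: for $-2<\alpha<0$ one has $\beta=\alpha-2\in(-4,-2)$, and the principal eigenvalue of $-\partial_t^2-\beta(\beta+1)/(1-t^2)$ on $H^1_0(-1,1)$ is \emph{nonpositive} (the paper proves this via the Chandrasekhar map $w\mapsto w/\sin^2\theta$ onto $H^1_{\mathrm{rad}}(\mathbb{S}^4)$, Lemma 3.13), so the quadratic form produced by multiplying by $g/\sin\theta$ and integrating is indefinite precisely because $(2-\alpha)(1-\alpha)>0$ is \emph{subtracted}; no Hardy inequality rescues sign-definiteness. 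Second, for a given rotational solution the constant $C$ in your reduced ODE is not at your disposal (its sign is uncontrolled), and the global reduction $\Pi=c|\psi|^{\mu}$ with a single constant is itself unjustified for an arbitrary $C^2$ solution, since first integrals are only locally functions of $\psi$. The paper's mechanism is entirely different and does not pass through the stream-function ODE: it uses the first integral $\zeta=(\nabla\times u)\cdot e_\phi/r$, which is $(-\alpha-2)$-homogeneous, hence satisfies $v\cdot\nabla\zeta=(\alpha+2)f\zeta$ on $\mathbb{S}^2$; eliminating $f$ against $(2-\alpha)f+\nabla\cdot v=0$ and integrating gives $|\zeta|^{2-\alpha}|a|^{\alpha+2}\sin^{\alpha+2}\theta\equiv C$ on any interval where $a\zeta\neq0$, and for $-2<\alpha<0$ the left-hand side vanishes at the poles, forcing $C=0$ and then $\zeta\equiv0$, i.e.\ irrotationality. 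At $\alpha=-2$ the exponent $\alpha+2$ vanishes, the identity degenerates to $a\,\partial_\theta\zeta=0$, and the extra hypothesis that $\zeta$ vanish on the axis is what upgrades $\partial_\theta\zeta\equiv0$ to $\zeta\equiv0$; it is not a saturated-Hardy-constant phenomenon.

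For the existence parts, the mountain-pass geometry you invoke for all $\alpha>2$ holds only for $2<\alpha<3$: for $\alpha\geq3$ (i.e.\ $\beta\geq1$) the origin is \emph{not} a local minimum of $\tilde J$, again because the principal eigenvalue is nonpositive, and the paper must pass to a linking theorem on the decomposition $H^1_0=Y\oplus Z$ by the span of eigenfunctions with nonpositive eigenvalues (and a saddle-point theorem for $\alpha<-2$, where your Nehari minimization would also have to contend with the quadratic part being unbounded below on the finite-dimensional subspace $Y$). The Palais--Smale condition is not ``automatic'': compactness follows from boundedness of PS sequences, but boundedness is the nontrivial step, handled in the paper via the identity for $I-\sigma\langle I'[w],w\rangle$ together with the finite dimensionality of $Y$ (Propositions 3.20--3.21). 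Finally, ``standard ODE regularity at the regular singular points'' does not deliver the stated regularity of $(u,p)$ across the symmetry axis: a $C^2[-1,1]$ solution $w$ only gives $u\in C^1$ off the axis, and the paper's $C^2(\mathbb{R}^3\setminus\{0\})$ claim for $\alpha>2$ requires showing $\chi=w/\sin^2\theta\in C^2[0,\pi]$ with $\chi'(0)=\chi'(\pi)=0$ and $\chi'''\sin\theta\in C_0[0,\pi]$ via the induced elliptic equation on $\mathbb{S}^4$ (Propositions 4.5--4.7 and Lemma 4.3). These pole-regularity and indefiniteness issues are exactly where the paper's Chandrasekhar transform does the work that your proposal leaves unaddressed.
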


\vspace{3pt}

We state a general existence result on axisymmetric ($-\alpha$)-homogeneous solutions with a nonconstant Bernoulli function. 
 
\vspace{3pt}

\begin{thm}
The following holds for rotational axisymmetric $(-\alpha)$-homogeneous solutions to (1.1):\\
\noindent
(i) For $1\leq \alpha\leq 2$, no solutions $(u,p)\in C^{1}(\mathbb{R}^{3}\backslash \{0\})$ exist.

\noindent
(ii) For $0\leq \alpha<1$, no solutions $(u,p)\in C^{2}(\mathbb{R}^{3}\backslash \{0\})$ exist.

\noindent
(iii) For $\alpha>2$, solutions $(u,p)\in C^{1}(\mathbb{R}^{3}\backslash \{0\})$ such that $u^{P},p\in C^{2}(\mathbb{R}^{3}\backslash \{0\})$ and $ u^{\phi}e_{\phi}\in C^{1}(\mathbb{R}^{3}\backslash \{0\})$ exist. 

\noindent
(iv) For $\alpha<-2$, solutions $(u,p)\in C(\mathbb{R}^{3})$ such that $u^{P},p\in  C^{1}(\mathbb{R}^{3}\backslash \{r=0\})\cap C(\mathbb{R}^{3})$ and $u^{\phi}e_{\phi}\in C(\mathbb{R}^{3})$ exist.   
\end{thm}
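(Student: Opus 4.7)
Parts (i) and (ii) are nonexistence results due to Shvydkoy and follow from the angular ODE rigidity for axisymmetric $(-\alpha)$-homogeneous flows, so I would just cite them. The novel content is (iii) and (iv), for which I plan to reduce the problem to a scalar Grad--Shafranov ODE via the axisymmetric stream function. Setting $u^{r}=-r^{-1}\partial_{z}\psi$, $u^{z}=r^{-1}\partial_{r}\psi$, with $ru^{\phi}=F(\psi)$ and Bernoulli function $\Pi=p+\tfrac{1}{2}|u|^{2}=\Pi(\psi)$, the Euler equations are equivalent to
\[
\partial_{rr}\psi - \tfrac{1}{r}\partial_{r}\psi + \partial_{zz}\psi = -F(\psi)F'(\psi) - r^{2}\Pi'(\psi).
\]
Scale invariance forces $\psi=R^{2-\alpha}\Psi(z/R)$ together with the power ans\"atze $F(\psi)=c_{1}|\psi|^{(1-\alpha)/(2-\alpha)}\mathrm{sgn}(\psi)$ and $\Pi(\psi)=c_{2}|\psi|^{-2\alpha/(2-\alpha)}$, and the PDE collapses to a semilinear ODE
\[
(1-s^{2})\Psi'' + (2-\alpha)(1-\alpha)\Psi + g_{\alpha}(s,\Psi) = 0,\qquad s\in(-1,1),\qquad \Psi(\pm 1)=0,
\]
where $g_{\alpha}$ is a sum of two power nonlinearities and the Dirichlet data encode regularity of $u$ on the $z$-axis.

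To extract a nontrivial $\Psi$ I would apply a minimax theorem to the energy
\[
E_{\alpha}[\Psi] = \int_{-1}^{1}\!\Bigl\{\tfrac{1}{2}(1-s^{2})|\Psi'|^{2} - \tfrac{1}{2}(2-\alpha)(1-\alpha)\Psi^{2} - G_{\alpha}(s,\Psi)\Bigr\}\,ds
\]
on the weighted Sobolev space $H^{1}_{0}((-1,1);(1-s^{2})ds)$, controlling the $L^{p}$-norms through a weighted Hardy--Poincar\'e inequality. For $\alpha>2$ the spectral coefficient $(2-\alpha)(1-\alpha)>0$ exceeds the first Gegenbauer eigenvalue (with equality exactly on the integer irrotational branch), and the dominant exponent $-2\alpha/(2-\alpha)>2$ makes the leading nonlinearity superquadratic, so an Ambrosetti--Rabinowitz mountain-pass argument produces a non-trivial critical point. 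For $\alpha<-2$ the relevant exponents lie in $(1,2)$ and the nonlinearity is subquadratic, so I would choose the signs of $c_{1},c_{2}$ so that $G_{\alpha}$ enters $E_{\alpha}$ favorably and obtain the solution by constrained minimization or by a linking argument on the sign-indefinite quadratic part.

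Regularity of the reconstructed $(u,p)$ then follows from standard elliptic bootstrapping for $\Psi$ on compact subsets of $(-1,1)$ together with a separate asymptotic analysis near $s=\pm 1$ and near $R=0,\infty$. The reduced smoothness of $u^{\phi}e_{\phi}$ compared with $u^{P}$ reflects the H\"older exponent of $|\psi|^{(1-\alpha)/(2-\alpha)}\mathrm{sgn}(\psi)$, which lies strictly in $(0,1)$ for $|\alpha|>2$; for $\alpha<0$, continuity at the origin is automatic because $u\sim R^{-\alpha}$ with $-\alpha>0$.

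The main technical obstacle I anticipate is verifying the Palais--Smale (or Cerami) condition for $E_{\alpha}$ on the degenerate weighted space and, simultaneously, preventing the minimax sequence from collapsing to $\Psi\equiv 0$ or to the linear Gegenbauer solution $\Psi\propto(1-s^{2})C_{n-1}^{3/2}(s)$ available at integer $\alpha$. The thresholds $|\alpha|=2$ are exactly where $(2-\alpha)(1-\alpha)$ crosses the first eigenvalue of the Sturm--Liouville operator $-[(1-s^{2})\,\cdot\,']'$ and where the homogeneity exponents exit the variationally admissible range; this is consistent with the nonexistence statements (i), (ii) and explains why the method degenerates precisely in the rigidity regime.
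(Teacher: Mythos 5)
Your overall strategy---citing Shvydkoy for (i)--(ii), reducing (iii)--(iv) to a one-dimensional semilinear Dirichlet problem on $(-1,1)$ via the axisymmetric stream function with power-law choices of $\Gamma$ and $\Pi$, and solving it by minimax---is exactly the paper's (Sections 3 and 4), and your exponents agree with the paper's $\Pi(\psi)=C_1|\psi|^{2+4/\beta}$, $\Gamma(\psi)=C_2\psi|\psi|^{1/\beta}$ with $\beta=\alpha-2$. But two steps fail as written. First, $E_{\alpha}$ is not the Euler--Lagrange functional of your ODE: the first variation of $\int\tfrac12(1-s^{2})|\Psi'|^{2}\,ds$ produces $-[(1-s^{2})\Psi']'=-(1-s^{2})\Psi''+2s\Psi'$, so critical points of $E_{\alpha}$ solve an equation with an extra drift term $2s\Psi'$ that is absent from the Grad--Shafranov reduction. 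The correct self-adjoint formulation divides the equation by $(1-s^{2})$, i.e. $-w''=\frac{(\alpha-2)(\alpha-1)}{1-t^{2}}w+\cdots$, and works in the plain $H^{1}_{0}(-1,1)$ with the unweighted Dirichlet energy, the singular potential being controlled by Hardy's inequality (Proposition 3.8); your weighted space $H^{1}_{0}((-1,1);(1-s^{2})ds)$ pairs the wrong operator with the wrong measure, and your heuristic about the spectrum of $-[(1-s^{2})\,\cdot\,']'$ inherits this mismatch.

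Second, ``mountain pass for $\alpha>2$'' is too coarse. The principal eigenvalue of $-\partial_{t}^{2}-\frac{(\alpha-2)(\alpha-1)}{1-t^{2}}$ on $H^{1}_{0}(-1,1)$ is positive only for $2<\alpha<3$ (the paper detects this via the Chandrasekhar map $w\mapsto w/\sin^{2}\theta$ and a Rayleigh quotient on $\mathbb{S}^{4}$); for $\alpha\geq 3$ it is nonpositive, the quadratic part of the functional is sign-indefinite, the origin is not separated from infinity by a sphere of positive energy, and the Ambrosetti--Rabinowitz geometry fails. One must use the linking theorem on the decomposition $H^{1}_{0}=Y\oplus Z$ by the nonpositive spectral subspace for $\alpha\geq 3$, reserving the mountain pass for $2<\alpha<3$. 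Finally, the regularity of $(u,p)$ at the symmetry axis is not standard bootstrapping: the velocity components are built from $\chi=w/\sin^{2}\theta$ and $\chi'\sin\theta$, and obtaining $u^{P}\in C^{2}$ and $u^{\phi}e_{\phi}\in C^{1}$ across $\theta=0,\pi$ requires proving $\chi\in C^{2}[0,\pi]$ with $\chi'(0)=\chi'(\pi)=0$ and $\chi'''\sin\theta\in C_{0}[0,\pi]$ from the equation itself (Lemmas 4.3--4.7); this is precisely where the asserted gap in smoothness between $u^{P}$ and the swirl component is established, so it cannot be left to a generic asymptotic analysis.
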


\vspace{3pt}

\begin{rems}
(i) The explicit rotational axisymmetric ($-\alpha$)-homogeneous solution with a nonconstant Bernoulli function $(u,p)$ exists for $\alpha\leq 0$ \cite[p.2521, (13)]{Shv}: 

\begin{align*}
u(x)&=b^{2}\frac{x_3}{x_1^{2}+x_2^{2}}K^{-\frac{\alpha}{2}}
\left(
\begin{array}{c}
x_1 \\
x_2\\
0
\end{array}
\right)
-\frac{b}{x_1^{2}+x_2^{2}}K^{\frac{1-\alpha}{2}}
\left(
\begin{array}{c}
-x_2 \\
x_1\\
0
\end{array}
\right)
+a^{2}K^{-\frac{\alpha}{2}}
\left(
\begin{array}{c}
0 \\
0\\
1
\end{array}
\right),\\
K(x)&=\left(a^{2}(x_1^{2}+x_2^{2})-b^{2}x_3^{2} \right)_{+},\quad a^{2}+b^{2}=1,\quad 
\alpha p=0,\quad \alpha\leq 0.
\end{align*}\\
Here, $s_+=\max\{s,0\}$ for $s\in \mathbb{R}$. For $b=0$, $u=r^{-\alpha} e_z$ is without swirl and belongs to $C^{1}(\mathbb{R}^{2})$ for $\alpha<-1$ and $C^{2}(\mathbb{R}^{2})$ for $\alpha\leq  -2$. For $\alpha=-2$, the toroidal component of vorticity is $\nabla \times u\cdot e_{\phi}/r=-2$, cf. Theorem 1.4 (ii). For $b\neq 0$, $u$ is with swirls, belongs to $C^{1}(\mathbb{R}^{3})$ for $\alpha<-2$, and is supported in the wedged region $\{a^{2}r^{2}-b^{2}z^{2}>0\}$. In particular, $u|_{\mathbb{S}^{2}}$ is compactly supported in $\{\theta_0<\theta<\pi-\theta,\ 0\leq \phi\leq 2\pi \}$ on $\mathbb{S}^{2}$ for $\theta_0=\arctan{|b/a|}$, cf. Theorem 1.5 (iv). Here, $\theta$ is the geodesic radial coordinate on $\mathbb{S}^{2}$ (Section 2). This solution is as follows:\\

\noindent
(D) Geodesic flows $(u\cdot \nabla u)\times u=0$.\\

Namely, streamlines are rays. The solutions of (1.1) with a constant pressure are geodesic flows. It is demonstrated in the work of Shvydkoy \cite[Proposition 5.3]{Shv} that all axisymmetric ($-\alpha$)-homogeneous solutions $u\in C^{1}(\mathbb{R}^{3}\backslash \{0\})$ with a constant pressure $p$ are this solution or the irrotational solution for $\alpha=2$. We remark on the existence of compactly supported inhomogeneous axisymmetric solutions with swirls in $\mathbb{R}^{3}$ \cite{Gav}, \cite{CLV}, \cite{DEPS21} and compactly supported vortex patch solutions in $\mathbb{R}^{2}$ \cite{GPS22}. Baldi \cite{Baldi} discusses the streamline geometry of compactly supported inhomogeneous axisymmetric solutions with swirls.

\noindent
(ii) The two-dimensional (2D) $(-\alpha)$-homogeneous solutions $u=(u^{1},u^{2},0)$ can exist for all $\alpha\in \mathbb{R}$. Luo and Shvydkoy \cite{LuoShvydkoy}, \cite[2.2]{Shv} found several explicit solutions and investigated the streamlines of $(-\alpha)$-homogeneous solutions based on the stream function's  Hamiltonian PDE. The pressure $p$ of 2D $(-\alpha)$-homogeneous solutions is constant on the circle $r=1$. In fact, for any $\alpha\in \mathbb{R}$,

\begin{align*}
u(x)=\frac{a}{r^{\alpha+1}}\left(
\begin{array}{c}
-x_2 \\
x_1\\
0
\end{array}
\right),\quad p(x)=-\left(\frac{a^{2}}{2\alpha}\right)\frac{1}{r^{2\alpha}},\quad a\in \mathbb{R},
\end{align*}\\
is a radially symmetric $(-\alpha)$-homogeneous solution (a circular flow) to (1.1) in $\mathbb{R}^{3}\backslash \{r=0\}$ for $\alpha\geq 0$ and in $\mathbb{R}^{3}$ for $\alpha< 0$, cf. Theorem 1.5 (i) and (ii). All 2D radially symmetric $(-\alpha)$-homogeneous solution for $\alpha\in \mathbb{R}\backslash \{1\}$ is this solution (Theorem A.1). The work by Shvydkoy \cite[Proposition 4.1]{Shv} demonstrates that all ($-\alpha$)-homogeneous solutions $(u,p)\in C^{1}(\mathbb{R}^{3}\backslash \{0\})$ to (1.1) are 2D radially symmetric solutions for $\alpha\leq -1$, provided that\\ 

\noindent
(E) Tangential flows $u\cdot x=0$. \\

Guo et al. \cite{GHPW}, \cite{GPW} demonstrated that the radially symmetric $1$-homogeneous solution is stable in the axisymmetric Euler equations via the Euler--Coriolis equations. 
\end{rems}

\newpage
  
Noncircular streamlines appear for the following:\\ 

\noindent
(F) 2D reflection symmetric flows $u=(u^{1},u^{2},0),$

\begin{equation*}
\begin{aligned}
u^{1}(x_1,x_2)&=u^{1}(x_1,-x_2), \\
u^{2}(x_1,x_2)&=-u^{2}(x_1,-x_2),\\
p(x_1,x_2)&=p(x_1,-x_2).
\end{aligned}
\end{equation*}\\
Irrotational 2D reflection symmetric ($-\alpha$)-homogeneous solutions $u=(u^{1},u^{2},0)\in C^{1}(\mathbb{R}^{2}\backslash \{0\})$ to (1.1) exist if and only if $\alpha \in \mathbb{Z}$ (Theorem A.2). They are constant multiples of the following:

\begin{equation}
\begin{aligned}
u&=
\frac{1}{r^{2}}
\left(
\begin{array}{c}
x_1 \\
x_2\\
0
\end{array}
\right),\quad \alpha=1,\\
\quad
u&=\left(
\begin{array}{c}
\partial_2\psi \\
-\partial_1\psi\\
0
\end{array}
\right),\quad \alpha\in \mathbb{Z}\backslash \{1\},
\end{aligned}
\end{equation}
for the stream function 

\begin{align}
\psi(x_1,x_2)=\frac{\sin(n\phi)}{r^{n}},\quad n=\alpha-1.
\end{align}\\
Figure 2 shows the level sets of $\psi$ for $n=\pm1$, $\pm2$, and $\pm3$.\\

We consider 2D reflection symmetric ($-\alpha$)-homogeneous solutions $u\in C^{1}(\mathbb{R}^{2}\backslash \{0\})$ for $\alpha\geq 0$ in $\mathbb{R}^{2}\backslash \{0\}$ and $u\in C(\mathbb{R}^{2})$ for $\alpha< 0$ in $\mathbb{R}^{2}$ satisfying (1.1) in the distributional sense. The existence and nonexistence ranges of 2D reflection symmetric ($-\alpha$)-homogeneous solutions are split into $\alpha\in \mathbb{R}\backslash[-1,1]$ and $\alpha\in [-1,1]$, cf. Theorem 1.4.

\vspace{3pt}

\begin{thm}
The following holds for rotational 2D reflection symmetric $(-\alpha)$-homogeneous solutions $u=(u^{1},u^{2},0)$ and $r^{2\alpha} p=\textrm{const.}$ to (1.1):

\noindent
(i) For $0\leq \alpha\leq 1$, no solutions $u\in C^{1}(\mathbb{R}^{2}\backslash \{0\})$ exist.\\
\noindent
(ii) For $-1\leq \alpha< 0$, no solutions $u\in C^{2}(\mathbb{R}^{2}\backslash \{0\})$ exist.\\
\noindent
(iii) For $\alpha>1$, solutions $u\in C^{2}(\mathbb{R}^{2}\backslash \{0\})$ exist. \\
\noindent
(iv) For $\alpha<-1$, solutions $u\in C^{1}(\mathbb{R}^{2}\backslash \{0\})\cap C(\mathbb{R}^{2})$ exist.
\end{thm}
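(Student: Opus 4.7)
The plan is to reduce the system to a scalar boundary value problem on $[0,\pi]$ via a stream function, and then handle the reduced problem by direct analysis for nonexistence and by variational methods for existence. Writing $u=(\partial_2\psi,-\partial_1\psi)$, the $(-\alpha)$-homogeneity of $u$ forces $\psi$ to be $(-n)$-homogeneous with $n=\alpha-1$, and the reflection symmetry forces $\psi(x_1,-x_2)=-\psi(x_1,x_2)$. Hence $\psi=r^{-n}f(\phi)$ with $f$ odd on $S^{1}$, so $f(0)=f(\pi)=0$. The 2D steady Euler equations are equivalent to $-\Delta\psi=F(\psi)$, and the $(-2\alpha)$-homogeneity of $p$ together with the oddness of $F$ inherited from reflection symmetry forces $F(s)=c|s|^{2/n}s$ for some $c\in\mathbb{R}$ (when $n\neq 0$). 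The problem reduces to
\[
f''+n^{2}f+c|f|^{2/n}f=0 \quad\text{on } (0,\pi), \qquad f(0)=f(\pi)=0.
\]
A direct calculation using the first integral of this ODE shows that the condition $r^{2\alpha}p=\textrm{const}$ is automatically satisfied. The irrotational case is $c=0$, giving $f=A\sin(n\phi)$ with $n\in\mathbb{Z}$.

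For the nonexistence parts (i)--(ii), I would analyze the local behavior at the boundary zero $\phi=0$. The case $n=0$ ($\alpha=1$) is direct: $\omega=f''(\phi)/r^{2}$ depends on $r$ while $F(\psi)=F(f(\phi))$ does not, forcing $f''\equiv 0$ and hence $\omega\equiv 0$. For the remaining ranges $n\in[-2,0)$, the nonlinearity $|f|^{1+2/n}\mathrm{sgn}(f)$ either has exponent $1+2/n<0$ (for $n\in(-2,-1)\cup(-1,0)$) and blows up at zeros of $f$, or is the step function $c\,\mathrm{sgn}(f)$ (for $n\in\{-1,-2\}$) and produces jumps in $f''$. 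The regularity of $u$ (giving $f\in C^{2}$ for (i), $f\in C^{3}$ for (ii)) is incompatible with $c\neq 0$ at the boundary zeros, forcing $c=0$.

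For the existence parts (iii)--(iv), I would use variational methods. With $p=2+2/n$, consider $I(f)=\tfrac{1}{2}\int_0^\pi((f')^{2}-n^{2}f^{2})\,d\phi-\tfrac{c}{p}\int_0^\pi|f|^{p}\,d\phi$ on $H^{1}_{0}(0,\pi)$, whose critical points solve the ODE. For $n>1$ or $n<-2$, the quadratic form $J(f)=\int((f')^{2}-n^{2}f^{2})\,d\phi$ is indefinite and one minimizes $J$ on the constraint $\int|f|^{p}\,d\phi=1$. The infimum is negative (test with $\sin\phi$, since $|n|>1$) and finite: for $p>2$ by direct coercivity, and for $p\in(1,2)$ (the case $n<-2$) by the interpolation $\|f\|_{L^{2}}^{2}\le \|f\|_{L^{\infty}}^{2-p}\|f\|_{L^{p}}^{p}$ combined with $\|f\|_{L^{\infty}}\le C\|f'\|_{L^{2}}$ in one dimension, yielding $J(f)\ge\|f'\|_{L^{2}}^{2}-n^{2}C^{2-p}\|f'\|_{L^{2}}^{2-p}$, coercive since $2>2-p$. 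Compactness $H^{1}_{0}\hookrightarrow L^{p}$ in one dimension and weak lower semicontinuity produce a minimizer whose Lagrange multiplier gives $c\neq 0$. For $0<n\le 1$, $J$ is positive semidefinite and I would instead apply the mountain pass theorem to $I$: the superlinearity $p>2$ supplies the geometry, and Palais--Smale is routine in one dimension. Regularity bootstraps from the ODE: for $n>0$ the nonlinearity is $C^{1}$ and $f\in C^{3}$, giving $u\in C^{2}$; for $n<-2$ it is only H\"older continuous at zero, so $f\in C^{2}$ suffices for $u\in C^{1}$ away from the origin, and $|n|>2$ ensures $\psi=r^{|n|}f$ vanishes at $r=0$, extending $u$ continuously to $\mathbb{R}^{2}$.

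The principal difficulty is case (iv) with $n<-2$, where the subquadratic, non-Lipschitz nonlinearity requires the interpolation-based coercivity estimate above together with a careful regularity bootstrap at zeros of the minimizer. The same non-smoothness of the nonlinearity in the complementary range $-1\le\alpha<0$ is precisely what blocks existence in (ii), producing the clean dichotomy between nonexistence and existence.
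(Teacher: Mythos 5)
Your reduction to the autonomous problem $f''+n^{2}f+c|f|^{2/n}f=0$ on $(0,\pi)$ with $f(0)=f(\pi)=0$, $n=\alpha-1$, is exactly the paper's equation (1.15)/(B.4), and your existence strategy is variational as in Appendix B; for $n<-2$ your constrained minimization of $J(f)=\int((f')^{2}-n^{2}f^{2})\,d\phi$ on $\{\|f\|_{L^{p}}=1\}$ is a legitimate and arguably more elementary alternative to the paper's saddle point theorem. But there is a genuine gap at $\alpha=2$, i.e.\ $n=1$, which lies inside the range of part (iii). There $\mu_{k}=k^{2}-n^{2}$ gives $\mu_{1}=0$: the quadratic form is positive semidefinite but degenerate, with $J(\sin\phi)=0$, so $I[t\sin\phi]=-\tfrac{c}{p}t^{p}\|\sin\phi\|_{L^{p}}^{p}<0$ for every $t>0$ (when $c>0$) and the mountain pass geometry $\inf_{\|f\|=r_{0}}I>I[0]=0$ fails; switching the sign of $c$ makes $I\geq 0$ everywhere and kills the geometry from the other side. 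The constrained minimization also fails at $n=1$, since $\inf J=0$ on the constraint set is attained at the irrotational eigenfunction with Lagrange multiplier zero. This resonant case is precisely why the paper uses a linking theorem with the decomposition $H^{1}_{0}=\mathrm{span}(\sin\phi)\oplus Z$ for $n\geq 1$. (Also, for part (iv) you should still verify that the continuously extended $u$ solves (1.1) distributionally across the origin, as in the paper's cut-off argument.)

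The nonexistence argument has a more structural gap: it presupposes that the vorticity satisfies $-\Delta\psi=c|\psi|^{2/n}\psi$ globally with a single constant $c$. The Euler equations only give that $\Delta\psi$ is constant on connected components of level sets of $\psi$; homogeneity then yields the power law separately on each nodal domain of $f$, with a constant that may differ from one nodal domain to the next, and you must also treat intervals where $f\equiv 0$ and, for $\alpha=1$, flows with nonzero flux through circles whose stream function is multivalued (e.g.\ $u=e_{r}/r$); for $\alpha\neq 1$ the divergence-free condition on the circle forces zero flux, but this needs to be said. Your local blow-up-at-zeros idea is in principle repairable nodal domain by nodal domain, though note that for $n=-1$ the nonlinearity is $c\,\mathrm{sgn}(f)/|f|$, unbounded, rather than the step function you describe (only $n=-2$ gives $c\,\mathrm{sgn}(f)$). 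The paper avoids introducing a global profile function altogether: it works with the angular system for the radial and tangential velocity components together with the first integral conditions $a\Pi'=2\alpha f\Pi$ and $a\omega'=(\alpha+1)f\omega$, integrates these to $|\Pi|^{1-\alpha}|a|^{2\alpha}=C$ for $0<\alpha<1$ (resp.\ $|\omega|^{1-\alpha}|a|^{1+\alpha}=C$ for $-1<\alpha<0$) on any interval where the product is nonzero, and derives $C=0$ from the boundary condition $a(0)=0$, forcing $\omega=0$; the endpoint cases $\alpha=1,0,-1$ are handled separately. You should either adopt that route or supply the missing justification for the piecewise reduction and then run your regularity argument on each nodal domain.
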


\vspace{3pt}

The stream function level sets $\{\psi=\pm C\}$ for $C>0$ of the rotational 2D reflection symmetric $(-\alpha)$-homogeneous solutions for $\alpha>1$ in Theorem 1.7 (iii) are unions of the Jordan curves sharing the origin (multifoils). For $\alpha=2$, the stream function level sets of the irrotational 2D reflection symmetric $(-2)$-homogeneous solution consist of the Jordan curves $\{\psi=C\}$ in the upper half plane and the Jordan curves $\{\psi=-C\}$ in the lower half plane ($n=1$ in Figure 2). We show the existence of rotational $(-\alpha)$-homogeneous solutions for $1<\alpha<2$ whose stream function level sets are homeomorphic to those of the irrotational 2D reflection symmetric $(-2)$-homogeneous solution.

\vspace{3pt}

\begin{thm}
For $1<\alpha<2$, there exist rotational 2D reflection symmetric $(-\alpha)$-homogeneous solutions $u\in C^{2}(\mathbb{R}^{2}\backslash \{0\})$ to (1.1) whose stream function level sets are homeomorphic to those of the irrotational $(-2)$-homogeneous solution.
\end{thm}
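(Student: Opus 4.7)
The plan is to find a rotational $(-\alpha)$-homogeneous 2D reflection symmetric solution by a stream function ansatz. In polar coordinates $(r,\phi)$ set
$$\psi(r,\phi)=r^{-n}g(\phi),\qquad n=\alpha-1\in(0,1),$$
so that $u=(\partial_{2}\psi,-\partial_{1}\psi,0)$ is automatically $(-\alpha)$-homogeneous. The reflection symmetry $u^{1}(x_{1},-x_{2})=u^{1}(x_{1},x_{2})$, $u^{2}(x_{1},-x_{2})=-u^{2}(x_{1},x_{2})$ translates into $g$ being odd in $\phi$. The stationary 2D Euler system reduces to a Grad--Shafranov-type equation $-\Delta\psi=F(\psi)$, and matching the radial-scaling exponents of $\psi$ and $\omega=-\Delta\psi$ forces the pure power law $F(t)=\mu\,|t|^{q-1}t$ with
$$q=\frac{n+2}{n}=\frac{\alpha+1}{\alpha-1}\in(3,\infty),\qquad\mu\in\mathbb{R}.$$
Separating the radial factor reduces the whole PDE to the Dirichlet problem
\begin{equation*}
-g''(\phi)-n^{2}g(\phi)=\mu\,|g(\phi)|^{q-1}g(\phi)\quad\text{on }(0,\pi),\qquad g(0)=g(\pi)=0,
\end{equation*}
for the angular profile on the upper half-plane; the solution is then extended to $\mathbb{R}$ by odd reflection and $2\pi$-periodicity.

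I would produce a positive solution of this ODE boundary value problem by constrained minimization. Since $n<1$ lies strictly below the first Dirichlet eigenvalue of $-\partial_{\phi}^{2}$ on $(0,\pi)$ (which equals $1$), the quadratic form $Q(g)=\int_{0}^{\pi}((g')^{2}-n^{2}g^{2})\,d\phi$ is equivalent to the $H_{0}^{1}$ norm squared. Minimize $Q$ over the constraint $\mathcal{M}=\{g\in H_{0}^{1}(0,\pi):\int_{0}^{\pi}|g|^{q+1}\,d\phi=1\}$. The one-dimensional Sobolev embedding $H_{0}^{1}(0,\pi)\hookrightarrow L^{q+1}(0,\pi)$ is compact for every finite $q$, so the infimum is attained and the Euler--Lagrange equation yields a weak solution with a Lagrange multiplier $\mu>0$; the value of $\mu$ can then be normalized by rescaling $g\mapsto\beta g$. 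Because $|g|$ also minimizes $Q$ on $\mathcal{M}$, we may take $g\geq 0$; ODE regularity and the strong maximum principle upgrade $g$ to a classical $C^{2}([0,\pi])$ solution strictly positive on $(0,\pi)$. The ODE and $g(0)=g(\pi)=0$ give $g''(0)=g''(\pi)=0$, so the odd periodic extension is $C^{2}(\mathbb{R})$, and the uniqueness theorem for the linear IVP at each endpoint rules out $g'(0)=0$ or $g'(\pi)=0$.

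Reconstituting $u=r^{-\alpha}(g'(\phi)e_{r}+n\,g(\phi)e_{\phi})$ and defining $p$ through the Bernoulli relation $p=-|u|^{2}/2-\frac{\mu}{q+1}|\psi|^{q+1}$ produces a rotational $(-\alpha)$-homogeneous 2D reflection symmetric solution of (1.1) in $\mathbb{R}^{2}\setminus\{0\}$; its vorticity $\omega=\mu\,g^{q}r^{-\alpha-1}$ is strictly positive on the upper half-plane. For each $C>0$, the level set $\{\psi=C\}\cap\{x_{2}>0\}$ is the smooth arc parametrized in polar form by $r=(g(\phi)/C)^{1/n}$, $\phi\in(0,\pi)$; since $g>0$ in the interior and vanishes at both endpoints, the arc tends to the origin from both sides, and its closure is a Jordan curve meeting the $x_{1}$-axis transversally because $g'(0)>0$ and $g'(\pi)<0$. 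Odd symmetry yields the mirror Jordan curves $\{\psi=-C\}$ in the lower half-plane, and $\{\psi=0\}$ is the $x_{1}$-axis. This matches the level-set picture of the irrotational solution $\psi_{\mathrm{irr}}=\sin\phi/r$ (the $n=1$ panel of Figure 2) level by level, establishing the required homeomorphism.

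The main obstacle I anticipate is not the variational existence itself, which is routine in this subcritical one-dimensional regime, but certifying that no interior critical point of $\psi$ can split a level set into several components. Since $\psi(r,\phi)=r^{-n}g(\phi)$ is strictly monotone in $r$ along each ray $\{\phi=\textrm{const}\}$ of the upper half-plane (because $g>0$ there), every level set $\{\psi=C\}\cap\{x_{2}>0\}$ is automatically a graph over $\phi\in(0,\pi)$ and hence a single arc; this short observation closes the topological question. As a fallback, if the ground state is uncomfortable to extract variationally, one could replace the argument with a shooting method based on the conserved energy $\tfrac{1}{2}(g')^{2}+\tfrac{n^{2}}{2}g^{2}+\tfrac{\mu}{q+1}g^{q+1}$: parametrize the initial slope $s=g'(0)>0$, read off $g_{\max}(s)$ from the conservation law, and select $s$ by matching the time map to $\pi$ via the intermediate value theorem.
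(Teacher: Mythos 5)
Your construction is essentially the paper's: the same ansatz $\psi=g(\phi)/r^{n}$ with $n=\alpha-1\in(0,1)$, the same reduction to the autonomous Dirichlet problem $-g''-n^{2}g=\mu g|g|^{2/n}$ on $(0,\pi)$ (equation (B.4) of the paper with $\beta=n$, $c=\mu$), and the same level-set argument (the sets $\{\psi=C\}$ in the upper half-plane are graphs $r=(g(\phi)/C)^{1/n}$ because $g>0$ forces $\partial_{r}\psi<0$ there, hence Jordan curves through the origin matching the $n=1$ picture $r=\sin\phi/C$). The one genuine difference is the variational device for producing the positive solution: the paper applies the mountain pass theorem to a truncated functional built from $g_{+}$ and then invokes the strong maximum principle, whereas you minimize the positive-definite form $Q(g)=\int((g')^{2}-n^{2}g^{2})$ on the constraint manifold $\int|g|^{q+1}=1$, use that $|g|$ is also a minimizer, and normalize the Lagrange multiplier by scaling. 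Your route is more elementary (compact embedding plus direct minimization, no Palais--Smale verification) and works precisely because $n<1$ keeps $\mu_{1}=1-n^{2}>0$; it would not extend to the paper's other ranges $\beta\geq1$ or $\beta<-2$, where the linking and saddle point theorems are needed, but for Theorem 1.8 that is irrelevant. One small point to tighten: to conclude $u\in C^{2}(\mathbb{R}^{2}\backslash\{0\})$ you need $\psi\in C^{3}$, i.e.\ $g\in C^{3}[0,\pi]$ with the odd extension $C^{3}$ across $\phi=0,\pi$; this follows by differentiating the autonomous ODE once (the nonlinearity is $C^{1}$ since $2/n>2$) together with your observation that $g''(0)=g''(\pi)=0$, exactly as the paper records in Theorem B.2, but your write-up only claims $C^{2}$ for the extension of $g$.
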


\vspace{3pt}

%\if{

\begin{figure}[p]
    \begin{tabular}{ccc}
    \hspace{-15pt}
      \begin{minipage}[t]{0.45\hsize}
        \centering
\includegraphics[scale=0.65]{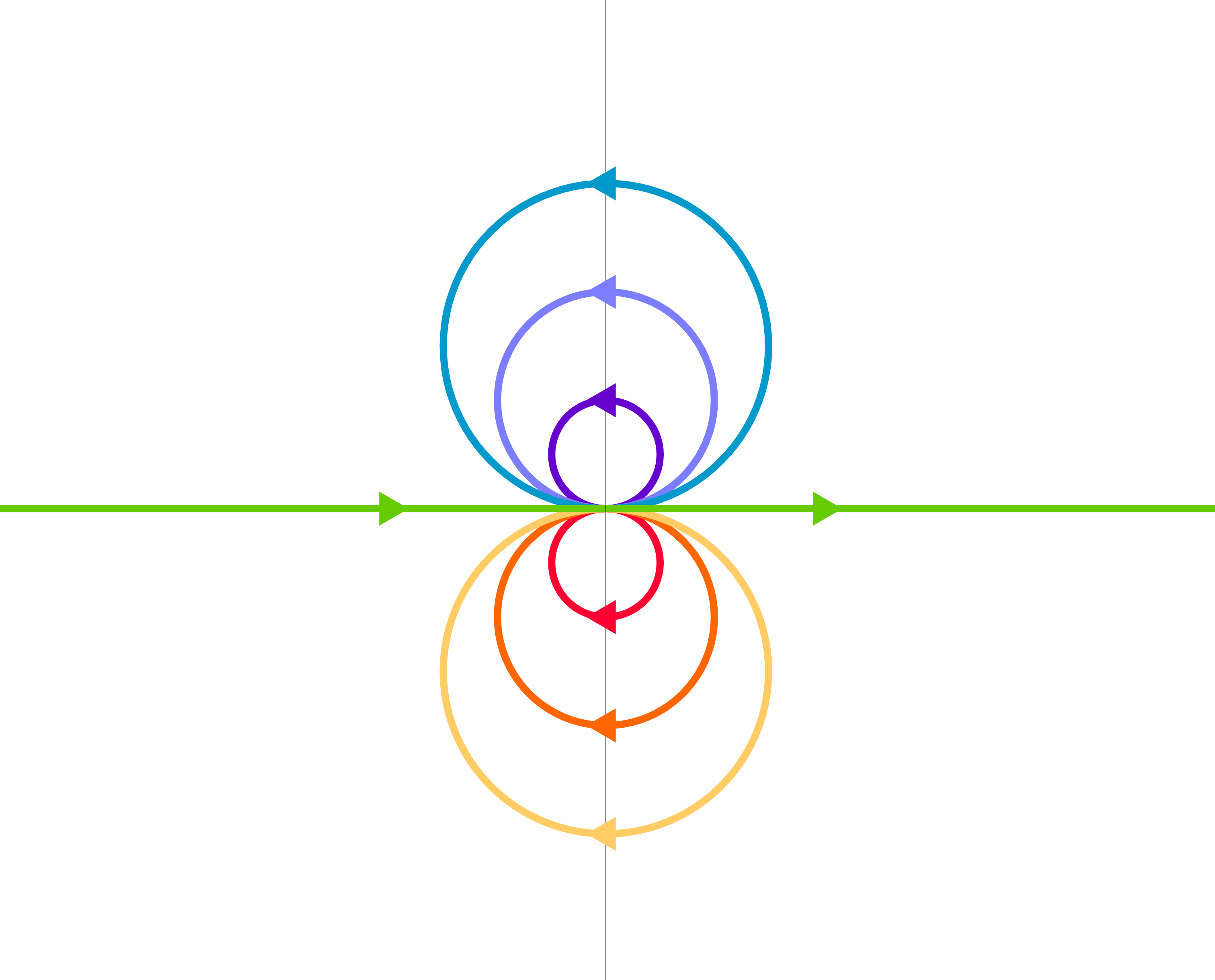}    \subcaption*{\hspace{0pt}$n=1$}
        \label{composite}
      \end{minipage} &
    \hspace{15pt}
            \vspace{-10pt}
      \begin{minipage}[t]{0.45\hsize}
        \centering
                \mbox{\raisebox{0pt}{\includegraphics[scale=0.65]{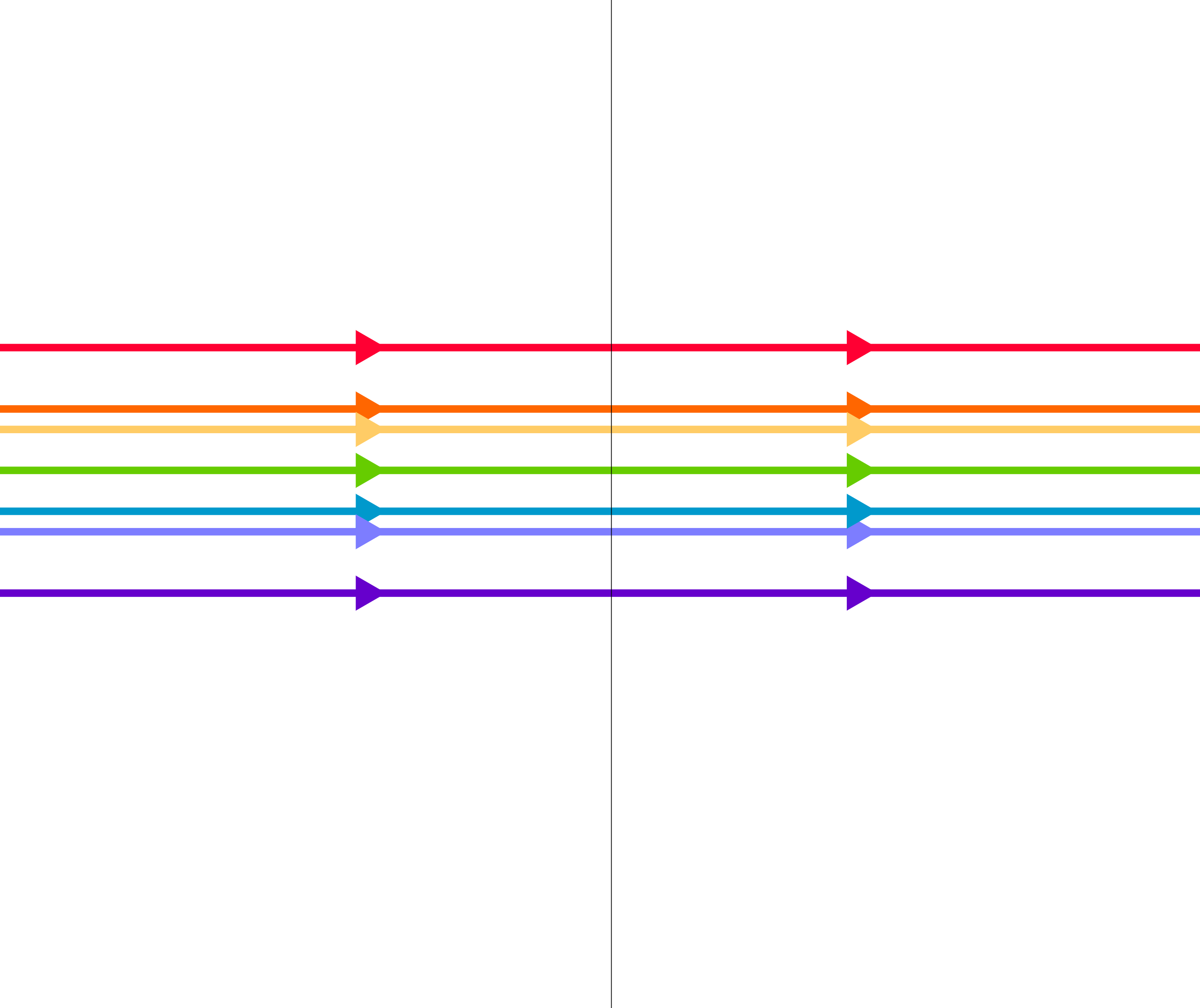}}}
           \subcaption*{\hspace{10pt}$n=-1$}
        \label{Gradation}
      \end{minipage} \\
    \hspace{-30pt}\\
      \vspace{10pt}
      \begin{minipage}[c]{0.45\hsize}
        \centering
    \hspace{-14pt}\includegraphics[scale=1.1]{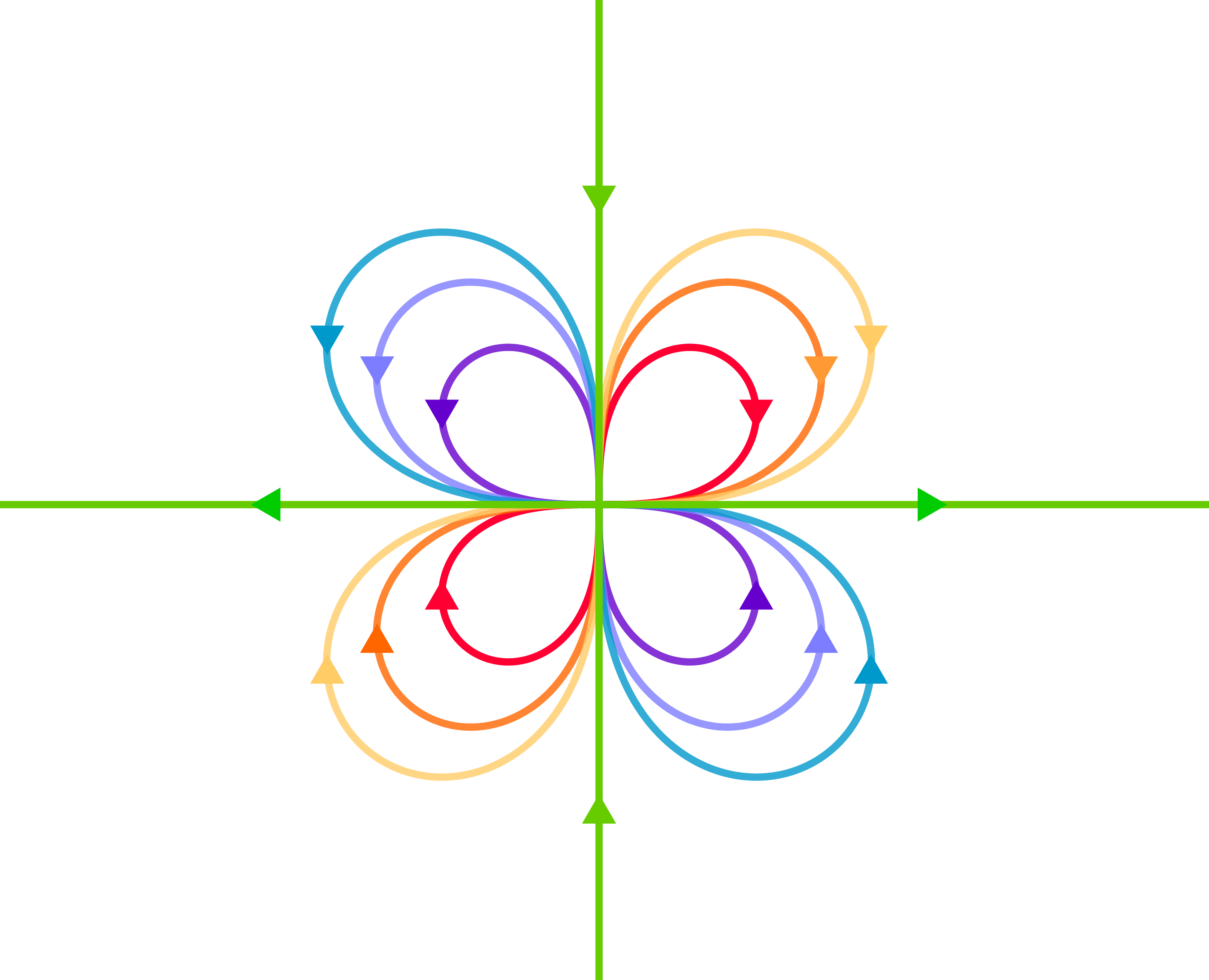} \subcaption*{\hspace{-15pt}$n=2$}
        \label{composite}
      \end{minipage} &
    \hspace{18pt}
      \begin{minipage}[c]{0.45\hsize}
         \centering
                  \mbox{\raisebox{0pt}{\includegraphics[scale=0.78]{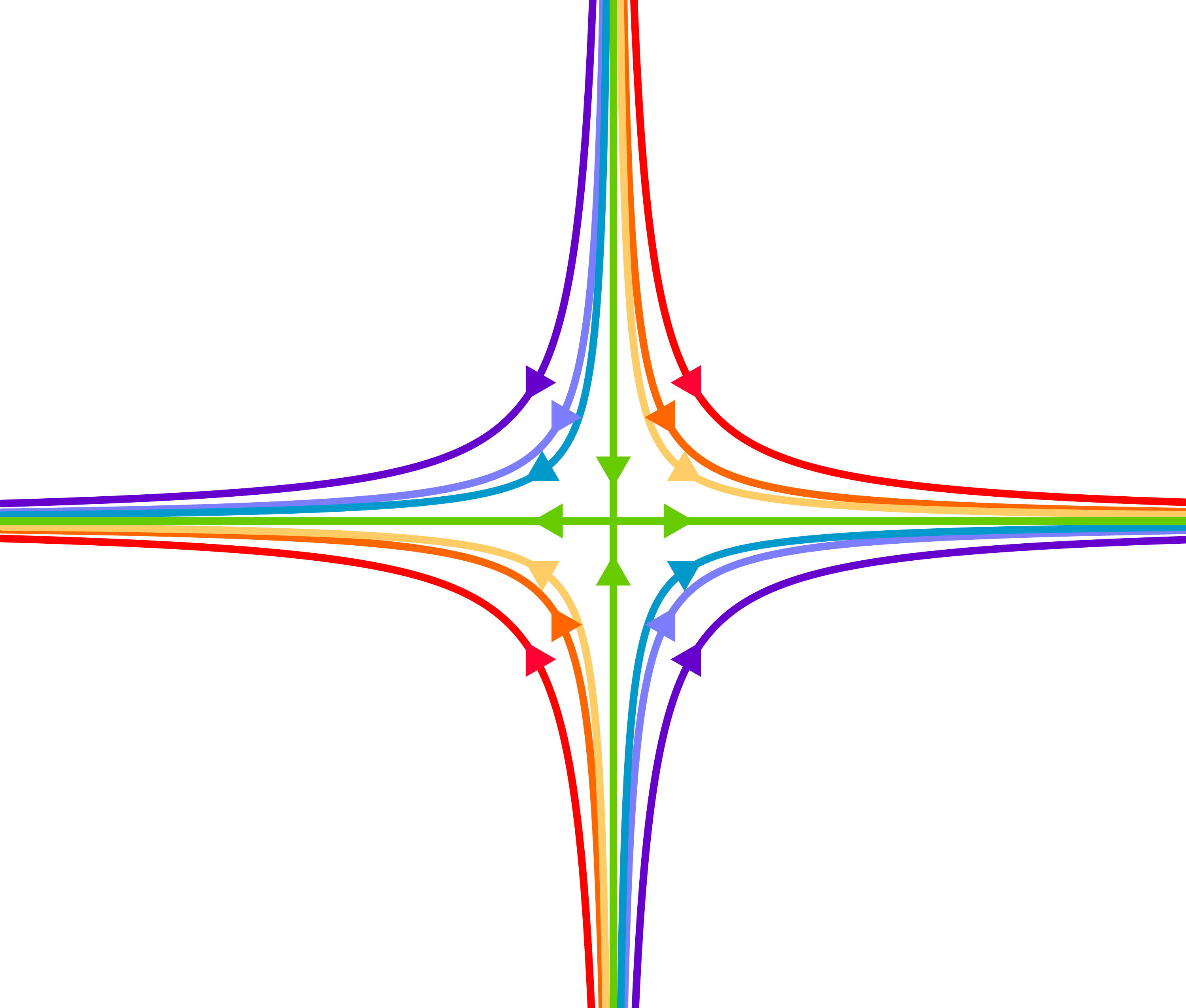} }  
                   }
                   \subcaption*{\hspace{6.5pt}$n=-2$}  
        \label{Gradation}
      \end{minipage} \\   
    \hspace{-11pt}      \begin{minipage}[c]{0.45\hsize}
        \centering
           \mbox{\raisebox{0pt}{\includegraphics[scale=1.26]{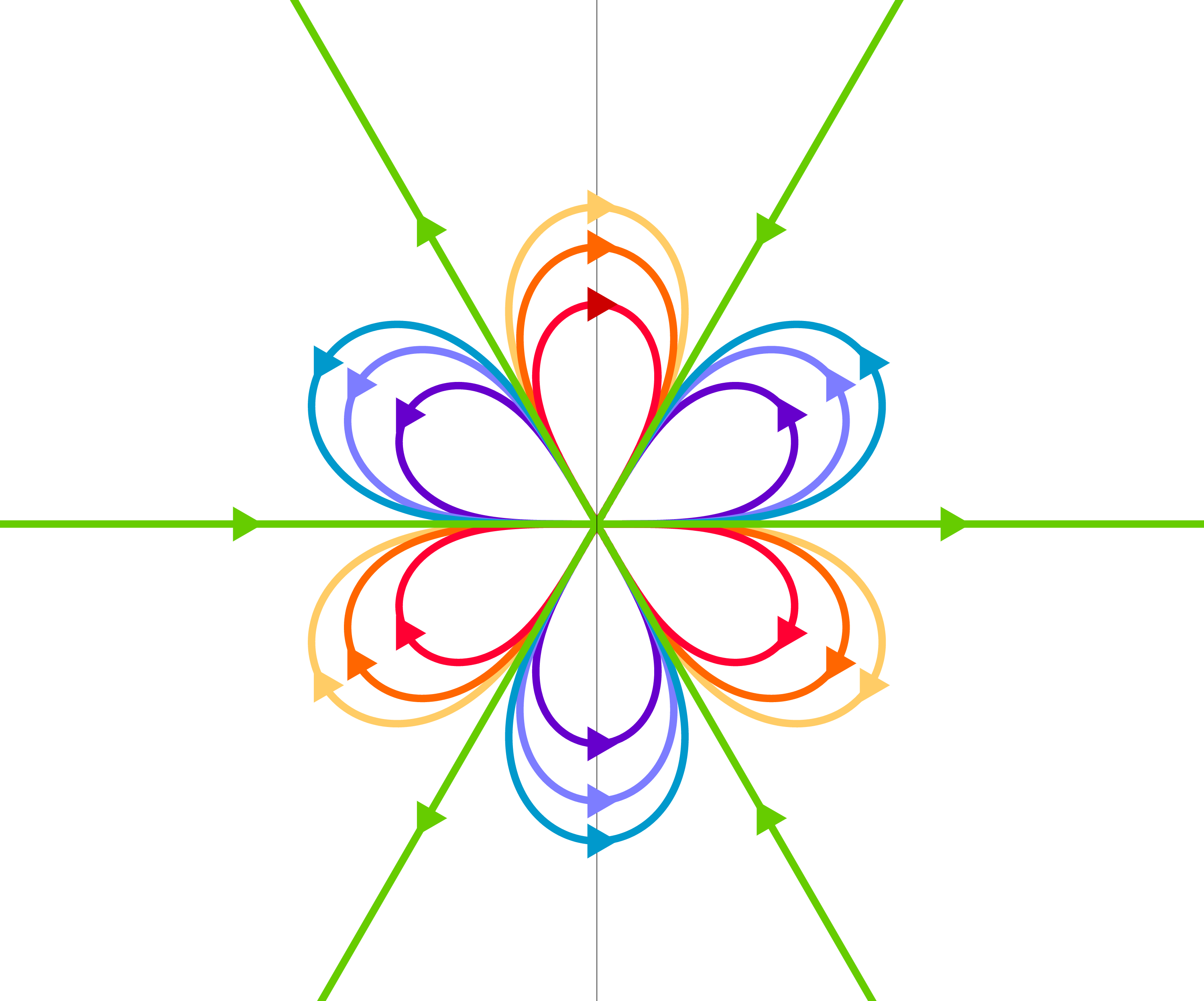} }}
    \subcaption*{\hspace{-4pt}$n=3$}
        \label{fill}
      \end{minipage} &
    \hspace{14pt}    \vspace{10pt}
          \begin{minipage}[c]{0.45\hsize}
        \centering
   \mbox{\raisebox{0pt}{\includegraphics[scale=0.7]{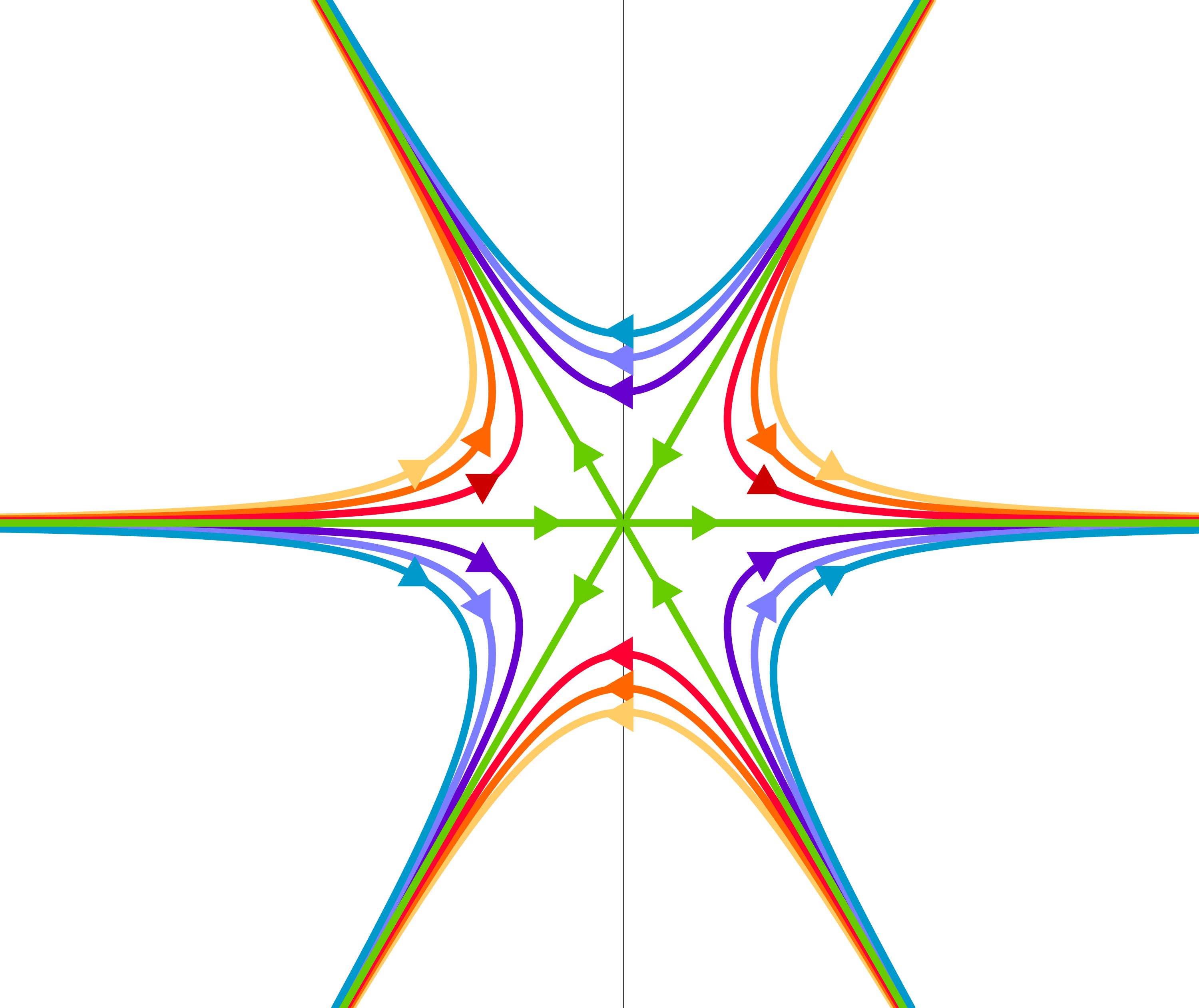} }}
        \subcaption*{\hspace{11pt}$n=-3$}
               \label{transform}
      \end{minipage}
    \end{tabular}
    \vspace{5pt}
    \caption{The level sets of $\psi(x_1,x_2)=\sin{(n\phi)}/r^{n}$ for $n=\pm1$ (dipole), $\pm2$ (quadrupole), and $\pm3$ (hexapole). The sets $\psi^{-1}(k)$ are represented in purple  ($k=1$), blue ($k=1/2$), light blue ($k=1/3$), green ($k=0$), yellow ($k=-1/3$), orange ($k=-1/2$), and red ($k=-1$). 
}
  \end{figure}

%}\fi

\newpage

\begin{rems}
(i) Choffrut and \v{S}ver\'{a}k \cite{CS12} investigated a local one-to-one correspondence between smooth 2D steady states $u=(u^{1},u^{2},0)$ and co-adjoint orbits of the nonstationary problem in an annulus for steady states whose stream function $\psi$ and vorticity $\omega$ have no critical points and satisfy nondegeneracy conditions. The stream function and vorticity of ($-\alpha$)-homogeneous solutions in Theorem 1.7 (iii) and (iv) are the following: 

\begin{equation}
\begin{aligned}
\psi(x_1,x_2)&=\frac{w(\phi)}{r^{\alpha-1}},\\
\omega&=c\psi|\psi|^{\frac{2}{\alpha-1}},
\end{aligned}
\end{equation}\\
for some function $w(\phi)$ on $[-\pi,\pi]$ and a positive constant $c>0$. Their gradients are the following:

\begin{align*}
|\nabla\psi|^{2}=\frac{|\alpha-1|^{2}w^{2}+|w'|^{2} }{r^{2\alpha}},\quad \nabla \omega=\frac{\alpha+1}{\alpha-1}|\psi|^{\frac{2}{\alpha-1}}\nabla \psi.
\end{align*}\\
For $\alpha>1$, $\psi$ has no critical points in $\mathbb{R}^{2}\backslash \{0\}$ because $w$ and $w'$ do not vanish at the same point (Remarks B.3 (iii)). The vorticity $\omega$ has critical points on, e.g., $\{x_2=0\}$. For $\alpha<-1$, both $\psi$ and $\omega$ have critical points at the origin. We remark that Choffrut and Sz\'{e}kelyhidi \cite{Cho14} demonstrated the existence of merely bounded steady states near a given smooth steady state in $\mathbb{T}^{d}$ for $d\geq 2$ based on the convex integration.

\noindent
(ii) Hamel and Nadirashvili \cite[Theorem 1.8]{HN22} established rigidity theorems for the 2D Euler equations in bounded annuli, exteriors of disk, punctured disks, and punctured planes. It is shown that all solutions of the 2D Euler equations in a punctured plane $u=(u^{1},u^{2},0)\in C^{2}(\mathbb{R}^{2}\backslash \{0\})$ satisfying 

\begin{equation}
\begin{aligned}
&|u|>0\quad \textrm{in}\ \mathbb{R}^{2}\backslash \{0\},\\
&\liminf_{r\to\infty}|u|>0,\\
&u\cdot e_{r}=o\left(\frac{1}{r}\right)\quad \textrm{as}\ r\to\infty,\\
&\int_{\{r=\varepsilon\}}|u\cdot e_{r}|\dd H \to 0\quad \textrm{as}\ \varepsilon\to 0,
\end{aligned}
\end{equation}\\
are circular flows, i.e., $u\cdot e_{r}=0$. The vector field $(1.4)_2$ is a noncircular irrotational ($-\alpha$)-homogeneous solution to the 2D Euler equation in $\mathbb{R}^{2}\backslash \{0\}$, violating the conditions $(1.7)_2$ and $(1.7)_4$ for $n>0$ and $(1.7)_3$ for $n<0$. The solutions in Theorem 1.7 (iii) are examples of the noncircular rotational ($-\alpha$)-homogeneous solutions for $\alpha>1$ in $\mathbb{R}^{2}\backslash \{0\}$.

\noindent
(iii) It is shown in the work of Hamel and Nadirashvili \cite[Theorem 1.1]{HN19} that all solutions of the 2D Euler equations in the plane $u=(u^{1},u^{2},0)\in C^{2}(\mathbb{R}^{2})$ satisfying 

\begin{align*}
&\sup_{\mathbb{R}^{2}}|u|<\infty,\\
&\inf_{\mathbb{R}^{2}}|u|>0,
\end{align*}\\
are shear flows, i.e., $u=(u^{1}(x_2),0,0)$, for some function $u^{1}$ with a constant strict sign by a suitable rotation. (Koch and Nadirashvili \cite{Nadirashvili2015} discuss analyticity of streamlines and Hamel and Nadirashvili \cite{HN17} discuss a rigidity theorem in a strip). The vector field $(1.4)_2$ for $n<0$ is an irrotational ($-\alpha$)-homogeneous solution to the 2D Euler equation in $\mathbb{R}^{2}$. This solution is constant (a shear flow) for $n=-1$ and has a stagnation point at $x=0$ and is growing as $|x|\to\infty$ for $n\leq -2$ (Figure 2). The solutions $u\in C^{1}(\mathbb{R}^{2}\backslash \{0\})\cap C(\mathbb{R}^{2})$ in Theorem 1.7 (iv) are examples of the  nonshear rotational ($-\alpha$)-homogeneous solutions for $\alpha<-1$ in $\mathbb{R}^{2}$.  

\noindent
(iv) It is a conjecture \cite[chapter 34]{SverakLec}, \cite{Sh13} that vorticity of the 2D nonstationary Euler equation is generically weakly compact but not strongly compact as $t\to\infty$. Glatt-Holtz et al. \cite[2.2]{GSV15} discuss the relationship between the compactness of vorticity and coherent structures at the end state. The behavior of solutions around shear and circular flows are investigated in perturbative regimes. We refer to the important works \cite{BM15}, \cite{BPM19}, \cite{DM18}, \cite{MZ20}, \cite{IJ20}, \cite{IJ22}, \cite{IJ22b} on the nonlinear asymptotic stability of the 2D Euler equations. 
\end{rems}

\vspace{3pt}

The stream function level sets of 2D reflection symmetric ($-\alpha$)-homogeneous solutions in the $(x_1,x_2)$-upper half plane model those of axisymmetric ($-\alpha$)-homogeneous solutions in the $(z,r)$-upper half plane (cross-section).

\vspace{13pt}

\begin{table}[h]
\begin{minipage}[b]{0.45\linewidth}
 \hspace{-110pt}
\begin{tabular}{|c|cl|c|c|c|cc|}
\hline
$\alpha$   & \multicolumn{1}{c|}{$-2$}     & \multicolumn{1}{c|}{$-1$} & $0$                         & $1$ & $2$          & \multicolumn{1}{c|}{$3$}        & $4$      \\ \hline
Pole       & \multicolumn{1}{l|}{Hexapole} & Quadrupole                & \multicolumn{1}{l|}{Dipole} & -   & Dipole       & \multicolumn{1}{l|}{Quadrupole} & Hexapole \\ \hline
Level set & \multicolumn{2}{c|}{Wedged curve}                         & Line                        & -   & Jordan curve & \multicolumn{2}{c|}{Multifoil}             \\ \hline
\end{tabular}
\vspace{15pt}
\subcaption{The level sets of $\psi(x_1,x_2)=\sin(n\phi)/r^{n}$ for $n=\alpha-1$ and $\alpha\in \mathbb{Z}\backslash \{1\}$ in the $(x_1,x_2)$-upper half plane.}
\end{minipage}\\
\vspace{15pt}
 \begin{minipage}[b]{0.45\linewidth}
 \hspace{-110pt}
\begin{tabular}{|c|cl|c|c|c|c|cc|}
\hline
$\alpha$   & \multicolumn{1}{c|}{$-2$}     & \multicolumn{1}{c|}{$-1$} & $0$                         & $1$ & $2$ & $3$          & \multicolumn{1}{c|}{$4$}        & $5$      \\ \hline
Pole       & \multicolumn{1}{l|}{Hexapole} & Quadrupole                & \multicolumn{1}{l|}{Dipole} & -   & -   & Dipole       & \multicolumn{1}{l|}{Quadrupole} & Hexapole \\ \hline
Level set & \multicolumn{2}{c|}{Wedged curve}                         & Line                        & -   & -   & Jordan curve & \multicolumn{2}{c|}{Multifoil}             \\ \hline
\end{tabular}
\vspace{15pt}
\subcaption{The level sets of $\psi(z,r)=w_{n+1}(\cos\theta)/\rho^{n}$ for $n=\alpha-2$ and $\alpha\in \mathbb{Z}\backslash \{1,2\}$ in the $(z,r)$-upper half plane.}
\end{minipage}
\vspace{15pt}
\caption{Integers $\alpha$ and stream function level sets for the irrotational $(-\alpha)$-homogeneous solutions to (1.1): (A) 2D reflection symmetric case and (B) axisymmetric case.}
\end{table}

The irrotational axisymmetric $(-\alpha)$-homogeneous solutions are the following:

\begin{equation}
\begin{aligned}
u&=
\frac{1}{\rho^{3}}x,\quad \alpha=2,\\
\quad
u&=\nabla \times (\psi\nabla \phi),\quad \alpha\in \mathbb{Z}\backslash \{1,2\},
\end{aligned}
\end{equation}\\
for the axisymmetric stream function, 

\begin{equation}
\begin{aligned}
\psi(z,r)&=\frac{w_{n+1}(\cos\theta) }{\rho^{n}},\quad n=\alpha-2,\\
w_{n+1}(t)&=-\int_{-1}^{t}P_{n}(s)\dd s.
\end{aligned}
\end{equation}\\
Here, $(\rho,\theta)$ is the polar coordinates in the $(z,r)$-plane and $P_{n}$ is the Legendre function (Remark 2.3). The function $w_{n+1}$ is a polynomial of degree $n+1$ whose all $n+1$ zero points are lying on $[-1,1]$. The function $w_{n+1}$ for $\alpha$ is the same as that for $3-\alpha$. For $n=1,2,3$, they are as follows: 

\begin{equation}
\begin{aligned}
w_{2}(t)&=\frac{1}{2}(1-t^{2}),\quad \alpha=3,0, \\
w_{3}(t)&=\frac{1}{2}t(1-t^{2}),\quad \alpha=4,-1,\\
w_{4}(t)&=-\frac{1}{8}(5t^{4}-6t^{2}+1),\quad \alpha=5,-2.
\end{aligned}
\end{equation}\\
Table 1 (B) shows the level sets of the stream functions (1.9) in the $(z,r)$-upper half plane for the irrotational axisymmetric $(-\alpha)$-homogeneous solutions. The stream function level sets $\{\psi=C\}$ for $C>0$ of the irrotational axisymmetric ($-3$)-homogeneous are the Jordan curves. We show the existence of rotational axisymmetric $(-\alpha)$-homogeneous solutions for $2<\alpha<3$ whose stream function level sets are homeomorphic to those of the irrotational axisymmetric $(-3)$-homogeneous solution.

\vspace{3pt}

\begin{thm}
For $2<\alpha<3$, there exist rotational axisymmetric $(-\alpha)$-homogeneous solutions to (1.1) whose stream function level sets are homeomorphic to those of the irrotational axisymmetric $(-3)$-homogeneous solution. Those solutions exist in the classes (iii) of Theorems 1.1, 1.4, and 1.5. 
\end{thm}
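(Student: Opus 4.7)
The plan is to exploit the scaling ansatz forced by homogeneity: in spherical coordinates $(\rho,\theta)$ on the meridional half-plane, an axisymmetric stream function of a $(-\alpha)$-homogeneous flow must take the form
\begin{equation*}
\psi(z,r) = \rho^{2-\alpha} w(\theta),
\end{equation*}
and the boundary behavior on the $z$-axis forces $w(0)=w(\pi)=0$. Inserting this ansatz into the Grad--Shafranov reduction of the axisymmetric Euler system gives, in each of the three classes (iii) of Theorems 1.1, 1.4, and 1.5, a semilinear ODE of the schematic form $-\mathcal{L}_\alpha w = F_\alpha(w,\theta)$, where $\mathcal{L}_\alpha$ is a linear second-order operator with weights degenerating at $\theta=0,\pi$ and $F_\alpha$ is a pure-power nonlinearity in $|w|$ whose exponent is uniquely determined by matching the homogeneity of the swirl profile $\Lambda(\psi)$, the Bernoulli profile $\Pi'(\psi)$, or the Beltrami proportionality factor $\varphi$ given in (1.3). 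Theorem 1.10 is therefore really a statement about the geometry of a selected positive solution of this ODE: Theorems 1.1, 1.4, and 1.5 already provide existence of nontrivial $w$ for every $\alpha > 2$, so the task is to pick, for each $\alpha \in (2,3)$, a solution $w$ that is strictly positive on $(0,\pi)$ with a single interior maximum.

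Once such a $w$ is secured, the geometric conclusion is immediate: the level set $\{\psi=C\}$ for $C>0$ is the curve
\begin{equation*}
\rho(\theta) = \bigl(w(\theta)/C\bigr)^{1/(\alpha-2)}, \qquad \theta\in[0,\pi],
\end{equation*}
which emanates from the origin at $\theta=0$, rises to a unique maximum radius, and returns to the origin at $\theta=\pi$ without self-intersection, reproducing precisely the Jordan-curve topology of the $(-3)$-homogeneous profile $w_2(\cos\theta)=\tfrac12\sin^2\theta$ tabulated in (1.10). I would therefore produce such a $w$ by specializing the variational construction underlying Theorems 1.1--1.5 (iii) to the symmetric sector: restrict the minimax/mountain-pass scheme to functions that are symmetric under $\theta\mapsto\pi-\theta$ (equivalently, symmetric under $z\mapsto -z$), and within that class take the ground-state critical point. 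Concavity of $F_\alpha(\cdot,\theta)$ along rays, combined with a moving-plane or Schwarz-rearrangement argument in the variable $\mu=\cos\theta$, then forces monotonicity of $w$ on each half-interval $[0,\pi/2]$ and $[\pi/2,\pi]$, yielding the required unimodal profile.

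The hard part, in my view, is not the existence of a positive critical point (this follows from the minimax machinery already used for Theorems 1.1, 1.4, 1.5 (iii)) but rather maintaining strict positivity of $w$ together with a single interior maximum throughout the \emph{entire} open range $2<\alpha<3$. Two pitfalls must be avoided: at $\alpha\downarrow 2$ the operator $\mathcal{L}_\alpha$ loses coercivity in the weighted Sobolev space adapted to the Grad--Shafranov reduction, and at $\alpha\uparrow 3$ the nonlinear exponent $1/(\alpha-2)$ approaches the exponent of the irrotational linear mode $w_2$, so the minimax level could in principle collapse onto the irrotational solution and lose its rotational character. The first issue is handled by working with weighted norms tailored to the endpoint degeneracies (incorporating the $\sin\theta$ weight) and by using the symmetric restriction to restore the Poincar\'e inequality; the second is handled by verifying that the minimax level strictly exceeds the energy of the irrotational $w_2$ at every $\alpha \in (2,3)$, for instance via a test-function comparison with a suitable rescaling of $w_2$ that detects the nonlinear correction.

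Finally, because the ansatz pulls back level sets of $w$ to level sets of $\psi$ \emph{homeomorphically} (the map $\theta\mapsto(\rho(\theta),\theta)$ is a homeomorphism from $[0,\pi]$ onto the closed curve, with the two endpoints identified at the origin), the Jordan-curve property of the level sets transfers directly from the unimodality of $w$. Carrying out this selection in all three variational frameworks (Beltrami with proportionality factor \eqref{eq:phi-placeholder}$\varphi=C(1+1/(\alpha-2))|\psi|^{1/(\alpha-2)}$, swirl-free with a power Bernoulli, and general axisymmetric with a matched swirl/Bernoulli pair) yields the three asserted solution families in the classes (iii) of Theorems 1.1, 1.4, and 1.5.
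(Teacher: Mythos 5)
Your overall strategy coincides with the paper's: reduce to the one--dimensional Dirichlet problem for $w$ with $\beta=\alpha-2\in(0,1)$, produce a \emph{positive} solution by a variational argument, and observe that positivity of $w$ turns each level set $\{\psi=C\}$, $C>0$, into a closed curve through the origin, hence homeomorphic to the level sets of $w_2(\cos\theta)/\rho=\sin^2\theta/(2\rho)$. Two remarks on where you diverge. First, you impose an unnecessary requirement: the level set is the polar graph $\rho=(w/C)^{1/(\alpha-2)}$ over $\theta\in[0,\pi]$, so it cannot self-intersect regardless of whether $w$ is unimodal; strict positivity on the open interval together with the boundary zeros already yields a Jordan curve, and any two Jordan curves are homeomorphic. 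Consequently the entire moving-plane/Schwarz-rearrangement step can be dropped --- which is fortunate, because as proposed it is shaky: for $2<\alpha<3$ the nonlinearity $c_1w|w|^{4/\beta}+c_2(1-t^2)^{-1}w|w|^{2/\beta}$ is superlinear and convex along rays (not concave), and the weight $(1-t^2)^{-1}$ \emph{increases} toward the endpoints, which is the wrong monotonicity for a Gidas--Ni--Nirenberg argument in the variable $t=\cos\theta$. Second, the mechanism you leave vague --- how to guarantee positivity of the selected critical point --- is handled in the paper by the standard truncation: replace the right-hand side by its $w_+$ version, run the mountain pass theorem for the modified functional (for $0<\beta<1$ the principal eigenvalue $\mu_1$ of $-L_\beta$ is positive, so the full space serves as $Z$ and no symmetric restriction is needed to restore coercivity), and then conclude $w>0$ from $-w''\ge 0$ and the strong maximum principle. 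Your two endpoint worries are moot: the statement is for each fixed $\alpha\in(2,3)$, $\mu_1>0$ uniformly on compact subsets of $0<\beta<1$, and the mountain pass level is strictly positive, so the critical point is nontrivial and solves the genuinely nonlinear equation --- there is no competing ``irrotational mode'' to exclude by energy comparison.
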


\vspace{3pt}

For smooth Euler flows in a bounded domain, the level sets of $\Pi$ (Bernoulli surfaces) are nested tori or nested cylinders \cite{AK21}. For smooth Beltrami flows, the level sets of $\varphi$ are not diffeomorphic to a sphere \cite{EP16}.

The solutions in Theorem 1.10 are expressed by the Clebsch representation:

\begin{align}
u=\nabla \times (\psi \nabla \phi)+\Gamma\nabla \phi.
\end{align}\\
Their Bernoulli function and toroidal component are as follows: 

\begin{align}
\Pi=C_1|\psi|^{2+\frac{4}{\alpha-2}},\quad \Gamma=C_2\psi |\psi|^{\frac{1}{\alpha-2}},
\end{align}\\
for some constants $C_1$ and $C_2$. For $C_1=0$, this solution is a Beltrami flow with the proportionality factor (1.3) for $C=C_2$. For $C_1\neq 0$, this solution is an Euler flow with a nonconstant Bernoulli function. The level sets of $\Pi$ and $\varphi$ are identified with surfaces created by the rotation of the level sets of $\psi$ in the $(z,r)$-upper half plane. They provide new examples of the Bernoulli surfaces and level sets of $\varphi$, cf. \cite{AK21}, \cite{EP16}.

\vspace{3pt}

\begin{cor}
The level sets of $\Pi$ and $\varphi$ of the rotational axisymmetric $(-\alpha)$-homogeneous solutions for $2<\alpha<3$ in Theorem 1.10 are nested surfaces created by the rotation of the sign $``\infty"$ around the $z$-axis.
\end{cor}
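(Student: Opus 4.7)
The plan is to reduce the level set structure of $\Pi$ and $\varphi$ to that of the stream function $\psi$, and then use the homeomorphism of level sets supplied by Theorem 1.10 to transport the explicit picture known for the irrotational $(-3)$-homogeneous solution.

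First I would observe that both exponents $2+4/(\alpha-2)$ and $1/(\alpha-2)$ are strictly positive throughout $2<\alpha<3$, so the maps $t\mapsto|t|^{1/(\alpha-2)}$ and $t\mapsto|t|^{2+4/(\alpha-2)}$ are strictly increasing on $[0,\infty)$. By (1.3) and (1.12), every level set of $\varphi$ and of $\Pi$ then coincides with a level set of $|\psi|$. Since the level sets of $\psi$ are homeomorphic to those of the nonvanishing stream function $\psi_{0}=\sin^{2}\theta/(2\rho)>0$ of the irrotational $(-3)$-homogeneous solution, $\psi$ must have a definite sign on the open $(z,r)$-upper half plane $\{r>0\}$, so that the level sets of $\Pi$ and $\varphi$ actually agree with those of $\psi$ there.

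Next I would treat the model case $\alpha=3$. From (1.9) and (1.10), the level set $\{\psi_{0}=c\}$ for $c>0$ in the $(z,r)$-upper half plane is the polar curve $\rho=\sin^{2}\theta/(2c)$, $\theta\in(0,\pi)$. This is a Jordan curve that starts and ends at the origin, approaching it tangentially to the positive $z$-axis as $\theta\to 0^{+}$ and to the negative $z$-axis as $\theta\to\pi^{-}$, since $x_{1}/z=\tan\theta\to 0$ in both limits. Passing to the $(x_{1},z)$-plane via $r=|x_{1}|$ and joining the curve to its mirror image gives the lemniscate sign ``$\infty$'', with the crossing pinched at the origin. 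Different values of $c>0$ produce a nested family of such lemniscates, pairwise disjoint outside the origin, and rotating around the $z$-axis turns them into nested pinched tori meeting only at the origin.

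Finally, Theorem 1.10 asserts that each level set of the stream function $\psi$ of the rotational $(-\alpha)$-homogeneous solution for $2<\alpha<3$ is homeomorphic to the corresponding level set of $\psi_{0}$. Combined with the two previous steps, this identifies the level sets of $\Pi$ and $\varphi$ in the $(z,r)$-upper half plane with Jordan curves pinched at the origin, and their rotations about the $z$-axis with nested surfaces of revolution of the sign ``$\infty$'', as claimed. The step I anticipate being most delicate is confirming that the tangency of the level curves to the $z$-axis at the origin persists under the homeomorphism, so that the planar cross-sections are genuinely lemniscate-shaped and not merely Jordan curves of some other type; this will have to be extracted from the explicit angular profile of $\psi$ produced by the construction behind Theorem 1.10, rather than from the bare homeomorphism of topological spaces.
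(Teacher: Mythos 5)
Your proposal is correct and follows essentially the same route as the paper: since $\Pi$ and $\varphi$ are strictly increasing functions of the positive stream function $\psi$ (positivity coming directly from the positive solution $w$ of (3.4) for $0<\beta<1$, rather than needing to be inferred back from the homeomorphism of level sets), their level sets are those of $\psi$, which in the cross-section are the polar graphs $\rho=C^{-1/\beta}w^{1/\beta}(\cos\theta)$ --- Jordan loops through the origin, nested because different levels are dilates of one another by homogeneity --- and reflecting and rotating these yields the claimed surfaces. The tangency issue you flag at the end is not actually needed: the paper's claim is topological, identifying the cross-sections only up to homeomorphism with $\rho=\sin^{2}\theta/(2C)$, so no lemniscate-type regularity at the pinch point has to be verified.
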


\vspace{3pt}

\begin{rem}
The level sets of $\Pi$ and $\varphi$ of the rotational axisymmetric $(-\alpha)$-homogeneous solutions for $\alpha \in \mathbb{R}\backslash [0,3)$ in (iii) and (iv) of Theorems 1.1, 1.4, and 1.5 are \textit{not} nested surfaces created by the rotation of the sign $``\infty"$. For $\alpha\geq 3$, they are nested surfaces created by the rotation of multifoils around the $z$-axis (Remark 4.11). 
\end{rem}

\vspace{3pt}

\subsection{Physical background to homogeneous force-free fields}

The axisymmetric homogeneous Beltrami flows (force-free fields) have been studied by astrophysicists to model magnetic fields around the Sun. Low and Lou \cite{LL90} derived a second-order nonlinear ODE describing axisymmetric homogeneous force-free fields (Subsection 1.4). Wolfson and Low \cite{Wolfson92} studied the dipole structure of axisymmetric homogeneous force-free fields. Lynden-Bell and Boily \cite{LB94} observed the quadrupole structure and derived asymptotics of axisymmetric homogeneous force-free fields toward a current sheet as $\alpha\to2$. Aly \cite{Aly94} considered its 2D model. Axisymmetric homogeneous force-free fields are also used to calculate the magnetic helicity of Sun's corona \cite{Low12}. We refer to the more recent works by Gourgouliatos \cite{Gourgouliatos}, Lerche \cite{Lerche}, Lynden-Bell and Moffatt \cite{Lynden-Bell15}, Luna et al. \cite{Luna}, and Lyutikov \cite{Lyutikov} on homogeneous solutions in astrophysics. 

The dipole is a standard model to describe magnetic fields around stars and planets, e.g., Earth and Jupiter. On the other hand, quadrupoles or multipoles also appear for the theoretical models of star's magnetic fields \cite{Brun}, \cite{Sokoloff}, \cite{Maiewski}. We remark that Jupiter's magnetic field has an asymmetric structure with a nondipole in the northern hemisphere and a dipole in the southern hemisphere, according to the results observed by the Juno spacecraft \cite{Moore18}.  

The existence (or nonexistence) of asymmetric solutions is related to Grad's conjecture for the Euler equations \cite{Grad67}, \cite{Grad85}, \cite[Conjecture 1]{CDG21b}. Asymmetric solutions to the Euler equations are constructed in the work by Constantin et al. \cite{CDG21} with force and in the works by Bruno and Laurence \cite{BL96} and Enciso et al. \cite{ELP21} with a piecewise constant Bernoulli function. Constantin and Pasqualotto \cite{Pasqualotto}, \cite{CP22} constructed smooth solutions to (1.1) with a nonconstant Bernoulli function by magnetic relaxation via the Voigt--MHD system without assuming any symmetries.  

The existence of rotational asymmetric homogeneous solutions to (1.1) is unknown. Irrotational homogeneous solutions consist of one axisymmetric solution without swirls and other $2|\alpha-3/2|-1$ nonaxisymmetric solutions for each $\alpha\in \mathbb{Z}\backslash \{1\}$ (Theorem 2.2).

\vspace{3pt}

\subsection{Self-similar solutions to the Euler equations}
Homogeneous solutions to (1.1) are \textit{self-similar} solutions to the nonstationary Euler equations. According to Chae and Wolf \cite{CW20} and Tsai \cite[Chapter 8]{Tsaibook}, we say that $u=u(x,t)$ is a self-similar solution to the Euler equations if there exists $\alpha\in \mathbb{R}$ such that  $u(x,t)=\lambda^{\alpha} u(\lambda x, \lambda^{\alpha+1}t)$ for all $\lambda>0$ and $(x,t)$. Self-similar solutions can be defined in the following domains:\\

\noindent
(i) Homogeneous ($u$ is time independent): $\mathbb{R}^{3}\backslash \{0\}$ or $\mathbb{R}^{3}$\\
(ii) Backward: $\mathbb{R}^{3}\times (-\infty,0)$ or $\mathbb{R}^{3}\times (-\infty,0]$  \\
(iii) Forward: $\mathbb{R}^{3}\times (0,\infty)$ or $\mathbb{R}^{3}\times [0,\infty)$  \\

Self-similarity of the Euler equations is \textit{second} kind \cite{Bare96}, \cite{EF09} in the sense that the scaling law has the freedom on the choice of the parameter $\alpha\in \mathbb{R}$.

Backward self-similar solutions to the Navier--Stokes equations appeared in the work of Leray \cite{Leray1934} as the first candidate of the self-similar singularity. Their nonexistence is demonstrated in the works by Necas et al. \cite{NRS} and Tsai \cite{Tsai98}. Forward self-similar solutions are related to uniqueness and large time asymptotics. The existence of large forward self-similar solutions is demonstrated in the work by Jia and Sverak \cite{JS} (also in the works by Tsai \cite{Tsai14}, Korobkov and Tsai  \cite{KT16}, and Bradshaw and Tsai \cite{BT17}). We refer to the works by Jia et al. \cite{JST18} and Tsai \cite{Tsaibook} for self-similar solutions to the Navier-Stokes equations. Jia and \v{S}ver\'{a}k \cite{JS}, \cite{JS15}, Guillod and \v{S}ver\'{a}k, \cite{GuS}, and Albritton et al. \cite{ABC} also describe the nonuniqueness of the Leray--Hopf solutions.

Chae \cite{Chae07}, \cite{Chae15}, \cite{Chae15b}, \cite{Chae15c}, Chae et al. \cite{CKL}, Chae and Shvydkoy \cite{ChaeShvydkoy13}, and Chae and Tsai \cite{CT14} investigated the nonexistence of backward self-similar solutions to the Euler equations for $\alpha\neq -1$:

\begin{align*}
u(x,t)=\frac{1}{(-t)^{\frac{\alpha}{\alpha+1}}}u\left(\frac{x}{(-t)^{\frac{1}{\alpha+1}}},-1\right).
\end{align*}\\
Their nonexistence is demonstrated in the work by Chae and Shvydkoy \cite{ChaeShvydkoy13} under the condition $u(y,-1)\in L^{p}\cap C^{1}(\mathbb{R}^{3})$ for $3\leq p\leq \infty$ and

\begin{align*}
-1<\alpha\leq \frac{3}{p}\ \textrm{or}\ \  \frac{3}{2}<\alpha\leq \infty.
\end{align*}\\
The case $\alpha=3/2$ is referred to as the energy-conserving scale. Shvydkoy \cite{Shvydkoy13} introduced the energy measure to investigate the concentration of the energy at the blowup time (also in the works by Bronzi and Shvydkoy \cite{BronziShvydkoy} and Leslie and Shvydkoy \cite{LeslieShvydkoy2018}). Chae and Wolf \cite{CW20} demonstrated the nonexistence of backward self-similar solutions at the energy-conserving scale $\alpha=3/2$ by using the energy measure. 

Elgindi \cite{Elgindi} demonstrated the existence of backward self-similar solutions to the Euler equations for some $\alpha\in \mathbb{R}$. The backward self-similar solution \cite{Elgindi} is with infinite energy and axisymmetric without swirls $u(y,-1)\in C^{1,\gamma}(\mathbb{R}^{3})$ ($0<\gamma<1$). Elgindi et al. \cite{EGM2} demonstrated the existence of blowup solutions for axisymmetric data with swirls with compactly supported vorticity in $C^{1,\gamma}\cap L^{2}(\mathbb{R}^{3})$ ($0<\gamma<1$) by showing the stability of the backward self-similar solution.

It is remarkable that backward self-similar solutions to the one-dimensional (1D) inviscid Bergers equation $u_t+u u_x=0$ exist for $-1<\alpha<0$ and admit the profile $u(y,-1)$ growing as $|y|\to\infty$ \cite{EF09}, \cite{CGM22}. More specifically, the profile $u(y,-1)$ is analytic if

\begin{align*}
\alpha=\frac{-1}{2i+1},\quad i=1,2,\dots.
\end{align*}\\
Collot et al. \cite{CGM22} classified all scale invariant solutions to the 1D inviscid Bergers equation and demonstrated their existence. In particular, backward self-similar solutions exist even for nonintegers $i>0$. The 2D Burgers equation is related to the 2D Prandtl equations. Collot et al. \cite{CGM21} demonstrated the existence of backward self-similar solutions to the inviscid 2D Prandtl equations. Collot et al. \cite{CGIM22} described the self-similar blow-up behavior for the 2D Prandtl equations.

Elling \cite{Elling} constructed forward self-similar weak solutions $u=(u^{1},u^{2},0)$ to the 2D Euler equations for $-1<\alpha<1/2$ with initial vorticity, 

\begin{align*}
\omega(x,0)=\frac{1}{|x|^{\alpha+1}}\omega\left(\frac{x}{|x|},0\right).
\end{align*}\\
The work by Elling \cite{Elling} is motivated by the numerical results on bifurcation of self-similar vortex sheets \cite{Pullin78}, \cite{Pullin89}. Bressan and Murray \cite{Bressan20} and Bressan and Shen \cite{BS21} also discuss the nonuniqueness of forward self-similar weak solutions for vortex initial data in $L^{p}$ for $1\leq p<\infty$. Vishik \cite{Vishik18}, \cite{Vishik18b} demonstrated the nonuniqueness of weak solutions with force in this class, cf. \cite{ABCDGK}. In a recent work, Mengual \cite{Mengual23} constructed admissible weak solutions for vortex initial data in $L^{p}\cap L^{1}$ for $2<p<\infty$ based on the convex integration.

Elgindi and Jeong \cite{EJ20b} constructed unique forward self-similar weak solutions for $\alpha=-1$,  

\begin{align*}
\omega(x,t)= \omega\left(\frac{x}{|x|},t\right)=h(\phi,t),
\end{align*}\\
for bounded and discretely symmetric initial vorticity. Elgindi et al. \cite{Elgindi2022} investigated the large time behavior of this solution. Jeong \cite{Jeong17} investigated 3D solutions and Drivas and Elgindi \cite{DE} described a review on symmetries and singularity formations of the Euler equations.

We finally discuss the self-similar logarithmic spiral vortex sheets of Prandtl \cite{Prandtl} and Alexander \cite{Al}. They have been important objects for forward self-similar solutions to the Birkhoff--Rott equation or the Euler equations. We refer to Mengual and Sz\'{e}kelyhidi \cite{Mengual} on the existence of admissible weak solutions to the 2D Euler equations for general vortex sheet initial data based on the convex integration. In recent works, Elling and Gnann \cite{EG} demonstrated that Alexander spirals are the solutions of the Birkhoff--Rott equation when the number of sheets is larger than 3. More recently, Cie\'{s}lak et al. \cite{Cieslak1}, \cite{Cieslak2} established a sufficient condition for a general logarithmic spiral vortex sheet including Prandtl and Alexander spirals to be weak forward self-similar solutions to the Euler equations for $\alpha\in \mathbb{R}\backslash \{-1\}$. Jeong and Said \cite{Jeong23} demonstrated the existence of forward self-similar solutions of the form 

\begin{align*}
\omega (x,t)=h(\phi-\beta \ln{r},t),\quad \beta>0,
\end{align*}\\
representing logarithmic spiral vortex sheets and investigated their large time behavior. Garc\'{i}a and G\'{o}mez-Serrano \cite{Serrano22} demonstrated the existence of forward self-similar solutions to the SQG equations.

\vspace{3pt}

\subsection{Ideas of the proof}

The main results of this study consist of two parts:\\

\noindent
(a) Rigidity: Theorems 1.3, 1.4 (ii), and 1.7 (i), (ii)  \\
(b) Existence: (iii) and (iv) of Theorems 1.1, 1.4, and 1.5 and Theorems 1.8 and 1.10\\

The proofs of the rigidity are based on the homogeneous solution's equations on the sphere \cite{Sverak2011}, \cite{Shv}. We demonstrate Theorem 1.3 by an analog of the rigidity of tangentially homogeneous solutions. We show that the integral curves of the tangential component of radially irrotational homogeneous solutions on the sphere are geodesics (Lemma 2.10). 

The proof of Theorem 1.4 (ii) is based on the first integral of $u$ for axisymmetric solutions without swirls,

\begin{align*}
u\cdot \nabla \zeta=0,\quad \zeta=\frac{\nabla \times u\cdot e_{\phi}}{r}.
\end{align*}\\
The nonexistence of rotational axisymmetric ($-\alpha$)-homogeneous solutions for $0\leq \alpha\leq 2$ \cite{Shv} is based on the first integral of $u$,

\begin{align*}
u\cdot \nabla \Pi=0,\quad \Pi=p+\frac{1}{2}|u|^{2}.
\end{align*}\\
The Bernoulli function $\Pi$ is ($-2\alpha$)-homogeneous and implies the nonexistence of rotational axisymmetric ($-\alpha$)-homogeneous solutions for $0\leq \alpha\leq 2$. The quantity $\zeta$ is ($-\alpha-2$)-homogeneous and will be used to demonstrate the nonexistence of rotational axisymmetric ($-\alpha$)-homogeneous solutions without swirls for $-2\leq \alpha < 0$. For the 2D reflection symmetric ($-\alpha$)-homogeneous solutions $u=(u^{1},u^{2},0)$, the vorticity $\omega=\partial_1u^{2}-\partial_2u^{1}$ is a first integral and $(-\alpha-1)$-homogeneous. We use it to show the nonexistence of rotational 2D reflection symmetric ($-\alpha$)-homogeneous solutions for $-1\leq \alpha\leq 1$ (Theorem 1.7 (i) and (ii)).

The main contribution of this study is the existence of axisymmetric ($-\alpha$)-homogeneous solutions to (1.1): (iii) and (iv) of Theorems 1.1, 1.4, and 1.5. We express axisymmetric ($-\alpha$)-homogeneous solenoidal vector fields $u$ by the Clebsch representation (1.11) and the axisymmetric stream function 

\begin{align*}
\psi=\frac{w(\cos\theta)}{\rho^{\alpha-2}}.
\end{align*}\\
We choose the functions $\Pi$ and $\Gamma$ by (1.12) so that they are first integrals of $u$ and homogeneous of degree $-2\alpha$ and $-\alpha+1$, respectively. Then axisymmetric ($-\alpha$)-homogeneous solutions to (1.1) can be constructed by the one-dimensional (1D) Dirichlet problem for $w(t)$ with $\beta=\alpha-2$:

\begin{equation}
\begin{aligned}
- w''&=\frac{\beta(\beta+1)}{1-t^{2}}w+c_1 w|w|^{\frac{4}{\beta}}+\frac{c_2}{1-t^{2}}w|w|^{\frac{2}{\beta}},\quad t\in (-1,1),\\
w(1)&=w(-1)=0. 
\end{aligned}
\end{equation}\\
This equation appeared in \cite{LL90} for $c_1=0$ and in \cite[Remark 5.5 (45)]{Shv} for $c_2=0$. The function $(1.9)_2$ is a solution for $c_1=c_2=0$. For the constants $\beta$, $c_1$, and $c_2$ satisfying

\begin{align*}
&\beta\in \mathbb{R}\backslash [-2,0],\quad c_1,c_2\geq 0,\quad c_1\neq 0\ \textrm{or}\  c_2\neq 0,\\
&c_1=0\quad \textrm{for}\ -4\leq \beta<-2,
\end{align*}\\
the nonlinear term is superlinear for $\beta>0$ and sublinear for $\beta<-2$.

We construct solutions to (1.13) by minimax theorems. The main task is to specify $\beta$ for which solutions exist. The equation (1.13) involves $\beta$ both in the linear operator $L_{\beta}=\partial_{t}^{2}+\beta(\beta+1)/(1-t^{2})$ and in the nonlinear power. For the 2D reflection symmetric ($-\alpha$)-homogeneous solutions to (1.1), the corresponding equation is an autonomous equation for $\beta=\alpha-1$ (in (1.15) in the following). The construction of solutions to the autonomous equation for superlinear $\beta>0$ using a linking theorem can be found in the book of Willem \cite[Corollary 2.19]{Willem}. The construction of solutions to some sublinear elliptic problem using a saddle point theorem can be found in the book of Rabinowitz \cite[THEOREM 4.12]{Rab}.

The main idea is to use the Chandrasekhar's transformation $\psi\longmapsto \psi/r^{2}$ and regard the equation (1.13) as an elliptic equation on $\mathbb{S}^{4}$. This map appears in the study of vortex rings and transforms stream functions in the cross-section $\mathbb{R}^{2}_{+}$ into axisymmetric functions in $\mathbb{R}^{5}$, e.g., \cite{Fra00}. For homogeneous stream functions, this map is expressed as follows:

\begin{align*}
w(t)\longmapsto \chi(\theta)=\frac{w(\cos\theta)}{\sin^{2}\theta},
\end{align*}\\
with the geodesic radial coordinate $\theta$ on $\mathbb{S}^{4}$. The equation (1.13) is then transformed into an elliptic equation on $\mathbb{S}^{4}$:

\begin{align}
-\Delta_{\mathbb{S}^{4}}\chi=(\beta-1)(\beta+2)\chi+c_1\chi |\chi|^{\frac{4}{\beta}}\sin^{2+\frac{8}{\beta}}\theta+c_2\chi |\chi|^{\frac{2}{\beta}}\sin^{\frac{4}{\beta}}\theta.
\end{align}\\
Here, $\Delta_{\mathbb{S}^{4}}$ is the Laplace--Beltrami operator on $\mathbb{S}^{4}$. We remark that the Brezis--Nirenberg problem on $\mathbb{S}^{n}$ is $-\Delta_{\mathbb{S}^{n}}\chi=\lambda \chi+\chi|\chi|^{4/(n-2)}$ and $\chi>0$ on $\mathbb{S}^{n}$, for example, the works by Brezis and Peletier \cite{Brezis06}, Brezis and Li \cite{BrezisLi06}, and Bandle and Wei \cite{Bandle08}.

We construct an orthonormal basis $\{e_i\}$ on $L^{2}(-1,1)$ consisting of the  eigenfunctions of the operator $-L_{\beta}$ associated with the eigenvalues $\mu_1\leq \mu_2\leq \dots\leq\mu_{N}\leq 0<\mu_{N+1}\leq \cdots$. Chandrasekhar's transformation yields the sign of the principal eigenvalue $\mu_1$:\\

\noindent
(i) $\mu_1>0$ for $0<\beta<1$ \\
(ii) $\mu_1\leq 0$ for $\beta\in \mathbb{R}\backslash [-2,1)$\\

We consider the direct sum decomposition $H^{1}_{0}(-1,1)=Y\oplus Z$ for a finite-dimensional subspace spanned by the eigenfunctions associated with the  nonpositive eigenvalues $Y=\textrm{span}(e_1,\dots,e_N)$ and its orthogonal space $Z=\{z\in H^{1}_{0}(-1,1)\ |\ \int_{-1}^{1}zy\dd t=0,\ y\in Y \}$. We construct solutions to (1.13) by seeking critical points of the associated functional $I\in C^{1}(H^{1}_{0}(\Omega); \mathbb{R})$. According to the sign of $\mu_1$ and the nonlinear power, we apply three minimax theorems:\\ 

\noindent
(i) Mountain pass theorem for $0<\beta<1$\\
(ii) Linking theorem for $1\leq \beta$\\
(iii) Saddle point theorem for $\beta<-2$\\

For each case, we check that $I$ satisfies the Palais--Smale condition and $\inf_{N}I>\max_{M_0}I$ for suitable subsets $N$ and $M_0\subset H^{1}_{0}(-1,1)$. For $0<\beta<1$, we consider a modified problem to (1.13) and construct positive solutions by a mountain pass theorem. We outline the variational construction in detail in subsection 3.2.

The existence of solutions to the 1D Dirichlet problem (1.13) implies the existence of axisymmetric ($-\alpha$)-homogeneous solutions to (1.1) outside of the nonexistence range $0\leq \alpha\leq 2$. Namely, the existence of solutions to (1.13) for $c_1\neq 0$ implies the existence of axisymmetric $(-\alpha)$-homogeneous solutions with a nonconstant Bernoulli function ((iii) and (iv) of Theorems 1.4 and 1.5). The existence of solutions to (1.13) for $c_1=0$ implies the existence of axisymmetric Beltrami $(-\alpha)$-homogeneous solutions (Theorem 1.1 (iii) and (iv)). The existence of positive solutions to (1.13) for $0<\beta<1$ implies the existence of axisymmetric ($-\alpha$)-homogeneous solutions to (1.1) whose stream function level sets are the Jordan curves (Theorem 1.10). We show the regularity of $(-\alpha)$-homogeneous solutions at poles by using the geodesic radial coordinate $\theta$ and the function $\chi$.\\

For 2D reflection symmetric $(-\alpha)$-homogeneous solutions $u=(u^{1},u^{2},0)$, we choose stream functions and vorticity by (1.6). Then the associated 1D Dirichlet problem is an autonomous equation for $w(\phi)$ with $\beta=\alpha-1$:

\begin{equation}
\begin{aligned}
-w''&=\beta^{2}w+cw|w|^{\frac{2}{\beta}},\quad \phi\in (0,\pi),\\
w(0)&=w(\pi)=0.
\end{aligned}
\end{equation}\\
The construction of solutions to (1.15) is easier than that of solutions to (1.13). The principal eigenvalue of the operator $-\partial_{\phi}^{2}$ is positive for $0<\beta<1$ and nonpositive for $\beta\in \mathbb{R}\backslash [-2,1)$ because the eigenvalues are $\mu_{n}=n^{2}-\beta^{2}$ for $n\in \mathbb{N}$. The aforementioned minimax theorems imply the existence of solutions to (1.15) for $\beta\in \mathbb{R}\backslash [-2,0]$. The solutions of (1.15) provide two-and-a-half-dimensional ($2\nicefrac{1}{2}$D) reflection symmetric ($-\alpha$)-homogeneous solutions to (1.1) (Theorem B.1) including 2D reflection symmetric solutions in Theorems 1.7 (iii) and (iv) and 1.8.

\vspace{3pt}

\subsection{Organization of the study}
The following is how this study is structured. In Section 2, we show the nonexistence of rotational axisymmetric ($-\alpha$)-homogeneous solutions without swirls for $-2\leq \alpha<0$ (Theorem 1.4 (ii)) and the rigidity of radially irrotational homogeneous solutions (Theorem 1.3) based on the equations on the sphere. In Section 3, we derive the nonautonomous 1D Dirichlet problem (1.13) for axisymmetric ($-\alpha$)-homogeneous solutions and demonstrate the existence of solutions to (1.13) by applying minimax theorems. In Section 4, we show the regularity of rotational axisymmetric ($-\alpha$)-homogeneous solutions and deduce their existence from the existence of solutions to the nonautonomous 1D Dirichlet problem ((iii) and (iv) of Theorems 1.1, 1.4, and 1.5 and Theorem 1.10). In  Appendix A, we show the nonexistence of rotational 2D reflection symmetric $(-\alpha)$-homogeneous solutions to (1.1) for $-1\leq \alpha\leq 1$ (Theorems 1.7 (i) and (ii)). In Appendix B, we show the existence of rotational $2\nicefrac{1}{2}$D reflection symmetric $(-\alpha)$-homogeneous solutions to (1.1) for $\alpha\in \mathbb{R}\backslash [-1,1]$ (Theorem B.1) and deduce Theorems 1.7 (iii) and (iv) and 1.8.

\vspace{3pt}

\subsection{Acknowledgments}
I thank In-Jee Jeong for discussing self-similar solutions to the Euler equations. This work was made possible in part by the JSPS through the Grant-in-aid for Young Scientist 20K14347 and by MEXT Promotion of Distinctive Joint Research Center Program JPMXP0619217849.

\vspace{3pt}

\section{The rigidity of homogeneous solutions}

The nonexistence of irrotational homogeneous solutions to (1.1) is based on the uniqueness of the eigenfunctions of the Laplace--Beltrami operator on the sphere. The nonexistence of rotational Beltrami homogeneous solutions is based on that of irrotational homogeneous solutions. We give a heuristic argument using the first integral condition $u\cdot \nabla \varphi=0$ when Beltrami homogeneous solutions admit the proportionality factor $\varphi$. The nonexistence of rotational axisymmetric homogeneous solutions is based on that of Beltrami homogeneous solutions and the first integral condition $u\cdot\nabla  \Pi=0$ for the Bernoulli function $\Pi$. We show the nonexistence of rotational axisymmetric homogeneous solutions without swirls based on the nonexistence of irrotational homogeneous solutions and the first integral condition $u\cdot \nabla \zeta=0$ for $\zeta=\nabla_{\mathbb{R}^{3}} \times u\cdot e_{\phi}/r$. We also demonstrate the rigidity of radially irrotational homogeneous solutions by using the geodesic property of the tangential component of solutions on the sphere and the uniqueness of the associated Legendre equation.

\subsection{Equations on the sphere}

We consider homogeneous solution's equations on the sphere to (1.1) \cite{Sverak2011}, \cite{Shv}. We use the polar coordinates 

\begin{align*}
x_1&=\rho\cos\phi\sin \theta,  \\
x_2&=\rho\sin\phi\sin \theta, \\
x_3&=\rho\cos\theta,
\end{align*}\\
and the associated orthogonal frame 
 
\begin{equation*}
e_{\rho}=\left(
\begin{array}{c}
\cos\phi\sin \theta \\
\sin\phi\sin \theta \\
\cos\theta
\end{array}
\right),\quad 
e_\theta=\left(
\begin{array}{c}
\cos\phi\cos \theta \\
\sin\phi\cos \theta \\
-\sin\theta
\end{array}
\right),\quad 
e_{\phi}=\left(
\begin{array}{c}
-\sin\phi \\
\cos\phi \\
0
\end{array}
\right).
\end{equation*}\\
The basis $e_{\theta}$ and $e_{\phi}$ are the orthogonal frame on $\mathbb{S}^{2}$. The dual basis are $\textrm{d}\theta$ and $\sin\theta\textrm{d}\phi$. For a tangential vector field $v=ae_{\theta}+be_{\phi}$ on $\mathbb{S}^{2}$, the dual 1-form is denoted by $v^{b}=ad\theta+b\sin\theta d\phi$. Conversely, for a 1-form $\beta=ad\theta+b\sin\theta d\phi$, the associated tangential vector field is denoted by $\beta^{\#}=ae_{\theta}+be_{\phi}$. We denote the $\pi/2$ counterclockwise rotation of $v=ae_{\theta}+be_{\phi}$ by $v^{\perp}=(*\ v^{b})^{\#}=-be_{\theta}+a e_{\phi}$. Here,  $*$ is the Hodge star operator. 

The gradient of a $0$-form $\psi$ is $d\psi$ for the exterior differentiation $d$. Its adjoint operator is $\delta \beta=- {*}\ d\  {*}\ \beta$ for a 1-form $\beta$. The divergence of $\beta$ is $-\delta \beta$. We define $\textrm{grad}\ \psi=(d\psi)^{\#}$ and $\textrm{div}\ v=-\delta v^{b}$ for a scholar function $\psi$ and a tangential vector field $v$. We use the operator $\nabla=e_{\theta}\partial_{\theta}+(\sin{\theta})^{-1}e_{\phi}\partial_{\phi}$ and express them as

\begin{align*}
\textrm{grad}\ \psi&=\nabla \psi=\partial_{\theta}\psi e_{\theta}+\frac{1}{\sin\theta}\partial_{\phi}\psi e_{\phi} ,\\
\textrm{div}\ v&=\nabla\cdot v=\frac{1}{\sin\theta}\left(\partial_{\theta}(a\sin\theta)+\partial_{\phi}b \right) .
\end{align*}\\
We omit the symbol $\mathbb{S}^{2}$ and use a short-hand notation $\nabla=\nabla_{\mathbb{S}^{2}}$ throughout this section. Similarly, the rotation of a 1-form $\beta$ is $*\ d \beta$. The adjoint operator of $*\ d$ is $\delta * \psi$ for a 0-form $\psi$ and $\delta=-{*}\ d\  {*}$. We set $\textrm{curl}^{*}\ \psi=-\delta *\psi=*\ d\psi$. We define $\textrm{curl}\ v=*\ dv^{b}$ and $\textrm{curl}^{*}\ \psi=(*\ d\psi)^{\#}$ for a tangential vector field $v$ and a scholar function $\psi$. We use the operator $\nabla^{\perp}=-(\sin\theta)^{-1} e_{\theta}\partial_{\phi}+e_{\phi}\partial_{\theta}$ and express them as 

\begin{align*}
\textrm{curl}\ v&=\nabla^{\perp}\cdot v=\frac{1}{\sin\theta}\left(\partial_{\theta}(b\sin\theta)-\partial_{\phi}a \right) ,\\
\textrm{curl}^{*}\ \psi&=\nabla^{\perp}\psi=-\frac{1}{\sin\theta}\partial_{\phi}\psi e_{\theta}+\partial_{\theta}\psi e_{\phi}.
\end{align*}\\
We note that $\nabla^{\perp}\psi=(\nabla\psi)^{\perp}$, $v\cdot \nabla^{\perp}=-v^{\perp}\cdot \nabla$, and $\nabla^{\perp}\cdot v^{\perp}=\nabla\cdot v$. The Laplace--Beltrami operator for a 0-form $\psi$ is $-\Delta \psi=\delta d \psi$. Its explicit form is expressed as 

\begin{align*}
\Delta \psi=\frac{1}{\sin\theta}   \partial_{\theta}(\partial_{\theta}\psi\sin\theta)+\frac{1}{\sin^{2}\theta}\partial_{\phi}^{2}\psi.
\end{align*}\\
We note that $\Delta \psi=\nabla \cdot \nabla \psi=\nabla^{\perp} \cdot \nabla^{\perp} \psi$. 

We rewrite the equations (1.1) as 

\begin{equation}
\begin{aligned}
(\nabla_{\mathbb{R}^{3}} \times u)\times u+\nabla_{\mathbb{R}^{3}}  \Pi&=0,\\
\nabla_{\mathbb{R}^{3}} \cdot u&=0,
\end{aligned}
\end{equation}\\
with the Bernoulli function $\Pi=p+|u|^{2}/2$. By multiplying $u$ by the first equation,

\begin{align}
u\cdot \nabla_{\mathbb{R}^{3}} \Pi=0.
\end{align}\\
We denote ($-\alpha$)-homogeneous solutions $u$ to (2.1) by  

\begin{align}
u=\frac{1}{\rho^{\alpha}}(v+fe_{\rho}),\quad v=ae_{\theta}+be_{\phi}, 
\end{align}\\
and the functions $a(\theta,\phi)$, $b(\theta,\phi)$, and $f(\theta,\phi)$. The rotation of $u$ in $\mathbb{R}^{3}$ is expressed as   

\begin{align}
\nabla_{\mathbb{R}^{3}} \times u=\frac{1}{\rho^{\alpha+1}}\left( (1-\alpha)v^{\perp}-\nabla^{\perp}f+(\nabla^{\perp}\cdot v) e_{\rho} \right).
\end{align}\\
Substituting (2.3) into (2.1) implies that $(v,f,p)$ satisfies the equations on $\mathbb{S}^{2}$:  

\begin{equation}
\begin{aligned}
&(1-\alpha)fv+(\nabla^{\perp}\cdot v) v^{\perp}+\nabla \left(p+\frac{1}{2}|v|^{2} \right)=0,\\
&v\cdot \nabla f=|v|^{2}+\alpha f^{2}+2\alpha p,\\
&(2-\alpha)f+\nabla \cdot v=0.
\end{aligned}
\end{equation}\\
The condition (2.2) in terms of $(v,f,p)$ on $\mathbb{S}^{2}$ is $\Pi=p+|v|^{2}/2+f^{2}/2$ and

\begin{align}
v\cdot \nabla \Pi=2\alpha f \Pi.
\end{align}

\vspace{3pt}

\begin{rem}
The identity 

\begin{align}
v\nabla v=(\nabla^{\perp}\cdot v)v^{\perp}+\nabla \frac{1}{2}|v|^{2},
\end{align}\\
holds for the covariant derivative of $v$. The tangential vector field $v$ is a $0$-homogeneous vector field in $\mathbb{R}^{3}$ and satisfies the identity \cite[Appendix]{Shv},

\begin{align*}
v\cdot \nabla_{\mathbb{R}^{3}}v&=\frac{1}{\rho} (v\nabla v-|v|^{2}e_{\rho} ),\\
\nabla_{\mathbb{R}^{3}} \times v&=\frac{1}{\rho}\left(v^{\perp}+(\nabla^{\perp}\cdot v)e_{\rho} \right).
\end{align*}\\
The identity $v\cdot \nabla_{\mathbb{R}^{3}}v=(\nabla_{\mathbb{R}^{3}} \times v)\times v+\nabla_{\mathbb{R}^{3}} |v|^{2}/2$ implies (2.7).
\end{rem}

\vspace{3pt}

\subsection{Irrotational solutions}
The nonexistence of irrotational homogeneous solutions to (1.1) is based on the fact that the eigenvalues of the Laplace--Beltrami operator $-\Delta$ are $ n(n+1)$ for $n\in \mathbb{N}\cup \{0\}$. The multiplicity of each eigenvalue is $2n+1$ with one axisymmetric eigenfunction (zonal harmonic) and $2n$ nonaxisymmetric eigenfunctions.

\vspace{3pt}

\begin{thm}(\hspace{-1pt}\cite[Proposition 3.1]{Shv})
Irrotational $(-\alpha)$-homogeneous solutions $(u,p)\in C^{1}(\mathbb{R}^{3}\backslash \{0\})$ exist if and only if $\alpha\in \mathbb{Z}\backslash \{1\}$. They are given by spherical harmonics $f$ satisfying $-\Delta f=(\alpha-2)(\alpha-1)f$ and 

\begin{align}
v=\frac{1}{1-\alpha}\nabla f,\quad p=-\frac{1}{2(1-\alpha)^{2}}|\nabla f|^{2}-\frac{1}{2}f^{2}.
\end{align}\\
The pressure $p$ is unique up to constant for $\alpha=0$. In particular, no solutions exist for $\alpha=1$. For each $\alpha\in \mathbb{Z}\backslash \{1\}$, linearly independent one axisymmetric solution without swirls and $2|\alpha -3/2|-1$ nonaxisymmetric solutions exist.  
\end{thm}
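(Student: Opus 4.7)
The plan is to reduce the problem to a scalar eigenvalue problem for $-\Delta_{\mathbb{S}^{2}}$ via the sphere equations (2.5) and the explicit formula (2.4) for $\nabla_{\mathbb{R}^{3}}\times u$. First I would impose $\nabla_{\mathbb{R}^{3}}\times u=0$ in (2.4). The $e_{\rho}$-component gives $\nabla^{\perp}\cdot v=0$, and the tangential part gives $(1-\alpha)v^{\perp}=\nabla^{\perp}f$, equivalently $(1-\alpha)v=\nabla f$. For $\alpha\neq 1$ this forces $v=\nabla f/(1-\alpha)$ (a pure surface gradient, for which $\nabla^{\perp}\cdot v=0$ is automatic). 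Substituting into the divergence condition $(2-\alpha)f+\nabla\cdot v=0$ from (2.5) yields
\[
-\Delta f=(\alpha-2)(\alpha-1)f
\]
on $\mathbb{S}^{2}$.

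The next step is to match this eigenvalue with the spectrum $\{n(n+1):n\in\mathbb{N}\cup\{0\}\}$ of $-\Delta_{\mathbb{S}^{2}}$. Since $\alpha-2$ and $\alpha-1$ are two consecutive reals, the equation $(\alpha-2)(\alpha-1)=n(n+1)$ forces either $\alpha-2=n$ or $\alpha-1=-n$, giving $\alpha=n+2$ or $\alpha=1-n$, i.e.\ $\alpha\in\mathbb{Z}\backslash\{1\}$. For such $\alpha$ any spherical harmonic of the corresponding degree furnishes an $f$. In the degenerate case $\alpha=1$, the tangential irrotationality identity forces $\nabla^{\perp}f=0$, so $f$ is constant; the third equation of (2.5) then gives $\nabla\cdot v=-f$, and integration over $\mathbb{S}^{2}$ forces $f=0$. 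With $f=0$, the field $v$ is simultaneously divergence-free and curl-free on the simply connected $\mathbb{S}^{2}$, hence $v=0$ by Hodge theory. This gives the ``only if'' direction and the nonexistence at $\alpha=1$.

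To close the construction I would recover $p$ and verify the remaining equations of (2.5). Using $v=\nabla f/(1-\alpha)$ and $\nabla^{\perp}\cdot v=0$ together with the covariant identity (2.7), the momentum equation of (2.5) collapses to
\[
\nabla\!\left(p+\frac{1}{2}|v|^{2}+\frac{1}{2}f^{2}\right)=0,
\]
so $p+\frac{1}{2}|v|^{2}+\frac{1}{2}f^{2}$ is constant on $\mathbb{S}^{2}$. Since $p$ must be $(-2\alpha)$-homogeneous, this constant must vanish unless $\alpha=0$, in which case any additive constant is admissible; this yields the stated formula for $p$ and its uniqueness-up-to-constant at $\alpha=0$. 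A direct substitution using the eigenvalue equation then shows that the second equation of (2.5) is automatic, so the ansatz is truly a solution.

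For the multiplicity count, I would invoke that the $n(n+1)$-eigenspace of $-\Delta_{\mathbb{S}^{2}}$ has dimension $2n+1$, consisting of a unique zonal (axisymmetric) harmonic and $2n$ non-zonal ones. Taking $n=\alpha-2$ for $\alpha\geq 2$ and $n=1-\alpha$ for $\alpha\leq 0$, a short computation gives $2n=2|\alpha-3/2|-1$, matching the announced count of one axisymmetric solution without swirl plus $2|\alpha-3/2|-1$ nonaxisymmetric ones. The most delicate steps I anticipate are the $\alpha=1$ analysis (requiring the absence of harmonic $1$-forms on $\mathbb{S}^{2}$) and ensuring that the momentum equation imposes no extra constraint beyond the scalar eigenvalue problem; identity (2.7) together with the homogeneity of $p$ makes the latter transparent.
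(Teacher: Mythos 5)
Your proposal is correct and follows essentially the same route as the paper: reduce to $(1-\alpha)v=\nabla f$ via (2.4), obtain the eigenvalue equation $-\Delta f=(\alpha-2)(\alpha-1)f$ from $(2.5)_3$, match against the spectrum $n(n+1)$ to get $\alpha\in\mathbb{Z}\backslash\{1\}$, dispose of $\alpha=1$ by integrating $(2.5)_3$ and invoking the absence of nonzero harmonic tangential fields on $\mathbb{S}^{2}$, and count one zonal plus $2n=2|\alpha-3/2|-1$ non-zonal harmonics. The only cosmetic difference is that you derive the pressure formula by integrating the momentum equation on the sphere and then killing the constant by $(-2\alpha)$-homogeneity, whereas the paper phrases the same fact as the vanishing (for $\alpha\neq 0$) of the homogeneous Bernoulli function $\Pi$.
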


\begin{proof}
For $\alpha\neq 1$, $\nabla_{\mathbb{R}^{3}}\times u=0$ and the identity (2.4) imply that $(1-\alpha)v^{\perp}=\nabla^{\perp}f$. Thus $(2.8)_1$ holds by taking $\perp$. Substituting $(2.8)_1$ into $(2.5)_3$ implies the equation for $f$. Since the Bernoulli function $\Pi=p+|v|^{2}/2+f^{2}/2$ is zero for $\alpha\neq 0$ and constant for $\alpha=0$, the pressure representation $(2.8)_2$ follows.  

For $\alpha=1$, $\nabla_{\mathbb{R}^{3}}\times u=0$ and (2.4) imply that $f$ is constant and $\nabla^{\perp}\cdot v=0$. By integrating $(2.5)_3$ on $\mathbb{S}^{2}$, $f=0$ and $\nabla\cdot v=0$. Thus $v$ is harmonic on $\mathbb{S}^{2}$ and hence $v=0$. The pressure $p$ vanishes by $(2.5)_2$. 

For integers $\alpha\geq 2$ and $n=\alpha-2$, one zonal harmonic and $2n$ nonaxisymmetric spherical harmonics exist. Since $(\alpha-2)(\alpha-1)$ is symmetric for $\alpha=3/2$, for each $\alpha\in \mathbb{Z}\backslash \{1\}$ there exists one axisymmetric irrotational ($-\alpha$)-homogeneous solution and $2|\alpha -3/2|-1$ nonaxisymmetric solutions. The axisymmetric solution is without swirls by $(2.8)_1$. 
\end{proof}

\vspace{3pt}

\begin{rem}
The spherical harmonics of $-\Delta f=(\alpha-2)(\alpha-1)f$ for $\alpha\in \mathbb{Z}\backslash \{1\}$ are expressed by the associated Legendre functions. By the Fourier series expansion of $f=f_{n}(\theta,\phi)$ for $n=\alpha-2$,

\begin{align}
f_n(\theta,\phi)=\sum_{m=-\infty}^{\infty}f_{n,m}(\theta)e^{im\phi},\quad 
f_{n,m}(\theta)=\frac{1}{2\pi}\int_{0}^{2\pi} f_n(\theta,\phi)e^{-im\phi}\textrm{d}\phi.
\end{align}\\
The Fourier coefficients $f_{n,m}(\theta)$ satisfy the associated Legendre equation,

\begin{align*}
-f_{n,m}''-\cot{\theta}f_{n,m}'+\frac{m^{2}}{\sin^{2}\theta}f_{n,m}=n(n+1)f_{n,m}.
\end{align*}\\
For $t=\cos\theta$, $g_{n,m}(t)=f_{n,m}(\theta)$ satisfy 

\begin{align*}
-(1-t^{2})g_{n,m}''+2t g_{n,m}'+\frac{m^{2}}{1-t^{2}}g_{n,m}=n(n+1)g_{n,m}.
\end{align*}\\
Solutions of this equation for $n\in \mathbb{N}\cup \{0\}$ are given by the associated Legendre functions, e.g., \cite[12.5]{AW01},

\begin{align*}
P_{n}^{m}(t)&=\frac{(1-t^{2})^{\frac{m}{2}}}{2^{n}n!}\frac{\dd^{n+m} }{\dd t^{n+m}}(t^{2}-1)^{n},\\
P_{n}^{-m}(t)&=(-1)^{m}\frac{(n-m)!}{(n+m)!}P_{n}^{m}(t),\quad m\in \mathbb{N}\cup \{0\}.
\end{align*}\\
The function $P_{n}=P^{0}_{n}$ is the Legendre function. The function $w_{n+1}$ in (1.9$)_2$ is a solution to (1.13) for $c_1=c_2=0$ and $\beta=n$ since $(1-t^{2})P_{n}'=-n(n+1)\int_{-1}^{t}P_{n}\dd s$ by integrating the above equation for $P_{n}$.
\end{rem}

\vspace{3pt}

\subsection{The case $\alpha=1$}

For $\alpha=1$, the quantity $p+|v|^{2}/2$ is a first integral of $v$ and implies the nonexistence of homogeneous solutions, cf. \cite[Theorem 3]{TX98}.

\vspace{3pt}

\begin{thm}(\hspace{-1pt}\cite[Proposition 2.1]{Shv})
There exist no $(-1)$-homogeneous solutions $(u,p)\in C^{1}(\mathbb{R}^{3}\backslash \{0\})$ to (1.1).
\end{thm}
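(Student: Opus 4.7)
The plan is to exploit the hint that $Q := p+|v|^{2}/2$ is a first integral of $v$, and to upgrade this observation to the quantitative identity $\int_{\mathbb{S}^{2}}Q^{2}=0$. Specializing (2.5)--(2.6) to $\alpha=1$, the momentum equation $(\nabla^{\perp}\cdot v)v^{\perp}+\nabla Q=0$ yields $v\cdot\nabla Q=0$ upon contraction with $v$ (using $v\cdot v^{\perp}=0$). Since $|v|^{2}+2p=2Q$, the second equation of (2.5) becomes $v\cdot\nabla f=f^{2}+2Q$, while the third gives $\nabla\cdot v=-f$.

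The key computation is to evaluate $\int_{\mathbb{S}^{2}}v\cdot\nabla(Qf)$ in two ways. Expanding via the product rule and using $v\cdot\nabla Q=0$ yields $\int_{\mathbb{S}^{2}}Qv\cdot\nabla f$; integration by parts on the closed surface using $\nabla\cdot v=-f$ yields $\int_{\mathbb{S}^{2}}Qf^{2}$. Substituting $v\cdot\nabla f=f^{2}+2Q$ into the first expression gives $\int_{\mathbb{S}^{2}}Qf^{2}+2\int_{\mathbb{S}^{2}}Q^{2}$, so $\int_{\mathbb{S}^{2}}Q^{2}=0$ and hence $Q\equiv 0$ on $\mathbb{S}^{2}$.

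With $Q\equiv 0$, the momentum equation collapses to the pointwise identity $(\nabla^{\perp}\cdot v)v^{\perp}\equiv 0$. Wherever $v\neq 0$ this forces $\nabla^{\perp}\cdot v=0$; on the interior of $\{v=0\}$ all first derivatives of $v$ vanish; and continuity of $\nabla^{\perp}\cdot v$ (from $u\in C^{1}$) propagates the identity across the shared boundary. Hence $\nabla^{\perp}\cdot v\equiv 0$ on $\mathbb{S}^{2}$, and since $H^{1}(\mathbb{S}^{2})=0$ the Hodge decomposition gives $v=\nabla\psi$ for some $\psi\in C^{2}(\mathbb{S}^{2})$, with $f=-\Delta\psi$ from the third equation.

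The remaining relation $\nabla\psi\cdot\nabla f=f^{2}\geq 0$ then kills $f$: since $f\in C^{1}(\mathbb{S}^{2})$ attains its maximum and minimum on the compact sphere at points where $\nabla f=0$, evaluating the relation there gives $f^{2}=0$, so $\max f=\min f=0$ and $f\equiv 0$. Then $\Delta\psi\equiv 0$ forces $\psi$ constant, hence $v\equiv 0$ and finally $u\equiv 0$, contradicting the existence of a nontrivial solution. The most delicate step is the propagation of $\nabla^{\perp}\cdot v\equiv 0$ across $\{v=0\}$ to the full sphere; everything else reduces to two integrations by parts on $\mathbb{S}^{2}$ and the maximum principle applied to $f$.
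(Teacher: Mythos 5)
Your argument is correct, and its first half is the paper's argument in different clothing: your two evaluations of $\int_{\mathbb{S}^{2}}v\cdot\nabla(Qf)$ are exactly the paper's divergence identity $\nabla\cdot(vfh)=h^{2}$ for $h=2Q$, integrated over the sphere to force $Q\equiv 0$. You diverge in how you kill $f$. The paper stays with integral identities, computing $\nabla\cdot(vf^{3})=2f^{4}$ from $\nabla\cdot v=-f$ and $v\cdot\nabla f=f^{2}$ and integrating; this needs nothing about $\nabla^{\perp}\cdot v$. You instead evaluate $v\cdot\nabla f=f^{2}$ at the extrema of $f$ on the compact sphere, where $\nabla f=0$, to get $\max f=\min f=0$ -- a pointwise first-derivative-test argument that is arguably more elementary, and which in fact does not require the Hodge step $v=\nabla\psi$ at all (you only need $v=\nabla\psi$ at the very end, to conclude $v=0$ from harmonicity, exactly as the paper does). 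A small credit to your write-up: the paper passes from $(\nabla^{\perp}\cdot v)v^{\perp}=0$ to $\nabla^{\perp}\cdot v=0$ without comment, whereas you supply the correct justification (the identity holds on $\{v\neq 0\}$, trivially on the interior of $\{v=0\}$, and extends to the boundary by continuity of $\nabla^{\perp}\cdot v$ for $v\in C^{1}$). Both proofs are complete and of comparable length.
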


\vspace{3pt}

\begin{proof}
The equation $(2.5)_1$ implies that 

\begin{align*}
v\cdot \nabla \left(p+\frac{1}{2}|v|^{2}\right)=0.
\end{align*}\\
Thus $v\cdot \nabla h=0$ for $h=2p+|v|^{2}$. By $(2.5)_3$ and $(2.5)_2$, $\nabla \cdot v=-f$, $v\cdot \nabla f=f^{2}+h$, and 

\begin{align*}
\nabla\cdot (vfh)=(\nabla \cdot v)fh+(v\cdot \nabla f) h+vf\cdot \nabla  h=h^{2}.
\end{align*}\\
Integrating both sides on $\mathbb{S}^{2}$ implies that $h=0$. By $(2.5)_1$, $\nabla^{\perp}\cdot v=0$ and 

\begin{align*}
\nabla\cdot (vf^{3})=(\nabla \cdot v)f^{3}+3(v\cdot \nabla f)f^{2}=2f^{4}.
\end{align*}\\
Integrating both sides on $\mathbb{S}^{2}$ implies that $f=0$ and $\nabla\cdot v=0$. Thus $v$ is harmonic on $\mathbb{S}^{2}$ and hence $v=0$. 
\end{proof}

\vspace{3pt}

\subsection{Beltrami solutions for $\alpha\leq 2$}

The nonexistence of rotational ($-\alpha$)-homogeneous Beltrami solutions $(\nabla_{\mathbb{R}^{3}} \times u)\times u=0$ for $\alpha\leq 2$ (Theorem 1.1 (i) and (ii)) is based on the nonexistence of irrotational ($-\alpha$)-homogeneous solutions for $\alpha\notin \mathbb{Z}\backslash \{1\}$ (Theorem 2.2) and the nonexistence of ($-1$)-homogeneous solutions (Theorem 2.4). 

\vspace{3pt}

\begin{thm}(\hspace{-1pt}\cite[Proposition 3.2]{Shv})
The following holds for the rotational Beltrami ($-\alpha$)-homogeneous solutions to (1.1):

\noindent
(i) For $1\leq \alpha\leq 2$, no solutions $(u,p)\in C^{1}(\mathbb{R}^{3}\backslash \{0\})$ exist.\\
(ii) For $\alpha<1$, no solutions $(u,p)\in C^{2}(\mathbb{R}^{3}\backslash \{0\})$ exist.
\end{thm}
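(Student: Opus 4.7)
The plan is to reduce to the irrotational case, which is handled by Theorem 2.2 (and by Theorem 2.4 for $\alpha = 1$). For $\alpha = 1$, the conclusion is immediate from Theorem 2.4. For $\alpha \neq 1$, the Beltrami hypothesis kills the advection in the Euler equation and yields $\nabla_{\mathbb{R}^{3}}\Pi = 0$; hence $\Pi = p + |u|^{2}/2$ is constant, and by its $(-2\alpha)$-homogeneity $\Pi \equiv 0$ for $\alpha \neq 0$ (for $\alpha = 0$ the constant drops out of the subsequent algebra). Writing $u = \rho^{-\alpha}(v + fe_{\rho})$ and substituting $p = -(|v|^{2} + f^{2})/2$ into $(2.5)_{2}$ gives the pointwise identity $v\cdot\nabla f = (1-\alpha)|v|^{2}$, while $(2.5)_{3}$ already reads $\nabla\cdot v = (\alpha-2)f$.

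Integrating $\nabla\cdot(fv) = (v\cdot\nabla f) + f\nabla\cdot v = (1-\alpha)|v|^{2} + (\alpha-2)f^{2}$ on $\mathbb{S}^{2}$ by Stokes yields
\[
(2-\alpha)\int_{\mathbb{S}^{2}} f^{2}\,\mathrm{d}\sigma \;=\; (1-\alpha)\int_{\mathbb{S}^{2}} |v|^{2}\,\mathrm{d}\sigma. \qquad (\ast)
\]
For $1 < \alpha \leq 2$ the two sides of $(\ast)$ have opposite signs, so both must vanish: this forces $v \equiv 0$ and, when $\alpha < 2$, also $f \equiv 0$. When $\alpha = 2$, $v \equiv 0$ together with $(2.5)_{1}$ forces $f$ constant, so $u = f_{0}\rho^{-2}e_{\rho} = \nabla(-f_{0}/\rho)$ is irrotational, contradicting the rotational hypothesis. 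This argument is purely first-order and proves (i) with only $C^{1}$ regularity.

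For (ii), when $\alpha < 1$ the two sides of $(\ast)$ have the same sign and no direct contradiction arises, so a genuinely new identity must be produced. At points where $u \neq 0$, define the proportionality factor $\varphi$ by $\nabla\times u = \varphi u$; it is $(-1)$-homogeneous, $\varphi = \tilde\varphi(\theta,\phi)/\rho$, and the first-integral condition $u\cdot\nabla_{\mathbb{R}^{3}}\varphi = 0$ becomes $v\cdot\nabla\tilde\varphi = f\tilde\varphi$ on $\mathbb{S}^{2}$. The sphere-level Beltrami relations read $\nabla f = (1-\alpha)v + \tilde\varphi v^{\perp}$ and $\nabla^{\perp}\!\cdot v = \tilde\varphi f$. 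Taking $\nabla\cdot$ of the first relation (which is where the $C^{2}$ assumption on $u$ enters) and using the first integral to absorb $\nabla\tilde\varphi\cdot v$ produces an elliptic equation of Schr\"odinger type
\[
-\Delta f = (\alpha-1)(\alpha-2)\,f + \tilde\varphi^{2} f + (\text{drift in }\nabla\tilde\varphi).
\]
Testing against $f$, integrating by parts, and comparing with the tautology $\tilde\varphi^{2}|v|^{2} = |\nabla f|^{2} - (1-\alpha)^{2}|v|^{2}$ obtained by squaring the tangential Beltrami relation should yield a coercive identity that forces $\tilde\varphi \equiv 0$. Then $u$ is irrotational, and Theorem 2.2 rules out rotational solutions in this range, completing the proof of (ii).

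The main obstacle is this last step: the Stokes identity applied to $\nabla\cdot(fv)$ and the energy identity obtained by testing the elliptic equation against $f$ turn out to be algebraically equivalent, so the substantive new information must be extracted by testing against a nonlinear weight such as $\tilde\varphi^{2} f$ or $f^{3}$ that genuinely couples the two Beltrami relations with the first integral $v\cdot\nabla\tilde\varphi = f\tilde\varphi$. The regularity jump from $C^{1}$ in (i) to $C^{2}$ in (ii) is precisely the cost of bringing $\Delta f$ and $\nabla\tilde\varphi$ into play.
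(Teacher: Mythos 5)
Your part (i) is correct and coincides with the paper's argument: with $\Pi\equiv 0$ (constant for $\alpha=0$), the identity $\nabla\cdot(fv)=(\alpha-2)f^{2}+(1-\alpha)|v|^{2}$ integrated over $\mathbb{S}^{2}$ gives a sign contradiction for $1<\alpha<2$, the case $\alpha=2$ reduces to an irrotational solution, and $\alpha=1$ is Theorem 2.4. Part (ii), however, is not a proof but an outline whose decisive step is missing, and you say so yourself (``should yield a coercive identity''). Two concrete problems. First, the proportionality factor $\varphi$ is only defined on $\{u\neq 0\}$ and need not extend to a $C^{1}$ function on $\mathbb{S}^{2}$; any integration by parts of $\varphi$-weighted quantities over the whole sphere is therefore not justified. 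The paper treats the $\varphi$-argument explicitly as a heuristic for the special case where $\varphi\in C^{1}(\mathbb{R}^{3}\backslash\{0\})$ exists globally, and replaces it in the actual proof by the radial vorticity $\omega=\nabla^{\perp}\cdot v$, which is globally defined once $u\in C^{2}$ and satisfies the transport identity $(vf)\cdot\nabla\omega=(f^{2}+(1-\alpha)|v|^{2})\omega$ coming from the radial component of the vorticity equation. Second, testing the Schr\"odinger-type equation for $f$ against $f$ (or against $f^{3}$) does not close the argument, as you observe; and note that the drift term in your equation is $v^{\perp}\cdot\nabla\tilde\varphi$, which the first integral $v\cdot\nabla\tilde\varphi=f\tilde\varphi$ does not control.

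The missing computation is the following weighted divergence identity, obtained by combining $\nabla\cdot(fv)=(\alpha-2)f^{2}+(1-\alpha)|v|^{2}$ with the transport identity for $\omega$:
\begin{align*}
\nabla \cdot (vf \omega^{n})=(\alpha-2+n)f^{2}\omega^{n}+(n+1)(1-\alpha)|v|^{2}\omega^{n}.
\end{align*}
Choosing $n=2k$ even with $n>2-\alpha$ makes both coefficients positive for $\alpha<1$ and the weight $\omega^{2k}$ nonnegative, so integration over $\mathbb{S}^{2}$ forces $f\omega^{k}=0$ and $v\omega^{k}=0$, hence $f\omega=0$ and $v\omega=0$. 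From $(2.5)_{1}$ one then gets $((1-\alpha)v^{\perp}-\nabla^{\perp}f)f=\omega v=0$, and a short case analysis ($f\neq 0$ versus $f=0$, the latter forcing $v=0$ via the divergence identity) yields $\omega=0$ and $(1-\alpha)v^{\perp}=\nabla^{\perp}f$, i.e.\ $u$ is irrotational by (2.4). This is exactly the ``nonlinear weight'' you anticipated, but with $\omega^{2k}$ rather than $\tilde\varphi^{2}f$ or $f^{3}$, and with the evenness of the exponent and the threshold $n>2-\alpha$ being the points that make the signs work. Without this step your part (ii) does not establish the theorem.
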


\vspace{3pt}

The Bernoulli function $\Pi=p+|u|^{2}/2$ of Beltrami ($-\alpha$)-homogeneous solutions vanishes for $\alpha\neq 0$ and is constant for $\alpha=0$. Thus $\alpha\Pi= 0$. By $(2.5)_2$ and $(2.5)_3$, 

\begin{align}
\nabla \cdot (vf)=(\alpha-2)f^{2}+(1-\alpha)|v|^{2}.
\end{align}\\
The case $\alpha=1$ is excluded by Theorem 2.4. For $1<\alpha< 2$, integrating both sides on $\mathbb{S}^{2}$ implies that $f=0$ and $v=0$. For $\alpha=2$, integrating (2.10) implies that $v=0$. The equations $(2.5)_1$ and $(2.5)_2$ imply that $p$ and $f$ are constants. Thus $u$ is irrotational by (2.4).

The case $\alpha<1$ is more involved. A simpler case is when $u$ admits the proportionality factor $\varphi\in C^{1}(\mathbb{R}^{3}\backslash \{0\})$ and forms 

\begin{align}
\nabla_{\mathbb{R}^{3}} \times u=\varphi u.
\end{align}\\
Taking the divergence in $\mathbb{R}^{3}$ implies that $\varphi$ is a first integral of $u$, i.e., 

\begin{align*}
u\cdot \nabla_{\mathbb{R}^{3}} \varphi=0.
\end{align*}\\
The factor $\varphi$ is ($-1$)-homogeneous and hence  

\begin{align}
v\cdot \nabla \varphi=f\varphi.
\end{align}\\
The equations (2.10) and (2.12) imply that for nonnegative integers $n$, 

\begin{align*}
\nabla \cdot (vf\varphi^{n})=(\alpha-2+n)f^{2}\varphi^{n}+(1-\alpha)|v|^{2}\varphi^{n}.
\end{align*}\\
By taking an even number $n=2k>2-\alpha$ and integrating both sides on $\mathbb{S}^{2}$, $f\varphi^{k}=0$ and $v \varphi^{k}=0$. This implies that $f\varphi=0$ and $v\varphi=0$, and hence $u$ is irrotational by (2.11).

The work by Shvydkoy \cite{Shv} demonstrated the same conclusion without using the proportionality factor $\varphi$. Instead of (2.12), one can use the equation for $\omega=\nabla^{\perp}\cdot v$, 

\begin{align}
(vf)\cdot\nabla \omega=(f^{2}+(1-\alpha)|v|^{2}) \omega.
\end{align}\\
This equation follows from the radial component of the vorticity equations $u\cdot \nabla_{\mathbb{R}^{3}} (\nabla_{\mathbb{R}^{3}}\times u)=(\nabla_{\mathbb{R}^{3}}\times u)\cdot \nabla_{\mathbb{R}^{3}} u$ (The equation (2.13) is a different form of (2.12) if $u$ admits the proportionality factor $\varphi=\omega/f$). Since $\Pi$ is constant, the equation $(2.5)_1$ forms

\begin{align*}
(1-\alpha)fv+\omega v^{\perp}-\nabla f f=0.
\end{align*}\\
By  taking rotation and using $(2.5)_3$, $(1-\alpha)\nabla^{\perp}(fv)+(\alpha-2)f\omega+v\cdot \nabla \omega=0$. By multiplying $v^{\perp}$ by the above equation, $\omega|v|^{2}=v^{\perp}\cdot \nabla f f$ and $\nabla^{\perp}\cdot (fv)=(|v|^{2}/f+f)\omega$. Thus (2.13) follows from (2.5).

Combining (2.13) and (2.10) imply for nonnegative integers $n$, 

\begin{align}
\nabla \cdot (vf \omega^{n})=(\alpha-2+n)f^{2}\omega^{n}+(n+1)(1-\alpha)|v|^{2}\omega^{n}.
\end{align}

\vspace{3pt}

\begin{proof}
For $\alpha<1$, by taking an even number $n=2k>2-\alpha$ and integrating both sides of (2.14) on $\mathbb{S}^{2}$, $f\omega^{k}=0$ and $v\omega^{k}=0$. Thus 

\begin{align*}
f\omega=0,\quad v\omega=0.
\end{align*}\\
The equation $(2.5)_1$ forms $((1-\alpha)v-\nabla f)f=-\omega v^{\perp}$ and hence  

\begin{align*}
\left((1-\alpha)v^{\perp}-\nabla^{\perp}f\right)f=\omega v=0.
\end{align*}\\
If $f\neq 0$, $(1-\alpha)v^{\perp}-\nabla^{\perp}f=0$ and $\omega=0$. If $f=0$, $v=0$ by (2.10). In particular, $(1-\alpha)v^{\perp}-\nabla^{\perp}f=0$ and $\omega=0$. Thus $u$ is irrotational by (2.4). 
\end{proof}

\vspace{3pt}

\begin{proof}[Proof of Theorem 1.1 (i) and (ii)]
The results follow from Theorem 2.5. 
\end{proof}

\vspace{3pt}

\subsection{Axisymmetric solutions for $0\leq \alpha\leq 2$}

The nonexistence of rotational axisymmetric ($-\alpha$)-homogeneous solutions for $0\leq \alpha\leq 2$ (Theorem 1.5 (i) and (ii)) is based on the nonexistence of rotational Beltrami ($-\alpha$)-homogeneous solutions (Theorem 2.5) and the first integral condition (2.6).

\vspace{3pt}

\begin{thm}(\hspace{-1pt}\cite[Propositions 5.1 and 5.2]{Shv})
The following holds for rotational axisymmetric ($-\alpha$)-homogeneous solutions to (1.1):

\noindent
(i) For $1\leq \alpha\leq 2$, no solutions $u\in C^{1}(\mathbb{R}^{3}\backslash \{0\})$ exist. \\
(ii) For $0\leq \alpha<1$, no solutions $u\in C^{2}(\mathbb{R}^{3}\backslash \{0\})$ exist. 
\end{thm}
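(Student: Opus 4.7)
My plan is to combine the sphere equations (2.5) with the Bernoulli first integral (2.6) under the axisymmetric reduction $a=a(\theta)$, $b=b(\theta)$, $f=f(\theta)$, $p=p(\theta)$, and to extend the divergence-identity strategy of Theorem 2.5 to the non-Beltrami setting. The core identity generalizing (2.10) is
\begin{align*}
\nabla \cdot (vf) = (\alpha - 2)f^{2} + (1-\alpha)|v|^{2} + 2\alpha \Pi,
\end{align*}
obtained from $\nabla \cdot (vf) = f\,\nabla\cdot v + v\cdot\nabla f$ after substituting $(2.5)_2$ and $(2.5)_3$ and eliminating $p$ via $\Pi = p + (|v|^{2}+f^{2})/2$. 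In parallel, $(2.5)_3$ together with (2.6) yields the divergence family $\nabla \cdot (v\Pi^{n}) = [(2n+1)\alpha - 2]\,f\,\Pi^{n}$ for every nonnegative integer $n$.

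For (i), $1 \leq \alpha \leq 2$, the coefficients of $f^{2}$ and $|v|^{2}$ above are nonpositive, so the strategy is to show the Bernoulli correction $2\alpha\Pi$ can be integrated away, reducing to the Beltrami case. Axisymmetry is decisive: (2.6) becomes the ODE $a\,\Pi' = 2\alpha f\,\Pi$, whose explicit integration on intervals where $a \neq 0$ determines $\Pi$ from $f/a$ and fixes the sign of $\Pi$ on each connected component; integrating the divergence family over $S^{2}$ and using the continuity of $u$ at the poles $\theta=0,\pi$ then forces $\Pi \equiv 0$. At that point $f\equiv 0$ and $v\equiv 0$ follow from integrating the main identity over $S^{2}$, and $u$ is irrotational by (2.4). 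For (ii), $0 \leq \alpha < 1$, the coefficients have opposite signs and the direct argument fails, so I instead exploit the radial vorticity $\omega = \nabla^{\perp}\cdot v$ as in the Beltrami subcase: the equation $u\cdot\nabla_{\mathbb{R}^{3}}(\nabla_{\mathbb{R}^{3}}\times u) = (\nabla_{\mathbb{R}^{3}}\times u)\cdot\nabla_{\mathbb{R}^{3}} u$ yields an axisymmetric analog $(vf)\cdot\nabla\omega = (f^{2} + (1-\alpha)|v|^{2})\omega + R[\Pi]$ with a $\Pi$-correction $R[\Pi]$, and evaluating $\nabla\cdot(vf\omega^{2k})$ for an even integer $2k > 2-\alpha$ produces a sign-definite integrand modulo correction.

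The main obstacle is the handling of $R[\Pi]$ and the $2\alpha\Pi$ term in the basic identity: as soon as $\Pi \not\equiv 0$, the clean sign structure of the Beltrami proof of Theorem 2.5 is spoiled, and converting the correction terms into boundary-integrable divergences or into sign-definite pieces requires a careful combination of (2.6), the axisymmetric ODE structure in $\theta$, and smoothness at the polar axis, which is where the $C^{2}$ assumption in (ii) enters via the vorticity equation. Once these corrections are absorbed or shown to vanish, the chain $\omega \equiv 0 \Rightarrow f \equiv 0 \Rightarrow v \equiv 0$ proceeds exactly as in Theorem 2.5 and yields that $u$ must be irrotational, contradicting the standing rotational hypothesis.
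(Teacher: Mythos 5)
Your setup is on the right track: the axisymmetric reduction of (2.5)--(2.6), the corrected divergence identity $\nabla\cdot(vf)=(\alpha-2)f^{2}+(1-\alpha)|v|^{2}+2\alpha\Pi$, and the family $\nabla\cdot(v\Pi^{n})=((2n+1)\alpha-2)f\Pi^{n}$ are all correct, and the overall strategy of first killing $\Pi$ and then reducing to the Beltrami case is exactly the paper's. But the step that is supposed to force $\Pi\equiv 0$ does not work as described: integrating $\nabla\cdot(v\Pi^{n})$ over $\mathbb{S}^{2}$ only yields $\int_{\mathbb{S}^{2}}f\Pi^{n}\,\dd H=0$, and $f\Pi^{n}$ has no sign, so these global identities give nothing. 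The working mechanism is pointwise in $\theta$: on an interval where $a\Pi\neq 0$ one eliminates $f$ between $a\,\partial_{\theta}\Pi=2\alpha f\Pi$ and $(2-\alpha)f+\frac{1}{\sin\theta}\partial_{\theta}(a\sin\theta)=0$ to obtain the first integral $|\Pi|^{2-\alpha}|a|^{2\alpha}\sin^{2\alpha}\theta=C$; since the left-hand side must vanish as $\theta\to 0$ for $0<\alpha<2$ while a nonzero $C$ would let the interval extend to all of $(0,\pi)$, one gets $C=0$ and hence $a\Pi\equiv 0$. You then still must treat the branch where $a\equiv 0$ but $\Pi\neq 0$ on an interval (purely toroidal $v=be_{\phi}$), which requires a second ODE system for $b$ and $p$ and another conserved quantity $|p|\sin^{2\alpha}\theta=C$; your proposal does not address this branch at all. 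The endpoints $\alpha=0$ and $\alpha=2$ also need separate (short) arguments, and $\alpha=1$ is already Theorem 2.4.

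For part (ii) the gap is more serious: you propose to carry a $\Pi$-correction $R[\Pi]$ through the vorticity identity and the powers $\nabla\cdot(vf\omega^{2k})$, and you yourself flag the absorption of $R[\Pi]$ as the unresolved obstacle. The way out is not to fight $R[\Pi]$: the $\Pi\equiv 0$ argument above is valid on the whole range $0<\alpha<2$ (and for $\alpha=0$ the condition (2.6) gives $a\,\partial_{\theta}\Pi=0$ directly, from which $\partial_{\theta}\Pi\equiv 0$ follows by the same contradiction scheme), so once $\Pi\equiv 0$ is established the solution is a Beltrami flow and Theorem 2.5 --- which already contains the $\omega^{2k}$ machinery for $\alpha<1$ and is where the $C^{2}$ hypothesis enters --- finishes both (i) and (ii). As written, your proof of (ii) is not closed.
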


\vspace{3pt}

\begin{proof}
We first consider the case $0<\alpha<2$. The equations $(2.5)_3$ and (2.6) for axisymmetric solutions are expressed as 

\begin{align*}
&(2-\alpha)f+\frac{1}{\sin\theta}\partial_{\theta}(a\sin\theta)=0,\\
&a\partial_{\theta}\Pi=2\alpha f\Pi.
\end{align*}\\
We show that $a\Pi=0$ on $(0,\pi)$ by a contradiction argument. Suppose that $a\Pi\neq 0$ on some interval $J\subset (0,\pi)$. Then, by eliminating $f$ from the above equations and integrating an equation for $\Pi$ and $a$,  

\begin{align*}
|\Pi|^{2-\alpha}|a|^{2\alpha}\sin^{2\alpha}{\theta}=C,
\end{align*}\\
on $J$ for some constant $C$. If $C\neq 0$, $J$ is extendable to $(0,\pi)$. However, the condition $0<\alpha<2$ implies that the left-hand side vanishes as $\theta\to0$. Thus $C=0$. This contradicts the assumption $a\Pi\neq 0$ on $J$. Hence $a\Pi=0$ on $(0,\pi)$.

We show that $\Pi=0$ on $(0,\pi)$ by a contradiction argument. Suppose that $\Pi\neq 0$ on some interval $J\subset (0,\pi)$. Then, $a=0$ and $v=be_{\phi}$. By $(2.5)_3$, (2.5$)_1$, and $\nabla^{\perp}\cdot v=\partial_{\theta}b+b\cot{\theta}$, $f=0$ and $b^{2}\cot\theta-\partial_{\theta}p=0$. By $(2.5)_2$, $b^{2}+2\alpha p=0$. Thus $b$ and $p$ satisfy 

\begin{align*}
b^{2}\cot\theta-\partial_{\theta}p&=0,\\
b^{2}+2\alpha p&=0,
\end{align*}\\
on $J$. Since $0\neq \Pi=p+|b|^{2}/2=(1-\alpha)p$, by eliminating $b$ from the above equations and integrating an equation for $p$,

\begin{align*}
|p|\sin^{2\alpha}{\theta}=C,
\end{align*}\\
on $J$ for some constant $C$. If $C\neq 0$, $J$ is extendable to $(0,\pi)$. However, the left-hand side vanishes as $\theta\to0$. Thus $C=0$ and $\Pi=(1-\alpha)p=0$ on $J$. This contradicts $\Pi \neq 0$ on $J$. Thus $\Pi=0$ on $(0,\pi)$ and $u$ is a Beltrami flow. We apply Theorem 2.5 and conclude that no rotational solutions exist for $0<\alpha<2$. 

For $\alpha=2$, the equation $(2.5)_3$ implies that 

\begin{align*}
0=\nabla \cdot v=\frac{1}{\sin\theta}\partial_{\theta}(a\sin\theta)
\end{align*}\\
Thus $a\sin\theta=C$ for some constant $C$. Since $a$ is bounded, $C=0$ and hence $a=0$. By the equations $(2.5)_1$ and $(2.5)_2$ for $v=be_{\phi}$

\begin{align*}
fb&=0,\\
b^{2}\cot\theta-\partial_{\theta}p&=0,\\
b^{2}+2f^{2}+4p&=0.
\end{align*}\\
If $f= 0$ on $(0,\pi)$, the second and third equations are a system for $b$ and $p$. If $p\neq0$ on some interval $J\subset (0,\pi)$, 

\begin{align*}
|p|\sin^{4}\theta=C.
\end{align*}\\
If $C\neq 0$, $J$ is extendable to $(0,\pi)$. However, the left-hand side vanishes as $\theta\to0$. Thus $C=0$. This contradicts $p\neq 0$ on $J$. Thus $p= 0$ and $b=0$. We conclude that no solutions exist.

If $f\neq 0$ on some interval $J\subset (0,\pi)$, the above equations imply that $b=0$, and $p$ and $f$ are constants on $J$. Thus $J$ is extendable to $(0,\pi)$ and $f$ is constant on $(0,\pi)$. Since $b=0$ on $(0,\pi)$, $v=0$ and $u$ is irrotational.

For $\alpha=0$, the equation (2.6) implies that $a\partial_{\theta}\Pi=0$. Suppose that $\partial_{\theta}\Pi\neq 0$ on some interval $J\subset (0,\pi)$. Then $a=0$ and the equation $(2.5)_2$ implies that $v=be_{\phi}=0$. By $(2.5)_3$ and $(2.5)_1$, $f=0$ and $p$ is constant. Thus $\partial_{\theta}\Pi= 0$ on $J$. This is a contradiction. Thus $\partial_{\theta}\Pi=0$ in $(0,\pi)$ and $u$ is a Beltrami flow. We apply Theorem 2.5 and conclude that no rotational solutions exist.
\end{proof}

\vspace{3pt}

\begin{proof}[Proof of Theorem 1.5 (i) and (ii)]
The results follow from Theorem 2.6. 
\end{proof}

\vspace{3pt}

\subsection{Axisymmetric solutions without swirls for $-2\leq \alpha< 0$}

We now demonstrate the nonexistence of rotational axisymmetric ($-\alpha$)-homogeneous solutions without swirls for $-2\leq \alpha<0$ (Theorem 1.4 (ii)). 

\vspace{3pt}

\begin{thm}
The following holds for rotational axisymmetric ($-\alpha$)-homogeneous solutions without swirls to (1.1):

\noindent
(i) For $1\leq \alpha\leq 2$, no solutions $(u,p)\in C^{1}(\mathbb{R}^{3}\backslash \{0\})$ exist. \\
(ii) For $-2< \alpha<1$, no solutions $(u,p)\in C^{2}(\mathbb{R}^{3}\backslash \{0\})$ exist. For $\alpha=-2$, no solutions $(u,p)\in C^{2}(\mathbb{R}^{3}\backslash \{0\})$ exist provided that $\nabla_{\mathbb{R}^{3}} \times u\cdot e_{\phi}/r$ vanishes on the $z$-axis. 
\end{thm}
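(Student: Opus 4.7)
Part (i) is immediate from Theorem 2.6 (i), and the range $0\leq \alpha<1$ of part (ii) follows from Theorem 2.6 (ii); the new content is the range $-2\leq \alpha<0$. The plan is to replace the Bernoulli first integral $u\cdot\nabla_{\mathbb{R}^{3}}\Pi$ used in the proof of Theorem 2.6 by the no-swirl first integral $\zeta=\nabla_{\mathbb{R}^{3}}\times u\cdot e_{\phi}/r$, which satisfies $u\cdot \nabla_{\mathbb{R}^{3}}\zeta=0$. Since $u$ is $(-\alpha)$-homogeneous, $\zeta$ is $(-\alpha-2)$-homogeneous, so writing $\zeta=\rho^{-\alpha-2}\xi(\theta)$ together with $u=\rho^{-\alpha}(ae_{\theta}+fe_{\rho})$ (the no-swirl hypothesis $b\equiv 0$), a direct computation in polar coordinates converts the first integral into
\begin{align*}
a\,\partial_{\theta}\xi=(\alpha+2)\, f\,\xi
\end{align*}
on $(0,\pi)$. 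Under the $C^{2}$ axisymmetric regularity of $u$ away from the origin, the standard Taylor expansion along the axis forces $\xi$ to be continuous on $[0,\pi]$, and $a$ is continuous and bounded on $[0,\pi]$.

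Using the divergence equation $(2-\alpha)f+(\sin\theta)^{-1}\partial_{\theta}(a\sin\theta)=0$ from $(2.5)_{3}$ to eliminate $f$ (valid for $\alpha\neq 2$), the identity above becomes a separable ODE. On any open interval $J\subset (0,\pi)$ on which $a\xi\neq 0$, integration yields the conserved quantity
\begin{align*}
|\xi|^{2-\alpha}\,|a\sin\theta|^{\alpha+2}=C.
\end{align*}
For $-2<\alpha<0$ both exponents $2-\alpha$ and $\alpha+2$ are strictly positive. I would then mimic the extension argument of Theorem 2.6: if $C\neq 0$, then by continuity and positivity of the exponents, neither $a$ nor $\xi$ can vanish at any boundary point of $J$ inside $(0,\pi)$, so $J$ extends to the full interval $(0,\pi)$; but then as $\theta\to 0$ the boundedness of $a$, $\xi$ and $\sin\theta\to 0$ force the left-hand side to $0$, contradicting $C\neq 0$. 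Hence $a\xi\equiv 0$ on $(0,\pi)$.

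If $\xi\not\equiv 0$, then $\xi\neq 0$ on a nonempty open set, so $a\equiv 0$ there by continuity; the divergence equation then yields $f\equiv 0$ on this set, whence $u\equiv 0$ and $\xi=0$ there, a contradiction. Therefore $\xi\equiv 0$, so the $\phi$-component of vorticity vanishes. Since $u$ is axisymmetric without swirl, the vorticity has only a $\phi$-component, so $\nabla_{\mathbb{R}^{3}}\times u\equiv 0$ and $u$ is irrotational, ruling out any rotational solution in the range $-2<\alpha<0$.

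For $\alpha=-2$ the coefficient $\alpha+2$ vanishes and the transport identity degenerates to $a\,\partial_{\theta}\xi=0$, removing the integrated invariant. Here the hypothesis that $\xi$ vanishes on the $z$-axis must be exploited: on each connected component of the open set $\{a\neq 0\}\subset (0,\pi)$ the continuous function $\xi$ is locally constant, while on each connected component of the interior of $\{a=0\}$ one has $v\equiv 0$, so the divergence equation (still valid since $\alpha\neq 2$) forces $f\equiv 0$, hence $u\equiv 0$ and $\xi=0$ there. Continuity then propagates constant values of $\xi$ across interior zeros of $a$, and the boundary hypothesis $\xi(0)=\xi(\pi)=0$ forces the constant to be zero on each component whose closure reaches a pole, ultimately yielding $\xi\equiv 0$, and irrotationality again follows. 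The main obstacle I foresee is this combinatorial propagation in the $\alpha=-2$ case: absent the integrated invariant, one must rule out, purely by continuity and by the vanishing of $u$ on components of $\{a=0\}$, nontrivial piecewise-constant profiles of $\xi$ supported on components of $\{a\neq 0\}$ whose closure does not meet a pole.
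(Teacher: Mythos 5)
Your argument for part (i), for $0\leq\alpha<1$, and for $-2<\alpha<0$ is exactly the paper's: the same first integral $\zeta=\nabla_{\mathbb{R}^{3}}\times u\cdot e_{\phi}/r$, the same reduction $a\,\partial_{\theta}\zeta=(\alpha+2)f\zeta$ on the sphere, the same elimination of $f$ via $(2.5)_{3}$ leading to the invariant $|\zeta|^{2-\alpha}|a|^{\alpha+2}\sin^{\alpha+2}\theta=C$, and the same extension-to-$(0,\pi)$ contradiction using $\alpha+2>0$. That part is correct and needs no change.

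The one place you diverge is the endpoint $\alpha=-2$, and there you have correctly diagnosed a gap in your own argument: decomposing $(0,\pi)$ by the zero set of $a$ leaves you unable to rule out a nonzero constant value of $\zeta$ on a component of $\{a\neq0\}$ whose closure avoids both poles and every interval of $\mathrm{int}\{a=0\}$. The paper sidesteps this entirely by running the contradiction argument on the open set $\{\partial_{\theta}\zeta\neq0\}$ rather than on $\{a\neq0\}$: if $\partial_{\theta}\zeta\neq0$ on some interval $J$, then $a\,\partial_{\theta}\zeta=0$ forces $a=0$ on $J$, whence $(2.5)_{3}$ gives $f=0$, whence $u=0$ on the cone over $J$ and so $\zeta=0$ and $\partial_{\theta}\zeta=0$ on $J$ --- a contradiction. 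Hence $\partial_{\theta}\zeta\equiv0$ on all of $(0,\pi)$, $\zeta$ is a single constant there, and the hypothesis that $\zeta$ vanishes on the $z$-axis kills it. If you prefer to keep your decomposition, you can close your gap by observing that $\zeta\in C^{1}(0,\pi)$, that $\{a\neq0\}\cup\mathrm{int}\{a=0\}$ is open and dense in $(0,\pi)$, and that your argument shows $\partial_{\theta}\zeta=0$ on this dense set, hence everywhere by continuity of $\partial_{\theta}\zeta$; but the paper's choice of open set makes the combinatorial propagation unnecessary.
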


\vspace{3pt}

We show that axisymmetric $(-\alpha)$-homogeneous solutions without swirls $u\in C^{2}(\mathbb{R}^{3}\backslash \{0\})$ for $-2\leq \alpha <0$ are irrotational. Their vorticity in the cylindrical coordinates $(r,\phi,z)$ is expressed as 

\begin{align*}
\nabla_{\mathbb{R}^{3}}\times u=\omega^{\phi}(z,r)e_{\phi}.
\end{align*}\\
By taking the rotation in $\mathbb{R}^{3}$ to (2.1), 

\begin{align*}
u\cdot \nabla_{\mathbb{R}^{3}} (\nabla_{\mathbb{R}^{3}} \times u)=(\nabla_{\mathbb{R}^{3}} \times u) \cdot \nabla_{\mathbb{R}^{3}} u.
\end{align*}\\
Multiplying $e_{\phi}$ by both sides,

\begin{align*}
u\cdot \nabla_{\mathbb{R}^{3}} \omega^{\phi}=\frac{u^{r}}{r}\omega^{\phi}.
\end{align*}\\
By dividing both sides by $r$, 

\begin{align*}
u\cdot \nabla_{\mathbb{R}^{3}}\zeta=0,\quad \zeta=\frac{\omega^{\phi}(z,r)}{r}.
\end{align*}\\
Thus $\zeta$ is a first integral of $u$ \cite{UI}. Since $\zeta$ is ($-\alpha-2$)-homogeneous, this condition implies that  

\begin{align}
v\cdot \nabla \zeta=(\alpha+2)f\zeta.
\end{align}

\vspace{3pt}

\begin{proof}[Proof]
It suffices to show (ii) for $-2\leq \alpha<0$ by Theorem 2.6. For axisymmetric $(-\alpha)$-homogeneous solutions without swirls $u\in C^{2}(\mathbb{R}^{3}\backslash \{0\})$, $v=ae_{\theta}$ and 

\begin{align}
\nabla_{\mathbb{R}^{3}} \times u=\omega^{\phi}(z,r)e_{\phi}=
\frac{1}{\rho^{\alpha+1}}\left( (1-\alpha)a-\partial_{\theta}f \right)e_{\phi}.
\end{align}\\
Since $\nabla_{\mathbb{R}^{3}}\times u$ is continuously differentiable on $\mathbb{S}^{2}$, $(1-\alpha)a-\partial_{\theta}f\in C^{1}[0,\pi]$ vanishes at $\theta=0$ and $\pi$. We set 

\begin{align*}
\zeta=\frac{\omega^{\phi}(z,r) }{r}
=\frac{1}{\rho^{\alpha+2}}\left( \frac{(1-\alpha)a-\partial_{\theta}f}{\sin\theta}\right).
\end{align*}\\
The function $\zeta$ on $\mathbb{S}^{2}$ is continuous on $[0,\pi]$ and continuously differentiable in $(0,\pi)$. 

We first consider the case $-2<\alpha<0$. We show that $a\zeta=0$ on $(0,\pi)$. Suppose that $a\zeta\neq 0$ on some interval $J\subset (0,\pi)$. Then, by eliminating $f$ from $(2.5)_3$ and (2.15), and integrating an equation for $a$ and $\zeta$, 

\begin{align*}
|\zeta|^{2-\alpha}|a|^{\alpha+2}\sin^{\alpha+2}\theta=C,
\end{align*}\\
for some constant $C$ on $J$. If $C\neq 0$, $J$ is extendable to $(0,\pi)$. However, the condition $-2<\alpha<0$ implies that the left-hand side vanishes as $\theta\to0$. Thus $C=0$. This contradicts $a\zeta\neq 0$ on $J$. Thus $a\zeta=0$ on $(0,\pi)$.

We show that $\zeta=0$ on $(0,\pi)$. Suppose that $\zeta\neq0$ on some interval $J\subset (0,\pi)$. Then $a=0$ on $J$. The equation (2.5$)_3$ implies that $v=ae_\phi=0$ and $f=0$. Hence $\zeta=0$ on $J$. This is a contradiction and hence $\zeta=0$ on $(0,\pi)$. Thus $u$ is irrotational.

For $\alpha=-2$,  the equation (2.15) implies that $a\partial_{\theta}\zeta=0$. If $\partial_{\theta}\zeta\neq 0$ on some interval $J\subset (0,\pi)$, $v=ae_{\theta}=0$ and the equation $(2.5)_3$ implies that $f=0$. Hence $\zeta=0$ on $J$. This is a contradiction. Thus $\partial_{\theta}\zeta= 0$ on $(0,\pi)$. By the assumption, $\zeta$ vanishes at $\theta=0$ and $\pi$. Thus $\zeta=0$ on $(0,\pi)$ and $u$ is irrotational. 
\end{proof}

\vspace{3pt}

\begin{proof}[Proof of Theorem 1.4 (i) and (ii)]
The results follow from Theorem 2.7.
\end{proof}

\vspace{3pt}

\subsection{Radially irrotational solutions}

We show that all radially irrotational ($-\alpha$)-homogeneous solutions to (1.1) for $\alpha\in \mathbb{R}$ with a nonconstant Bernoulli function are axisymmetric without swirls (Theorem 1.3). Radially irrotational ($-\alpha$)-homogeneous solutions $u=(v+fe_{\rho})/\rho^{\alpha}$ satisfy

\begin{align*}
\nabla_{\mathbb{R}^{3}}\times u\cdot x=0.
\end{align*}\\
The rotation expression (2.4) implies that $\nabla^{\perp}\cdot v=0$ and

\begin{align}
\nabla_{\mathbb{R}^{3}} \times u=\frac{1}{\rho^{\alpha+1}}\left( (1-\alpha)v^{\perp}-\nabla^{\perp}f \right).
\end{align}\\
The equations (2.5) form

\begin{equation}
\begin{aligned}
&(1-\alpha)fv+\nabla \left(p+\frac{1}{2}|v|^{2} \right)=0,\\
&v\cdot \nabla f=|v|^{2}+\alpha f^{2}+2\alpha p,\\
&(2-\alpha)f+\nabla \cdot v=0,\\
&\nabla^{\perp}\cdot v=0.
\end{aligned}
\end{equation}\\
Taking the rotation on $\mathbb{S}^{2}$ to the first equation implies that $v\cdot \nabla^{\perp} f=0$. Multiplying $v^{\perp}$ by the first equation yields that $v^{\perp}\cdot \nabla (p+|v|^{2}/2)=0$. Thus 

\begin{equation}
\begin{aligned}
v^{\perp}\cdot \nabla f&=0,\\
v^{\perp}\cdot \nabla \Pi&=0.
\end{aligned}
\end{equation}

\vspace{3pt}

A key fact to demonstrate the rigidity of radially irrotational homogeneous solutions is that integral curves of $v$ on $\mathbb{S}^{2}$ are geodesic on $\{\Pi\neq 0\}$ (Lemma 2.10). We first exclude the case $\alpha=0$.

\vspace{3pt}

\begin{prop}
There exist no radially irrotational $0$-homogeneous solutions $(u,p)\in C^{2}(\mathbb{R}^{3}\backslash \{0\})$ with a nonconstant Bernoulli function to (1.1).
\end{prop}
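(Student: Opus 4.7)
The plan is to show any such solution has constant Bernoulli function $\Pi$, contradicting the hypothesis. The key point is that for $\alpha=0$ the radially irrotational structure makes $\Pi$ a first integral of \emph{both} $v$ and $v^{\perp}$ on $\mathbb{S}^{2}$, which pins $\nabla \Pi$ down entirely wherever $v$ does not vanish.

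First I specialize the identities (2.6) and (2.19) to $\alpha=0$. The identity (2.6) gives directly $v\cdot \nabla \Pi=0$. For the perpendicular direction, dotting $(2.18)_{1}$ with $v^{\perp}$ yields $v^{\perp}\cdot \nabla(p+|v|^{2}/2)=0$, and combining this with $v^{\perp}\cdot \nabla f=0$ from (2.19) gives $v^{\perp}\cdot \nabla \Pi=0$ since $\Pi=p+|v|^{2}/2+f^{2}/2$. At any point where $v\neq 0$, $\{v,v^{\perp}\}$ is an orthogonal basis of the tangent space of $\mathbb{S}^{2}$, so these two identities force $\nabla \Pi\equiv 0$ on the open set $\{v\neq 0\}$.

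I then close the argument by contradiction. Assume $\Pi$ is nonconstant on $\mathbb{S}^{2}$. The $C^{2}$ regularity of $(u,p)$ makes $\nabla \Pi$ continuous, so $U:=\{\nabla \Pi\neq 0\}$ is a nonempty open subset of $\mathbb{S}^{2}$. By the preceding step $U\subset \{v=0\}$, hence $v\equiv 0$ on $U$. This gives $\nabla\cdot v=0$ on $U$, and then the third equation of (2.18) with $\alpha=0$ forces $f\equiv 0$ on $U$. Plugging $v=0$ and $f=0$ back into $(2.18)_{1}$ yields $\nabla p\equiv 0$ on $U$, so $\Pi=p$ is locally constant on $U$, contradicting the definition of $U$. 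The argument is short once the two first-integral identities are in place; I do not anticipate any real obstacle, the only delicate point being that $v^{\perp}\cdot \nabla \Pi=0$ must be assembled from the two pieces $v^{\perp}\cdot \nabla(p+|v|^{2}/2)=0$ and $v^{\perp}\cdot \nabla f=0$, which rely implicitly on the radial irrotationality $\nabla^{\perp}\cdot v=0$.
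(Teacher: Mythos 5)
Your proof is correct, and it takes a route that differs from the paper's in its key step. The paper introduces a potential $\psi$ with $v=\nabla\psi$ (available from $(2.18)_4$), uses $(2.18)_2$ and $(2.19)_1$ to conclude $\nabla(f-\psi)=0$, deduces from (2.17) that $u$ is irrotational, and only then reads off $\nabla\Pi=0$ from $(2.1)_1$. You instead work with $\Pi$ directly: the two first-integral identities $v\cdot\nabla\Pi=0$ (from (2.6) at $\alpha=0$) and $v^{\perp}\cdot\nabla\Pi=0$ (which is exactly $(2.19)_2$) force $\nabla\Pi=0$ on the open set $\{v\neq 0\}$, and you then dispose of the complementary open set $U=\{\nabla\Pi\neq 0\}\subset\{v=0\}$ by feeding $v\equiv 0$ back into $(2.18)_3$ and $(2.18)_1$ to get $f=0$ and $\nabla p=0$ there. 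What your version buys is an explicit treatment of the degenerate set where $v$ vanishes: the paper's inference ``$\nabla\psi\cdot\nabla(f-\psi)=0$ and $\nabla^{\perp}\psi\cdot\nabla(f-\psi)=0$, thus $\nabla(f-\psi)=0$'' is only immediate where $\nabla\psi\neq 0$ and needs a density/continuity remark to cover the rest, whereas your contradiction argument on $U$ handles that set head-on. What it gives up is the stronger intermediate conclusion that $u$ is irrotational, which you do not need for the statement. Both arguments rest on the same two ingredients, namely $(2.19)$ (hence on $\nabla^{\perp}\cdot v=0$) and the system $(2.18)$ at $\alpha=0$, so the difference is one of organization rather than of substance, but it is a genuine and slightly more careful reorganization.
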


\vspace{3pt}

\begin{proof}
By the irrotational condition $(2.18)_4$, there exists a potential function $\psi$ such that $v=\nabla \psi$. The equation $(2.18)_2$ for $\alpha=0$ implies that

\begin{align*}
\nabla \psi\cdot \nabla (f-\psi)=0.
\end{align*}\\
The condition $(2.19)_1$ implies that $\nabla^{\perp} \psi\cdot \nabla (f-\psi)=0$. Thus $\nabla (f-\psi)=0$. By $v^{\perp}=\nabla^{\perp}\psi=\nabla^{\perp}f$ and (2.17) for $\alpha=0$, $u$ is irrotational. Since $\Pi$ is not constant, no solutions exist by the equation $(2.1)_1$.
\end{proof}

\vspace{3pt}

\begin{prop}
Let $(u,p)\in C^{2}(\mathbb{R}^{3}\backslash \{0\})$ be a radially irrotational $(-\alpha)$-homogeneous solution for $\alpha\in \mathbb{R}\backslash \{0\}$ with a nonconstant Bernoulli function $\Pi$ to (1.1). Then

\begin{equation}
\begin{aligned}
&v\neq 0,\\
&v^{\perp}\cdot \nabla |v|^{2}=0,
\end{aligned}
\end{equation}
on the set

\begin{align*}
\{\Pi\neq 0\}=\left\{e_{\rho}(\theta,\phi)\in \mathbb{S}^{2}\ \middle|\ \Pi(\theta,\phi)\neq 0,\ 0\leq \theta\leq \pi,\ 0\leq \phi\leq 2\pi\ \right\}\neq \emptyset.
\end{align*}
\end{prop}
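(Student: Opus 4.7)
The set $\{\Pi\neq 0\}$ is nonempty because $\Pi$ is nonconstant. The nonvanishing of $v$ on this set I would prove by contradiction: at a hypothetical zero $q$ of $v$, equation $(2.18)_2$ yields $0 = \alpha(f(q)^2 + 2p(q))$, and $v(q)=0$ reduces $\Pi(q)$ to $p(q)+f(q)^2/2$, giving $2\alpha\Pi(q) = 0$, which contradicts $\alpha\neq 0$ and $\Pi(q)\neq 0$.

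For the main identity $v^{\perp}\cdot\nabla|v|^2 = 0$, my plan is to exploit a local stream function. The condition $(2.18)_4$ gives, on any simply connected neighborhood, a scalar $\psi$ with $v = \nabla\psi$. Since $v\neq 0$ on $\{\Pi\neq 0\}$, $\nabla\psi$ does not vanish, so the level curves of $\psi$ are smooth and coincide locally with the integral curves of $v^{\perp}$. The identities $(2.19)_1$ and $(2.19)_2$ then mean that $f$ and $\Pi$ are constant along these level curves, yielding local functional representations $f = F(\psi)$ and $\Pi = G(\psi)$ with $F, G$ smooth.

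I would next eliminate $p$ from $(2.18)_2$ using $\Pi = p + |v|^2/2 + f^2/2$ to obtain $v\cdot\nabla f = (1-\alpha)|v|^2 + 2\alpha\Pi$, then substitute $v\cdot\nabla f = \nabla\psi\cdot F'(\psi)\nabla\psi = F'(\psi)|v|^2$ and $\Pi = G(\psi)$ to arrive at
\[
\bigl(F'(\psi)-(1-\alpha)\bigr)|v|^2 = 2\alpha G(\psi).
\]
On $\{\Pi\neq 0\}$ the prefactor $F'(\psi)-(1-\alpha)$ cannot vanish, for otherwise the right-hand side would force $\Pi = G(\psi) = 0$ (using $\alpha\neq 0$). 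Therefore $|v|^2$ is locally a smooth function of $\psi$, which gives $\nabla|v|^2 \parallel \nabla\psi = v$, equivalently $v^{\perp}\cdot\nabla|v|^2 = 0$.

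The main subtlety I expect is the passage from $\nabla f \parallel v$ and $\nabla\Pi\parallel v$ to the functional forms $f = F(\psi)$, $\Pi = G(\psi)$: this requires $\nabla\psi = v\neq 0$, which is handled by part (i), together with a choice of simply connected local chart for the existence of $\psi$. The identity $v^{\perp}\cdot\nabla|v|^2 = 0$ is then established at each point of $\{\Pi\neq 0\}$, which gives the claimed pointwise statement on this open set.
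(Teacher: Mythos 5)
Your proposal is correct and follows essentially the same route as the paper: the nonvanishing of $v$ on $\{\Pi\neq 0\}$ from $(2.18)_2$, the local representation $f=F(\psi)$ for the potential $\psi$ with $v=\nabla\psi$, and the identity $\bigl(F'(\psi)-(1-\alpha)\bigr)|v|^{2}=2\alpha\Pi$ forcing $|v|^{2}$ to be constant on level sets of $\psi$. The subtlety you flag (upgrading $v^{\perp}\cdot\nabla f=0$ to $f=F(\psi)$ with $F$ differentiable) is exactly the part the paper spends most of its proof on, via the implicit function theorem and the flow of $\nabla\tilde{\psi}/|\nabla\tilde{\psi}|^{2}$.
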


\vspace{3pt}

\begin{proof}
We write (2.18$)_2$ as $v\cdot \nabla f=(1-\alpha)|v|^{2}+2\alpha \Pi$ and observe that $v$ does not vanish on the set $\{\Pi\neq 0\}$. Thus $(2.20)_1$ holds. We take a function $\psi$ such that $v=\nabla \psi$. We show that there exists a $C^{1}$-function $F$ such that $f$ is locally expressed as 

\begin{align}
F(\psi(\theta,\phi))=f(\theta,\phi).
\end{align}\\
This equation and $(2.18)_2$ imply 

\begin{align*}
2\alpha \Pi=v\cdot (\nabla f-(1-\alpha)v)=|v|^{2}\left(F'(\psi)-(1-\alpha) \right).
\end{align*}\\
By $\alpha\neq 0$ and $v\neq 0$ on $\{\Pi\neq 0\}$, $F'(\psi)-(1-\alpha)\neq 0$. Since $\Pi$ and $F'$ are constants on level sets of $\psi$ by $(2.19)_2$, so is $|v|^{2}$. Thus $(2.20)_2$ holds.

We demonstrate (2.21). For the function $\psi(\theta,\phi)$, we denote its 0-homogeneous extension to $\mathbb{R}^{3}\backslash \{0\}$ by 

\begin{align*}
\tilde{\psi}(x)=\psi(\theta,\phi),\quad x=\rho e_{\rho}(\theta,\phi).
\end{align*}\\
We consider a level set,

\begin{align*}
\left\{\tilde{\psi}=C,\ \tilde{\Pi}\neq 0\right\}=\left\{x=e_{\rho}(\theta,\phi)\in \mathbb{S}^{2}\ |\   \tilde{\psi}(x)=C,\ \tilde{\Pi}(x)\neq 0\ \right\},
\end{align*}\\
for a constant $C$. By $\nabla \tilde{\psi}=\nabla \psi\neq 0$ on $\{\tilde{\psi}=C,\ \tilde{\Pi}\neq 0\}$ and the implicit function theorem, there exists a function $\phi(\theta)$ and an open interval $J$ such that the level set $\{\tilde{\psi}=C,\ \tilde{\Pi}\neq 0\}$ is locally expressed by a curve $\{e_{\rho}(\theta,\phi(\theta))\ |\ \theta\in J\ \}$. We take an integral curve $x(t,\theta)$ of $\nabla \tilde{\psi}/| \nabla \tilde{\psi}|^{2}$ starting from the point $x(0,\theta)=e_{\rho}(\theta,\phi(\theta))$. Since $\nabla \tilde{\psi}/| \nabla \tilde{\psi}|^{2}$ is a $C^{2}$-function, $x(t,\theta)$ is a $C^{1}$-function for $(t,\theta)$. By $\partial_t x\cdot \nabla \tilde{\psi}=1$,

\begin{align*}
\tilde{\psi}(x(t,\theta) )=C+t.
\end{align*}\\
By differentiating both sides by $\theta$, $\partial_{\theta}x\cdot \nabla \tilde{\psi}=0$. Thus $\partial_{\theta}x$ is parallel to $\nabla^{\perp} \tilde{\psi}$. Since $\nabla^{\perp} \tilde{\psi}\cdot \nabla \tilde{f}=0$ by $(2.19)_1$, 

\begin{align*}
0=\partial_{\theta}x(t,\theta) \cdot \nabla \tilde{f}(x(t,\theta))=\partial_{\theta} \tilde{f}(x(t,\theta)).
\end{align*}\\
Thus $\tilde{f}(x(t,\theta))$ is independent of $\theta$. We denote $\tilde{f}(x(t,\theta))$ by $\tilde{f}(x(t,\theta))=F(C+t)$ and a function $F$. By using the inverse function $t(x)$ of $(t,\theta)\longmapsto x(t,\theta)$, $\tilde{f}(x)=F(C+t(x))=F(\tilde{\psi}(x))$. Thus (2.21) holds.
\end{proof}

\vspace{3pt}

\begin{lem}
Integral curves of $v$ on $\mathbb{S}^{2}$ are geodesics on $\{\Pi\neq 0\}$.
\end{lem}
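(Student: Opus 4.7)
The plan is to show that the covariant derivative $v\nabla v$ is parallel to $v$ at every point of $\{\Pi\neq 0\}$, which is the analytic expression of the geodesic condition for the integral curves of $v$ as unparametrized curves on $\mathbb{S}^{2}$. The case $\alpha=0$ is excluded by Proposition 2.8, so I may assume $\alpha\in \mathbb{R}\backslash\{0\}$ and use Proposition 2.9 throughout.

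First, I would invoke the identity from Remark 2.1, namely
\begin{align*}
v\nabla v=(\nabla^{\perp}\cdot v)v^{\perp}+\nabla \tfrac{1}{2}|v|^{2}.
\end{align*}
Since $u$ is radially irrotational, equation (2.18)$_4$ gives $\nabla^{\perp}\cdot v=0$, so the first term drops and the identity reduces to $v\nabla v=\nabla |v|^{2}/2$. This is the key simplification that replaces a covariant derivative with a gradient.

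Next, I would project this identity onto the orthogonal direction $v^{\perp}$. On the set $\{\Pi\neq 0\}$, Proposition 2.9 (ii) gives $v^{\perp}\cdot \nabla |v|^{2}=0$. Therefore
\begin{align*}
(v\nabla v)\cdot v^{\perp}=\tfrac{1}{2}\,v^{\perp}\cdot \nabla |v|^{2}=0,
\end{align*}
which means $v\nabla v$ is parallel to $v$ at every point of $\{\Pi\neq 0\}$. Since Proposition 2.9 (i) also guarantees $v\neq 0$ on this set, the integral curves of $v$ have nonvanishing tangent vector and satisfy the pregeodesic equation, hence they are geodesics on $\mathbb{S}^{2}$ (possibly up to reparametrization).

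I do not anticipate a serious obstacle: the argument is an immediate consequence of Remark 2.1, the vanishing of $\nabla^{\perp}\cdot v$ from the radial irrotationality, and the two properties of $v$ on $\{\Pi\neq 0\}$ established in Proposition 2.9. The only mild subtlety is the interpretation of ``integral curve is geodesic'', which here means geodesic as an unparametrized curve; the condition $(v\nabla v)\cdot v^{\perp}=0$ with $v\neq 0$ captures precisely this.
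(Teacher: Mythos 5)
Your proposal is correct and follows essentially the same route as the paper: both reduce to the identity $v\nabla v=(\nabla^{\perp}\cdot v)v^{\perp}+\nabla|v|^{2}/2$ from Remark 2.1, use $\nabla^{\perp}\cdot v=0$ and the properties $(2.20)$ from Proposition 2.9 to conclude that $v\nabla v$ is parallel to the nonvanishing $v$, hence the integral curves are pregeodesics. The only cosmetic difference is that the paper writes out the explicit reparametrization $z'(t)=y'(t)\exp(-\int_0^t c(s)\,ds)$ making $z''_{\textrm{tan}}=0$, whereas you invoke the standard fact that a pregeodesic with nonvanishing tangent is a geodesic up to reparametrization.
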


\vspace{3pt}

\begin{proof}
The property (2.20) implies that 

\begin{align*}
\nabla |v|^{2}=(v\cdot \nabla |v|^{2})\frac{v}{|v|^{2}},
\end{align*}\\
on $\{\Pi\neq 0\}$. By the identity (2.7), 

\begin{align*}
v\nabla v=(\nabla^{\perp}\cdot v)v^{\perp}+\nabla \frac{1}{2}|v|^{2}=\frac{v\cdot \nabla |v|^{2}}{2|v|^{2}}v.
\end{align*}\\
Since the covariant derivative $v\nabla v$ is the tangential component of the second derivative $y''_{\textrm{tan}}(t)$ for the integral curve $y(t)$ of $v$ on $\mathbb{S}^{2}$, 

\begin{align*}
y''_{\textrm{tan}}(t)=c(t)y'(t),\quad
c(t)=\left(\frac{v\cdot \nabla |v|^{2}}{2|v|^{2}}\right)(y(t)).
\end{align*}\\
The curve $z(t)$ defined by

\begin{align*}
z'(t)=y'(t)\exp\left(-\int_{0}^{t}c(s)ds\right),
\end{align*}\\
satisfies $z''_{\textrm{tan}}(t)=0$. Thus $z(t)$ is geodesic and so is $y(t)$.
\end{proof}

\vspace{3pt}

In the sequel, we say that $(v,f,p)$ is axisymmetric without swirls if $v=ae_{\theta}$ and $(a,f,p)$ is independent of $\phi$. We show that the geodesic property of integral curves of $v$ on $\{\Pi\neq 0\}$ implies that $(v,f,p)$ is axisymmetric without swirls in the zonal region 

\begin{align}
D(\theta_1,\theta_2)=\left\{e_{\rho}(\theta,\phi)\in \mathbb{S}^{2}\ \middle|\ 0\leq \phi\leq 2\pi,\ \theta_1<\theta<\theta_2\right\}\subset \{\Pi\neq 0\},
\end{align}\\
for some $0<\theta_1<\theta_2<\pi$. 

\vspace{3pt}

\begin{prop}
There exists $\theta_1,\theta_2\in (0,\pi)$ such that up to rotation, $(v,f,p)$ is axisymmetric without swirls in $D(\theta_1,\theta_2)\subset \{\Pi\neq 0\}$.
\end{prop}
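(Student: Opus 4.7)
The plan is to exploit the geodesic property from Lemma 2.10 by setting up Fermi coordinates along a level set of $\Pi$. Since $\Pi$ is nonconstant and continuous, pick a point $p_0 \in \{\Pi \neq 0\}$ at which $\nabla \Pi \neq 0$, and let $\gamma_0$ be the connected component of the level set $\{\Pi = \Pi(p_0)\}$ through $p_0$, parametrized by arc length $t$. From each point $\gamma_0(t)$, shoot the unit-speed geodesic perpendicular to $\gamma_0$ and use its arc length $s$ as the second coordinate. In these Fermi coordinates $(s,t)$, the $v$-integral curves in a neighborhood of $\gamma_0$ are precisely $\{t = \textrm{const}\}$: they are geodesics by Lemma 2.10, and they cross $\gamma_0$ perpendicularly because $(2.19)_2$ forces $v$ to be parallel to $\nabla \Pi$. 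The $v^{\perp}$-integral curves coincide with the level sets of $\Pi$, hence are $\{s = \textrm{const}\}$, so by $(2.19)_1$ and $(2.20)_2$ each of $\Pi$, $f$, and $|v|^2$ depends only on $s$.

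The second step is to extract a rigidity constraint from the divergence equation $(2.18)_3$. In Fermi coordinates the induced metric on $\mathbb{S}^{2}$ takes the form $ds^2 + \phi(s,t)^2 dt^2$ with $\phi(0,t) = 1$ and $\partial_s \phi(0,t)$ equal to $\pm\kappa(t)$, where $\kappa(t)$ is the geodesic curvature of $\gamma_0$. Since $\mathbb{S}^2$ has Gauss curvature $1$, the Jacobi equation $\partial_s^2 \phi + \phi = 0$ yields the explicit form $\phi(s,t) = \cos s + B(t)\sin s$ with $B(t)$ proportional to $\kappa(t)$. Writing $v = |v(s)|\, \partial_s$ and using the standard formula $\nabla \cdot v = \phi^{-1}\partial_s(\phi |v|) = |v|'(s) + (\phi_s/\phi)(s,t)\, |v|(s)$, the equation $(2-\alpha)f + \nabla \cdot v = 0$, together with the fact that $f$ and $|v|$ depend only on $s$ and that $|v| \neq 0$ by $(2.20)_1$, forces $(\phi_s/\phi)(s,t)$ to be independent of $t$.

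Differentiating in $t$ yields $\phi_{st}\phi = \phi_s\phi_t$; substituting $\phi(s,t) = \cos s + B(t)\sin s$ and simplifying via $\sin^2 s + \cos^2 s = 1$ reduces this to $B'(t) \equiv 0$, so $\kappa$ is constant along $\gamma_0$. A smooth curve on $\mathbb{S}^2$ with constant geodesic curvature is a circle, and its orthogonal geodesics are the meridians through its two poles. Rotating so that these poles become the poles of the $(\theta,\phi)$ coordinates, $\gamma_0$ becomes $\{\theta = \theta_0\}$ for some $\theta_0 \in (0,\pi)$, the perpendicular geodesics become meridians, and $v = a\, e_\theta$ with $a$, $f$, $\Pi$, $|v|^2$ functions of $\theta$ alone; equation $(2.18)_1$ then makes $p$ a function of $\theta$ alone as well. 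This provides the zonal neighborhood $D(\theta_1, \theta_2)$ claimed in the proposition.

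The main obstacle I expect is the clean derivation of the rigidity $\kappa \equiv \textrm{const}$ from the PDE; the key technical ingredients are the Jacobi equation on $\mathbb{S}^2$ (giving the explicit form of $\phi$) and the combined use of $(2.19)$ and $(2.20)$, which reduce the three unknowns $\Pi$, $f$, and $|v|^2$ to functions of the Fermi normal coordinate $s$ alone before the divergence equation is invoked.
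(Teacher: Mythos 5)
Your argument is correct, and it is more careful than the paper's own proof at precisely the point where care is needed. The paper takes an open set $D_0\subset\{\Pi\neq 0\}$, invokes Lemma 2.10, and then simply asserts ``we may assume that $v=ae_{\theta}$ by rotation,'' after which $(2.19)$ and $(2.20)_2$ give the $\phi$-independence of $f$, $\Pi$, $a$. But a foliation of an open set by geodesic arcs need not become a meridian foliation after a rotation (the great circles tangent to a fixed small circle foliate an annular region without being concurrent), so the normalization $v=ae_{\theta}$ really requires showing that the orthogonal trajectories --- the level sets of $\Pi$ --- are circles. That is exactly what your Fermi-coordinate computation delivers: the divergence equation $(2.18)_3$, together with the facts that $f$ and $|v|$ depend only on the normal coordinate $s$ and that $|v|\neq 0$ on $\{\Pi\neq 0\}$, forces $\partial_s\log\phi$ to be $t$-independent; since $\phi(0,t)=1$ and $\phi_{ss}+\phi=0$, the identity $\phi_{st}\phi-\phi_s\phi_t=B'(t)$ then gives $B'\equiv 0$, i.e.\ constant geodesic curvature, hence a circle whose orthogonal geodesics are meridians. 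Notably, the paper's proof of this proposition never uses $(2.18)_3$, whereas your argument shows it (or some substitute) is needed; what the paper's terser route buys is brevity, at the cost of leaving the normalization unjustified. The one loose end you share with the paper is the passage from axisymmetry on the Fermi ``rectangle'' over an arc of $\gamma_0$ to the full zonal region $D(\theta_1,\theta_2)$: one should add that the level set of $\Pi$ through $p_0$ lies entirely in $\{\Pi\neq 0\}$, so the same local argument applies along all of it, it is therefore a complete circle of constant curvature containing the arc already identified with a latitude circle, and hence $\Pi$ is a nonzero constant on each full circle of latitude, giving $D(\theta_1,\theta_2)\subset\{\Pi\neq 0\}$.
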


\vspace{3pt}

\begin{proof}
We take an open set $D_0\subset \mathbb{S}^{2}$ such that $D_0\subset \{\Pi\neq 0\}$. By Lemma 2.10, integral curves of $v$ are geodesics on $D_0$. We may assume that $v=ae_{\theta}$ by rotation and  

\begin{align*}
D_0=\left\{e_{\rho}(\theta,\phi)\in \mathbb{S}^{2}\ \middle|\ 0<\phi<\phi_1,\ \theta_1<\theta<\theta_2\right\}
\end{align*}\\
for some $0<\phi_1<2\pi$ and $0<\theta_1<\theta_2<\pi$. The conditions (2.19) and $(2.20)_2$ imply that  

\begin{align*}
\partial_{\phi}f=0,\quad \partial_{\phi}\Pi=0,\quad  \partial_{\phi}a=0.
\end{align*}\\
Thus $(v,f,p)$ is axisymmetric without swirls on $D_0$. For each $\theta\in (\theta_1,\theta_2)$, $\Pi$ is nonzero constant on the circle of the latitude

\begin{align*}
\left\{e_{\rho}(\theta,\phi)\in \mathbb{S}^{2}\ \middle|\ 0\leq \phi\leq 2\pi\ \right\}.
\end{align*}\\
Thus the set $\{\Pi\neq 0\}$ includes the zonal region $D(\theta_1,\theta_2)$.
\end{proof}

\vspace{3pt}

\begin{prop}
Suppose that $(v,f,p)$ is axisymmetric without swirls in $D(\theta_1,\theta_2)$. If there exists $\theta_3\in (\theta_2,\pi)$ such that $\Pi=0$ in $D(\theta_2,\theta_3)$. Then, $(v,f,p)$ is axisymmetric without swirls in $D(\theta_1,\theta_3)$.
\end{prop}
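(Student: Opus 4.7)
The plan is to reduce the system on the strip $D(\theta_2,\theta_3)$ where $\Pi=0$ to an irrotational one, derive the Laplace--Beltrami eigenvalue equation for $f$ there, and then propagate axisymmetry across the latitude circle $\theta=\theta_2$ by Fourier expansion in $\phi$ together with ODE uniqueness. I may assume $\alpha\neq 0,1$ throughout, since Proposition 2.8 and Theorem 2.4 dispose of those cases.

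First, on $D(\theta_2,\theta_3)$ the relation $\Pi=p+|v|^{2}/2+f^{2}/2=0$ gives $p+|v|^{2}/2=-f^{2}/2$, so the radially irrotational system $(2.18)$ collapses to $f\bigl[(1-\alpha)v-\nabla f\bigr]=0$ together with $v\cdot\nabla f=(1-\alpha)|v|^{2}$ and $(2-\alpha)f+\nabla\cdot v=0$. Splitting pointwise, wherever $f\neq 0$ one has $(1-\alpha)v=\nabla f$; in the interior of $\{f=0\}$ the middle identity forces $v=0$ because $\alpha\neq 1$, and so $(1-\alpha)v=\nabla f$ persists by continuity. Hence the identity $(1-\alpha)v=\nabla f$ holds throughout $D(\theta_2,\theta_3)$, which by the rotation formula $(2.17)$ means $\nabla_{\mathbb{R}^{3}}\times u=0$ there, and substituting into $(2-\alpha)f+\nabla\cdot v=0$ yields the spherical eigenvalue equation
\begin{equation*}
-\Delta f=(\alpha-2)(\alpha-1)f\quad\text{in}\ D(\theta_2,\theta_3),
\end{equation*}
exactly as in the proof of Theorem 2.2.

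Next, I would Fourier expand $f(\theta,\phi)=\sum_{m\in\mathbb{Z}}f_m(\theta)e^{im\phi}$ as in $(2.9)$. Each $f_m$ solves the associated Legendre ODE of Remark 2.3 on $(\theta_2,\theta_3)$, which is a non-singular second-order linear ODE there since $\sin\theta>0$. Because $(u,p)\in C^{2}(\mathbb{R}^{3}\backslash\{0\})$, both $f$ and $\partial_\theta f$ are continuous across the circle $\theta=\theta_2$, and on the $D(\theta_1,\theta_2)$ side they are $\phi$-independent by hypothesis; therefore $f_m(\theta_2)=f_m'(\theta_2)=0$ for every $m\neq 0$. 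Uniqueness for the initial value problem then forces $f_m\equiv 0$ on $(\theta_2,\theta_3)$ for all $m\neq 0$, so $f=f(\theta)$ in $D(\theta_2,\theta_3)$.

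Once $f$ is axisymmetric, $\nabla f=(\partial_\theta f)e_\theta$, so the identity $(1-\alpha)v=\nabla f$ gives $v=a(\theta)e_\theta$ with $a=\partial_\theta f/(1-\alpha)$, and $p=-|v|^{2}/2-f^{2}/2$ depends only on $\theta$ as well. Combined with the assumed axisymmetry on $D(\theta_1,\theta_2)$, this yields the conclusion on $D(\theta_1,\theta_3)$. The only delicate step is the pointwise dichotomy $f\bigl[(1-\alpha)v-\nabla f\bigr]=0$, whose two branches could in principle alternate inside the strip; the argument above resolves this by showing that the $C^{2}$ regularity of $f$ propagates $(1-\alpha)v=\nabla f$ globally in $D(\theta_2,\theta_3)$, after which the Fourier/ODE uniqueness step is unambiguous.
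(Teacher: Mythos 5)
Your proposal is correct and follows essentially the same route as the paper: reduce to $(1-\alpha)v=\nabla f$ on the strip where $\Pi=0$, derive $-\Delta f=(\alpha-2)(\alpha-1)f$, and kill the nonzero Fourier modes of $f$ via the associated Legendre ODE with vanishing Cauchy data at $\theta=\theta_2$. Your handling of the dichotomy on $\{f=0\}$ (interior plus continuity) is in fact slightly more careful than the paper's one-line version of that step.
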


\vspace{3pt}

\begin{proof}
By $(2.18)_1$ and $\Pi=0$ on $D(\theta_2,\theta_3)$, $(1-\alpha)vf=\nabla ff$. If $f\neq  0$, $(1-\alpha)v=\nabla f$. If $f=0$, $(2.18)_2$ and $\Pi=0$ imply that  $v=0$. Thus,

\begin{align*}
(1-\alpha)v=\nabla f\quad \textrm{in}\ D(\theta_2,\theta_3). 
\end{align*}\\
By $(2.18)_3$, 
 
\begin{align*}
-\Delta f=(\alpha-2)(\alpha-1)f\quad \textrm{in}\ D(\theta_2,\theta_3).
\end{align*}\\
Since $(v,f,p)$ is axisymmetric without swirls in $D(\theta_1,\theta_2)$, 

\begin{align*}
\textrm{$f$ and $\partial_{\theta}f$ are constants on $\{\theta=\theta_2\}$.}
\end{align*}\\
By the Fourier series expansion (2.9) for $f=f_{\alpha-2}$,

\begin{align*}
f_{\alpha-2}(\theta,\phi)=\sum_{m=-\infty}^{\infty}f_{\alpha-2,m}(\theta)e^{im\phi},\quad f_{\alpha-2,m}(\theta)=\frac{1}{2\pi}\int_{0}^{2\pi}f_{\alpha-2}(\theta,\phi)e^{-im\phi}\dd \phi.
\end{align*}\\
The Fourier coefficients $f_{\alpha-2,m}(\theta)$ satisfy the associated Legendre equation,

\begin{align*}
-f_{\alpha-2,m}''-\cot \theta f_{\alpha-2,m}'+\frac{m^{2}}{\sin^{2}\theta}f_{\alpha-2,m}=(\alpha-2)(\alpha-1)f_{\alpha-2,m},\quad \theta_2<\theta<\theta_3.
\end{align*}\\
Since $f$ and $\partial_{\theta}f$ are constants on $\{\theta=\theta_2\}$, the coefficients $f_{\alpha-2,m}$ for $m\in \mathbb{Z}\backslash \{0\}$ satisfy 

\begin{align*}
f_{\alpha-2,m}(\theta_2)=f_{\alpha-2,m}'(\theta_2)=0.
\end{align*}\\
By the uniqueness of the ODE, $f_{\alpha-2,m}(\theta)=0$ for all $m\in \mathbb{Z}\backslash \{0\}$ and 

\begin{align*}
f_{\alpha-2}(\theta,\phi)=\sum_{m=-\infty}^{m}f_{\alpha-2,m}(\theta)e^{im\phi}=f_{\alpha-2,0}(\theta).
\end{align*}\\
Thus $(v,f,p)$ is axisymmetric without swirls in $D(\theta_1,\theta_3)$.
\end{proof}

\vspace{3pt}

\begin{prop}
Suppose that $(v,f,p)$ is axisymmetric without swirls in $D(\theta_1,\theta_2)$. Then, there exists $\theta_3\in (\theta_2,\pi)$ such that $(v,f,p)$ is axisymmetric without swirls in $D(\theta_1,\theta_3)$.
\end{prop}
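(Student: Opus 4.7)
The plan is to split according to the value $c_0$ taken by the Bernoulli function $\Pi$ on the circle $\{\theta=\theta_2\}$. By hypothesis and continuity, $(v,f,p)$ extends axisymmetrically to $\overline{D(\theta_1,\theta_2)}$, so $\Pi|_{\theta=\theta_2}\equiv c_0$ is constant. I would treat $c_0\neq 0$ and $c_0=0$ separately.

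For Case 1, $c_0\neq 0$: by continuity of $\Pi$, pick $\delta>0$ so that $\Pi\neq 0$ throughout $D(\theta_2-\delta,\theta_2+\delta)$. Proposition 2.9 then gives $v\neq 0$ on the same strip, and Lemma 2.10 says that the integral curves of $v$ there are geodesics on $\mathbb{S}^{2}$. On the lower half $D(\theta_2-\delta,\theta_2)$ we already have $v=a(\theta)e_{\theta}$ with $a(\theta_2)\neq 0$, so these integral curves are meridian arcs, and the uniqueness of geodesics with prescribed initial point and tangent direction forces them to continue as meridians into $D(\theta_2,\theta_2+\delta)$. Since the meridians through the points of $\{\theta=\theta_2\}$ foliate the upper strip, $v=\tilde{a}(\theta,\phi)e_{\theta}$ everywhere on $D(\theta_2,\theta_2+\delta)$. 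Then $v^{\perp}=\tilde{a}\,e_{\phi}$ combined with (2.19) and $(2.20)_{2}$ yield $\tilde{a}\,\partial_{\phi}f=\tilde{a}\,\partial_{\phi}\Pi=\tilde{a}\,\partial_{\phi}|v|^{2}=0$, and $\tilde{a}\neq 0$ lets me conclude that $\tilde{a}$, $f$, $\Pi$, and hence $p=\Pi-|v|^{2}/2-f^{2}/2$, are independent of $\phi$. Taking $\theta_3=\theta_2+\delta$ finishes this case.

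For Case 2, $c_0=0$: if $\Pi\equiv 0$ on some $D(\theta_2,\theta_2+\delta)$, then Proposition 2.12 applies directly and gives axisymmetry on $D(\theta_1,\theta_2+\delta)$. The remaining sub-case, in which the open set $\{\Pi\neq 0\}$ accumulates at the circle $\{\theta=\theta_2\}$, is where I expect the main obstacle. The natural plan is to apply the Case 1 reasoning on each connected component of $\{\Pi\neq 0\}$ that abuts $\{\theta=\theta_2\}$ (using the axisymmetric boundary data from below to pin down the meridian structure), and to apply Proposition 2.12 on the interior of $\{\Pi=0\}$. Pasting these conclusions across the nodal set $\partial\{\Pi=0\}\cap D(\theta_2,\theta_2+\delta)$ is the key remaining step, for which one would expect to invoke a unique continuation argument for the associated Legendre equation satisfied by $f$ on any open subregion where $\Pi$ vanishes.
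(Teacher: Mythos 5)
Your Case 1 ($\Pi\neq 0$ on the circle $\{\theta=\theta_2\}$) and your sub-case 2a ($\Pi\equiv 0$ on a band above it) are correct and coincide with the two alternatives the paper actually uses: the first is the ``repetition of the argument of Proposition 2.11'' (geodesics continuing the meridians, then (2.19) and $(2.20)_2$ killing the $\phi$-derivatives), the second is exactly Proposition 2.12. The problem is sub-case 2b, which you explicitly leave open. What you offer there --- run the geodesic argument on each component of $\{\Pi\neq 0\}$ touching the circle, run Proposition 2.12 on the interior of $\{\Pi=0\}$, and ``paste across the nodal set by unique continuation'' --- is not a proof and does not obviously close: unique continuation for the associated Legendre equation is only available where $\Pi$ vanishes on an \emph{open} set (since only there does $(2.18)_1$--$(2.18)_2$ give $(1-\alpha)v=\nabla f$ and $-\Delta f=(\alpha-2)(\alpha-1)f$), the geodesic argument is only available where $\Pi\neq 0$, and the set $\partial\{\Pi=0\}$ separating them can a priori be wild near $\{\theta=\theta_2\}$; controlling $f$ and $\partial_\theta f$ on that interface is precisely the content of the proposition, so this is a genuine gap rather than a routine verification.

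The paper sidesteps your Case 2b with a different dichotomy, and this is the device you are missing: it asks only whether, after a rotation about the $z$-axis, \emph{some} sector $\tilde{D}_0=\{0<\phi<\tilde{\phi}_1,\ \theta_2<\theta<\theta_3\}$ is contained in $\{\Pi\neq 0\}$ --- not whether $\Pi\neq 0$ on the whole circle $\{\theta=\theta_2\}$. If such a sector exists, the Proposition 2.11 argument run on $\tilde{D}_0$ alone already gives $\partial_\phi\Pi=\partial_\phi f=\partial_\phi a=0$ there, so $\Pi$ is a nonzero constant on each latitude arc inside the sector; as in the end of the proof of Proposition 2.11, this propagates the nonvanishing of $\Pi$ around the full circles of latitude, so $\{\Pi\neq 0\}$ contains the entire zonal band $D(\theta_2,\theta_3)$ and axisymmetry follows on all of $D(\theta_1,\theta_3)$. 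In other words, a sector suffices where you demanded a full band, and this absorbs most of your Case 2b into Case 1. (The complementary alternative, in which no such sector exists, is then asserted in the paper to force $\Pi\equiv 0$ on a band $D(\theta_2,\theta_3)$ --- a step the paper itself leaves terse --- after which Proposition 2.12 applies.) If you want to salvage your write-up, replace your trichotomy by the paper's sector-based dichotomy and add the propagation-around-the-latitude step; as it stands, the argument is incomplete.
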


\vspace{3pt}

\begin{proof}
If there exists $\tilde{\phi}_1\in (0,2\pi)$ and $\theta_3\in (\theta_2,\pi)$ such that by rotation around the $z$-axis, 

\begin{align*}
\tilde{D}_0=\left\{e_{\rho}(\theta,\phi)\in \mathbb{S}^{2}\ \middle|\ 0<\phi<\tilde{\phi}_1,\ \theta_2<\theta<\theta_3\right\}\subset \{\Pi\neq 0\},
\end{align*}\\
then repetition of the same argument in the proof of Proposition 2.11 implies that level sets of $\Pi$ on $\tilde{D}_0$ are parallel to circles of the latitude and $(v,f,p)$ is axisymmetric without swirls in $D(\theta_1,\theta_3)$. 

If there exist no such points $\tilde{\phi}_1$ and $\theta_3$, $\Pi=0$ in $D(\theta_2,\theta_3)$ for some $\theta_3\in (\theta_2,\pi)$. By Proposition 2.12, $(v,f,p)$ is axisymmetric without swirls in $D(\theta_1,\theta_3)$.
\end{proof}

\vspace{3pt}

\begin{proof}[Proof of Theorem 1.3]
By Proposition 2.8, we may assume that $\alpha\neq 0$. By Proposition 2.11, there exist $\theta_1,\theta_2\in(0,\pi)$ such that by rotation $(v,f,p)$ is axisymmetric without swirls in $D(\theta_1,\theta_2)$. We set 

\begin{align*}
\theta_{*}=\sup\left\{\ \theta_3\in (\theta_2,\pi)\ \middle|\  \textrm{$(v,f,p)$ is axisymmetric without swirls}\ \textrm{in}\ D(\theta_1,\theta_3)\  \right\}.
\end{align*}\\
By Proposition 2.13, $\theta_*= \pi$. By applying the same argument for $\theta<\theta_1$, we conclude that $(v,f,p)$ is axisymmetric without swirls on $\mathbb{S}^{2}$. Thus $u$ is axisymmetric without swirls in $\mathbb{R}^{3}\backslash \{0\}$. 
\end{proof}

\vspace{3pt}

\begin{rem}
Tangentially irrotational ($-\alpha$)-homogeneous solutions $u\in C^{1}(\mathbb{R}^{3}\backslash \{0\})$ to (1.1) are irrotational solutions for $\alpha\in \mathbb{Z}\backslash \{1\}$. In fact, the condition

\begin{align*}
(\nabla_{\mathbb{R}^{3}}\times u)\times x=0,
\end{align*}\\
implies that $(1-\alpha)v=\nabla f$ and 

\begin{align*}
\nabla_{\mathbb{R}^{3}}\times u=\frac{1}{\rho^{\alpha+1}}(\nabla^{\perp}\cdot v)e_{\rho}.
\end{align*}\\
For $\alpha\neq 1$, taking the rotation to $(1-\alpha)v=\nabla f$ implies that $\nabla^{\perp}\cdot v=0$. Thus $u$ is irrotational. For $\alpha=1$, no solutions exist by Theorem 2.4.
\end{rem}

\vspace{3pt}

\section{The existence of solutions to the one-dimensional Dirichlet problem}

We construct axisymmetric homogeneous solutions to the Euler equations (1.1) by solutions of the one-dimensional (1D) Dirichlet problem (1.13). Axisymmetric solutions $u=\nabla \times (\psi\nabla \phi)+\Gamma \nabla \phi$ to the Euler equations can be constructed by solutions of the Grad--Shafranov equation in the cross-section for a prescribed Bernoulli function $\Pi=\Pi(\psi)$ and a function $\Gamma=\Gamma(\psi)$ \cite{Gav}, \cite{CLV}, \cite{DEPS21}. We consider $(-\alpha+2)$-homogeneous solutions $\psi=w(\cos\theta)/\rho^{\alpha-2}$ to the Grad--Shafranov equation for the $(-2\alpha)$-homogeneous $\Pi(\psi)=C_1|\psi|^{2+4/(\alpha-2)}$ and the $(-\alpha+1)$-homogeneous $\Gamma(\psi)=C_2\psi|\psi|^{1/(\alpha-2)}$, and derive the 1D Dirichlet problem for the function $w$ in $(-1,1)$. We outline the construction of solutions to the 1D Dirichlet problem in subsection 3.2.

\subsection{The nonautonomous Dirichlet problem}

We consider axisymmetric solenoidal vector fields expressed by the Clebsch representation 

\begin{align*}
u=\nabla \times (\psi\nabla \phi)+\Gamma \nabla \phi,
\end{align*}\\
for the axisymmetric function $\Gamma$ and stream function $\psi$ satisfying 

\begin{align*}
\psi(z,0)=0.
\end{align*}\\
Here, $\nabla=\nabla_{\mathbb{R}^{3}}$ is the gradient in $\mathbb{R}^{3}$ and $(r,\phi,z)$ are the cylindrical coordinates. The rotation of $u$ is expressed as 

\begin{align*}
\nabla \times u&=\nabla \times (\Gamma\nabla \phi)+(-L\psi)\nabla \phi,\\
L&=\partial_{z}^{2}-\frac{1}{r}\partial_r+\partial_{r}^{2}.
\end{align*}\\
For axisymmetric solutions to the Euler equations (2.1), the Bernoulli function $\Pi=p+|u|^{2}/2$ and the function $\Gamma$ are first integrals of $u$, i.e., 

\begin{align*}
u\cdot \nabla \Pi=0,\quad u\cdot \nabla \Gamma=0.
\end{align*}\\
They imply that $\Pi$ and $\Gamma$ are locally functions of $\psi$. We assume that they are globally functions of $\psi$, i.e., $\Pi=\Pi(\psi)$ and $\Gamma=\Gamma(\psi)$. By the triple product $(\nabla \psi\times \nabla \phi) \times \nabla \phi=-\nabla \psi/r^{2}$, 

\begin{align*}
(\nabla \times u)\times u&=-\frac{1}{r^{2}} \left(\Gamma'(\psi)\Gamma(\psi) +L\psi  \right)\nabla \psi,\\
\nabla \Pi&=\Pi'(\psi)\nabla \psi.
\end{align*}\\
Thus the stream function $\psi$ satisfies the Grad--Shafranov equations \cite{Grad}, \cite{Shafranov}:  

\begin{equation}
\begin{aligned}
-\frac{1}{r^{2}}L\psi &=-\Pi'(\psi)+\frac{1}{r^{2}}\Gamma'(\psi)\Gamma(\psi),\quad (z,r)\in \mathbb{R}^{2}_{+}, \\
\psi(z,0)&=0,\quad z\in \mathbb{R}.
\end{aligned}
\end{equation}\\
Here, $\Pi'(\psi)$ denotes the derivative for the variable $\psi$. Solutions to the above 2D Dirichlet problem for prescribed $\Pi(\psi)$ and $\Gamma(\psi)$ provide axisymmetric solutions to the Euler equations (1.1).

We choose particular $\Pi(\psi)$ and $\Gamma(\psi)$ to construct axisymmetric homogeneous solutions to (1.1). We use the polar coordinates $(\rho,\theta)$ in the cross-section $\mathbb{R}^{2}_{+}$ and the coordinate $t=\cos \theta$ to express the operator $L$ as 

\begin{align*}
&z=\rho\cos\theta,\quad r=\rho\sin\theta,\\
&L=\partial_{\rho}^{2}+\frac{1-t^{2}}{\rho^{2}}\partial_t^{2}.
\end{align*}\\
For ($-\alpha$)-homogeneous solutions to (1.1), stream functions are ($-\alpha+2$)-homogeneous and expressed as

\begin{align*}
\psi(z,r)=\frac{w(t)}{\rho^{\beta}},\quad \beta=\alpha-2.
\end{align*}\\
The left-hand side of $(3.1)_1$ is expressed as

\begin{align*}
-\frac{1}{r^{2}}L\psi=-\frac{1}{\rho^{\beta+4}}\left(\frac{\beta(\beta+1)}{1-t^{2}}w+w''\right).
\end{align*}\\
We choose the functions $\Pi(\psi)$ and $\Gamma(\psi)$ by 

\begin{equation}
\begin{aligned}
\Pi(\psi)&=C_1|\psi|^{2+\frac{4}{\beta}}=C_1\frac{|w|^{2+\frac{4}{\beta}}}{\rho^{2\beta+4}},\\
\Gamma(\psi)&=C_2\psi|\psi|^{\frac{1}{\beta}}=C_2\frac{w|w|^{\frac{1}{\beta}}}{\rho^{\beta+1}},
\end{aligned}
\end{equation}\\
for constants $C_1,C_2\in \mathbb{R}$. The right-hand side of $(3.1)_1$ is expressed as 

\begin{align*}
-\Pi'(\psi)+\frac{1}{r^{2}}\Gamma'(\psi)\Gamma(\psi)
=\frac{1}{\rho^{\beta+4}}\left(c_1w|w|^{\frac{4}{\beta}}+\frac{c_2}{1-t^{2}}w|w|^{\frac{2}{\beta}} \right),
\end{align*}\\
for the constants

\begin{align}
c_1=-2C_1\left(1+\frac{2}{\beta}\right),\quad c_2=C_2^{2}\left(1+\frac{1}{\beta}\right).
\end{align}\\
Then the function $w$ satisfies the $1$D Dirichlet problem:

\begin{equation}
\begin{aligned}
- w''&=\frac{\beta(\beta+1)}{1-t^{2}}w+c_1 w|w|^{\frac{4}{\beta}}+\frac{c_2}{1-t^{2}}w|w|^{\frac{2}{\beta}},\quad t\in (-1,1),\\
w(1)&=w(-1)=0. 
\end{aligned}
\end{equation}\\
We demonstrate the existence of axisymmetric ($-\alpha$)-homogeneous solutions to (1.1) by constructing solutions to (3.4) for $\beta$, $c_1$, and $c_2$ satisfying 

\begin{equation}
\begin{aligned}
&\beta\in \mathbb{R}\backslash [-2,0],\quad c_1,c_2\geq 0,\quad c_1\neq 0\ \textrm{or}\ c_2\neq 0,\\
&c_1=0\quad \textrm{for}\ -4\leq \beta<-2.
\end{aligned}
\end{equation}

\vspace{3pt}

\begin{thm}
For constants $\beta$, $c_1$, and $c_2$ satisfying (3.5), there exists a solution $w\in C^{2}(-1,1)\cap C^{1}[-1,1]$ to (3.4). For $0<\beta<1$, there exists a positive solution to (3.4). 
\end{thm}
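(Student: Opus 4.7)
The plan is to produce solutions of (3.4) as critical points of the Euler--Lagrange functional
\begin{align*}
I(w) &= \frac{1}{2}\int_{-1}^{1}\Bigl(|w'|^{2}-\frac{\beta(\beta+1)}{1-t^{2}}w^{2}\Bigr)dt \\
&\quad - \frac{c_{1}\beta}{4(\beta+2)}\int_{-1}^{1}|w|^{2+4/\beta}dt - \frac{c_{2}\beta}{2(\beta+1)}\int_{-1}^{1}\frac{|w|^{2+2/\beta}}{1-t^{2}}dt
\end{align*}
on $H^{1}_{0}(-1,1)$. The central technical device is the Chandrasekhar transformation $w(t)\mapsto \chi(\theta)=w(\cos\theta)/\sin^{2}\theta$, which sends (3.4) to the elliptic equation (1.14) on $\mathbb{S}^{4}$. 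This absorbs the singular coefficient $1/(1-t^{2})$ into smooth factors $\sin^{k/\beta}\theta$, so that $I$ is well-defined and $C^{1}$ on a Hilbert space that I identify with axisymmetric functions in $H^{1}(\mathbb{S}^{4})$ vanishing at the poles; the Sobolev embedding $H^{1}(\mathbb{S}^{4})\hookrightarrow L^{p}(\mathbb{S}^{4})$ for $p\leq 4$ then supplies the compactness needed for the nonlinear terms. The hypothesis $c_{1}=0$ for $-4\leq\beta<-2$ is precisely the subcritical threshold for the exponent $2+4/\beta$ on $\mathbb{S}^{4}$.

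Next I would diagonalize $-L_{\beta}$ on $H^{1}_{0}(-1,1)$ and read the sign of the principal eigenvalue off the spherical picture: $\mu_{1}>0$ for $0<\beta<1$, and $\mu_{1}\leq 0$ for $\beta\in\mathbb{R}\setminus[-2,1)$. Writing $H^{1}_{0}(-1,1)=Y\oplus Z$ with $Y$ the finite-dimensional span of eigenfunctions of nonpositive eigenvalue, the variational geometry splits into three regimes. For $0<\beta<1$ the quadratic form is positive definite and the nonlinearity is superlinear ($1+4/\beta>5$), so I would apply the mountain pass theorem to the truncated functional obtained by replacing $|w|^{2+4/\beta}$ and $|w|^{2+2/\beta}$ by $w_{+}^{2+4/\beta}$ and $w_{+}^{2+2/\beta}$, and recover a strictly positive solution via the maximum principle. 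For $\beta\geq 1$ the operator has $N\geq 1$ nonpositive eigenvalues and the nonlinearity remains superlinear, so I would invoke a linking theorem (cf.\ Willem, Corollary 2.19) on the pair $(Y\oplus\operatorname{span}\{e\},\,Z)$ for a suitable $e\in Z$. For $\beta<-2$ the nonlinearity is sublinear ($1+4/\beta$ and $1+2/\beta$ lie in $(-1,1)$) and I would apply the saddle point theorem (cf.\ Rabinowitz, Theorem 4.12), exploiting that $-I$ is coercive on $Z$ while bounded above on the finite-dimensional $Y$.

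The hard part will be verifying the Palais--Smale condition together with the geometric separation $\inf_{N}I>\max_{M_{0}}I$ uniformly in each regime. In the superlinear cases PS reduces to an Ambrosetti--Rabinowitz estimate, which is automatic since the nonlinearities are pure powers; compactness of bounded PS sequences is then delivered by the Sobolev embedding on $\mathbb{S}^{4}$ after applying the Chandrasekhar change of variables. In the sublinear case, coercivity of $-I$ on $Z$ together with boundedness of the nonlinear functionals on $Y$-bounded sets (from sublinear homogeneity) yields boundedness of PS sequences. Finally, $w\in C^{2}(-1,1)\cap C^{1}[-1,1]$ follows from standard elliptic regularity applied to $\chi$ on $\mathbb{S}^{4}$ and then transferred back via the Chandrasekhar map, with smoothness of $\chi$ at the poles encoding the $C^{1}$ matching of $w$ at $t=\pm 1$.
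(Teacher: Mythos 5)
Your overall architecture coincides with the paper's: the same functional $I$ on $H^{1}_{0}(-1,1)$, the eigenfunction decomposition $Y\oplus Z$ of $-L_{\beta}$ with the sign of $\mu_{1}$ read off via the Chandrasekhar map, the same three minimax theorems in the same three regimes of $\beta$, and the truncation $w_{+}$ plus the strong maximum principle for the positive solution when $0<\beta<1$. The divergence, and the gap, is in where you place the compactness. You claim that the Sobolev embedding $H^{1}(\mathbb{S}^{4})\hookrightarrow L^{p}(\mathbb{S}^{4})$ for $p\leq 4$ "supplies the compactness needed for the nonlinear terms." The critical exponent on $\mathbb{S}^{4}$ is $4$, while your leading nonlinear exponent is $2+4/\beta$, which exceeds $4$ for every $0<\beta<2$ -- that is, in the entire mountain-pass regime $0<\beta<1$ (on which the positive solution and Theorem 1.10 depend) and part of the linking regime. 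The unweighted embedding therefore does not control $\int|\chi|^{2+4/\beta}$ there. What saves the integral is the degenerate weight $\sin^{4+8/\beta}\theta$ coming from $w=\chi\sin^{2}\theta$, but you never invoke it; and in fact the paper avoids the issue entirely by keeping all compactness in one dimension, where $H^{1}_{0}(-1,1)\hookrightarrow C[-1,1]$ compactly, the pointwise bound $|w(t)|\leq(1-t^{2})^{1/4}\|w\|_{H^{1}_{0}}$ controls every power, and Hardy's inequality handles the singular weight $1/(1-t^{2})$. The paper uses $\mathbb{S}^{4}$ for exactly one purpose: Rayleigh's formula (3.9) for the sign of $\mu_{1}$. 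Your proof as written needs either that 1D substitute or an explicit weighted-Sobolev argument on $\mathbb{S}^{4}$; the unweighted embedding statement is false for the exponents at hand.

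A second, smaller misdiagnosis: you assert that $c_{1}=0$ for $-4\leq\beta<-2$ is "precisely the subcritical threshold for the exponent $2+4/\beta$ on $\mathbb{S}^{4}$." For $\beta<-2$ one has $2+4/\beta\in(0,2)$, far below any Sobolev-critical exponent; the actual reason for the restriction is that $1+4/\beta\leq 0$ on $-4\leq\beta<-2$, so the term $c_{1}w|w|^{4/\beta}$ is discontinuous or singular at $w=0$ and the functional would fail to be $C^{1}$. Also note that the image of $H^{1}_{0}(-1,1)$ under the Chandrasekhar map is all of $H^{1}_{\mathrm{rad}}(\mathbb{S}^{4})$, not the subspace vanishing at the poles (constants $\chi$ correspond to $w=c(1-t^{2})\in H^{1}_{0}$), and your coefficient $c_{1}\beta/(4(\beta+2))$ in $I$ should be $c_{1}\beta/(2(\beta+2))$. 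None of these change the skeleton of the argument, but the compactness step must be repaired before the Palais--Smale verification goes through.
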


\vspace{3pt}

\begin{rems}
(i) The function $w_{n+1}$ in $(1.9)_2$ is a solution to (3.4) for $c_1=c_2=0$ and $\beta=n$.

\noindent
(ii) No positive solutions to (3.4) exist for $\beta\in \mathbb{R}\backslash [-2,1)$. In fact, solutions $w$ to (3.4) and $w_2=(1-t^{2})/2$ satisfy 

\begin{align*}
\left(1-\frac{\beta(\beta+1)}{2}\right)\int_{-1}^{1}w\dd t=\int_{-1}^{1}\left(c_1w|w|^{\frac{4}{\beta}}+\frac{c_2}{1-t^{2}}w|w|^{\frac{2}{\beta}}\right)w_2\dd t.
\end{align*}\\
Thus $0<\beta<1$ for positive $w$.

\noindent
(iii) The number of zero points are finite for solutions to (3.4) for $\beta>0$ by the uniqueness of the ODE. In fact, if zero points $\{t_n\}\subset (-1,1)$ accumulate at some $t_0\in (-1,1)$, critical points $\{s_n\}\subset (-1,1)$ also  accumulate at the same point by Rolle's theorem, and hence $w(t_0)=w'(t_0)=0$. By the uniqueness of the ODE (3.4) for $\beta>0$, such $w$ vanishes in $[-1,1]$.
\end{rems}

\vspace{3pt}

\subsection{The minimax method outline}

We demonstrate Theorem 3.1 in the rest of this section. We recast (3.4) as 

\begin{equation}
\begin{aligned}
-L_{\beta}w&=g(t,w)\quad \textrm{in}\ \Omega,\\
w&=0 \quad \textrm{on}\ \partial\Omega,
\end{aligned}
\end{equation}\\
with the symbols 

\begin{equation}
\begin{aligned}
L_{\beta}&=\partial_{t}^{2}+\beta(\beta+1)\frac{1}{1-t^{2}},\\
g(t,w)&=c_1w|w|^{\frac{4}{\beta}}+\frac{c_2}{1-t^{2}}w|w|^{\frac{2}{\beta}},\\
\Omega&=(-1,1).
\end{aligned}
\end{equation}

\vspace{3pt}

We seek variational solutions to (3.6) in the following steps (I)-(V).\\

\noindent
(I) The direct sum decomposition. 

We construct an orthonormal basis on $L^{2}(\Omega)$ consisting of the eigenfunctions of the operator $-L_{\beta}$ associated with the eigenvalues $\mu_1\leq \mu_2\leq \cdots\leq \mu_N\leq 0< \mu_{N+1}\leq \cdots$ (Lemma 3.10). The Chandrasekhar's transform 

\begin{align}
w(t)\longmapsto \chi(\theta)=\frac{w(\cos\theta)}{\sin^{2}\theta},
\end{align}\\
is an isometry between $H^{1}_{0}(\Omega)$ and the space of all radially symmetric functions $H^{1}_{\textrm{rad}}(\mathbb{S}^{4})$ (Proposition 3.12), and yields Rayleigh's formula for the principal eigenvalue $\mu_1$ in terms of radially symmetric functions on $\mathbb{S}^{4}$:

\begin{align}
\mu_1=\inf\left\{\frac{1}{2\pi^{2}}\int_{\mathbb{S}^{4} }\left( |\nabla_{\mathbb{S}^{4}}\chi |^{2}+(2+\beta)(1-\beta)|\chi|^{2} \right)  \dd H\ \middle|\ \chi \in H^{1}_{\textrm{rad}}(\mathbb{S}^{4}),\  \frac{1}{2\pi^{2}}\int_{\mathbb{S}^{4}}\sin^{2}\theta |\chi|^{2}\dd H=1\  \right\}.
\end{align}\\
This formula implies the principal eigenvalue sign (Lemma 3.13): 

\begin{equation}
\begin{aligned}
\mu_1&>0,\quad 0<\beta<1\\
\mu_1&\leq 0,\quad \beta\in \mathbb{R}\backslash [-2,1)
\end{aligned}
\end{equation}\\
We consider the direct sum decomposition on $H^{1}_{0}(\Omega)$ by a finite-dimensional subspace associated with the nonpositive eigenvalues and its complement (Lemma 3.14):

\begin{equation}
\begin{aligned}
&H^{1}_{0}(\Omega)=Y\oplus Z,\\
&Y=\textrm{span}\ (e_1,\cdots,e_N),\\
&Z=\left\{z\in H^{1}_{0}(\Omega)\ \middle|\ (z,y)_{L^{2}}=0,\ y\in Y\ \right\}.
\end{aligned}
\end{equation}\\
For $0<\beta<1$, $Y=\emptyset$ and the operator $-L_{\beta}$ is positive on $Z=H^{1}_{0}(\Omega)$. For $\beta\in \mathbb{R}\backslash [-2,1)$, $-L_{\beta}$ is nonpositive on $Y$ and positive on $Z$. We show a coercive estimate for the bilinear form associated with $L_{\beta}$ on $Z$ (Lemma 3.16).

\noindent
(II) Regularity of critical points.  

The functional associated with (3.6) is as follows:

\begin{equation}
\begin{aligned}
I[w]&=\frac{1}{2}\int_{\Omega}\left( |w'|^{2}-\frac{\beta(\beta+1)}{1-t^{2}}w^{2}\right)\dd t-\int_{\Omega}G(t,w)\dd t, \\
G(t,w)&=\int_{0}^{w}g(t,s)\dd s=\frac{c_1\beta}{2(\beta+2)}|w|^{2+\frac{4}{\beta}}+\frac{c_2\beta}{2(\beta+1)(1-t^{2})}|w|^{2+\frac{2}{\beta}}.
\end{aligned}
\end{equation}\\
This functional is Fr\'echet differentiable and $I'\in C(H^{1}_{0}(\Omega); H^{-1}(\Omega))$. We seek a critical point $w\in H^{1}_{0}(\Omega)$ of $I$ in the sense that  

\begin{align}
<I'[w],\eta>=0,\quad \eta\in H^{1}_0(\Omega).
\end{align}\\
We show that critical points are classical solutions $w\in C^{2}(\Omega)\cap C^{1}(\overline
{\Omega})$ (Lemma 3.19). \\

\noindent
(III) The three minimax theorems.

We find critical points by applying the three minimax theorems (Lemmas 3.4, 3.6, and 3.7) according to $\beta\in \mathbb{R}\backslash [-2,0]$: \\

\noindent
(i) Mountain pass theorem: $0<\beta<1$\\
(ii) Linking theorem: $1\leq \beta$\\
(iii) Saddle-point theorem: $\beta<-2$ \\

\noindent
In order to apply minimax theorems, we check two conditions for the functional $I$:\\  

\noindent
(a) The Palais--Smale condition \\
(b) The functional estimates on subsets: $\inf_{N}I>\max_{M_0}I$\\

\noindent
(IV) The Palais--Smale condition.

We say that a sequence $\{w_n\}\subset H^{1}_{0}(\Omega)$ is a Palais--Smale sequence at level $c\in \mathbb{R}$ if 

\begin{align*}
I[w_n]&\to c,\\ 
I'[w_n]&\to 0\quad \textrm{in}\ H^{-1}(\Omega).
\end{align*}\\
We say that $I$ satisfies the $(\textrm{PS})_c$ condition if any Palais--Smale sequence at level $c\in \mathbb{R}$ has a convergent subsequence. We show that $I$ satisfies the $(\textrm{PS})_c$ condition for any $c\in \mathbb{R}$ (Lemma 3.22) by showing the boundedness of a Palais--Smale sequence in $H^{1}_{0}(\Omega)$ both for superlinear $\beta>0$ and for sublinear $\beta<-2$.\\  

\noindent
(v) The functional estimates on subsets: $\inf_{N}I>\max_{M_0}I$.

According to $\beta\in \mathbb{R}\backslash [-2,0]$, we choose a set $N\subset H^{1}_{0}(\Omega)$ on which the infimum of $I$ is bounded from below and a set in a finite-dimensional subspace $M_0\subset H^{1}_{0}(\Omega)$ on which the maximum of $I$ is small. The parameter $\beta$ affects both the linear operator $L_{\beta}$ and the power of the nonlinear terms in (3.6).

The simplest case is $0<\beta<1$ for which the operator $-L_{\beta}$ is positive on $H^{1}_{0}(\Omega)$ and the nonlinear term is superlinear. The functional $I[w]$ is positive near $w=0$ in $H^{1}_{0}(\Omega)$ and decreases for large $w$ in $H^{1}_0(\Omega)$. We choose

\begin{equation*}
\begin{aligned}
N&=\left\{w\in H^{1}_{0}(\Omega)\ \middle|\ ||w||_{H^{1}_{0}}=r_0\ \right\},\\
M_0&=\{0,w_0\},
\end{aligned}
\end{equation*}\\
for some $r_0>0$ and $w_0\in H^{1}_{0}(\Omega)$ such that $||w_0||_{H^{1}_{0}}>r_0$, and apply a mountain pass theorem (Lemma 3.4). 

For $\beta\in \mathbb{R}\backslash [-2,1)$, the operator $-L_{\beta}$ is nonpositive on $Y$ and positive on $Z$. The nonlinear term is superlinear for $\beta\geq 1$. The functional $I[w]$ is positive near $w=0$ in $Z$ and decreases for large $w$ in $H^{1}_0(\Omega)$. We choose

\begin{equation*}
\begin{aligned}
N&=\left\{z\in Z\ \middle|\ ||z||_{H^{1}_{0}}=r_0\ \right\},\\
M_0&=\left\{w=y+\lambda z_0\in Y\oplus \mathbb{R}z_0\ \middle|\ \lambda=0\ \textrm{and}\ ||y||_{H^{1}_{0}}\leq \rho_0,\ \textrm{or}\ \lambda>0\ \textrm{and}\ ||w||_{H^{1}_{0}}= \rho_0\   \right\},
\end{aligned}
\end{equation*}\\
for some $z_0\in Z$ and $\rho_0>r_0>0$ such that $||z_0||_{H^{1}_{0}}=r_0$, and apply a linking theorem (Lemma 3.6).

The nonlinear term is sublinear for $\beta <-2$. The functional $I$ is bounded from below on $Z$ and decreases on $Y$. We choose 

\begin{equation*}
\begin{aligned}
N&=Z,\\
M_0&=\left\{y\in Y\ \middle|\  ||y||_{H^{1}_{0}}= \rho_0\   \right\},
\end{aligned}
\end{equation*}\\
for some $\rho_0>0$, and apply a saddle point theorem (Lemma 3.7).\\

The existence of positive solutions to (3.6) for $0<\beta<1$ follows from the existence of solutions to the modified problem 

\begin{equation}
\begin{aligned}
-w''&=\frac{\beta(\beta+1)}{1-t^{2}}w_{+}+c_1w_{+}^{1+\frac{4}{\beta}}+c_2\frac{1}{1-t^{2}}w_{+}^{1+\frac{2}{\beta}},\quad t\in(-1,1),\\
w(1)&=w(-1)=0.
\end{aligned}
\end{equation}\\
Here, $s_{+}=\max\{s,0\}$ for $s\in \mathbb{R}$. The associated functional of this problem is the following: 

\begin{align*}
\tilde{I}[w]&=\frac{1}{2}\int_{\Omega}\left(|w'|^{2}-\frac{\beta(\beta+1)}{1-t^{2}}w_{+}^{2}\right)\dd t-\int_{\Omega}\tilde{G}(t,w)\dd t,\\
\tilde{G}(t,w)&=\int_{0}^{w}g(t,s)_{+}\dd s=\frac{c_1\beta}{2(\beta+2)}w_{+}^{2+\frac{4}{\beta}}+\frac{c_2\beta}{2(\beta+1)(1-t^{2})}w_{+}^{2+\frac{2}{\beta}}.
\end{align*}\\
We apply a mountain pass theorem to $\tilde{I}$ and find a solution $w\in C^{2}(\Omega)\cap C^{1}(\overline{\Omega})$ to (3.14) by using the pointwise estimate $w_{+}\leq |w|$ and modifying the argument for $I$ to $\tilde{I}$. The solution $w$ to (3.14) satisfies $-w''\geq 0$ in $\Omega$ and hence is a positive solution to (3.6) by the strong maximum principle \cite[6.4. Theorem 3]{E}. \\

In the sequel, we first state the minimax theorems (III) for a general Banach space $X$, and then proceed to the other steps (I), (II), (IV), and (V) for $X=H^{1}_{0}(\Omega)$ in order.

\vspace{3pt}

\subsection{The three minimax theorems}

According to \cite{Willem}, we define the Palais--Smale condition and state three minimax theorems for a Banach space $X$ equipped with the norm $||\cdot||_{X}$. Let $C^{1}(X;\mathbb{R})$ be the space of all functionals $I[\cdot]: X\to \mathbb{R}$ such that the Fr\'echet derivative $I': X\to X^{*}$ exists and is continuous on $X$. We say that $w\in X$ is a critical point of $I$ if 

\begin{align*}
<I'[w],\eta>=0\quad \textrm{for all}\ \eta\in X.
\end{align*}\\
We say that the constant $c\in \mathbb{R}$ is a critical value if there exists a critical point $w\in X$ such that $c=I[w]$. 

\begin{defn}[Palais--Smale condition]
We say that a sequence $\{w_n\}\subset X$ is a Palais--Smale sequence at level $c\in \mathbb{R}$ if 

\begin{align*}
I[w_n]&\to c,\\ 
I'[w_n]&\to 0\quad \textrm{in}\ X^{*}.
\end{align*}\\
We say that $I$ satisfies the $(\textrm{PS})_c$ condition if any Palais--Smale sequence at level $c\in \mathbb{R}$ has a convergent subsequence.
\end{defn}

The following three minimax theorems are due to Ambrosetti and Rabinowitz \cite[Theorems 2.10, 2.11, 2.12]{Willem}.

\begin{lem}[Mountain pass theorem]
Assume that there exist $w_0\in X$ and $r_0>0$ such that $||w_0||_{X}>r_0$ and 

\begin{align*}
\inf_{||w||_{X}=r_0}I[w]>I[0]\geq I[w_0]. 
\end{align*}\\
Assume that $I$ satisfies the $(\textrm{PS})_c$ condition with 

\begin{align*}
c&=\inf_{\gamma\in \Lambda}\max_{0\leq t\leq 1}I[\gamma(t)], \\
\Lambda&=\left\{\gamma\in C([0,1]; X )\ \middle|\ \gamma(0)=0,\ \gamma(1)=w_0\  \right\}.
\end{align*}\\
Then, $c$ is a critical value of $I$.
\end{lem}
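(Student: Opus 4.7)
The plan is to apply the standard deformation argument of Ambrosetti--Rabinowitz, combining a quantitative deformation lemma (built from a pseudo-gradient vector field) with the $(\text{PS})_c$ condition. First I would check that the minimax value $c$ is finite and satisfies $c > I[0]$. By continuity, every path $\gamma \in \Lambda$ must cross the sphere $S_{r_0} = \{w \in X : \|w\|_X = r_0\}$ at some parameter, since $\gamma(0)=0$ has norm $0 < r_0$ and $\gamma(1)=w_0$ has norm $\|w_0\|_X > r_0$. Hence $\max_{t\in[0,1]} I[\gamma(t)] \geq \inf_{S_{r_0}} I$, and taking the infimum over $\gamma$ gives $c \geq \inf_{S_{r_0}} I > I[0] \geq I[w_0]$. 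In particular $c$ is a real number strictly separated from the values of $I$ at the endpoints of the admissible paths.

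Next I would argue by contradiction, assuming that $c$ is not a critical value, i.e., the set $K_c = \{w \in X : I[w] = c,\ I'[w] = 0\}$ is empty. Combined with the $(\text{PS})_c$ condition, this assumption yields a quantitative non-degeneracy statement: there exist $\delta_0 > 0$ and $\alpha > 0$ such that
\begin{equation*}
\|I'[w]\|_{X^*} \geq \alpha \quad \text{whenever } |I[w] - c| \leq \delta_0.
\end{equation*}
Otherwise one could extract a Palais--Smale sequence at level $c$ which, by $(\text{PS})_c$, would converge (up to a subsequence) to a critical point in $K_c$, contradicting $K_c = \emptyset$.

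The next step is to invoke the quantitative deformation lemma: using the bound on $\|I'\|_{X^*}$ on the strip $\{|I - c| \leq \delta_0\}$, one constructs a locally Lipschitz pseudo-gradient vector field for $I$ on the regular set and integrates an appropriately truncated flow to produce, for any sufficiently small $\varepsilon$ with $0 < \varepsilon < \min\bigl\{\delta_0,\ \tfrac{1}{2}(c - I[0]),\ \tfrac{1}{2}(c - I[w_0])\bigr\}$, a continuous map $\eta \in C(X, X)$ such that (i) $\eta(w) = w$ whenever $|I[w] - c| \geq 2\varepsilon$, (ii) $I[\eta(w)] \leq I[w]$ for all $w \in X$, and (iii) $I[\eta(w)] \leq c - \varepsilon$ whenever $I[w] \leq c + \varepsilon$. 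The construction of the pseudo-gradient and the truncated flow is the technical core of the argument and constitutes the main obstacle; it is a standard construction for $C^1$ functionals on Banach spaces, carried out via a partition of unity subordinate to the open cover of the regular set.

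Finally, by the definition of $c$ as an infimum, choose $\gamma \in \Lambda$ with $\max_{t\in[0,1]} I[\gamma(t)] \leq c + \varepsilon$, and set $\tilde \gamma = \eta \circ \gamma$. Since $I[0] \leq c - 2\varepsilon$ and $I[w_0] \leq c - 2\varepsilon$ by the choice of $\varepsilon$, property (i) forces $\tilde \gamma(0) = \eta(0) = 0$ and $\tilde \gamma(1) = \eta(w_0) = w_0$, so $\tilde \gamma \in \Lambda$. Property (iii) then gives $\max_{t\in[0,1]} I[\tilde \gamma(t)] \leq c - \varepsilon$, contradicting the definition of $c$ as the infimum of $\max I \circ \gamma$ over $\gamma \in \Lambda$. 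Thus $K_c \neq \emptyset$, and $c$ is a critical value of $I$.
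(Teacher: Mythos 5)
The paper does not prove this lemma at all: it is quoted verbatim from Willem's book (Theorem 2.10 there, originally Ambrosetti--Rabinowitz), so there is no in-paper argument to compare against. Your proof is the classical deformation-lemma argument and is correct in all essentials: the intermediate-value observation that every admissible path meets the sphere $\{\|w\|_{X}=r_0\}$ gives $c\geq\inf_{\|w\|_{X}=r_0}I>I[0]\geq I[w_0]$ and in particular $c\in\mathbb{R}$; the contradiction hypothesis $K_c=\emptyset$ combined with $(\mathrm{PS})_c$ yields the uniform lower bound on $\|I'\|_{X^{*}}$ in a level strip; the quantitative deformation lemma then pushes a nearly optimal path below level $c-\varepsilon$ while fixing the endpoints (your choice $\varepsilon<\min\{\delta_0,\tfrac12(c-I[0]),\tfrac12(c-I[w_0])\}$ correctly guarantees $|I[0]-c|>2\varepsilon$ and $|I[w_0]-c|>2\varepsilon$, so the deformed path stays in $\Lambda$), contradicting the definition of $c$. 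The only caveat is bookkeeping of constants in the deformation lemma (one needs $2\varepsilon\leq\delta_0$ and $\varepsilon$ small relative to the gradient bound $\alpha$), which you rightly flag as the standard technical core. Worth noting: Willem's own route is slightly different --- he first proves a general minimax principle producing a Palais--Smale sequence at level $c$ \emph{without} assuming $K_c=\emptyset$, and only then invokes $(\mathrm{PS})_c$ to extract a critical point; your contradiction version is the older Ambrosetti--Rabinowitz formulation. Both are standard, and either suffices for the paper's purposes, since the lemma is applied here only to the $C^1$ functional $I$ on $H^{1}_{0}(\Omega)$ for which $(\mathrm{PS})_c$ is verified separately.
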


\vspace{3pt}

\begin{rem}
For the functional $I$ satisfying $I[0]\geq I[w_0]$, the functional estimate in Lemma 3.4 can be expressed as $\inf_{N}I>\max_{M_0}I$ for 

\begin{align*}
N&=\left\{w\in X\ \middle|\ ||w||_{X}=r_0  \right\}, \\
M_0&=\left\{0,w_0\right\}.
\end{align*}
\end{rem}

\vspace{3pt}

We apply a linking theorem and a saddle point theorem when $X=Y\oplus Z$ with the subspaces $Y$ and $Z$ such that $0<\textrm{dim}\ Y<\infty$.

\vspace{3pt}

\begin{lem}[Linking theorem]
For $\rho_0>r_0>0$ and $z_0\in Z$ such that $||z_0||_{X}=r_0$, set 

\begin{align*}
M&=\left\{w=y+\lambda z_0\in Y\oplus \mathbb{R}z_0\ |\ ||w||_{X}\leq \rho_0,\ \lambda\geq 0,\ y\in Y\right\},\\
M_0&=\left\{w=y+\lambda z_0\in Y\oplus \mathbb{R}z_0\ |\ \lambda=0\ \textrm{and}\ ||y||_{X}\leq \rho_0,\ \textrm{or}\ \lambda> 0\ \textrm{and}\ ||w||_{X}= \rho_0 \right\},\\
N&=\left\{z\in Z\ |\ ||z||_{X}=r_0\  \right\}.
\end{align*}\\
Assume that there exist $\rho_0>r_0>0$ and $z_0\in Z$ satisfying $||z_0||_{X}=r_0$ such that 

\begin{align*}
\inf_{N}I>\max_{M_0}I.
\end{align*}\\
Assume that $I$ satisfies the $(\textrm{PS})_c$ condition with 

\begin{align*}
c&=\inf_{\gamma\in \Lambda}\max_{w\in M}I[\gamma[w]],\\
\Lambda&=\left\{\gamma\in C(M; X)\ \middle|\ \gamma|_{M_0}=id\  \right\}.
\end{align*}\\
Then $c$ is a critical value of $I$.
\end{lem}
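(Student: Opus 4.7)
The plan is to follow the classical deformation-plus-linking argument of Ambrosetti and Rabinowitz, as developed in \cite{Willem}. The two ingredients are a topological linking property that forces every $\gamma \in \Lambda$ to intersect $N$, and the quantitative deformation lemma applied under the $(\mathrm{PS})_c$ hypothesis to produce a contradiction if $c$ were a regular value.

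First I would establish the linking: every $\gamma \in \Lambda$ satisfies $\gamma(M) \cap N \neq \emptyset$. To do so, introduce the continuous map $F_\gamma \colon M \to Y \oplus \mathbb{R} z_0$ by
\[
F_\gamma(w) = P_Y \gamma(w) + \bigl(\|P_Z \gamma(w)\|_X - r_0\bigr) z_0,
\]
where $P_Y, P_Z$ are the (continuous, since $\dim Y < \infty$) projections from $X = Y \oplus Z$ onto $Y$ and $Z$. Because $\gamma|_{M_0} = \mathrm{id}$, we have $F_\gamma|_{M_0} = F_{\mathrm{id}}|_{M_0}$. A case split on $M_0$ shows $F_{\mathrm{id}} \neq 0$ there: if $\lambda = 0$, then $P_Z w = 0$ and $F_{\mathrm{id}}(w) = y - r_0 z_0 \neq 0$; if $\lambda > 0$ and $\|w\|_X = \rho_0$, vanishing of $F_{\mathrm{id}}(w) = y + (\lambda - 1) r_0 z_0$ would force $y = 0$ and $\lambda = 1$, contradicting $\|w\|_X = \rho_0 > r_0$. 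The straight-line homotopy $H_t = (1-t)F_{\mathrm{id}} + tF_\gamma$ agrees with $F_{\mathrm{id}}$ on $M_0$, so Brouwer degree invariance in the finite-dimensional space $Y \oplus \mathbb{R} z_0$ gives
\[
\deg(F_\gamma, \mathrm{int}\, M, 0) = \deg(F_{\mathrm{id}}, \mathrm{int}\, M, 0) = \pm 1 \neq 0,
\]
since $F_{\mathrm{id}}$ has the unique interior zero $(y, \lambda) = (0, 1)$. Hence there exists $w \in M$ with $P_Y \gamma(w) = 0$ and $\|P_Z \gamma(w)\|_X = r_0$, i.e., $\gamma(w) \in N$. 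This yields $c \geq \inf_N I > \max_{M_0} I$, and in particular $c$ is finite.

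Next I would run the deformation argument. Suppose for contradiction that $c$ is a regular value of $I$. Set $\delta = \inf_N I - \max_{M_0} I > 0$ and pick $\epsilon \in (0, \delta/3)$. By the $(\mathrm{PS})_c$ condition and the quantitative deformation lemma \cite[Lemma 2.3]{Willem}, there exists a continuous $\eta \colon [0,1] \times X \to X$ such that $\eta(0, \cdot) = \mathrm{id}$, $\eta(t, w) = w$ whenever $I[w] \notin [c - 2\epsilon, c + 2\epsilon]$, and $I[\eta(1, w)] \leq c - \epsilon$ whenever $I[w] \leq c + \epsilon$. Choose $\gamma \in \Lambda$ with $\max_{w \in M} I[\gamma(w)] \leq c + \epsilon$ and set $\tilde\gamma(w) = \eta(1, \gamma(w))$. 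Since $I \leq \max_{M_0} I < c - 2\epsilon$ on $M_0$ and $\gamma|_{M_0} = \mathrm{id}$, the deformation fixes $M_0$, so $\tilde\gamma \in \Lambda$; however $\max_{w \in M} I[\tilde\gamma(w)] \leq c - \epsilon$, contradicting the definition of $c$.

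The hard part will be the degree-theoretic linking: setting up $F_\gamma$ correctly as a map into a finite-dimensional target, verifying its nonvanishing on $M_0$ in both sub-cases, and computing $\deg(F_{\mathrm{id}}) \neq 0$. Once linking is in hand, the deformation step is a standard application of $(\mathrm{PS})_c$.
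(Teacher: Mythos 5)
Your proof is correct and is exactly the classical Ambrosetti--Rabinowitz argument: the degree-theoretic verification that every $\gamma\in\Lambda$ links $N$ (so $c\geq \inf_N I>\max_{M_0}I$), followed by the quantitative deformation lemma under $(\textrm{PS})_c$ to rule out $c$ being a regular value. The paper does not prove Lemma 3.6 itself but defers to Willem's book (Theorem 2.12), so your argument reconstructs precisely the proof the paper is citing, with all the key checks (nonvanishing of $F_{\mathrm{id}}$ on $M_0$ in both sub-cases, degree $\pm 1$ at the unique interior zero $w=z_0$, and the choice $2\epsilon<\inf_N I-\max_{M_0}I$ so the deformation fixes $M_0$) carried out correctly.
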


\vspace{3pt}

\begin{lem}[Saddle-point theorem]
For $\rho_0>0$, set 

\begin{align*}
M&=\left\{w\in Y\ \middle|\ ||w||_{X}\leq \rho_0\ \right\},\\
M_0&=\left\{w\in Y\ \middle|\ ||w||_{X}= \rho_0\ \right\},\\
N&=Z.
\end{align*}\\
Assume that there exists $\rho_0>0$ such that 

\begin{align*}
\inf_{N}I>\max_{M_0}I.
\end{align*}\\
Assume that $I$ satisfies the $(\textrm{PS})_c$ condition with 

\begin{align*}
c&=\inf_{\gamma\in \Lambda}\max_{w\in M}I[\gamma[w]],\\
\Lambda&=\left\{\gamma\in C(M; X)\ \middle|\ \gamma|_{M_0}=id\  \right\}.
\end{align*}\\
Then $c$ is a critical value of $I$.
\end{lem}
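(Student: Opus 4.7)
The plan is to follow the classical Ambrosetti--Rabinowitz scheme: argue by contradiction using the quantitative deformation lemma, with the crux being a topological intersection that forces $c \geq \inf_N I$.

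First I would verify that $c$ is well defined and bounded below by $\inf_N I$. For each $\gamma \in \Lambda$, the set $\gamma(M)$ is compact (as $M$ is a closed ball in the finite-dimensional space $Y$), so $\max_{w \in M} I[\gamma(w)]$ is attained. Let $P \colon X = Y \oplus Z \to Y$ be the continuous projection onto $Y$, and consider the map $P \circ \gamma \colon M \to Y$. Since $\gamma|_{M_0} = \mathrm{id}$, we have $P \circ \gamma = \mathrm{id}$ on the sphere $M_0 = \partial M$ of radius $\rho_0$ in $Y$. Brouwer degree theory in the finite-dimensional ball $M \subset Y$ then gives
\begin{equation*}
\deg(P \circ \gamma,\ \mathrm{int}\,M,\ 0) = \deg(\mathrm{id},\ \mathrm{int}\,M,\ 0) = 1,
\end{equation*}
so there exists $w_{*} \in M$ with $P(\gamma(w_{*})) = 0$, i.e., $\gamma(w_{*}) \in Z = N$. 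Consequently $\max_{M} I \circ \gamma \geq I[\gamma(w_{*})] \geq \inf_{N} I$, and taking the infimum over $\gamma \in \Lambda$ yields $c \geq \inf_{N} I > \max_{M_0} I$. In particular $c > -\infty$ and strictly exceeds the values of $I$ on $M_0$.

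Next I would suppose for contradiction that $c$ is not a critical value of $I$. Choose $\bar\varepsilon > 0$ small enough that $c - 2\bar\varepsilon > \max_{M_0} I$. The standard quantitative deformation lemma (valid by the $(\mathrm{PS})_c$ hypothesis; see, e.g., Willem, Lemma 2.3) then furnishes $\varepsilon \in (0, \bar\varepsilon)$ and a continuous map $\eta \colon X \to X$ such that $\eta$ is the identity outside $I^{-1}([c - 2\varepsilon,\ c + 2\varepsilon])$ and $\eta(I^{c + \varepsilon}) \subset I^{c - \varepsilon}$, where $I^{a} = \{w : I[w] \leq a\}$. Pick $\gamma \in \Lambda$ with $\max_{M} I \circ \gamma < c + \varepsilon$ and set $\tilde\gamma = \eta \circ \gamma$. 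On $M_0$ we have $\gamma = \mathrm{id}$ and $I \leq \max_{M_0} I < c - 2\varepsilon$, hence $\eta \circ \gamma = \gamma$ on $M_0$, so $\tilde\gamma \in \Lambda$. But then $\max_{M} I \circ \tilde\gamma \leq c - \varepsilon < c$, contradicting the definition of $c$ as an infimum over $\Lambda$.

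The main obstacle is the intersection step: one must justify that every continuous extension $\gamma$ of the identity on $M_0$ meets $Z$ somewhere in $M$. This is precisely where the finite dimensionality of $Y$ enters, via Brouwer degree (or equivalently an elementary Borsuk-type argument); no such topological obstruction is available in $Z$ directly, which is why the decomposition $X = Y \oplus Z$ with $\dim Y < \infty$ is built into the hypothesis. Once the intersection $\gamma(M) \cap Z \neq \emptyset$ is established, combining it with the deformation lemma in the manner above is routine.
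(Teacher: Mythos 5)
Your proof is correct and is precisely the standard Ambrosetti--Rabinowitz argument: the paper itself does not prove this lemma but quotes it from Willem's book (Theorem 2.12 there), whose proof is exactly your two steps --- the Brouwer-degree intersection property $\gamma(M)\cap Z\neq\emptyset$ forcing $c\geq\inf_{N}I>\max_{M_0}I$, followed by the quantitative deformation lemma under the $(\textrm{PS})_c$ hypothesis. Nothing to add; both the degree argument (which is where $0<\dim Y<\infty$ is used) and the verification that $\tilde\gamma=\eta\circ\gamma$ stays in $\Lambda$ are handled correctly.
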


\vspace{3pt}

\subsection{The direct sum decomposition}

We show the direct sum decomposition $H^{1}_{0}(\Omega)=Y\oplus Z$ in (3.11) by using basis on $L^{2}(\Omega)$ consisting of the eigenfunctions of the operator $-L_{\beta}$ and prepare a coercive estimate for a bilinear form on $Z$.

\vspace{3pt}

\begin{prop}
\begin{align}
&\int_{\Omega}\frac{w^{2}}{(1-t)^{2}}\dd t\leq 4 \int_{\Omega}|w'|^{2}\dd t,\quad w\in H^{1}_{0}(\Omega), \\
&\left|\int_{\Omega}\frac{w\eta}{1-t^{2}}\dd t\right| \leq 2 ||w'||_{L^{2}}  ||\eta||_{L^{2}},\quad w,\eta\in H^{1}_{0}(\Omega).
\end{align}
\end{prop}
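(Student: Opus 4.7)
The plan is to establish (3.15) by a standard Hardy-type argument via integration by parts and Cauchy--Schwarz, then derive the companion inequality at the other endpoint by symmetry, and finally combine the two through a partial-fraction decomposition to obtain (3.16).

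\textbf{Step 1: Prove (3.15).} By density it suffices to take $w\in C^{\infty}_c(\Omega)$. Integrating $\partial_{t}\left(\frac{1}{1-t}\right)=\frac{1}{(1-t)^{2}}$ by parts gives
\begin{equation*}
\int_{\Omega}\frac{w^{2}}{(1-t)^{2}}\,\dd t=\left[\frac{w^{2}}{1-t}\right]_{-1}^{1}-\int_{\Omega}\frac{2ww'}{1-t}\,\dd t=-2\int_{\Omega}\frac{w w'}{1-t}\,\dd t,
\end{equation*}
where the boundary contributions vanish because $w$ has compact support in $\Omega$. Applying Cauchy--Schwarz to the right-hand side yields
\begin{equation*}
\int_{\Omega}\frac{w^{2}}{(1-t)^{2}}\,\dd t\leq 2\left(\int_{\Omega}\frac{w^{2}}{(1-t)^{2}}\,\dd t\right)^{1/2}\|w'\|_{L^{2}},
\end{equation*}
and dividing by the first factor (or treating the case where it vanishes separately) gives (3.15). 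The bound then extends to all $w\in H^{1}_{0}(\Omega)$ by density.

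\textbf{Step 2: The symmetric inequality at $t=-1$.} The change of variable $t\mapsto -t$ turns (3.15) into
\begin{equation*}
\int_{\Omega}\frac{w^{2}}{(1+t)^{2}}\,\dd t\leq 4\int_{\Omega}|w'|^{2}\,\dd t,\qquad w\in H^{1}_{0}(\Omega).
\end{equation*}

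\textbf{Step 3: Deduce (3.16) by partial fractions.} Write
\begin{equation*}
\frac{1}{1-t^{2}}=\frac{1}{2(1-t)}+\frac{1}{2(1+t)}.
\end{equation*}
By Cauchy--Schwarz and Step~1 respectively Step~2,
\begin{equation*}
\left|\int_{\Omega}\frac{w\eta}{1-t}\,\dd t\right|\leq\left(\int_{\Omega}\frac{w^{2}}{(1-t)^{2}}\,\dd t\right)^{1/2}\|\eta\|_{L^{2}}\leq 2\|w'\|_{L^{2}}\|\eta\|_{L^{2}},
\end{equation*}
and the analogous bound holds with $1-t$ replaced by $1+t$. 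Adding and halving yields
\begin{equation*}
\left|\int_{\Omega}\frac{w\eta}{1-t^{2}}\,\dd t\right|\leq\frac{1}{2}(2+2)\|w'\|_{L^{2}}\|\eta\|_{L^{2}}=2\|w'\|_{L^{2}}\|\eta\|_{L^{2}},
\end{equation*}
which is (3.16).

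The only delicate point is justifying the vanishing of the boundary terms in Step~1 for a general $H^{1}_{0}(\Omega)$ function; this is handled either by the density of $C^{\infty}_{c}(\Omega)$ in $H^{1}_{0}(\Omega)$, or by truncating the domain to $[-1+\varepsilon,1-\varepsilon]$ and passing to the limit using the pointwise trace values $w(\pm 1)=0$ together with the bound already proved on the truncated interval. No obstacle beyond this bookkeeping is anticipated.
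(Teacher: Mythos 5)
Your proof is correct and follows essentially the same route as the paper: the paper simply cites the classical Hardy inequality on $H^{1}_{0}(0,2)$ (after the substitution $s=1-t$) where you reprove it by the standard integration-by-parts and Cauchy--Schwarz argument, and both derivations of (3.16) use the identical partial-fraction splitting $\frac{1}{1-t^{2}}=\frac{1}{2(1-t)}+\frac{1}{2(1+t)}$ followed by Cauchy--Schwarz and the Hardy bound at each endpoint. The density/truncation point you flag at the end is routine and handled exactly as you describe.
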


\vspace{3pt}

\begin{proof}
The inequality (3.15) is nothing but Hardy's inequality \cite[(1.3.4)]{Mazya}

\begin{align*}
\int_{0}^{2}\frac{\tilde{w}^{2}}{s^{2}}\dd s\leq 4 \int_{0}^{2}|\tilde{w}'|^{2}\dd s,\quad \tilde{w}\in H^{1}_{0}(0,2),
\end{align*}\\
for $w(t)=\tilde{w}(1-t)$. We apply it to estimate 

\begin{align*}
\left|\int_{\Omega}\frac{w\eta }{1-t^{2}}\dd t\right|\leq \frac{1}{2}\int_{\Omega}\left(\frac{1}{1-t}+\frac{1}{1+t}\right)|w||\eta| \dd t\leq 2||w'||_{L^{2}}||\eta ||_{L^{2}}. 
\end{align*}
\end{proof}

\vspace{3pt}

\begin{prop}
The bilinear form 

\begin{equation}
\begin{aligned}
B(w,\eta)&=\int_{\Omega} \left(w'\eta'-a(t)w\eta\right)\dd t,\\
a(t)&=\beta(\beta+1)\frac{1}{1-t^{2}},\quad \beta\in \mathbb{R}\backslash [-2,0],
\end{aligned}
\end{equation}\\
is bounded on $H^{1}_{0}(\Omega)\times H^{1}_{0}(\Omega)$. 
\end{prop}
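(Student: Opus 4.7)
The plan is to bound $B(w,\eta)$ term by term, using Cauchy--Schwarz for the gradient pairing and invoking estimate (3.16) of Proposition 3.8 for the singular potential term. Since the potential $a(t) = \beta(\beta+1)/(1-t^2)$ has the precise weight $1/(1-t^2)$ that appears in (3.16), the Hardy-type bound applies directly and yields the desired boundedness with an explicit constant depending only on $\beta$.

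More concretely, first I would estimate
\begin{align*}
\left|\int_{\Omega} w'\eta'\, \dd t\right|
 \leq \|w'\|_{L^{2}}\,\|\eta'\|_{L^{2}}
 \leq \|w\|_{H^{1}_{0}}\,\|\eta\|_{H^{1}_{0}}
\end{align*}
by the Cauchy--Schwarz inequality. For the second term I would factor the constant out and apply (3.16) to get
\begin{align*}
\left|\int_{\Omega} a(t)\, w\eta\, \dd t\right|
 = |\beta(\beta+1)| \left|\int_{\Omega}\frac{w\eta}{1-t^{2}}\, \dd t\right|
 \leq 2\,|\beta(\beta+1)|\,\|w'\|_{L^{2}}\,\|\eta\|_{L^{2}}
 \leq 2\,|\beta(\beta+1)|\,\|w\|_{H^{1}_{0}}\,\|\eta\|_{H^{1}_{0}}.
\end{align*}
Combining the two bounds gives
\begin{align*}
|B(w,\eta)| \leq \bigl(1 + 2\,|\beta(\beta+1)|\bigr)\,\|w\|_{H^{1}_{0}}\,\|\eta\|_{H^{1}_{0}},
\end{align*}
which is the claimed boundedness on $H^{1}_{0}(\Omega)\times H^{1}_{0}(\Omega)$.

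There is no substantive obstacle here: the only nontrivial input is Hardy's inequality, which has already been encapsulated in Proposition 3.8. Bilinearity of $B$ and continuity in each variable follow immediately from the above estimate, so no further argument is needed. The sign and size of $\beta$ (other than $\beta\in\mathbb{R}\setminus[-2,0]$, which is assumed in the statement) enter only through the constant $|\beta(\beta+1)|$, so the bound is uniform on any bounded set of admissible $\beta$.
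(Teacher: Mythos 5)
Your proof is correct and follows exactly the route the paper intends: the paper's proof of this proposition is the one-line remark that boundedness follows from the Hardy-type estimate (3.16), and you have simply written out the Cauchy--Schwarz step for the gradient term and the application of (3.16) to the potential term, together with the (standard, Poincar\'e-based) inequality $\|\eta\|_{L^{2}}\leq \|\eta\|_{H^{1}_{0}}$. Nothing further is needed.
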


\begin{proof}
The boundedness follows from (3.16).
\end{proof}

\vspace{3pt}

\begin{lem}
There exists an orthonormal basis $e_1$, $e_2$, $e_3$, $\cdots$ on $L^{2}(\Omega)$ consisting of the eigenfunctions of $-L_{\beta}$ corresponding to the eigenvalues $\mu_1\leq \mu_2\leq \mu_3\leq \cdots$ satisfying $\mu_n\to\infty$. These eigenvalues are denoted by repeating its finite multiplicity.  
\end{lem}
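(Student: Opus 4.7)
The plan is to realize $-L_\beta$ as a self-adjoint operator on $L^2(\Omega)$ with compact resolvent, and invoke the spectral theorem for compact self-adjoint operators. The only subtlety is the boundary singularity in the potential $a(t)=\beta(\beta+1)/(1-t^2)$, which I will control using the Hardy-type bound (3.16).

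First I would work with the bilinear form $B$ from (3.17) and produce a G\aa{}rding-type estimate. By Cauchy--Schwarz and (3.16),
\begin{equation*}
\left|\int_{\Omega}\frac{w^{2}}{1-t^{2}}\,\mathrm{d}t\right|\leq 2\,\|w'\|_{L^{2}}\|w\|_{L^{2}}\leq \varepsilon\|w'\|_{L^{2}}^{2}+\varepsilon^{-1}\|w\|_{L^{2}}^{2}
\end{equation*}
for any $\varepsilon>0$. Picking $\varepsilon=1/(2|\beta(\beta+1)|+1)$ yields
\begin{equation*}
B(w,w)\;\geq\;\tfrac{1}{2}\|w'\|_{L^{2}}^{2}-C\|w\|_{L^{2}}^{2}
\end{equation*}
for some $C=C(\beta)\geq 0$. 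Consequently, for any fixed $\lambda>C$ the shifted form
\begin{equation*}
B_{\lambda}(w,\eta):=B(w,\eta)+\lambda(w,\eta)_{L^{2}}
\end{equation*}
is bounded, symmetric and coercive on $H^{1}_{0}(\Omega)$, hence defines an inner product equivalent to the usual one by Proposition 3.9.

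Next I would use $B_{\lambda}$ to build a compact self-adjoint solution operator. Given $f\in L^{2}(\Omega)$, the functional $\eta\mapsto(f,\eta)_{L^{2}}$ is bounded on $(H^{1}_{0}(\Omega),B_{\lambda})$, so the Lax--Milgram theorem produces a unique $Tf\in H^{1}_{0}(\Omega)$ satisfying
\begin{equation*}
B_{\lambda}(Tf,\eta)=(f,\eta)_{L^{2}}\qquad\text{for all }\eta\in H^{1}_{0}(\Omega).
\end{equation*}
The operator $T:L^{2}(\Omega)\to L^{2}(\Omega)$ is symmetric since $B_{\lambda}$ is symmetric, and it is positive because $(Tf,f)_{L^{2}}=B_{\lambda}(Tf,Tf)>0$ for $f\neq 0$. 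Since $\Omega=(-1,1)$ is a bounded interval, the Rellich--Kondrachov embedding $H^{1}_{0}(\Omega)\hookrightarrow L^{2}(\Omega)$ is compact; composing with $T:L^{2}\to H^{1}_{0}$ shows that $T$ is a compact self-adjoint positive operator on $L^{2}(\Omega)$.

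The spectral theorem for compact self-adjoint operators then provides an orthonormal basis $\{e_{n}\}_{n\geq 1}$ of $L^{2}(\Omega)$ consisting of eigenfunctions $Te_{n}=\nu_{n}e_{n}$ with $\nu_{1}\geq\nu_{2}\geq\cdots>0$ and $\nu_{n}\to 0$. Unwinding the definition, $B_{\lambda}(e_{n},\eta)=\nu_{n}^{-1}(e_{n},\eta)_{L^{2}}$ for all $\eta\in H^{1}_{0}(\Omega)$, which is precisely the weak form of $-L_{\beta}e_{n}=\mu_{n}e_{n}$ with $\mu_{n}=\nu_{n}^{-1}-\lambda$. In particular $\mu_{n}\to\infty$ and, after ordering, $\mu_{1}\leq\mu_{2}\leq\cdots$, with multiplicities counted by repetition. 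I do not expect a genuine obstacle here; the only point requiring care is the singular coefficient $a(t)$ at $t=\pm 1$, and that is exactly what (3.16) neutralizes, allowing the standard abstract spectral machinery to go through verbatim.
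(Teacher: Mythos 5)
Your proposal is correct and follows essentially the same route as the paper: control the singular potential via the Hardy-type bound (3.16) to obtain a G\aa{}rding estimate, shift the form to make it coercive, apply Lax--Milgram to get a compact, symmetric, positive solution operator on $L^{2}(\Omega)$ via the Rellich--Kondrachov embedding, and invoke the spectral theorem for compact self-adjoint operators to conclude $\mu_{n}=\nu_{n}^{-1}-\lambda\to\infty$. The only cosmetic difference is that the paper shifts by $\varepsilon^{-1}$ with $0<\varepsilon<\beta^{-2}(\beta+1)^{-2}$ rather than by a generic $\lambda>C$.
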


\vspace{3pt}

\begin{proof}
For $\beta\in \mathbb{R}\backslash [-2,0]$, $\beta(\beta+1)$ is positive. We take an arbitrary $\varepsilon>0$ and apply (3.16) and Young's inequality to estimate

\begin{align*}
B(w,w)=\int_{\Omega}|w'|^{2}\dd t-\beta(\beta+1)\int_{\Omega}\frac{|w|^{2}}{1-t^{2}}\dd t
&\geq ||w'||_{L^{2}}^{2}-2\beta(\beta+1)||w'||_{L^{2}}||w||_{L^{2}} \\
&\geq \left(1-\varepsilon \beta^{2} (\beta+1)^{2} \right)||w'||_{L^{2}}^{2}-\frac{1}{\varepsilon}||w||_{L^{2}}^{2}.
\end{align*}\\
Then the bilinear form

\begin{align*}
B_{\varepsilon}(w,\eta)=B(w,\eta)+\frac{1}{\varepsilon} (w,\eta)_{L^{2}}
\end{align*}\\
is coercive for $0<\varepsilon<\beta^{-2}(\beta+1)^{-2}$.

By Lax--Milgram theorem, for $f\in H^{-1}(\Omega)$, there exists a unique $w\in H^{1}_{0}(\Omega)$ such that $B_{\varepsilon}(w,\eta)=<f,\eta>$ for all $\eta\in H^{1}_{0}(\Omega)$. By the coercivity of $B_{\varepsilon}$, the operator 

\begin{align*}
K_{\varepsilon} :H^{-1}(\Omega)\ni f\longmapsto w \in H^{1}_{0}(\Omega),
\end{align*}\\
is a bounded operator. By the embedding $H^{1}_{0}(\Omega)\subset \subset L^{2}(\Omega)\subset H^{-1}(\Omega)$, $K_{\varepsilon}$ is a compact operator on $L^{2}(\Omega)$. By symmetry of $B_{\varepsilon}$, $K_{\varepsilon}$ is a symmetric operator. Thus the spectrum of $K_{\varepsilon}$ consists of zero and countable eigenvalues lying on the real line. By the theory of compact and symmetric operators \cite[APPENDIX D, Theorem 7]{E}, there exists a countable orthogonal basis of $L^{2}(\Omega)$ consisting of the eigenfunction $\{f_n\}$ of $K_{\varepsilon}$. All eigenvalues are positive since the eigenvalue $\lambda$ and the eigenfunction $f$ of $K_{\varepsilon}$ satisfy

\begin{align*}
0<B_{\varepsilon}(K_{\varepsilon}f,K_{\varepsilon}f)=(f,K_{\varepsilon}f)_{L^{2}}=\lambda ||f||_{L^{2}}^{2}.
\end{align*}\\
Since the multiplicity of each eigenvalue is finite by Fredholm alternative \cite[APPENDIX D, Theorem 5]{E}, the eigenvalues are accumulating at zero. By repeating finite multiplicity, the eigenfunctions of $K_{\varepsilon}$ are expressed as 

\begin{align*}
0<\cdots\leq \cdots\leq \lambda_2\leq  \lambda_1,\quad \lambda_n\to0.
\end{align*}\\
Since $w_n=K_{\varepsilon}f_n=\lambda_n f_n$ satisfies 

\begin{align*}
B(w_n,\eta)+\frac{1}{\varepsilon}(w_n,\eta)_{L^{2}}=B_{\varepsilon}(w_n,\eta)=(f_n,\eta)_{L^{2}}=\frac{1}{\lambda_n}(w_n,\eta)_{L^{2}},\quad  \eta\in H^{1}_{0}(\Omega),
\end{align*}\\
the function $w_n$ is the eigenfunction of the operator $-L_{\beta}=-\partial_t^{2}-a(t)$ with the eigenvalue $\mu_n=\lambda_n^{-1}-\varepsilon^{-1}$. Since $\{f_n\}$ is orthogonal basis on $L^{2}(\Omega)$, by normalizing $\{w_n\}$ we obtain the orthonormal basis $\{e_n\}$ and the eigenvalues $\mu_1\leq \mu_2\leq \mu_3\leq \cdots$ satisfying $\mu_n\to\infty$.  
\end{proof}

\vspace{3pt}

\begin{prop}
\begin{align}
\mu_1=\inf\left\{B(w,w)\ \middle|\ w\in H^{1}_{0}(\Omega),\  \int_{\Omega}|w|^{2}\dd t=1\  \right\}.
\end{align}
\end{prop}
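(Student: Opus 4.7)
The plan is to obtain the Rayleigh formula as a direct consequence of the spectral decomposition in Lemma 3.10 together with the continuity of $B$ from Proposition 3.9. The approach is the standard one: expand an admissible test function in the eigenbasis $\{e_n\}$ and use the $L^2$-orthonormality together with the identity $B(e_n,e_m)=\mu_n\delta_{nm}$.

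First, I would establish the upper bound by the trial function $w=e_1$. Since $-L_\beta e_1=\mu_1 e_1$ and $\|e_1\|_{L^2}=1$, integration by parts gives $B(e_1,e_1)=\mu_1$, so the infimum in (3.18) is at most $\mu_1$.

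For the reverse inequality, I would exploit the coercive bilinear form $B_\varepsilon(\cdot,\cdot)=B(\cdot,\cdot)+\varepsilon^{-1}(\cdot,\cdot)_{L^2}$ used in the proof of Lemma 3.10 (with $0<\varepsilon<\beta^{-2}(\beta+1)^{-2}$). One checks directly that $B_\varepsilon(e_n,e_m)=(\mu_n+\varepsilon^{-1})\delta_{nm}$, so $\{e_n\}$ is orthogonal with respect to the inner product $B_\varepsilon$. The key step is to show that $\{e_n\}$ is in fact a complete orthogonal basis of $H^1_0(\Omega)$ with respect to $B_\varepsilon$: if $w\in H^1_0(\Omega)$ satisfies $B_\varepsilon(w,e_n)=0$ for every $n$, then $(\mu_n+\varepsilon^{-1})(w,e_n)_{L^2}=0$ and hence $(w,e_n)_{L^2}=0$ for all $n$, forcing $w=0$ by $L^2$-completeness of the basis. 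Given this, an arbitrary $w\in H^1_0(\Omega)$ with $\|w\|_{L^2}=1$ admits an expansion $w=\sum_n c_n e_n$ with $c_n=(w,e_n)_{L^2}$, converging both in $L^2(\Omega)$ and in $H^1_0(\Omega)$ (the latter because the $B_\varepsilon$-norm is equivalent to the $H^1_0$-norm by coercivity and boundedness).

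Passing to the limit in the continuous bilinear form $B$ on the partial sums, I would conclude
\begin{equation*}
B(w,w)=\sum_{n}\mu_n c_n^{2}\geq \mu_1 \sum_n c_n^{2}=\mu_1 \|w\|_{L^2}^{2}=\mu_1,
\end{equation*}
which combined with the upper bound yields (3.18). The only technical point requiring care is the $H^1_0$-convergence of the eigenfunction expansion, needed so that the limit can be pushed through $B$; this is handled by the $B_\varepsilon$-basis property above and presents no real obstacle, as it is the usual Rayleigh quotient argument transplanted to the singular weight setting.
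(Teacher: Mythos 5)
Your proposal is correct and follows essentially the same route as the paper: expand $w$ in the $L^{2}$-eigenbasis, use $B(e_n,e_m)=\mu_n\delta_{n,m}$ to get $B(w,w)=\sum_n\mu_n|(w,e_n)_{L^{2}}|^{2}\geq\mu_1\|w\|_{L^{2}}^{2}$, and take $w=e_1$ for equality. The only difference is that you carefully justify the termwise evaluation of $B$ on the series via $B_\varepsilon$-orthogonality and the equivalence of the $B_\varepsilon$- and $H^{1}_{0}$-norms, a point the paper's proof leaves implicit.
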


\vspace{3pt}

\begin{proof}
The bilinear form $B(\cdot,\cdot)$ satisfies 

\begin{align*}
B(e_n,e_m)=\mu_{n}(e_n,e_m)_{L^{2}}=\mu_{n}\delta_{n,m}.
\end{align*}\\
By the eigenfunction expansion of $w\in H^{1}_{0}(\Omega)$ in $L^{2}(\Omega)$,
  
\begin{align*}
w&=\sum_{n=1}^{\infty}(w,e_n)_{L^{2}}e_{n},\\
B(w,w)&=B\left(\sum_{n=1}^{\infty}(w,e_n)_{L^{2}}e_n, \sum_{m=1}^{\infty}(w,e_m)_{L^{2}}e_m\right)=\sum_{n=1}^{\infty}\mu_n |(w,e_n)_{L^{2}}|^{2}\geq \mu_1||w||_{L^{2}}^{2}.
\end{align*}\\
The equality holds for $w=e_1$. Thus the formula (3.18) holds.
\end{proof}

\vspace{3pt}

\begin{prop}
The transform (3.8) is an isometry between $H^{1}_{0}(\Omega)$ and $H^{1}_{\textrm{rad}}(\mathbb{S}^{4})$ in the sense that 

\begin{equation}
\begin{aligned}
\int_{-1}^{1}w'\eta'\dd t&=\frac{1}{2\pi^{2}}\int_{\mathbb{S}^{4} }(\nabla_{\mathbb{S}^{4}}\chi \cdot \nabla_{\mathbb{S}^{4}} \xi+2\chi \xi) \dd H,   \\
\int_{-1}^{1}\frac{w\eta}{1-t^{2}}\dd t&=\frac{1}{2\pi^{2}}\int_{\mathbb{S}^{4} }\chi  \xi \dd H,\quad \chi=\frac{w}{\sin^{2}\theta},\ \xi=\frac{\eta}{\sin^{2}\theta}.
\end{aligned}
\end{equation}\\
Here, $\nabla_{\mathbb{S}^{4}}\chi=\tau \chi'$ is the gradient on $\mathbb{S}^{4}$ and $\tau$ is the unit tangential vector field on $\mathbb{S}^{4}$ whose streamlines are geodesic. 
\end{prop}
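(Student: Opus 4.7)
The plan is to verify both identities of (3.19) by direct computation for smooth radial functions and then extend to $H^{1}_{0}(\Omega)$ by density. The key input is the polar decomposition on $\mathbb{S}^{4}$: in geodesic coordinates $(\theta,\omega)\in[0,\pi]\times\mathbb{S}^{3}$ the volume element is $\sin^{3}\theta\,\dd\theta\,\dd H_{\mathbb{S}^{3}}$, and since $|\mathbb{S}^{3}|=2\pi^{2}$, any radial $\Phi(\theta)$ satisfies
\begin{equation*}
\frac{1}{2\pi^{2}}\int_{\mathbb{S}^{4}}\Phi\,\dd H=\int_{0}^{\pi}\Phi(\theta)\sin^{3}\theta\,\dd\theta.
\end{equation*}
For a radial $\chi$, the gradient reduces to $\nabla_{\mathbb{S}^{4}}\chi=\chi'(\theta)\tau$, so $|\nabla_{\mathbb{S}^{4}}\chi|^{2}=|\chi'|^{2}$.

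The second identity follows immediately: substituting $\chi\xi=w\eta/\sin^{4}\theta$ into the volume formula yields $(2\pi^{2})^{-1}\int_{\mathbb{S}^{4}}\chi\xi\,\dd H=\int_{0}^{\pi}(w\eta/\sin\theta)\,\dd\theta$, and the change of variables $t=\cos\theta$, $\dd t=-\sin\theta\,\dd\theta$, converts this to $\int_{-1}^{1}w\eta/(1-t^{2})\,\dd t$.

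For the first identity I invert the transform as $w(t)=(1-t^{2})\chi(\arccos t)$ and apply the chain rule, using $\theta'(t)=-1/\sin\theta$, to obtain
\begin{equation*}
w'(t)=-2\cos\theta\,\chi(\theta)-\sin\theta\,\chi'(\theta),
\end{equation*}
together with the analogous expression for $\eta'(t)$. Multiplying and switching back to the $\theta$-integral via $\dd t=-\sin\theta\,\dd\theta$ gives
\begin{equation*}
\int_{-1}^{1}w'\eta'\,\dd t=\int_{0}^{\pi}\!\bigl[\,4\cos^{2}\theta\sin\theta\,\chi\xi+2\cos\theta\sin^{2}\theta\,(\chi\xi)'+\sin^{3}\theta\,\chi'\xi'\,\bigr]\dd\theta.
\end{equation*}
The crux is to integrate the cross term by parts. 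Using $(2\cos\theta\sin^{2}\theta)'=4\cos^{2}\theta\sin\theta-2\sin^{3}\theta$ and the vanishing of the boundary values (since $\sin\theta$ vanishes at $\theta=0,\pi$ and $\chi,\xi$ are smooth at the poles for test functions $w,\eta$ supported away from $t=\pm1$), two of the three contributions cancel and there remains exactly
\begin{equation*}
\int_{-1}^{1}w'\eta'\,\dd t=\int_{0}^{\pi}\bigl(\chi'\xi'+2\chi\xi\bigr)\sin^{3}\theta\,\dd\theta=\frac{1}{2\pi^{2}}\int_{\mathbb{S}^{4}}\bigl(\nabla_{\mathbb{S}^{4}}\chi\cdot\nabla_{\mathbb{S}^{4}}\xi+2\chi\xi\bigr)\dd H.
\end{equation*}

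To pass from test functions to all of $H^{1}_{0}(\Omega)$ I will combine density of $C_{c}^{\infty}(\Omega)$ with Hardy's bounds (3.15)–(3.16), which show that both bilinear forms in (3.19) are continuous on $H^{1}_{0}(\Omega)\times H^{1}_{0}(\Omega)$, so the identities extend by passing to the limit. The two identities together yield $\|\chi\|_{H^{1}(\mathbb{S}^{4})}^{2}\sim\|w\|_{H^{1}_{0}(\Omega)}^{2}$, and the inverse assignment $\chi\mapsto w=(1-t^{2})\chi(\arccos t)$ shows that the image is exactly $H^{1}_{\mathrm{rad}}(\mathbb{S}^{4})$, so the map is an isometric isomorphism in the sense claimed. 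The only delicate point I anticipate is the integration-by-parts bookkeeping that produces the precise coefficient $2$ in front of $\chi\xi$; once that cancellation is verified, the rest of the argument is routine change of variable plus density.
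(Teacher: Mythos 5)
Your argument is correct and is essentially the paper's own proof: both rest on the polar decomposition $\dd H=\sin^{3}\theta\,\dd\theta\,\dd H_{\mathbb{S}^{3}}$ with $|\mathbb{S}^{3}|=2\pi^{2}$, the substitution $t=\cos\theta$, and a single integration by parts that produces the coefficient $2$; the paper merely organizes the same computation through the radial identity $\Delta_{\mathbb{S}^{4}}\chi=w''+\tfrac{2}{1-t^{2}}w$ and then integrates by parts, whereas you expand $w'\eta'$ at the level of the quadratic form directly. One minor caveat: your closing claim that the image is exactly $H^{1}_{\textrm{rad}}(\mathbb{S}^{4})$ is not part of what the proposition asserts (it only claims the two integral identities) and would need an additional Hardy-type inequality on $\mathbb{S}^{4}$ to control $\int_{0}^{\pi}\chi^{2}\sin\theta\,\dd\theta$ for the inverse map, so it is best omitted or flagged as unproven.
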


\vspace{3pt}

\begin{proof}
The surface element $\dd H$ on $\mathbb{S}^{4}$ is the surface element on $\mathbb{S}^{3}$ times $\sin^{3}\theta\dd \theta$. The surface area of $\mathbb{S}^{3}$ is $2\pi^{2}$. The identity $(3.19)_2$ follows from the coordinate transform $t=\cos\theta$. The Laplace--Beltrami operator for radially symmetric functions on $\mathbb{S}^{4}$ is expressed as 

\begin{align*}
\Delta_{\mathbb{S}^{4}}\chi=\frac{1}{\sin^{3}\theta}\partial_{\theta}(\sin^{3}\theta \partial_{\theta}\chi)=\chi''+\frac{3}{\tan\theta}\chi'.
\end{align*}\\
By the transform (3.8),

\begin{align*}
\Delta_{\mathbb{S}^{4}}\chi=w''+\frac{2}{1-t^{2}}w.
\end{align*}\\
The identity $(3.19)_1$ follows from integration by parts.  
\end{proof}

\vspace{3pt}

\begin{lem}
The principal eigenvalue $\mu_1$ is positive for $0<\beta<1$ and nonpositive for $\beta\in \mathbb{R}\backslash [-2,1)$.
\end{lem}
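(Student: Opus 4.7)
The plan is to obtain the sign of $\mu_1$ from Rayleigh's formula (3.18), after rewriting the bilinear form $B(w,w)$ via the Chandrasekhar transform using Proposition 3.12. Writing $\chi=w/\sin^{2}\theta$ and applying $(3.19)_1$ with $\eta=w$ together with $(3.19)_2$ gives
\begin{equation*}
B(w,w)=\frac{1}{2\pi^{2}}\int_{\mathbb{S}^{4}}\Bigl(|\nabla_{\mathbb{S}^{4}}\chi|^{2}+\bigl(2-\beta(\beta+1)\bigr)|\chi|^{2}\Bigr)\dd H,
\end{equation*}
and the elementary factorization $2-\beta(\beta+1)=(2+\beta)(1-\beta)$ recasts this as the integrand appearing in (3.9). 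The normalization $\int_{-1}^{1}|w|^{2}\dd t=1$ is turned, through $t=\cos\theta$ and $\dd H=\sin^{3}\theta\dd\theta\dd S_{\mathbb{S}^{3}}$, into $(2\pi^{2})^{-1}\int_{\mathbb{S}^{4}}\sin^{2}\theta|\chi|^{2}\dd H=1$.

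For $0<\beta<1$ the coefficient $(2+\beta)(1-\beta)$ is strictly positive, so the integrand on $\mathbb{S}^{4}$ is pointwise nonnegative. Any admissible $\chi$ is not identically zero on $\mathbb{S}^{4}$, so the $|\chi|^{2}$ contribution alone yields a strictly positive lower bound; hence $\mu_{1}>0$.

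For $\beta\in\mathbb{R}\setminus[-2,1)$ we exhibit a single test function producing $B(w,w)\le 0$, which suffices by the infimum characterization. The natural choice is $\chi\equiv\mathrm{const.}$, corresponding to $w(t)=c(1-t^{2})\in H^{1}_{0}(\Omega)$, for which $\nabla_{\mathbb{S}^{4}}\chi=0$ and
\begin{equation*}
B(w,w)=\frac{(2+\beta)(1-\beta)}{2\pi^{2}}\int_{\mathbb{S}^{4}}|\chi|^{2}\dd H.
\end{equation*}
Since $(2+\beta)(1-\beta)\le 0$ precisely on $\mathbb{R}\setminus(-2,1)$, we conclude $\mu_{1}\le 0$ in this range; the endpoints $\beta=1$ and $\beta=-2$ give $\mu_{1}\le 0$ (with equality attained by this $w$), while $\beta>1$ or $\beta<-2$ give $\mu_{1}<0$. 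No step presents a real obstacle: the only subtlety is the clean factorization $2-\beta(\beta+1)=(2+\beta)(1-\beta)$, which is exactly what makes the boundary of the existence range coincide with $\beta\in\{-2,1\}$.
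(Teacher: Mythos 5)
Your argument is essentially the paper's: you pass to $\mathbb{S}^{4}$ via Proposition 3.12 to rewrite $B(w,w)$ with the coefficient $(2+\beta)(1-\beta)$, and for $\beta\in\mathbb{R}\setminus[-2,1)$ you test with the constant $\chi$ (i.e.\ $w=c(1-t^{2})$), which is exactly the paper's choice $\chi=\sqrt{15}/4$, giving $\mu_{1}\le 0$.

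The one place you need to be careful is the case $0<\beta<1$. As written, the inference ``each admissible $\chi$ is not identically zero, so the $|\chi|^{2}$ term gives a strictly positive lower bound, hence $\mu_{1}>0$'' is not valid on its own: an infimum of strictly positive quantities can still be zero. You need a lower bound that is uniform over the constraint set. This is immediate from your own normalization: since $\sin^{2}\theta\le 1$, the constraint $(2\pi^{2})^{-1}\int_{\mathbb{S}^{4}}\sin^{2}\theta|\chi|^{2}\dd H=1$ forces $(2\pi^{2})^{-1}\int_{\mathbb{S}^{4}}|\chi|^{2}\dd H\ge 1$, whence $B(w,w)\ge(2+\beta)(1-\beta)>0$ uniformly and $\mu_{1}\ge(2+\beta)(1-\beta)$. (The paper closes the same gap differently, by invoking that the infimum in (3.9) is attained by the principal eigenfunction $\chi_{1}=e_{1}(\cos\theta)/\sin^{2}\theta$ from Lemma 3.10 and evaluating there; your route, once the uniform bound is inserted, avoids needing attainment and even gives a quantitative lower bound.) A cosmetic point: $\beta=-2$ lies in $[-2,1)$, so it is not an endpoint of the range you are treating in the second case.
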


\vspace{3pt}

\begin{proof}
By the isometry (3.8), the formula (3.18) is transformed into (3.9). The constraint in (3.9) is satisfied for $\chi=\sqrt{15}/4$. Thus for $\beta\in \mathbb{R}\backslash [-2,1)$, 

\begin{align*}
\mu_1\leq \frac{5}{4}(2+\beta)(1-\beta)\leq 0. 
\end{align*}\\
The infimum (3.9) is achieved by the function $\chi_1(\theta)=e_1(\cos\theta)/\sin^{2}\theta$. Thus for $0<\beta<1$, 

\begin{align*}
\mu_1=\frac{1}{2\pi^{2}}\int_{\mathbb{S}^{4} }\left( |\nabla_{\mathbb{S}^{4}}\chi_1 |^{2}+(2+\beta)(1-\beta)|\chi_1|^{2}\right) \dd H>0.
\end{align*}
\end{proof}

\vspace{3pt}

\begin{lem}
The direct sum decomposition (3.11) holds. The bilinear form (3.17) satisfies 

\begin{equation}
\begin{aligned}
B(w,w)&=B(y,y)+B(z,z),\quad w=y+z\in  Y\oplus Z,\\
\mu_N ||y||_{L^{2}}^{2}&\geq B(y,y)\geq \mu_1 ||y||_{L^{2}}^{2}, \quad y\in Y,\\
 B(z,z)&\geq \mu_{N+1} ||z||_{L^{2}}^{2},\quad z\in Z.
\end{aligned}
\end{equation}
\end{lem}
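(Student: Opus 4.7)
The plan is to build the decomposition explicitly from the $L^{2}$-orthonormal eigenbasis $\{e_n\}$ produced in Lemma 3.10 and then read off the three assertions from the weak eigenvalue identity $B(e_n,\eta)=\mu_n(e_n,\eta)_{L^{2}}$ for $\eta\in H^{1}_{0}(\Omega)$, which is simply the weak form of $-L_{\beta}e_n=\mu_ne_n$.

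For the decomposition $H^{1}_{0}(\Omega)=Y\oplus Z$, given $w\in H^{1}_{0}(\Omega)$ I set $y=\sum_{n=1}^{N}(w,e_n)_{L^{2}}e_n\in Y$ and $z=w-y$. Then $(z,e_k)_{L^{2}}=0$ for $k=1,\dots,N$, so $z\in Z$. The intersection $Y\cap Z$ is trivial since an element $y\in Y$ that is $L^{2}$-orthogonal to $e_1,\dots,e_N$ must have all expansion coefficients equal to zero.

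For the splitting of the bilinear form, I apply the eigenvalue identity with test function $z\in Z$ to get $B(e_n,z)=\mu_n(e_n,z)_{L^{2}}=0$ for $1\leq n\leq N$, so $B(y,z)=0$ for every $y\in Y$, $z\in Z$, and expanding $B(w,w)$ yields the first line of (3.20). The bounds on $B(y,y)$ for $y=\sum_{n=1}^{N}a_ne_n$ follow at once from $B(e_n,e_m)=\mu_n\delta_{nm}$, which gives $B(y,y)=\sum_{n=1}^{N}\mu_n a_n^{2}$ and $\|y\|_{L^{2}}^{2}=\sum_{n=1}^{N}a_n^{2}$; since $\mu_1\leq\mu_n\leq\mu_N$ on this range, the second line of (3.20) follows.

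The delicate step is the lower bound $B(z,z)\geq\mu_{N+1}\|z\|_{L^{2}}^{2}$, because one must push the bilinear form inside the eigenfunction expansion, and the expansion is a priori only an $L^{2}$ statement. I plan to use the coercive shifted form $B_{\varepsilon}(w,\eta)=B(w,\eta)+\varepsilon^{-1}(w,\eta)_{L^{2}}$ from the proof of Lemma 3.10: it defines an inner product on $H^{1}_{0}(\Omega)$ equivalent to the standard one, and the renormalized functions $\tilde e_n=e_n/\sqrt{\mu_n+\varepsilon^{-1}}$ form a $B_{\varepsilon}$-orthonormal basis (this is the spectral resolution of the compact self-adjoint operator $K_{\varepsilon}$ transcribed to the form side). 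Consequently, for any $z\in Z$, the truncations $z_k=\sum_{n=N+1}^{k}(z,e_n)_{L^{2}}e_n$ converge to $z$ in $H^{1}_{0}(\Omega)$. Boundedness of $B$ on $H^{1}_{0}(\Omega)\times H^{1}_{0}(\Omega)$ (Proposition 3.9) gives $B(z_k,z_k)\to B(z,z)$. Computing directly, $B(z_k,z_k)=\sum_{n=N+1}^{k}\mu_n(z,e_n)_{L^{2}}^{2}\geq\mu_{N+1}\sum_{n=N+1}^{k}(z,e_n)_{L^{2}}^{2}$, and the right-hand side tends to $\mu_{N+1}\|z\|_{L^{2}}^{2}$ by Parseval's identity applied to $z$ (whose first $N$ coefficients vanish). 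The main obstacle is precisely this convergence of the spectral series in the $H^{1}_{0}$ topology; once the equivalent $B_{\varepsilon}$-inner product is invoked, the remainder is standard Hilbert space bookkeeping.
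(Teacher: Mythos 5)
Your proof is correct and follows essentially the same route as the paper: decompose $w$ via the $L^{2}$-eigenfunction expansion of Lemma 3.10 and read the three claims off the relations $B(e_n,e_m)=\mu_n\delta_{n,m}$. The one place you go beyond the paper is the justification of $B(z,z)\geq \mu_{N+1}\|z\|_{L^{2}}^{2}$ through the $B_{\varepsilon}$-orthonormal basis and the $H^{1}_{0}$-convergence of the truncations $z_k$; the paper simply pushes $B$ inside the infinite series without comment, so your extra step is a legitimate (and welcome) patch to the same argument rather than a different one.
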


\vspace{3pt}

\begin{proof}
By the eigenfunction expansion of $w\in H^{1}_{0}(\Omega)$ in $L^{2}(\Omega)$, we set 

\begin{align*}
w=\sum_{n=1}^{N}(w,e_n)_{L^{2}}e_n+\sum_{n=N+1}^{\infty}(w,e_n)_{L^{2}}e_n=:y+z.
\end{align*}
Since $y\in Y$, $z\in Z$ and $H^{1}_{0}(\Omega)=Y\oplus Z$ holds. The property $(3.20)_1$ follows from $B(e_n,e_m)=\mu_n\delta_{n,m}$ and 

\begin{align*}
B(y,z)&=\sum_{n=1}^{N}\sum_{m=N+1}^{\infty}(w,e_n)_{L^{2}}(w,e_m)_{L^{2}}B(e_n,e_m)=0,\\
B(w,w)&=B(y+z,y+z)=B(y,y)+B(z,z)=\sum_{n=1}^{N}\mu_n |(w,e_n)_{L^{2}}|^{2}+\sum_{n=N+1}^{\infty}\mu_n |(w,e_n)_{L^{2}}|^{2}.
\end{align*}\\
Since $||y||_{L^{2}}^{2}=\sum_{n=1}^{N}|(w,e_n)_{L^{2}}|^{2}$ and $||z||_{L^{2}}^{2}=\sum_{n=N+1}^{\infty}|(w,e_n)_{L^{2}}|^{2}$, $(3.20)_2$ and $(3.20)_3$ follow.
\end{proof}

\vspace{3pt}

\begin{prop}
The functional 

\begin{align}
w\longmapsto \int_{\Omega}a(t)w^{2}\dd t
\end{align}\\
is weakly continuous on $H^{1}_{0}(\Omega)$.
\end{prop}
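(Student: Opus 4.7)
The plan is to combine Rellich--Kondrachov compactness with the bilinear Hardy-type inequality (3.16) already established in Proposition 3.8. Suppose $w_n \rightharpoonup w$ in $H^1_0(\Omega)$. Then the sequence is uniformly bounded, say $\|w_n'\|_{L^2} \leq M$, and the compact embedding $H^1_0(\Omega) \hookrightarrow\hookrightarrow L^2(\Omega)$ yields strong convergence $\|w_n - w\|_{L^2} \to 0$.

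The key manipulation is to factor the difference of squares and apply (3.16):
$$\int_\Omega a(t)\bigl(w_n^2 - w^2\bigr)\, dt = \beta(\beta+1) \int_\Omega \frac{(w_n - w)(w_n + w)}{1 - t^2}\, dt,$$
and then invoke (3.16) with the $H^1_0$-derivative placed on $w_n + w$ and the $L^2$-factor on $w_n - w$:
$$\left| \int_\Omega \frac{(w_n - w)(w_n + w)}{1 - t^2}\, dt \right| \leq 2\, \|(w_n + w)'\|_{L^2}\, \|w_n - w\|_{L^2}.$$
The first factor is bounded by $M + \|w'\|_{L^2}$ uniformly in $n$, while the second tends to zero by Rellich compactness, so the left-hand side vanishes in the limit and weak continuity follows.

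There is no real obstacle: the bilinear inequality (3.16) is tailor-made so that the singular weight $1/(1-t^2)$ is absorbed by a single derivative, leaving the other slot to enter only through its $L^2$-norm. The only subtle point is to place the \emph{strongly} convergent factor $w_n - w$ in the $L^2$-slot and the merely bounded factor $w_n + w$ in the $H^1_0$-slot; the opposite pairing would yield uniform boundedness of the integral but not convergence.
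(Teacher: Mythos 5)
Your argument is correct and is essentially the paper's proof: the same factorization $w_n^2-w^2=(w_n-w)(w_n+w)$ combined with the Hardy-type bilinear estimate (3.16), placing the bounded factor in the derivative slot and the strongly $L^2$-convergent factor in the other. The paper merely phrases it as a contradiction argument with a subsequence extraction, whereas you invoke directly that the compact embedding sends weakly convergent sequences to strongly convergent ones; the substance is identical.
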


\vspace{3pt}

\begin{proof}
Suppose that (3.21) is not weakly continuous. Then, there exists $w\in H^{1}_{0}(\Omega)$ and a sequence $\{w_n\}\subset H^{1}_{0}(\Omega)$ such that $w_n\rightharpoonup w$ in $H^{1}_{0}(\Omega)$ and for some $\varepsilon_0>0$,

\begin{align*}
\left|\int_{\Omega}\frac{1}{1-t^{2}}w^{2}_{n}\dd t-\int_{\Omega}\frac{1}{1-t^{2}}w^{2}\dd t\right|\geq \varepsilon_0.
\end{align*}\\
Since a weakly convergent sequence in $H^{1}_{0}(\Omega)$ is uniformly bounded, by choosing a subsequence we may assume that $w_n\to w$ in $L^{2}(\Omega)$. By the estimate (3.16), 

\begin{align*}
0<\varepsilon_0\leq \left|\int_{\Omega}\frac{1}{1-t^{2}}(w_n+w)(w_n-w)\dd t\right|
\leq 2||w'_n+w'||_{L^{2}}||w_n-w||_{L^{2}}\to 0\quad \textrm{as}\ n\to\infty.
\end{align*}\\
This is a contradiction. Thus the functional (3.21) is weakly continuous on $H^{1}_{0}(\Omega)$. 
\end{proof}

\vspace{3pt}

\begin{lem}
There exists $\delta>0$ such that 

\begin{align}
B(z,z)\geq \delta ||z||_{H^{1}_{0}}^{2},\quad z\in Z.
\end{align}
\end{lem}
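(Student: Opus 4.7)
The plan is to argue by contradiction, relying on weak compactness, the $L^{2}$-lower bound from Lemma 3.14, and the weak continuity statement of Proposition 3.15. Assuming (3.22) fails, I would produce a sequence $\{z_n\} \subset Z$ with $\|z_n\|_{H^{1}_{0}} = 1$ and $B(z_n, z_n) \to 0$, and then aim to derive a contradiction by showing $\|z_n\|_{H^{1}_{0}} \to 0$.

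First I would exploit reflexivity of $H^{1}_{0}(\Omega)$ and the compact embedding $H^{1}_{0}(\Omega) \subset\subset L^{2}(\Omega)$ to extract a subsequence along which $z_n \rightharpoonup z$ weakly in $H^{1}_{0}(\Omega)$ and $z_n \to z$ strongly in $L^{2}(\Omega)$. Since each constraint $(z_n, y)_{L^{2}} = 0$ for $y \in Y$ passes to the strong $L^{2}$-limit, the limit function $z$ lies in $Z$.

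The crucial step will be to invoke Proposition 3.15, which supplies $\int_{\Omega} a(t)\, z_n^{2}\, \dd t \to \int_{\Omega} a(t)\, z^{2}\, \dd t$. Combined with $B(z_n, z_n) \to 0$, this yields $\|z_n'\|_{L^{2}}^{2} \to \int_{\Omega} a(t)\, z^{2}\, \dd t$. Weak lower semicontinuity of $w \mapsto \|w'\|_{L^{2}}^{2}$ then furnishes $\|z'\|_{L^{2}}^{2} \leq \int_{\Omega} a(t)\, z^{2}\, \dd t$, i.e.\ $B(z, z) \leq 0$. But $z \in Z$ and $\mu_{N+1} > 0$ by construction, so Lemma 3.14 forces $B(z, z) \geq \mu_{N+1} \|z\|_{L^{2}}^{2} \geq 0$. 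Hence $z = 0$, which in turn gives $\|z_n'\|_{L^{2}} \to 0$, contradicting $\|z_n\|_{H^{1}_{0}} = 1$.

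The main point where care is needed is the upgrade from the $L^{2}$-type estimate in Lemma 3.14 to the full $H^{1}_{0}$-coercivity (3.22). The obstruction is the singular coefficient $a(t) = \beta(\beta+1)/(1-t^{2})$, which a priori could prevent $\int_{\Omega} a(t)(z_n^{2} - z^{2})\, \dd t$ from vanishing under mere strong $L^{2}$-convergence. Proposition 3.15, whose proof handles the singularity via Hardy's inequality, is exactly what removes this obstruction and closes the argument; no additional analytical ingredient appears necessary.
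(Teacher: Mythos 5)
Your argument is correct and follows essentially the same route as the paper's proof: a normalized sequence with $B(z_n,z_n)\to 0$, weak compactness plus the compact embedding into $L^{2}(\Omega)$, the weak continuity of $w\mapsto\int_{\Omega}a(t)w^{2}\,\dd t$ from Proposition 3.15, weak lower semicontinuity of the Dirichlet norm, and the bound $B(z,z)\geq\mu_{N+1}\|z\|_{L^{2}}^{2}$ on $Z$. The only cosmetic difference is that the paper phrases it as a direct case analysis on the weak limit of a minimizing sequence for $\delta=\inf\{B(z,z):z\in Z,\ \|z\|_{H^{1}_{0}}=1\}$ rather than as a contradiction, so no further comment is needed.
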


\vspace{3pt}

\begin{proof}
The constant

\begin{align*}
\delta= \inf\left\{ B(z,z)\ \middle|\ z\in Z,\ ||z||_{H^{1}_{0}}=1  \right\},
\end{align*}\\
is nonnegative by $(3.20)_3$. We show that $\delta$ is positive. We take a sequence $\{z_n\}\subset Z$ such that $||z_n||_{H^{1}_{0}}=1$ and $B(z_n,z_n)\to \delta$. By choosing a subsequence, $z_n\rightharpoonup z$ in $H^{1}_{0}(\Omega)$ and $z_n\to z$ in $L^{2}(\Omega)$ for some $z\in Z$. By the weak continuity (3.21),

\begin{align*}
\delta=\lim_{n\to\infty}B(z_n,z_n)=\lim_{n\to\infty}\left(1-\int_{\Omega}a(t)z_n^{2}\dd t\right)=1-\int_{\Omega}a(t)z^{2}\dd t.
\end{align*}\\
By the lower semicontinuity of the norm $||z_n||_{H^{1}_{0}}$ in the weak convergence and $(3.20)_3$,   

\begin{align*}
\delta=\lim_{n\to\infty}B(z_n,z_n)\geq B(z,z)\geq \mu_{N+1}||z||_{L^{2}}^{2}.
\end{align*}\\
If $z=0$, the first equation implies that $\delta=1$. If $z\neq0$, the second equation implies that $\delta>0$. Thus $\delta$ is positive.
\end{proof}

\vspace{3pt}

\subsection{Regularity of critical points}

We show that the functional $I$ in (3.12) is Fr\'echet differentiable and its critical points are classical solutions to (3.6). In the sequel, we assume that the constants $\beta$, $c_1$, and $c_2$ satisfy (3.5). 

\vspace{3pt}

\begin{prop}
\begin{align}
|w(t)|&\leq (1-t^{2})^{\frac{1}{4}}||w||_{H^{1}_{0}},\quad t\in \Omega,\ w\in H^{1}_{0}(\Omega),\\
|g(t,w)|&\leq c_1 ||w||_{H^{1}_{0} }^{1+\frac{4}{\beta}}+c_2\frac{1}{(1-t^{2})^{\frac{3}{4}-\frac{1}{2\beta} }}||w||_{H^{1}_{0} }^{1+\frac{2}{\beta}},\quad t\in \Omega,\ w\in H^{1}_{0}(\Omega).
\end{align}
\end{prop}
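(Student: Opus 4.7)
The plan is to prove (3.23) first as a direct consequence of the fundamental theorem of calculus combined with Cauchy--Schwarz, and then to derive (3.24) by inserting (3.23) into the two terms of the nonlinearity $g(t,w)$.

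For the first estimate, I would use that any $w\in H^{1}_{0}(\Omega)$ vanishes at both endpoints $t=\pm 1$, so for each $t\in \Omega$ I can write $w(t)=\int_{-1}^{t}w'(s)\,\textrm{d} s$ and $w(t)=-\int_{t}^{1}w'(s)\,\textrm{d} s$. Cauchy--Schwarz applied to both representations yields
\begin{equation*}
|w(t)|^{2}\leq (1+t)\|w'\|_{L^{2}}^{2},\qquad |w(t)|^{2}\leq (1-t)\|w'\|_{L^{2}}^{2}.
\end{equation*}
Multiplying these and taking the square root gives $|w(t)|^{2}\leq (1-t^{2})^{1/2}\|w\|_{H^{1}_{0}}^{2}$, which is precisely (3.23) (under the standard identification $\|w\|_{H^{1}_{0}}=\|w'\|_{L^{2}}$ for functions on the bounded interval $\Omega$).

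For (3.24), the triangle inequality gives $|g(t,w)|\leq c_{1}|w|^{1+4/\beta}+c_{2}(1-t^{2})^{-1}|w|^{1+2/\beta}$, and I would apply (3.23) to each power of $|w|$. For the $c_{1}$-term, the exponent $1+4/\beta$ is nonnegative in every admissible parameter regime: under (3.5) the constant $c_{1}$ vanishes whenever $-4\leq\beta<-2$, and for the remaining cases $\beta>0$ or $\beta\leq-4$ one checks $1+4/\beta\geq 0$; raising $(1-t^{2})^{1/4}\leq 1$ to this power preserves the bound and yields $|w|^{1+4/\beta}\leq \|w\|_{H^{1}_{0}}^{1+4/\beta}$. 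For the $c_{2}$-term, the exponent $1+2/\beta$ is strictly positive on $\mathbb{R}\setminus[-2,0]$ (it vanishes only at $\beta=-2$, which is excluded), so (3.23) gives $|w|^{1+2/\beta}\leq (1-t^{2})^{(1+2/\beta)/4}\|w\|_{H^{1}_{0}}^{1+2/\beta}$; dividing by $(1-t^{2})$ produces the factor $(1-t^{2})^{(1+2/\beta)/4-1}=(1-t^{2})^{-3/4+1/(2\beta)}$, matching (3.24).

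There is no substantive obstacle: the only point requiring minor care is the sign-bookkeeping for the exponents of $(1-t^{2})$ that arise when (3.23) is applied with a general power, and in each regime of (3.5) this sign works out to make the bound valid.
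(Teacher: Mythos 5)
Your proof is correct and follows essentially the same route as the paper, which simply invokes $w(\pm 1)=0$ and H\"older's inequality for (3.23) and then substitutes into $g$; you have merely filled in the (correct) sign-bookkeeping for the exponents under (3.5) that the paper leaves implicit.
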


\vspace{3pt}

\begin{proof}
The estimate (3.23) follows from $w(1)=w(-1)=0$ and H\"older's inequality. The estimate (3.24) follows from (3.23).  
\end{proof}

\vspace{3pt}

\begin{prop}
The functional $I \in C^{1}(H^{1}_{0}(\Omega);\mathbb{R})$ satisfies 

\begin{align}
<I'[w],\eta>=B(w,\eta)-(g(\cdot ,w),\eta)_{L^{2}},\quad \eta\in H^{1}_0(\Omega),
\end{align}

\begin{equation}
\begin{aligned}
I[w]-\sigma <I'[w],w> 
&=\left(\frac{1}{2}-\sigma \right)B(w,w)
+c_1\left(\sigma-\frac{\beta}{2(\beta+2)}\right)\int_{\Omega}|w|^{2+\frac{4}{\beta}}\dd t\\
&+c_2\left(\sigma-\frac{\beta}{2(\beta+1)}\right)\int_{\Omega}\frac{1}{1-t^{2}}|w|^{2+\frac{2}{\beta}}\dd t,\quad \sigma\in \mathbb{R}.
\end{aligned}
\end{equation}
\end{prop}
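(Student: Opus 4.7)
The plan is to establish (3.25) by computing the G\^{a}teaux derivative term by term and then upgrading to a continuous Fr\'echet derivative; the identity (3.26) will then follow by direct algebraic manipulation.

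First, I would verify that $I$ is well-defined on $H^{1}_{0}(\Omega)$. The quadratic piece $\frac{1}{2}B(w,w)$ is bounded by Proposition 3.9. For $\int_{\Omega}G(t,w)\dd t$, I apply the pointwise bound (3.23): the first summand satisfies $|w|^{2+4/\beta}\leq (1-t^{2})^{1/2+1/\beta}\|w\|_{H^{1}_{0}}^{2+4/\beta}$, and the second summand $|w|^{2+2/\beta}/(1-t^{2})\leq (1-t^{2})^{1/(2\beta)-1/2}\|w\|_{H^{1}_{0}}^{2+2/\beta}$. The exponents $1/2+1/\beta$ and $1/(2\beta)-1/2$ both exceed $-1$ for every $\beta\in\mathbb{R}\setminus[-2,0]$, so both integrals are finite.

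Next, I would compute the G\^{a}teaux derivative. The quadratic part expands as $\tfrac12 B(w+s\eta,w+s\eta)=\tfrac12 B(w,w)+sB(w,\eta)+\tfrac{s^{2}}{2}B(\eta,\eta)$, contributing $B(w,\eta)$. For the nonlinear part, $\partial_{w}G(t,w)=g(t,w)$ by construction, so differentiation under the integral (justified by dominated convergence using (3.23)--(3.24) applied to $w+s\eta$ on a bounded neighborhood of $s=0$) gives the contribution $-\int_{\Omega}g(t,w)\eta\,\dd t$. To see this is a bounded linear functional of $\eta\in H^{1}_{0}(\Omega)$, I bound
\begin{align*}
\Bigl|\int_{\Omega}g(t,w)\eta\,\dd t\Bigr|
\leq c_{1}\|w\|_{H^{1}_{0}}^{1+4/\beta}\!\int_{\Omega}|\eta|\dd t
+c_{2}\|w\|_{H^{1}_{0}}^{1+2/\beta}\!\int_{\Omega}\frac{|\eta|}{(1-t^{2})^{3/4-1/(2\beta)}}\dd t,
\end{align*}
and the second integral is controlled using (3.23) applied to $\eta$: $|\eta|\leq (1-t^{2})^{1/4}\|\eta\|_{H^{1}_{0}}$, leaving the integrable exponent $1/(2\beta)-1/2$ analyzed above. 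Hence the map $\eta\mapsto\langle I'[w],\eta\rangle$ defined by (3.25) lies in $H^{-1}(\Omega)$.

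To upgrade to a continuous Fr\'echet derivative, I would show that $w\mapsto I'[w]$ is continuous from $H^{1}_{0}(\Omega)$ into $H^{-1}(\Omega)$. For a sequence $w_{n}\to w$ in $H^{1}_{0}(\Omega)$, the linear-in-$w$ map $B(w_{n}-w,\cdot)$ has operator norm controlled by $\|w_{n}-w\|_{H^{1}_{0}}$ via Proposition 3.9. For the nonlinear part one uses pointwise a.e.\ convergence of $g(t,w_{n})\to g(t,w)$ (after extracting a subsequence) together with the uniform bound (3.24) applied to a dominating function; the singular factor $(1-t^{2})^{1/(2\beta)-1/2}$ is integrable, so dominated convergence in the pairing against $|\eta|\leq(1-t^{2})^{1/4}\|\eta\|_{H^{1}_{0}}$ yields continuity in the $H^{-1}$ norm. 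This step---reconciling the singular weight $1/(1-t^{2})$ with $H^{1}_{0}$ regularity---is the main technical obstacle, but the Hardy-type bound (3.16) and the pointwise estimate (3.23) combine to resolve it uniformly across the full range $\beta\in\mathbb{R}\setminus[-2,0]$.

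Finally, for (3.26), I would simply substitute. From (3.25) evaluated at $\eta=w$,
\begin{align*}
\langle I'[w],w\rangle=B(w,w)-c_{1}\!\int_{\Omega}|w|^{2+\frac{4}{\beta}}\dd t-c_{2}\!\int_{\Omega}\frac{|w|^{2+\frac{2}{\beta}}}{1-t^{2}}\dd t,
\end{align*}
while the definition of $G$ gives
\begin{align*}
\int_{\Omega}G(t,w)\dd t=\frac{c_{1}\beta}{2(\beta+2)}\!\int_{\Omega}|w|^{2+\frac{4}{\beta}}\dd t+\frac{c_{2}\beta}{2(\beta+1)}\!\int_{\Omega}\frac{|w|^{2+\frac{2}{\beta}}}{1-t^{2}}\dd t.
\end{align*}
Forming $I[w]-\sigma\langle I'[w],w\rangle$ and collecting terms yields (3.26).
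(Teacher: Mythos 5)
Your proposal is correct and follows essentially the same route as the paper: split $I$ into the quadratic part $\tfrac12 B(w,w)$ and the nonlinear part $\int_\Omega G(t,w)\,\dd t$, compute the G\^ateaux derivative of each, use the pointwise bounds (3.23)--(3.24) to control the singular weight $(1-t^{2})^{-1}$ and to justify passage to the limit and continuity of $I'$, and then obtain (3.26) by substituting $\eta=w$ into (3.25). Your explicit verification that the relevant exponents exceed $-1$ for all $\beta\in\mathbb{R}\setminus[-2,0]$ is a slightly more detailed version of the paper's remark that $3/4-1/(2\beta)<1$, but the argument is the same.
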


\vspace{3pt}

\begin{proof}
We set 

\begin{equation*}
\begin{aligned}
I[w]=\frac{1}{2}\int_{\Omega}\left( |w'|^{2}-a(t)w^{2}\right)\dd t-\int_{\Omega}G(t,w)\dd t=:I_0[w]-J[w].
\end{aligned}
\end{equation*}\\
The functional $I_0[\cdot]=B(\cdot,\cdot)/2\in C(H^{1}_{0}(\Omega); \mathbb{R} )$ satisfies for $\varepsilon>0$,

\begin{align*}
\frac{1}{\varepsilon}\left(I_0[w+\varepsilon\eta]-I_0[w]\right)=B(w,\eta)+\frac{\varepsilon}{2}B(\eta,\eta),\quad \eta\in H^{1}_0(\Omega).
\end{align*}\\
Thus the Gateaux derivative $D_GI_0: H^{1}_{0}(\Omega)\to H^{-1}(\Omega) $ exists and $<D_GI_0[w],\eta>=B(w,\eta)$ for $\eta\in H^{1}_{0}(\Omega)$. The Gateaux derivative is continuous on $H^{1}_{0}(\Omega)$ by continuity of the bilinear form on $H^{1}_{0}(\Omega)\times H^{1}_{0}(\Omega)$. Thus the Fr\'echet derivative $I_0'=D_GI_0$ exists and $I_0\in C^{1}(H^{1}_{0}(\Omega); \mathbb{R})$.

The function $g$ satisfies the pointwise estimate (3.24). The second term on the right-hand side of (3.24) is integrable since $3/4-1/(2\beta)<1$ for $\beta\in \mathbb{R}\backslash [-2,0]$. Thus $J\in C(H^{1}_{0}(\Omega); \mathbb{R} )$. For $w,\eta\in H^{1}_{0}(\Omega)$ and $\varepsilon>0$, 

\begin{align*}
\frac{\dd}{\dd s}G(t,w+\varepsilon s\eta)=\varepsilon \eta g(t,w+\varepsilon s \eta)
\end{align*}\\
is integrable for $(s,t)\in [0,1]\times \Omega$. Thus 

\begin{align*}
\frac{1}{\varepsilon}\left(J[w+\varepsilon \eta]-J[w]\right)
=\frac{1}{\varepsilon}\int_{0}^{1}\frac{\dd}{\dd s}\left(\int_{\Omega}G(t,w+\varepsilon s\eta)\dd t \right)\dd s=\int_{\Omega}\eta\int_{0}^{1}g(t,w+\varepsilon s \eta)\dd s\dd t.
\end{align*}\\
By (3.24), letting $\varepsilon\to0$ implies the existence of the Gateaux derivative $D_GJ: H^{1}_{0}(\Omega)\to H^{-1}(\Omega) $ and 

\begin{align*}
<D_GJ[w],\eta>=\int_{\Omega} g(t,w)\eta\dd t.
\end{align*}\\
The Gateaux derivative $D_GJ$ is continuous on $H^{1}_{0}(\Omega)$ by (3.24). Thus the Fr\'echet derivative $J'=D_GJ$ exists and $J\in C^{1}(H^{1}_{0}(\Omega); \mathbb{R})$. 

We demonstrated that $I\in C^{1}(H^{1}_{0}(\Omega); \mathbb{R})$ and (3.25). The identity (3.26) follows from (3.12$)_1$ and $(3.25)$.
\end{proof}

\vspace{3pt}

\begin{lem}
Critical points $w\in H^{1}_{0}(\Omega)$ of the functional $I$ are classical solutions $w\in C^{2}(\Omega)\cap C^{1}(\overline{\Omega})$ to (3.6). 
\end{lem}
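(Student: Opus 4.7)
The plan is to recognize that the critical point equation (3.25) is precisely the distributional form $-w'' = f(t)$ on $\Omega$ with
\[
f(t) := a(t)w(t) + g(t,w(t)) = \frac{\beta(\beta+1)}{1-t^{2}}w + c_1 w|w|^{\frac{4}{\beta}} + \frac{c_2}{1-t^{2}}w|w|^{\frac{2}{\beta}},
\]
and then to extract smoothness in two stages: interior $C^{2}$ via standard bootstrap, and $C^{1}$ up to the endpoints $t=\pm 1$ by exploiting the Hardy-type pointwise bound (3.23) to show $f\in L^{1}(\Omega)$.

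First, recall that $H^{1}_{0}(\Omega)\hookrightarrow C(\overline{\Omega})$ in one dimension, so $w$ is continuous on $[-1,1]$ with $w(\pm 1)=0$. On any compact subinterval $K\Subset\Omega$, the coefficient $a(t)$ and the factor $(1-t^{2})^{-1}$ are smooth, so $f\in C(K)$. A distribution $w$ satisfying $-w''=f$ with $f\in C(K)$ lies in $C^{2}(K)$; varying $K$ yields $w\in C^{2}(\Omega)$ solving (3.6) in the classical sense.

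For the $C^{1}$ statement up to the boundary, I would use (3.23) to estimate each term of $f$. The linear term obeys $|a(t)w(t)|\leq |\beta(\beta+1)|(1-t^{2})^{-3/4}\|w\|_{H^{1}_{0}}$, and $(1-t^{2})^{-3/4}\in L^{1}(\Omega)$. For the Beltrami nonlinearity $c_1 w|w|^{4/\beta}$: when $\beta>0$ the exponent $1+4/\beta$ is positive, so the term is bounded; when $-4\leq\beta<-2$ the assumption (3.5) forces $c_1=0$, so there is nothing to estimate; when $\beta<-4$ one again has $1+4/\beta>0$. The swirl-type term is bounded by
\[
\Big|\tfrac{c_2}{1-t^{2}}w|w|^{\frac{2}{\beta}}\Big|\leq c_2(1-t^{2})^{-\frac{3}{4}+\frac{1}{2\beta}}\|w\|_{H^{1}_{0}}^{1+\frac{2}{\beta}},
\]
and for $\beta\in\mathbb{R}\setminus[-2,0]$ one checks that $-\tfrac{3}{4}+\tfrac{1}{2\beta}\in(-1,0)$, so this term is also in $L^{1}(\Omega)$. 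Hence $f\in L^{1}(\Omega)$, which gives $w''=-f\in L^{1}(\Omega)$, so $w\in W^{2,1}(\Omega)\hookrightarrow C^{1}(\overline{\Omega})$.

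The main obstacle is precisely the singular behaviour of $a(t)$ and of the swirl coefficient $(1-t^{2})^{-1}$ at $t=\pm 1$; both are on the borderline of integrability, and controlling them requires the $(1-t^{2})^{1/4}$ decay of $w$ from the $H^{1}_{0}$-bound (3.23). The case split in (3.5), in particular the restriction $c_1=0$ for $-4\leq \beta<-2$, is what ensures the Beltrami nonlinearity never becomes singular when the exponent $1+4/\beta$ could change sign.
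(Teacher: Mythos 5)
Your argument is correct and follows essentially the same route as the paper: interior $C^{2}$ regularity from continuity of $a(t)w+g(t,w)$ on compact subintervals, then $w''\in L^{1}(\Omega)$ via the pointwise bounds (3.23)--(3.24) to conclude $w'\in C(\overline{\Omega})$. One harmless imprecision: for small $\beta>0$ the exponent $-\tfrac{3}{4}+\tfrac{1}{2\beta}$ can be positive rather than lying in $(-1,0)$, but the only property needed is that it exceeds $-1$, which holds for all $\beta\in\mathbb{R}\setminus[-2,0]$.
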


\vspace{3pt}

\begin{proof}
By (3.25), a critical point $w\in H^{1}_{0}(\Omega)$ satisfies 

\begin{align*}
\int_{\Omega}w'\eta'\dd t=\int_{\Omega}a(t)w\eta\dd t+\int_{\Omega}g \eta\dd t,\quad \eta\in H^{1}_{0}(\Omega).
\end{align*}\\
Since $a(t)w+g\in C(\Omega)$, $w''\in L^{2}_{\textrm{loc}}(\Omega)$ and 

\begin{align*}
-w''=a(t)w+g.
\end{align*}\\
Thus $w''\in C(\Omega)$. By the pointwise estimates (3.23) and (3.24), $w''\in L^{1}(\Omega)$ and hence $w'\in C(\overline{\Omega})$. 
\end{proof}

\vspace{3pt}

\subsection{The Palais--Smale condition}

We show (PS$)_c$ for the functional $I$ and for any $c\in \mathbb{R}$. The main step is to demonstrate the boundedness of sequences $\{w_n\}\subset H^{1}_{0}(\Omega)$ satisfying 

\begin{equation}
\begin{aligned}
&\sup_{n}I[w_n]<\infty, \\
&I'[w_n]\to 0 \quad \textrm{in}\ H^{-1}(\Omega).
\end{aligned}
\end{equation}\\
We show the boundedness both for the superliner $\beta>0$ and for the sublinear $\beta<-2$.

The superlinear case $\beta>0$ is based on the identity (3.26). For simplicity of the explanation, we consider the case $c_1\neq 0$. For a suitable choice of $\sigma>0$, the identity (3.26) implies that 

\begin{align*}
I[w_n]-\sigma <I'[w_n], w_n>\ \gtrsim B(w_n,w_n)+||w_n||_{L^{p}}^{p}.
\end{align*}\\
The bilinear form is quadratic for $w_n$. The last term is the $p$-th power of $w_n$ for $p=2+4/\beta>2$. The left-hand side is at most linear growth for $w_n$ under the boundedness (3.27). If $0<\beta<1$, $Y=\emptyset$ and $B(w_n,w_n)$ is estimated from below by $||w_n||_{H^{1}_{0}}^{2}$ by Lemmas 3.14 and 3.16. Hence $\{w_n\}$ is bounded in $H^{1}_{0}(\Omega)$.

In the case $\beta\geq 1$, the quadratic term is decomposed into the nonpositive term and the positive term,

\begin{align*}
B(w_n,w_n)=B(y_n,y_n)+B(z_n,z_n),\quad w_n=y_n+z_n\in Y\oplus Z.
\end{align*}\\
We show that $\{w_n\}$ remains bounded in $H^{1}_{0}(\Omega)$ by using the $p$-th power of $w_n$ and the finite dimensionality of $Y$.

\vspace{3pt}

\begin{prop}
For $\beta>0$, any sequences $\{w_n\}\subset H^{1}_{0}(\Omega)$ satisfying (3.27) are bounded.
\end{prop}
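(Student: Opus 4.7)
The plan is to exploit identity (3.26) with a carefully chosen $\sigma$ that makes every coefficient on the right strictly positive, then argue by contradiction via a blow-up rescaling.

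First I would choose $\sigma\in\bigl(\tfrac{\beta}{2(\beta+1)},\tfrac{1}{2}\bigr)$, which is nonempty for $\beta>0$ and automatically satisfies $\sigma>\beta/(2(\beta+2))$ since $\beta+2>\beta+1$. Applying (3.26) to $w_n$ and using (3.27) to dominate the left side by $C+\epsilon_n\|w_n\|_{H^1_0}$ with $\epsilon_n\to 0$, one obtains
\begin{equation*}
C+\epsilon_n\|w_n\|_{H^1_0} \;\geq\; (\tfrac{1}{2}-\sigma)B(w_n,w_n) + A_1\|w_n\|_{L^p}^p + A_2\int_\Omega\frac{|w_n|^{p'}}{1-t^2}\,\dd t,
\end{equation*}
where $p=2+4/\beta>2$, $p'=2+2/\beta>2$, and $A_1,A_2\geq 0$ with at least one strictly positive by (3.5).

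Next, suppose for contradiction that $\lambda_n:=\|w_n\|_{H^1_0}\to\infty$ along a subsequence, and set $v_n=w_n/\lambda_n$. Compactness of $H^1_0(\Omega)\hookrightarrow C(\overline\Omega)$ yields, along a further subsequence, uniform convergence $v_n\to v$; the pointwise bound (3.23) and dominated convergence then promote this to convergence in the weighted space $L^{p'}\bigl((1-t^2)^{-1}\dd t\bigr)$ as well. Dividing the above inequality by $\lambda_n^2$ makes the left side tend to $0$, while $B(v_n,v_n)$ stays bounded by Proposition 3.9 and the nonlinear terms acquire the blow-up factors $\lambda_n^{p-2}$ and $\lambda_n^{p'-2}$.

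Two subcases complete the argument. If $v\not\equiv 0$, continuity makes $v$ nonzero on an open set, so the limiting $L^p$- or weighted $L^{p'}$-norm is strictly positive and the unbounded factor forces the right side to $+\infty$, a contradiction. If $v\equiv 0$, decompose $v_n=\tilde y_n+\tilde z_n\in Y\oplus Z$ as in (3.11); $L^2$-orthogonality and $v_n\to 0$ in $L^2$ give $\|\tilde y_n\|_{L^2}\to 0$, and finite dimensionality of $Y$ promotes this to $\|\tilde y_n\|_{H^1_0}\to 0$, whence $\|\tilde z_n\|_{H^1_0}\to 1$. Lemmas 3.14 and 3.16 then yield $\liminf_n B(v_n,v_n)\geq\delta>0$, again contradicting the vanishing left side. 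The principal obstacle is precisely this $v\equiv 0$ subcase in the indefinite regime $\beta\geq 1$: the splitting $Y\oplus Z$, coercivity of $B$ on $Z$, and norm equivalence on the finite-dimensional $Y$ together are what rule it out.
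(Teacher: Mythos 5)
Your proposal is correct and follows essentially the same route as the paper's proof: the same choice of $\sigma\in(\beta/(2(\beta+1)),1/2)$ in identity (3.26), the same normalization $w_n/\|w_n\|_{H^1_0}$ with compact embedding into $C(\overline\Omega)$, and the same use of the decomposition $Y\oplus Z$ with $(3.20)_2$ and (3.22) to derive the contradiction. The only cosmetic difference is that the paper first shows the limit $\tilde w=0$ by dividing by the higher power $M_n^{2+4/\beta}$ (or $M_n^{2+2/\beta}$), whereas you treat $v\neq 0$ as a separate contradiction subcase via the blow-up of the nonlinear term; these are equivalent.
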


\vspace{3pt}

\begin{proof}
We argue by contradiction. Suppose on the contrary that $M_n=||w_n||_{H^{1}_{0}}$ diverges. By (3.26), 

\begin{equation}
\begin{aligned}
I[w_n]-\sigma <I'[w_n],w_n> 
&=\left(\frac{1}{2}-\sigma \right)B(w_n,w_n)
+c_1\left(\sigma-\frac{\beta}{2(\beta+2)}\right)\int_{\Omega}|w_n|^{2+\frac{4}{\beta}}\dd t\\
&+c_2\left(\sigma-\frac{\beta}{2(\beta+1)}\right)\int_{\Omega}\frac{1}{1-t^{2}}|w_n|^{2+\frac{2}{\beta}}\dd t.
\end{aligned}
\end{equation}
We take $\sigma>0$ satisfying

\begin{align*}
\frac{\beta}{2(\beta+1)}<\sigma<\frac{1}{2},
\end{align*}\\
so that all the coefficients on the right-hand side of (3.28) are positive. We set $\tilde{w}_{n}=w_n/M_n$. By choosing a subsequence, there exists $\tilde{w}\in H^{1}_{0}(\Omega)$ such that $\tilde{w}_n\to \tilde{w}$ in $C(\overline{\Omega})$. By (3.5), either $c_1$ or $c_2$ is positive. We divide (3.28) by $M_{n}^{2+4/\beta}$ if $c_1>0$ and by $M_{n}^{2+2/\beta}$ if $c_2>0$. By the pointwise estimate (3.23), we let $n\to\infty$ and conclude $\tilde{w}=0$ in both cases.

By the direct sum decomposition $\tilde{w}_n=\tilde{y}_n+\tilde{z}_n\in Y\oplus Z$, 

\begin{align*}
1=||\tilde{w}_n||_{H^{1}_{0}}^{2}&=||\tilde{y}_n||_{H^{1}_{0}}^{2}+2(\tilde{y}_n',\tilde{z}_n')_{L^{2}}+||\tilde{z}_n||_{H^{1}_{0}}^{2},\\
||\tilde{w}_n||_{L^{2}}^{2}&=||\tilde{y}_n||_{L^{2}}^{2}+||\tilde{z}_n||_{L^{2}}^{2}.
\end{align*}\\
Since $\tilde{w}_{n}\to 0$ in $C(\overline{\Omega})$, we have $\tilde{y}_n\to 0$ and $\tilde{z}_n\to 0$ in $L^{2}(\Omega)$. Since $Y$ is finite dimensional,  $\tilde{y}_n\to 0$ in $H^{1}_{0}(\Omega)$ and $\{\tilde{z}_{n}\}$ is bounded in $H^{1}_{0}(\Omega)$. By letting $n\to \infty$ to the first equation, $\lim_{n\to\infty}||\tilde{z}_n||_{H^{1}_{0}}=1$. 

By the bilinear estimates $(3.20)_2$ and (3.22), 

\begin{align*}
\frac{1}{M_n^{2}}\left( I[w_n]-\sigma<I'[w_n],w_n>\right)\ \geq \left(\frac{1}{2}-\sigma\right)\left(\mu_1||\tilde{y}_n||_{L^{2}}^{2}+\delta ||\tilde
{z}_n||_{H^{1}_{0}}^{2}  \right).
\end{align*}\\
Letting $n\to\infty$ implies that 

\begin{align*}
0\geq \left(\frac{1}{2}-\sigma\right)\delta >0.
\end{align*}\\
This is a contradiction. We conclude that $\{w_n\}$ is bounded in $H^{1}_{0}(\Omega)$.
\end{proof}

\vspace{3pt}

For the sublinear case $\beta<-2$, the boundedness of $\{I'[w_n]\}$ in $H^{-1}(\Omega)$ implies the boundedness of the sequence $\{w_n\}$ in $H^{1}_{0}(\Omega)$.

\vspace{3pt}

\begin{prop}
For $\beta<-2$, any sequences $\{w_n\}\subset H^{1}_{0}(\Omega)$ satisfying (3.27) are bounded. 
\end{prop}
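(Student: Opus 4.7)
My plan is to argue by contradiction: assume $M_n := \|w_n\|_{H^{1}_{0}} \to \infty$, set $\tilde{w}_n = w_n/M_n$, and decompose $\tilde{w}_n = \tilde{y}_n + \tilde{z}_n$ according to $H^{1}_{0}(\Omega) = Y \oplus Z$. The first step is to show that the $Z$-component is negligible, $\|z_n\|_{H^{1}_{0}} = o(M_n)$. Testing $I'[w_n]$ against $z_n$ and exploiting the $B$-orthogonality $B(y_n,z_n)=0$, Lemma 3.16 yields
\[
\delta \|z_n\|_{H^{1}_{0}}^{2} \leq B(z_n,z_n) = \langle I'[w_n], z_n\rangle + \int_{\Omega} g(t,w_n)\,z_n\,\dd t.
\]
Applying the pointwise bound (3.23) to both $w_n$ and $z_n$ inside the integral produces a factor $\|z_n\|_{H^{1}_{0}}$ multiplied by weighted integrals $\int (1-t^{2})^{\gamma}\,\dd t$ on $(-1,1)$. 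The exponents $\gamma = \tfrac{1}{2}+\tfrac{1}{\beta}$ (from the $c_1$ piece, active only when $\beta<-4$ per (3.5)) and $\gamma = -\tfrac{1}{2}+\tfrac{1}{2\beta}$ (from the $c_2$ piece) both exceed $-1$ throughout $\beta<-2$, so the weights are integrable, and I obtain $\|z_n\|_{H^{1}_{0}} \leq C(1 + M_n^{1+4/\beta} + M_n^{1+2/\beta})$. Since both powers lie in $(0,1)$, this proves the claim.

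Consequently $\tilde{z}_n \to 0$ in $H^{1}_{0}$, and by finite-dimensionality of $Y$ a subsequence $\tilde{y}_n$ converges to some $\tilde{y}\in Y$ strongly, with $\|\tilde{y}\|_{H^{1}_{0}}=1$; hence $\tilde{w}_n \to \tilde{y}$ strongly in $H^{1}_{0}(\Omega)$. Dividing $I[w_n]$ by $M_n^{2}$, the bilinear part passes to $\tfrac{1}{2}B(\tilde{y},\tilde{y})$ by continuity of $B$ on $H^{1}_{0}\times H^{1}_{0}$, while the nonlinear part is subquadratic: $M_n^{-2}\int G(t,w_n)\,\dd t = O(M_n^{4/\beta}) + O(M_n^{2/\beta}) \to 0$. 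Boundedness of $I[w_n]$ then forces $B(\tilde{y},\tilde{y}) = 0$. Expanding in the eigenbasis of $-L_\beta$, this forces $\tilde{y}$ to lie in $Y_0 := \ker B|_Y$, the span of those eigenfunctions with eigenvalue exactly zero. If $Y_0 = \{0\}$, then $\tilde{y}=0$ contradicts $\|\tilde{y}\|_{H^{1}_{0}}=1$ and the proof is complete.

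In the residual degenerate case $Y_0 \neq \{0\}$, I test $I'[w_n]$ against $\tilde{y}$ itself. The eigenfunction orthogonality together with $\mu_k=0$ on the support of $\tilde{y}$ gives $B(w_n,\tilde{y})=0$, so the Palais--Smale hypothesis reduces to $\int g(t,w_n)\tilde{y}\,\dd t \to 0$. When $c_2>0$, dividing by the dominant scale $M_n^{1+2/\beta}$ (which tends to $+\infty$ since $1+2/\beta>0$) and invoking dominated convergence with the integrable dominant $(1-t^{2})^{(1+2/\beta)/4 - 1}|\tilde{y}|$ produces
\[
c_2 \int_{\Omega}\frac{|\tilde{y}|^{2+\frac{2}{\beta}}}{1-t^{2}}\,\dd t = 0,
\]
which forces $\tilde{y}\equiv 0$; the parallel rescaling by $M_n^{1+4/\beta}$ handles the remaining subcase $c_2=0$, $c_1>0$ (where (3.5) requires $\beta<-4$) via $c_1\int|\tilde{y}|^{2+4/\beta}\,\dd t = 0$. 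In every case $\tilde{y}=0$, contradicting $\|\tilde{y}\|_{H^{1}_{0}}=1$. The main subtlety is the uniform-in-$\beta$ accounting of weighted integrability exponents together with a clean treatment of the resonant kernel $Y_0$; once these are settled, all the remaining convergences follow from the 1D Sobolev embedding $H^{1}_{0}(\Omega) \hookrightarrow C(\overline{\Omega})$ and standard dominated convergence.
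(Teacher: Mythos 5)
Your proof is correct, but it follows a genuinely different route from the paper's. The paper tests $I'[w_n]$ once against the single ``flipped'' direction $\tilde{\eta}_n=-\tilde{y}_n+\tilde{z}_n$: the identity $B(\tilde{w}_n,\tilde{\eta}_n)=-B(\tilde{y}_n,\tilde{y}_n)+B(\tilde{z}_n,\tilde{z}_n)$ makes both pieces nonnegative by $(3.20)_2$ and (3.22), the sublinear nonlinear terms and the Palais--Smale hypothesis force this sum to zero, and both normalized components are killed in one stroke. You instead run a three-stage argument: first a sublinear a priori bound $\|z_n\|_{H^1_0}\lesssim 1+M_n^{1+4/\beta}+M_n^{1+2/\beta}=o(M_n)$ obtained by testing against $z_n$ alone (your bookkeeping of the weighted exponents $\tfrac12+\tfrac1\beta$ and $-\tfrac12+\tfrac1{2\beta}$, and of the restriction $c_1=0$ for $-4\le\beta<-2$ from (3.5), is accurate); then the energy rescaling $I[w_n]/M_n^2\to\tfrac12 B(\tilde y,\tilde y)=0$, which uses the boundedness of $I[w_n]$ — a part of (3.27) the paper's proof never invokes; and finally a second rescaled test against the limit $\tilde y$ itself to exploit positivity of $c_1,c_2$ and eliminate the resonant kernel $Y_0=\ker B|_Y$. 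The price is length; what you buy is an explicit treatment of the degenerate case where $-L_\beta$ has a zero eigenvalue ($\mu_N=0$), in which the paper's inequality $B(\tilde w_n,\tilde\eta_n)\ge-\mu_N\|\tilde y_n\|_{L^2}^2+\delta\|\tilde z_n\|_{H^1_0}^2$ yields no information on $\tilde y_n$ and its concluding sentence ``since $Y$ is finite dimensional, $\tilde y_n\to0$'' is not justified. In that respect your argument is the more complete of the two.
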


\vspace{3pt}

\begin{proof}
Suppose that $M_n=||w_n||_{H^{1}_{0}}$ diverges. We set $\tilde{w}_{n}=w_n/M_n$. For $\tilde{w}_n=\tilde{y}_n+\tilde{z}_n\in Y\oplus Z$, $\tilde{\eta}_n=-\tilde{y}_n+\tilde{z}_n$ is bounded in $H^{1}_{0}(\Omega)$ since $Y$ is finite dimensional. By substituting $\tilde{\eta}_n$ into (3.25), 

\begin{align*}
\frac{1}{M_n}<I'[w_n],\tilde{\eta}_n>=B(\tilde{w}_n,\tilde{\eta}_n)
-\frac{c_1}{M_n^{-\frac{4}{\beta}}}\int_{\Omega}\tilde{w}_n|\tilde{w}_n|^{\frac{4}{\beta}}\tilde{\eta}_{n}\dd t-\frac{c_2}{M_n^{-\frac{2}{\beta}}}\int_{\Omega}\frac{1}{1-t^{2}}\tilde{w}_n|\tilde{w}_n|^{\frac{2}{\beta}}\tilde{\eta}_{n}\dd t.
\end{align*}\\
The left-hand side vanishes as $n\to\infty$. By (3.23), the last term on the right-hand side vanishes as $n\to\infty$. For $-4\leq \beta<-2$, $c_1=0$. For $\beta<-4$, the second term on the right-hand side vanishes as $n\to\infty$. Thus we have 

\begin{align*}
\lim_{n\to\infty}B(\tilde{w}_n,\tilde{\eta}_n)=0.
\end{align*}\\
By the bilinear form estimates $(3.20)_2$ and (3.22),

\begin{align*}
B(\tilde{w}_n,\tilde{\eta}_n)
=B(\tilde{y}_n+\tilde{z}_n,-\tilde{y}_n+\tilde{z}_n)
=-B(\tilde{y}_n,\tilde{y}_n)+B(\tilde{z}_n,\tilde{z}_n)
\geq -\mu_{N}||\tilde{y}_n||_{L^{2}}^{2}+\delta ||\tilde{z}_n||_{H^{1}_{0}}^{2}
\geq 0. 
\end{align*}\\
Since $Y$ is finite dimensional, $\tilde{y}_n\to 0$ and $\tilde{z}_n \to 0$ in $H^{1}_{0}(\Omega)$ as $n\to\infty$. Thus $1=\lim_{n\to\infty}||\tilde{w}_n||_{H^{1}_{0}}=0$. This is a contradiction. We conclude that $\{w_n\}$ is bounded in $H^{1}_{0}(\Omega)$.
\end{proof}

\vspace{3pt}

\begin{lem}
The functional $I\in C^{1}(H^{1}_{0}(\Omega); \mathbb{R} )$ satisfies the $(\textrm{PS})_{c}$ condition for any $c\in \mathbb{R}$. 
\end{lem}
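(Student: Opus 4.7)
The plan is to reduce the $(\textrm{PS})_c$ condition to two ingredients already in hand: the a priori boundedness of a Palais--Smale sequence established in Propositions 3.20 and 3.21, and the compactness of the one-dimensional Sobolev embedding. Combined with the Hardy-type inequality (3.16) controlling the singular coefficient $a(t)$, these will upgrade weak convergence to strong convergence in $H^{1}_{0}(\Omega)$, which is all that the $(\textrm{PS})_c$ condition requires.

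First I would let $\{w_n\}\subset H^{1}_{0}(\Omega)$ be any Palais--Smale sequence at level $c\in\mathbb{R}$. By Propositions 3.20 (for $\beta>0$) and 3.21 (for $\beta<-2$), $\{w_n\}$ is bounded in $H^{1}_{0}(\Omega)$. Passing to a subsequence, $w_n\rightharpoonup w$ weakly in $H^{1}_{0}(\Omega)$ and, by compactness of the embedding $H^{1}_{0}(\Omega)\subset C(\overline{\Omega})$, uniformly on $\overline{\Omega}$, hence also in $L^{2}(\Omega)$.

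Next I would test the convergence $I'[w_n]\to 0$ in $H^{-1}(\Omega)$ against $w_n-w$. Since $\{w_n-w\}$ is bounded in $H^{1}_{0}(\Omega)$, the duality pairing $\langle I'[w_n],w_n-w\rangle$ tends to $0$, and the representation (3.25) rewrites as
\begin{equation*}
\int_{\Omega}w_n'(w_n'-w')\dd t \;=\; \int_{\Omega}a(t)\,w_n(w_n-w)\dd t\;+\;\int_{\Omega}g(t,w_n)(w_n-w)\dd t\;+\;o(1).
\end{equation*}
The Hardy-type bound (3.16) immediately controls the singular linear term,
\begin{equation*}
\left|\int_{\Omega}a(t)\,w_n(w_n-w)\dd t\right| \leq 2|\beta(\beta+1)|\,\|w_n'\|_{L^{2}}\,\|w_n-w\|_{L^{2}}\to 0,
\end{equation*}
and the pointwise estimate (3.24), together with the elementary observation that $3/4-1/(2\beta)<1$ for every $\beta\in\mathbb{R}\setminus[-2,0]$, yields a uniform $L^{1}(\Omega)$-bound on $g(\cdot,w_n)$, hence
\begin{equation*}
\left|\int_{\Omega}g(t,w_n)(w_n-w)\dd t\right| \leq \|w_n-w\|_{L^{\infty}(\Omega)}\,\|g(\cdot,w_n)\|_{L^{1}(\Omega)}\to 0.
\end{equation*}
Consequently $\int_{\Omega}w_n'(w_n'-w')\dd t\to 0$; combining this with $\int_{\Omega}w'(w_n'-w')\dd t\to 0$, which follows from the weak convergence $w_n'\rightharpoonup w'$ in $L^{2}(\Omega)$, subtraction gives $\|w_n-w\|_{H^{1}_{0}}^{2}\to 0$, the required strong convergence.

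The genuine obstruction --- the superlinear growth for $\beta>0$ and sublinear growth for $\beta<-2$ of $g$, combined with the singular potential $a(t)=\beta(\beta+1)/(1-t^{2})$ --- has already been overcome in Propositions 3.20--3.21 and in the Hardy estimate (3.16). The remaining work is essentially bookkeeping: one-dimensional Sobolev compactness eliminates the nonlinear remainder, the Hardy inequality eliminates the singular-weight remainder, no further case split between $\beta>0$ and $\beta<-2$ is needed at this stage, and weak-minus-strong cancellation produces $H^{1}_{0}$-convergence.
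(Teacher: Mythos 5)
Your proposal is correct and follows essentially the same route as the paper: boundedness of the Palais--Smale sequence from Propositions 3.20--3.21, extraction of a subsequence converging weakly in $H^{1}_{0}(\Omega)$ and uniformly on $\overline{\Omega}$, the uniform $L^{1}$-bound on $g(\cdot,w_n)$ from (3.24), and the Hardy-type control of the singular term to isolate $\int_{\Omega}|w_n'-w'|^{2}\dd t\to 0$. The only cosmetic difference is that the paper pairs $I'[w]-I'[w_n]$ with $w-w_n$ to get $B(w-w_n,w-w_n)\to 0$ directly, while you pair only $I'[w_n]$ with $w_n-w$ and dispose of the cross term via the weak convergence $w_n'\rightharpoonup w'$ in $L^{2}(\Omega)$; both are valid.
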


\vspace{3pt}

\begin{proof}
By Propositions 3.20 and 3.21, any sequences $\{w_n\}\subset H^{1}_{0}(\Omega)$ satisfying (3.27) are bounded in $H^{1}_{0}(\Omega)$. By choosing  a subsequence, there exists $w\in H^{1}_{0}(\Omega)$ such that $w_n \rightharpoonup w$ in $H^{1}_{0}(\Omega)$ and $w_n\to w$ in $C(\overline{\Omega})$. By the weak convergence of $\{w_n\}$ in $H^{1}_{0}(\Omega)$,

\begin{align*}
\lim_{n\to\infty}<I'[w],w-w_n>=0.
\end{align*}\\
By the convergence of $\{I'[w_n]\}$ in $H^{-1}(\Omega)$, 

\begin{align*}
\lim_{n\to\infty}<I'[w_n],w-w_n>=0.
\end{align*}\\
By (3.25) for $\eta\in H^{1}_{0}(\Omega)$, 

\begin{align*}
<I'[w]-I'[w_n],\eta>=B(w-w_n,\eta)-(g(\cdot,w)-g(\cdot,w_n),\eta)_{L^{2}}. 
\end{align*}\\
The above two convergence imply that the left-hand side for $\eta=w-w_n$ vanishes as $n\to\infty$. By the pointwise estimate (3.24), $\{g(t,w_n)\}$ is bounded in $L^{1}(\Omega)$ and 

\begin{align*}
\left|(g(\cdot,w)-g(\cdot,w_n),w-w_n )_{L^{2}}\right|\leq ||g(\cdot,w)-g(\cdot,w_n)||_{L^{1}}||w-w_n||_{L^{\infty}}\to 0\quad \textrm{as}\ n\to\infty.
\end{align*}\\
By the pointwise estimate (3.23), 

\begin{align*}
0=\lim_{n\to\infty}B(w-w_n,w-w_n)
&=\lim_{n\to\infty}\left(\int_{\Omega}|w'-w_n'|^{2}\textrm{d}t-\int_{\Omega}a(t)|w-w_n|^{2}\textrm{d}t\right) \\
&=\lim_{n\to\infty}\int_{\Omega}|w'-w_n'|^{2}\textrm{d}t.
\end{align*}\\
Thus $\{w_n\}$ strongly converges in $H^{1}_{0}(\Omega)$. 
\end{proof}

\vspace{3pt}

\subsection{The functional estimates on subsets}

We complete the proof of Theorem 3.1.

\vspace{3pt}

\begin{lem}
The functional $I\in C^{1}(H^{1}_{0}(\Omega); \mathbb{R} )$ satisfies 

\begin{align}
\inf_NI>\max_{M_0}I, 
\end{align}\\
for the following $\beta$ and sets $N, M_0$:

\noindent
(i) 
\begin{equation*}
\begin{aligned}
0&<\beta<1,\\
N&=\left\{w\in H^{1}_{0}(\Omega)\ \middle|\ ||w||_{H^{1}_{0}}=r_0\ \right\},\\
M_0&=\{0,w_0\},
\end{aligned}
\end{equation*}\\
for some $w_0\in H^{1}_{0}(\Omega)$ and $r_0>0$ such that $||w_0||_{H^{1}_{0}}>r_0$.

\noindent
(ii) 
\begin{equation*}
\begin{aligned}
1&\leq \beta <\infty,\\
N&=\left\{z\in Z\ \middle|\ ||z||_{H^{1}_{0}}=r_0\ \right\},\\
M_0&=\left\{w=y+\lambda z_0\in Y\oplus \mathbb{R}z_0\ \middle|\ \lambda=0\ \textrm{and}\ ||y||_{H^{1}_{0}}\leq \rho_0,\ \textrm{or}\ \lambda>0\ \textrm{and}\ ||w||_{H^{1}_{0}}= \rho_0\   \right\},
\end{aligned}
\end{equation*}\\
for some $z_0\in Z$ and $\rho_0>r_0>0$ such that $||z_0||_{H^{1}_{0}}=r_0$.

\noindent
(iii)  
\begin{equation*}
\begin{aligned}
\beta &<-2,\\
N&=Z,\\
M_0&=\left\{y\in Y\ \middle|\  ||y||_{H^{1}_{0}}= \rho_0\   \right\},
\end{aligned}
\end{equation*}
for some $\rho_0>0$. 
\end{lem}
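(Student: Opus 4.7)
The plan is to verify $\inf_N I > \max_{M_0} I$ case by case, relying on the coerciveness of $B$ on $Z$ (Lemma 3.16), the nonpositivity of $B$ on $Y$ (from $(3.20)_2$), the pointwise estimate (3.23), and equivalence of norms on the finite-dimensional spaces $Y$ and $Y\oplus\mathbb{R}z_0$. A uniform control on the nonlinear part, obtained from (3.23) together with integrability of $(1-t^{2})^{-1/2+1/(2\beta)}$ for every $\beta\in \mathbb{R}\setminus [-2,0]$, is
\begin{equation*}
\int_{\Omega}G(t,w)\,\dd t \;\leq\; C_{1}\|w\|_{H^{1}_{0}}^{2+\frac{4}{\beta}}+C_{2}\|w\|_{H^{1}_{0}}^{2+\frac{2}{\beta}},\qquad w\in H^{1}_{0}(\Omega),
\end{equation*}
which I would use throughout.

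For case (i) I would use $Y=\emptyset$ (since $\mu_{1}>0$ by Lemma 3.13) and $B(w,w)\geq \delta\|w\|_{H^{1}_{0}}^{2}$. Both nonlinear exponents exceed $2$, so the quadratic term dominates for small $r_{0}$, giving $\inf_{N}I\geq \delta r_{0}^{2}/4>0$. To exhibit $w_{0}$ I fix any nonzero $e\in H^{1}_{0}(\Omega)$; by (3.5) at least one of $c_{1}\int|e|^{2+4/\beta}\dd t$ or $c_{2}\int|e|^{2+2/\beta}/(1-t^{2})\dd t$ is strictly positive, so $I[se]\to -\infty$ as $s\to\infty$, and $w_{0}=s_{0}e$ for $s_{0}$ large gives $\|w_{0}\|_{H^{1}_{0}}>r_{0}$ with $I[w_{0}]\leq 0 = I[0]$.

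For case (ii) the same coerciveness yields $\inf_{N}I\geq \delta r_{0}^{2}/4>0$ on $N\subset Z$. On the flat face $\{y\in Y:\|y\|_{H^{1}_{0}}\leq \rho_{0}\}$ of $M_{0}$, $(3.20)_{2}$ gives $B(y,y)\leq 0$, and the coefficients in (3.12) are nonnegative for $\beta\geq 1$, so $G(t,y)\geq 0$ pointwise and $I[y]\leq 0$. On the curved cap $\{y+\lambda z_{0}:\lambda>0,\ \|y+\lambda z_{0}\|_{H^{1}_{0}}=\rho_{0}\}$, the $B$-orthogonal decomposition $(3.20)_{1}$ gives $B(w,w)=B(y,y)+\lambda^{2}B(z_{0},z_{0})\leq C\rho_{0}^{2}$ (using $B(y,y)\leq 0$, boundedness of $B$, and $|\lambda|\leq C'\rho_{0}$ by equivalence of norms on $Y\oplus\mathbb{R}z_{0}$), while equivalence of norms applied to whichever of the two positively homogeneous nonlinear functionals is active by (3.5) yields $\int_{\Omega}G(t,w)\,\dd t\geq c_{0}\rho_{0}^{p}$ with $p\in\{2+4/\beta,2+2/\beta\}\subset (2,\infty)$. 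Taking $\rho_{0}$ large forces $I[w]\leq 0$ on the cap, so $\max_{M_{0}}I\leq 0<\inf_{N}I$.

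For case (iii) both nonlinear exponents lie in $(0,2)$, so the strategy reverses. On $Z$, coerciveness combined with the displayed nonlinear bound yields $I[z]\geq \delta\|z\|_{H^{1}_{0}}^{2}/2 - C(\|z\|_{H^{1}_{0}}^{p_{1}}+\|z\|_{H^{1}_{0}}^{p_{2}})$ with $p_{i}\in(0,2)$; as a function of $s=\|z\|_{H^{1}_{0}}$ on $[0,\infty)$ the right-hand side is bounded below, so $\inf_{Z}I>-\infty$. On $Y$ I first check that the coefficients $c_{1}\beta/(2(\beta+2))$ and $c_{2}\beta/(2(\beta+1))$ in (3.12) are nonnegative for $\beta<-2$, so $I[y]\leq -\int_{\Omega}G(t,y)\,\dd t$; a compactness argument on the unit sphere of $Y$ together with (3.5) then yields $\int_{\Omega}G(t,y)\,\dd t\geq c_{0}\|y\|_{H^{1}_{0}}^{p}$ for some $p\in(0,2)$. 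Hence $I[y]\leq -c_{0}\rho_{0}^{p}\to -\infty$ as $\rho_{0}\to\infty$, so $\inf_{Z}I>\max_{M_{0}}I$ for $\rho_{0}$ large.

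\textbf{Main obstacle.} Case (iii) is the most delicate: sublinearity prevents the coerciveness from forcing $I\to\infty$ on $Z$, so only boundedness below is available there, and the burden falls on showing $I\to -\infty$ along $Y$. The two nonlinear pieces of $G$ have distinct degrees of homogeneity in $(0,2)$, and the argument on $Y$ must track which is active under (3.5); in particular, the subrange $-4\leq \beta<-2$ forces $c_{1}=0$ and leaves only the weighted $L^{2+2/\beta}$ integral to drive $I[y]\to -\infty$, which is what makes the compactness step on $Y$ essential rather than cosmetic.
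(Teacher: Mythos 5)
Your proposal is correct and follows essentially the same route as the paper: coercivity of $B$ on $Z$ (Lemma 3.16) plus the pointwise bound (3.23) for the lower bounds on $N$, nonpositivity of $B$ on $Y$ together with nonnegativity of the coefficients of $G$ on the flat face, and finite-dimensionality of $Y$ (resp.\ $Y\oplus\mathbb{R}z_0$) to drive $I\to-\infty$ where needed. The only cosmetic differences are that the paper invokes compactness of the relevant spheres where you invoke equivalence of norms, and in case (iii) it makes the boundedness below on $Z$ explicit via Young's inequality rather than by noting the scalar function $s\mapsto \delta s^{2}/2-Cs^{p_1}-Cs^{p_2}$ is bounded below.
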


\vspace{3pt}

\begin{proof}
We show (i). For $0<\beta<1$, (3.22) holds for $w\in H^{1}_0(\Omega)$. By the pointwise estimate (3.23), 

\begin{align*}
\int_{\Omega}|w|^{2+\frac{4}{\beta}}\dd t&\lesssim ||w||_{H^{1}_{0}}^{2+\frac{4}{\beta}},\\
\int_{\Omega}\frac{1}{1-t^{2}}|w|^{2+\frac{2}{\beta}}\dd t&\lesssim ||w||_{H^{1}_{0}}^{2+\frac{2}{\beta}}.
\end{align*}\\
There exists $C>0$ such that 

\begin{equation*}
\begin{aligned}
I[w]
&=\frac{1}{2}B(w,w)-\frac{c_1\beta}{2(\beta+2)} \int_{\Omega}|w|^{2+\frac{4}{\beta}}\dd t-\frac{c_2\beta}{2(\beta+1)} \int_{\Omega}\frac{1}{1-t^{2}}|w|^{2+\frac{2}{\beta}}\dd t\\
&\geq ||w||_{H^{1}_{0}}^{2}\left(\frac{\delta}{2}-C\left( \frac{c_1\beta}{2(\beta+2)} ||w||_{H^{1}_{0}}^{\frac{4}{\beta}}+\frac{c_2\beta}{2(\beta+1)} ||w||_{H^{1}_{0}}^{\frac{2}{\beta}} \right)  \right). 
\end{aligned}
\end{equation*}\\
Thus there exists $r_0>0$ such that 

\begin{align*}
\inf_{N}I=\inf\left\{I[w]\ \middle|\ w\in H^{1}_{0}(\Omega),\ ||w||_{H^{1}_{0}}=r_0 \right\}>0.
\end{align*}\\
For arbitrary $w\in H^{1}_{0}(\Omega)$ such that $||w||_{H^{1}_{0}}=r_0$,

\begin{align*}
I[\rho w]
&=\rho^{2}\left(\frac{1}{2}B(w,w)-\rho^{\frac{4}{\beta}}\frac{c_1\beta}{2(\beta+2)} \int_{\Omega}|w|^{2+\frac{4}{\beta}}\dd t-\rho^{\frac{2}{\beta}}\frac{c_2\beta}{2(\beta+1)} \int_{\Omega}\frac{1}{1-t^{2}}|w|^{2+\frac{2}{\beta}}\dd t  \right)\\
&\to -\infty\quad \textrm{as}\ \rho\to\infty.
\end{align*}\\
Thus there exists $w_0\in  H^{1}_{0}(\Omega)$ such that $||w_0||_{H^{1}_{0}}>r_0$ and $I[w_0]\leq 0$. Thus (3.29) holds. 

We next show (ii). For $\beta\geq 1$, (3.22) holds for $z\in Z$ and in the same way as (i), there exists $r_0>0$ such that 

\begin{align*}
\inf_{N}I=\inf\left\{I[z]\ \middle|\ z\in Z,\ ||z||_{H^{1}_{0}}=r_0\  \right\}>0.
\end{align*}\\
We consider a finite-dimensional subspace and a semicircle

\begin{align*}
&Y\oplus \mathbb{R}z_0\subset H^{1}_{0}(\Omega),\quad z_0=r_0\frac{e_{N+1}}{||e_{N+1} ||_{H^{1}_{0}}}, \\
&M_0=\left\{w=y+\lambda z_0\in Y\oplus \mathbb{R}z_0\ \middle|\ \lambda=0\ \textrm{and}\ ||y||_{H^{1}_{0}}\leq \rho_0,\ \textrm{or}\ \lambda>0\ \textrm{and}\ ||w||_{H^{1}_{0}}= \rho_0\   \right\}.
\end{align*}\\
For $w=y+\lambda z_0\in Y\oplus \mathbb{R}z_0$ and $\lambda>0$, $I[w]\to -\infty$ as $||w||_{H^{1}_{0}}\to\infty$. Since the set $\{w=y+\lambda z_0 \in M_0\ |\ \lambda\geq 0,\ ||w||_{H^{1}_{0}}=\rho\ \}$ is compact, there exists $\rho_0>0$ such that 

\begin{align*}
\max\left\{ I[w]\ \middle|\ w=y+\lambda z_0 \in M_0,\ \lambda\geq 0,\ ||w||_{H^{1}_{0}}=\rho_0\ \right\}\leq 0.
\end{align*}\\
The functional $I$ is nonpositive on $Y$ and hence $\max_{M_0}I\leq 0$. Thus (3.29) holds. 

It remains to show (iii). For $\beta<-2$, we apply Young's inequality to estimate 

\begin{align*}
||z||_{H^{1}_{0}}^{2+\frac{4}{\beta}}
&\leq \varepsilon \left(1+\frac{2}{\beta}\right)||z||_{H^{1}_{0}}^{2}+\frac{2}{(-\beta) \varepsilon^{-\left(\frac{\beta}{2}+1\right)}},\\
||z||_{H^{1}_{0}}^{2+\frac{2}{\beta}}
&\leq \varepsilon \left(1+\frac{1}{\beta}\right)||z||_{H^{1}_{0}}^{2}+\frac{1}{(-\beta) \varepsilon^{-(\beta+1)}},\quad \varepsilon>0.
\end{align*}\\
Since the estimate (3.22) holds for $z\in Z$,

\begin{align*}
I[z]
&\geq \frac{\delta}{2}||z||_{H^{1}_{0}}^{2} 
-C\left( \frac{c_1\beta}{2(\beta+2)} ||z||_{H^{1}_{0}}^{2+\frac{4}{\beta}}+\frac{c_2\beta}{2(\beta+1)} ||z||_{H^{1}_{0}}^{2+\frac{2}{\beta}} \right)\\
&\geq \frac{1}{2}\left(\delta-\varepsilon C(c_1+c_2) \right)||z||_{H^{1}_{0}}^{2}
+\frac{C}{2}\left(\frac{c_1}{\left(\frac{\beta}{2}+1\right)\varepsilon^{-\left(\frac{\beta}{2}+1\right)}}+\frac{c_2}{(\beta+1)\varepsilon^{-\left(\beta+1\right)}} \right).
\end{align*}\\
Thus for small $\varepsilon>0$,

\begin{align*}
\inf_{N}I=\inf_{Z}I \geq \frac{C}{2}\left(\frac{c_1}{\left(\frac{\beta}{2}+1\right)\varepsilon^{-\left(\frac{\beta}{2}+1\right)}}+\frac{c_2}{(\beta+1)\varepsilon^{-\left(\beta+1\right)}} \right)>-\infty.
\end{align*}\\
For arbitrary $y\in Y$, we set $\rho=||y||_{H^{1}_{0}}$ and $\tilde{y}=y/\rho$. By $(3.20)_2$,

\begin{align*}
I[y]
&=\frac{1}{2}B(y,y)-\frac{c_1\beta}{2(\beta+2)} \int_{\Omega}|y|^{2+\frac{4}{\beta}}\dd t-\frac{c_2\beta}{2(\beta+1)} \int_{\Omega}\frac{1}{1-t^{2}}|y|^{2+\frac{2}{\beta}}\dd t\\
&\leq -\rho^{2+\frac{4}{\beta}}\frac{c_1\beta}{2(\beta+2)} \int_{\Omega}|\tilde{y}|^{2+\frac{4}{\beta}}\dd t-\rho^{2+\frac{2}{\beta}}\frac{c_2\beta}{2(\beta+1)} \int_{\Omega}\frac{1}{1-t^{2}}|\tilde{y}|^{2+\frac{2}{\beta}}\dd t \\
&\to -\infty\quad \textrm{as}\ \rho\to\infty.
\end{align*}\\
Since $Y$ is finite-dimensional, there exists $\rho_0>r_0$ such that 

\begin{align*}
 \frac{C}{2}\left(\frac{c_1}{\left(\frac{\beta}{2}+1\right)\varepsilon^{-\left(\frac{\beta}{2}+1\right)}}+\frac{c_2}{(\beta+1)\varepsilon^{-\left(\beta+1\right)}} \right)>
\max\left\{I[y]\ \middle|\ y\in Y,\ ||y||_{H^{1}_{0}}=\rho_0 \right\}=\max_{M_0}I.
\end{align*}\\
Thus (3.29) holds. 
\end{proof}

\vspace{3pt}

\begin{proof}[Proof of Theorem 3.1]
By minimax theorems (Lemmas 3.4, 3.6, and 3.7), there exist critical points $w\in H^{1}_{0}(\Omega)$ of $I$. The critical points are classical solutions by Lemma 3.19. For $0<\beta<1$, we apply a mountain pass theorem for a functional associated with (3.14) and obtain positive solutions to (3.6).
\end{proof}

\vspace{3pt}

\section{The existence of axisymmetric homogeneous solutions}

We construct axisymmetric ($-\alpha$)-homogeneous solutions to (1.1) in (iii) and (iv) of Theorems 1.1, 1.4, and 1.5 and Theorem 1.10 by using solutions of (3.4) constructed in Theorem 3.1. We first set homogeneous vector fields by solutions of (3.4) and show their regularity at poles by using the geodesic radial coordinate $\theta$ and the function $\chi(\theta)=w(\cos\theta)/\sin^{2}\theta$. In the sequel, we show that the homogeneous vector fields are classical solutions to the Euler equations (1.1) in $\mathbb{R}^{3}\backslash \{0\}$ for $\alpha>2$ and distributional solutions to (1.1) in $\mathbb{R}^{3}$ for $\alpha<0$.

\subsection{Regularity of homogeneous solutions}

We choose constants $\alpha$, $C_1$, and $C_2$ satisfying 

\begin{equation}
\begin{aligned}
&\alpha\in \mathbb{R}\backslash [0,2], \quad C_1\leq 0,\ C_2\in \mathbb{R},\ C_1\neq0\ \textrm{or}\ C_2\neq 0,\\
&C_1=0\quad\textrm{for}\ -2\leq \alpha<0,
\end{aligned}
\end{equation}\\
so that the constants

\begin{align}
\beta=\alpha-2,\quad c_1=-2C_1\left(1+\frac{2}{\beta}\right),\quad c_2=C_2^{2}\left(1+\frac{1}{\beta}\right),
\end{align}\\
satisfy the condition (3.5).

\vspace{3pt}

\begin{thm}
Let $\alpha$, $C_1$, and $C_2$ satisfy (4.1). Let $w\in C^{2}(-1,1)\cap C^{1}[-1,1]$ be a solution to (3.4) for $\beta$, $c_1$, and $c_2$ in (4.2). Set  

\begin{equation}
\begin{aligned}
\psi&=\frac{w(\cos\theta)}{\rho^{\beta}},\quad \Pi=C_1|\psi|^{2+\frac{4}{\beta}},\quad \Gamma=C_2\psi|\psi|^{\frac{1}{\beta}}, \\
u&=\nabla \times (\psi\nabla \phi)+\Gamma \nabla \phi=-\frac{1}{r}\partial_z\psi e_r+\frac{1}{r}\partial_r\psi e_z+\frac{\Gamma}{r}e_{\phi}=u^{P}+u^{\phi}e_{\phi},\\
p&=\Pi-\frac{1}{2}|u|^{2}.
\end{aligned}
\end{equation}\\
Then $u$ is $(-\alpha)$-homogeneous and $p$ is $(-2\alpha)$-homogeneous. They satisfy the following regularity properties:\\
\noindent
(i) For $\alpha<0$, 

\begin{align}
u^{P},\ p\in C^{1}(\mathbb{R}^{3}\backslash \{r=0\})\cap C(\mathbb{R}^{3}),\quad u^{\phi}e_{\phi}\in C(\mathbb{R}^{3}).
\end{align}\\
(ii) For $\alpha<-2$ and $C_2=0$, 

\begin{align}
u^{P},\ p\in C^{1}(\mathbb{R}^{3}\backslash \{0\}),\quad u^{\phi}=0.
\end{align}\\
(iii) For $\alpha>2$, 

\begin{align}
u^{P},\ p\in C^{2}(\mathbb{R}^{3}\backslash \{0\}),\quad u^{\phi}e_{\phi}\in C^{1}(\mathbb{R}^{3}\backslash \{0\}).
\end{align}
\end{thm}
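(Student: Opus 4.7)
The plan is to rewrite the Clebsch representation (4.3) via the Chandrasekhar transform $\chi(\theta) = w(\cos\theta)/\sin^{2}\theta$ from Proposition 3.12, which identifies $w$ with a radial function on $\mathbb{S}^{4}$. Since $r = \rho\sin\theta$, one obtains
\begin{equation*}
\psi = \chi(\theta)\,\frac{r^{2}}{\rho^{\beta+2}}, \qquad u^{\phi} = \frac{\Gamma}{r} = C_{2}\,\chi|\chi|^{1/\beta}\,\frac{r^{1+2/\beta}}{\rho^{(\beta+2)(1+1/\beta)}},
\end{equation*}
and a direct computation using $\partial_{z} = \cos\theta\,\partial_{\rho} - \rho^{-1}\sin\theta\,\partial_{\theta}$ and $\partial_{r} = \sin\theta\,\partial_{\rho} + \rho^{-1}\cos\theta\,\partial_{\theta}$ expresses $u^{P} = -r^{-1}\partial_{z}\psi\,e_{r} + r^{-1}\partial_{r}\psi\,e_{z}$ as $\rho^{-\alpha}$ times a polynomial in $(\cos\theta,\sin\theta,\chi,\chi')$, the factors $1/r$ being absorbed by the $r^{2}$ in $\psi$. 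Regularity of $u$ and $p = \Pi - |u|^{2}/2$ away from the $z$-axis is then immediate from $w \in C^{2}(-1,1)$; the nontrivial task is to establish regularity across $\{r = 0\}$, together with continuity at the origin when $\alpha < 0$.

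The next step is to upgrade the regularity of $\chi$ on $\mathbb{S}^{4}$. Since $w$ solves (3.4), $\chi$ solves the elliptic equation (1.14) on $\mathbb{S}^{4}$. In case (iii), $\beta > 0$ so the powers $2 + 8/\beta$ and $4/\beta$ of $\sin\theta$ on the right-hand side are positive and the nonlinearities $\chi|\chi|^{s}$ are Lipschitz, so standard Schauder bootstrap yields $\chi \in C^{2}(\mathbb{S}^{4})$. In case (ii), only the $c_{1}$-term survives and carries $\sin^{2 + 8/\beta}\theta$, which is non-negative since $\beta < -4$ forces $2 + 8/\beta \geq 0$ (the subcase $-4 \leq \beta < -2$ being excluded by $c_{1}=0$), so again $\chi \in C^{2}(\mathbb{S}^{4})$. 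In case (i) with $C_{2}\neq 0$, the $c_{2}$-term involves $\sin^{4/\beta}\theta$ with $4/\beta \in (-2,0)$, which is only H\"older continuous near the poles; Schauder theory then gives $\chi \in C^{1,\gamma}(\mathbb{S}^{4})$ for some $\gamma \in (0,1)$.

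Reading off the regularity of $u^{P}$, $u^{\phi}$, and $p$ from the above formulas, case (iii) gives $u^{P}, p \in C^{2}(\mathbb{R}^{3}\setminus\{0\})$ and $u^{\phi}e_{\phi} \in C^{1}(\mathbb{R}^{3}\setminus\{0\})$, since $r^{1 + 2/\beta}$ has exponent strictly greater than one. Case (ii) delivers $u^{P}, p \in C^{1}(\mathbb{R}^{3}\setminus\{0\})$ with $u^{\phi} = 0$. For case (i), the poloidal component $u^{P}$ and the pressure $p$ contain only integer powers of $r$ multiplied by a $C^{1,\gamma}$ function of $\theta$ and are therefore $C^{1}$ off the axis, while the homogeneity degree $-\alpha > 0$ forces continuous vanishing at the origin; the key novelty is that $u^{\phi}e_{\phi}$ carries the non-integer factor $r^{1 + 2/\beta}$ with exponent in $(0,1)$, which is merely continuous at the axis, explaining the drop from $C^{1}$ to $C$ for the swirl. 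The main obstacle is the careful bookkeeping of the H\"older regularity of $\chi$ against the power $r^{1+2/\beta}$; one must also verify that radial smoothness of $\chi$ on $\mathbb{S}^{4}$ encodes the appropriate vanishing of odd-order derivatives at $\theta = 0, \pi$ needed to extend the composite smoothly across the Cartesian $z$-axis in $\mathbb{R}^{3}$.
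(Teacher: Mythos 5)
Your starting point (the Chandrasekhar substitution $\chi(\theta)=w(\cos\theta)/\sin^{2}\theta$ and the polar-coordinate formulas for $u^{P}$, $u^{\phi}e_{\phi}$, and $p+|u|^{2}/2$) coincides with the paper's Proposition 4.2, but the proposal has a genuine gap at exactly the point you flag as "the main obstacle": converting regularity of the radial profile $\chi$ into regularity of the three-dimensional vector field $u^{P}$ across the symmetry axis. This is not a bookkeeping afterthought; it is the core of the proof. The frame $(e_{\rho},e_{\theta})$ and the powers of $\sin\theta$ are singular on the $x_3$-axis, and the paper devotes Lemma 4.3 to showing, by direct Cartesian computation on expressions like $\chi\, x_1x_3$ and $f x_3$ with $f=\chi'\sin\theta+2\chi\cos\theta$, that $C^{1}$ regularity of $u^{P}$ across the axis requires $\chi'(0)=\chi'(\pi)=0$ together with $\chi'/\sin\theta\in C[0,\pi]$, and that $C^{2}$ regularity requires the weighted third-derivative condition $\chi'''\sin\theta\in C_{0}[0,\pi]$. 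None of this is "read off" from membership of $\chi$ in $C^{2}(\mathbb{S}^{4})$; in particular, $\chi\in C^{2,\gamma}(\mathbb{S}^{4})$ does not yield the condition on $\chi'''\sin\theta$ needed for case (iii), which the paper obtains by differentiating the ODE once more (Proposition 4.7) rather than by elliptic bootstrap.

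The Schauder step itself is also flawed in case (i). For $\beta<-2$ the factor $\sin^{4/\beta}\theta$ in the $c_{2}$-term has a negative exponent, so the right-hand side of the equation on $\mathbb{S}^{4}$ is unbounded near the poles (since $\chi$ generically does not vanish there), not "H\"older continuous" as you assert; for $-4\leq\beta<-2$ it lies only in $L^{p}$ with $p<-\beta\leq 4$, which is below the threshold for $W^{2,p}\hookrightarrow C^{1,\gamma}$ in dimension four, so the claimed $\chi\in C^{1,\gamma}(\mathbb{S}^{4})$ does not follow and is not needed: the paper's case (i) uses only $\chi\in C^{2}(0,\pi)$ with $\chi,\ \chi'\sin\theta\in C[0,\pi]$, which follows elementarily from $w\in C^{2}(-1,1)\cap C^{1}[-1,1]$ and $w(\pm1)=0$ (Proposition 4.4). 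To repair the proposal you would need to (a) replace the sphere bootstrap by the ODE-based arguments of Propositions 4.5--4.7 (integrating $\chi'\sin^{3}\theta=\int_{0}^{\theta}\Delta_{\mathbb{S}^{4}}\chi\,\sin^{3}\phi\,\dd\phi$ to control $\chi'/\sin\theta$, and differentiating the equation to control $\chi'''\sin\theta$), and (b) actually carry out the Cartesian verification of axis regularity that you postpone.
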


\vspace{3pt}

We demonstrate Theorem 4.1 by representing $(u,p)$ by the polar coordinates and by a function $\chi$ in the geodesic radial coordinate. 

\vspace{3pt}
  
\begin{prop}
For $t=\cos\theta$ and 

\begin{align}
\chi(\theta)=\frac{w(\cos {\theta})}{\sin^{2}{\theta}},
\end{align}\\
the functions $(u,p)$ in (4.3) are expressed as

\begin{equation}
\begin{aligned}
u^{P}&=\frac{1}{\rho^{\beta+2}}\left(\beta\chi \sin\theta e_{\theta}+(\chi'\sin\theta+2\chi \cos\theta )e_{\rho} \right),\\
u^{\phi}e_{\phi}&=\frac{1}{\rho^{\beta+2}}C_2\chi|\chi|^{\frac{1}{\beta}}\sin^{1+\frac{2}{\beta}}\theta e_{\phi},\\
p+\frac{1}{2}|u|^{2}&=\frac{1}{\rho^{2\beta+4}}C_1|\chi|^{2+\frac{4}{\beta}}\sin^{4+\frac{8}{\beta}}\theta.
\end{aligned}
\end{equation}
\end{prop}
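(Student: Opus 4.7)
The plan is a direct calculation in the polar coordinates $(\rho,\theta)$, converting the cylindrical-coordinate formulas in (4.3) to the polar-frame formulas in (4.8). The key starting point is that the definition $\chi(\theta)=w(\cos\theta)/\sin^{2}\theta$ rewrites the stream function as $\psi=\chi(\theta)\sin^{2}\theta/\rho^{\beta}$. Combined with $r=\rho\sin\theta$ and the orthogonal frame identities
\[e_{r}=\sin\theta\,e_{\rho}+\cos\theta\,e_{\theta},\qquad e_{z}=\cos\theta\,e_{\rho}-\sin\theta\,e_{\theta},\]
the poloidal, toroidal, and Bernoulli pieces of (4.3) can all be translated into the required form by direct substitution.

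First I would compute
\[\partial_{\rho}\psi=-\frac{\beta\chi\sin^{2}\theta}{\rho^{\beta+1}},\qquad \partial_{\theta}\psi=\frac{\chi'\sin^{2}\theta+2\chi\sin\theta\cos\theta}{\rho^{\beta}},\]
and then decompose $u^{P}=-r^{-1}\partial_{z}\psi\,e_{r}+r^{-1}\partial_{r}\psi\,e_{z}$ in the polar frame. Using the standard identities $\partial_{\rho}=\sin\theta\,\partial_{r}+\cos\theta\,\partial_{z}$ and $\rho^{-1}\partial_{\theta}=\cos\theta\,\partial_{r}-\sin\theta\,\partial_{z}$, a short calculation shows that the $e_{\rho}$- and $e_{\theta}$-components are $(r\rho)^{-1}\partial_{\theta}\psi$ and $-r^{-1}\partial_{\rho}\psi$, respectively. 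Substituting the derivatives above and using $r=\rho\sin\theta$ yields exactly the first line of (4.8).

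For the toroidal piece, $u^{\phi}e_{\phi}=(\Gamma/r)e_{\phi}$ together with $\Gamma=C_{2}\psi|\psi|^{1/\beta}$ gives, after inserting $\psi=\chi\sin^{2}\theta/\rho^{\beta}$ and $r=\rho\sin\theta$, the middle line of (4.8); the exponent $1+2/\beta$ of $\sin\theta$ arises from combining the $\sin^{2}\theta$ from $\psi$ with the $\sin^{2/\beta}\theta$ from $|\psi|^{1/\beta}$ and then dividing by $\sin\theta$. The pressure identity $p+|u|^{2}/2=\Pi=C_{1}|\psi|^{2+4/\beta}$ is handled in the same way, producing the $\sin^{4+8/\beta}\theta$ weight on the last line. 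There is no genuine obstacle here beyond careful bookkeeping of the exponents of $\rho$ and $\sin\theta$, which is the only thing one has to be attentive to throughout.
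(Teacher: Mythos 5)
Your computation is correct and follows essentially the same route as the paper: substitute $\psi=\chi\sin^{2}\theta/\rho^{\beta}$, pass to the polar frame, and read off the three components (the paper packages the poloidal step as $u^{P}=r^{-1}\nabla\psi\times e_{\phi}$ with $\nabla\psi$ computed in the $(e_{\rho},e_{\theta})$ frame, which gives exactly your intermediate formula $u^{P}=r^{-1}\bigl(\rho^{-1}\partial_{\theta}\psi\,e_{\rho}-\partial_{\rho}\psi\,e_{\theta}\bigr)$). The exponent bookkeeping for $\Gamma$ and $\Pi$ matches the paper's as well.
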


\vspace{3pt}

\begin{proof}
For $\psi=\chi \sin^{2}\theta/\rho^{\beta}$,

\begin{align*}
\nabla \psi=\left(e_{\rho}\partial_{\rho}+\frac{1}{\rho}\nabla_{\mathbb{S}^{2}} \right)\left(\frac{\chi \sin^{2}\theta}{\rho^{\beta} } \right)
=\frac{1}{\rho^{\beta+1}}\left(-\beta \chi \sin^{2}\theta e_{\rho}
+\left(\chi'\sin^{2}\theta+2\chi \sin\theta\cos\theta \right)e_{\theta}\right).
\end{align*}\\
Thus 

\begin{align*}
u^{P}=\frac{1}{r}\nabla \psi \times e_{\phi}=\frac{1}{\rho^{\beta+2}}\left(\beta\chi \sin\theta e_{\theta}+(\chi'\sin\theta+2\chi \cos\theta )e_{\rho} \right).
\end{align*}\\
The representations (4.8$)_2$ and (4.8$)_3$ follow by substituting $\psi=\chi \sin^{2}\theta/\rho^{\beta}$ into $\Pi$ and $\Gamma$ in (4.3$)_1$.
\end{proof}

\vspace{3pt}

We show that the desired regularity for the poloidal component $u^{P}$ follows from regularity of the function $\chi$.

\vspace{3pt}

\begin{lem}
Assume that

\begin{equation}
\begin{aligned}
&\chi\in C^{2}(0,\pi),\\
&\chi,\ \chi'\sin\theta\in C[0,\pi].
\end{aligned}
\end{equation}\\
Then, $u^{P}\in C^{1}(\mathbb{R}^{3}\backslash \{r=0\})\cap C(\mathbb{R}^{3}\backslash \{0\})$. If in addition that 

\begin{align}
\chi \in C^{2}[0,\pi],\ \chi'(0)=\chi'(\pi)=0,
\end{align}\\
then $u^{P}\in C^{1}(\mathbb{R}^{3}\backslash \{0\})$. If in addition that 

\begin{align}
\chi'''\sin\theta \in C_{0}[0,\pi],
\end{align}\\
then $u^{P}\in C^{2}(\mathbb{R}^{3}\backslash \{0\})$.
\end{lem}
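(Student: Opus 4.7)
The plan is to split $\mathbb{R}^{3}\setminus\{0\}$ into the open set $\{r>0\}$ and the punctured $z$-axis, treating them separately. On $\{r>0\}$ the coordinates $(\rho,\theta)$ and frame $(e_{\rho},e_{\theta})$ are $C^{\infty}$ functions of $x$, so $(4.8)_{1}$ combined with $\chi\in C^{2}(0,\pi)$ yields $u^{P}\in C^{1}(\{r>0\})$ at once: each derivative in $\theta$ of a coefficient involving $\chi,\chi'$ brings down at most $\chi''$, which is continuous on $(0,\pi)$. The $C^{2}$ part of (iii) follows identically on $\{r>0\}$, since (4.11) implies $\chi'''\in C(0,\pi)$.

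For the punctured $z$-axis I would project $u^{P}$ onto the cylindrical basis, obtaining
\begin{align*}
u^{r}&=\frac{\sin\theta}{\rho^{\beta+2}}\bigl[(\beta+2)\chi\cos\theta+\chi'\sin\theta\bigr],\\
u^{z}&=\frac{1}{\rho^{\beta+2}}\bigl[2\chi\cos^{2}\theta-\beta\chi\sin^{2}\theta+\chi'\sin\theta\cos\theta\bigr],
\end{align*}
and then rewrite $u^{P}=(u^{r}/r)(x_{1},x_{2},0)+(0,0,u^{z})$. Since $u^{r}/r=\rho^{-(\beta+3)}[(\beta+2)\chi\cos\theta+\chi'\sin\theta]$ and $u^{z}$ are axisymmetric scalars in $(r,z)$, the assumptions $\chi\in C[0,\pi]$ and $\chi'\sin\theta\in C[0,\pi]$ make both continuous on $\mathbb{R}^{3}\setminus\{0\}$, proving the first claim. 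Promoting continuity to $C^{1}$ or $C^{2}$ on $\mathbb{R}^{3}\setminus\{0\}$ then reduces, via $\partial_{r}\theta=\cos\theta/\rho$, $\partial_{z}\theta=-\sin\theta/\rho$, $\partial_{r}\rho=\sin\theta$, $\partial_{z}\rho=\cos\theta$, to showing that each first-order (resp.\ second-order) Cartesian derivative of $u^{r}/r$ and $u^{z}$ extends continuously to the poles. These derivatives are polynomials in $\chi,\chi',\chi'',\chi'''$ whose potentially singular contributions at $\theta\in\{0,\pi\}$ have the schematic form $\chi'/\sin\theta$ at order one and $\chi'''/\sin\theta$ at order two; the conditions $\chi\in C^{2}[0,\pi]$ with $\chi'(0)=\chi'(\pi)=0$ (respectively $\chi'''\sin\theta\in C_{0}[0,\pi]$) are exactly the minimal cancellation conditions.

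The main obstacle is this bookkeeping: identifying which combinations of derivatives of $\chi$ actually survive at the poles after the chain rule is applied. A cleaner organisation that I would adopt if the direct approach gets cumbersome is to set $\tilde\chi(r,z)=\chi(\theta)/\rho^{\beta+2}$, so that $\psi=r^{2}\tilde\chi$ and one computes $u^{r}/r=-\partial_{z}\tilde\chi$ and $u^{z}=2\tilde\chi+r\,\partial_{r}\tilde\chi$; this reduces everything to the regularity of the single scalar $\tilde\chi(r,z)$ at $r=0$, which in turn is controlled by the even-$\theta$-parity expansion of $\chi$ near the poles encoded by (4.10) and (4.11).
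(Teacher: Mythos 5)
Your overall strategy coincides with the paper's: write $u^{P}$ in the frame $(e_{\theta},e_{\rho})$ (equivalently in cylindrical components), note that interior regularity on $\{r>0\}$ is immediate from $\chi\in C^{2}(0,\pi)$, and reduce regularity across the punctured axis to showing that certain one-variable combinations of $\chi,\chi',\chi'',\chi'''$ extend continuously to $\theta=0,\pi$. Your continuity argument and the $C^{1}$ statement away from the axis are fine.

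The gap is in the decisive bookkeeping for the $C^{1}$ and $C^{2}$ claims on the axis, which you defer, and the one concrete structural claim you do make is wrong: you assert that the second-order singular contributions have ``the schematic form $\chi'''/\sin\theta$.'' If that were the correct object, the hypothesis $\chi'''\sin\theta\in C_{0}[0,\pi]$ would be hopelessly insufficient (it controls $\chi'''\sin\theta$, two powers of $\sin\theta$ away from $\chi'''/\sin\theta$), and your argument could not close. What actually appears, as in the paper's computation, are the quantities $\bigl(\chi'/\sin\theta\bigr)'\sin\theta=\chi''-\chi'\cot\theta$ and $\bigl(f'/\sin\theta\bigr)'\sin\theta=\chi'''\sin\theta+3\chi''\cos\theta-3\chi'/\sin\theta-2\chi'\sin\theta$, where $f=\chi'\sin\theta+2\chi\cos\theta$ is the $e_{\rho}$-coefficient. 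The extra factor of $\sin\theta$ that tames the singularity is structural: writing $\rho^{2}\sin\theta\,e_{\theta}=(x_1x_3,x_2x_3,-r^{2})$ and $\rho e_{\rho}=(x_1,x_2,x_3)$, each Cartesian differentiation of a $\theta$-dependent coefficient produces $(\cdot)'/\sin\theta$ paired with a vector that already carries a factor $\sin\theta$. These are precisely the objects controlled by (4.10) (which gives $\chi'/\sin\theta\in C[0,\pi]$, hence $\chi''-\chi'\cot\theta\in C_{0}[0,\pi]$ and $3\chi''\cos\theta-3\chi'/\sin\theta\to 0$ at the poles) and by (4.11). Until you carry out this cancellation explicitly --- or, in your alternative organization, actually prove that $\tilde\chi=\chi/\rho^{\beta+2}$ and $r\partial_{r}\tilde\chi$ are $C^{1}$ (resp.\ $C^{2}$) as functions of $x$ across $\{r=0\}$, which is the same computation in different clothes --- the proof is not complete.
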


\vspace{3pt}

\begin{proof}
We set 

\begin{align*}
u^{P}=\frac{1}{\rho^{\beta+2}}\left(ae_{\theta}+fe_{\rho}\right),\quad a=\beta\chi \sin\theta,\ f=\chi'\sin\theta+2\chi \cos\theta.
\end{align*}\\
The condition $\chi\in C^{2}(0,\pi)$ implies $u^{P}\in C^{1}(\mathbb{R}^{3}\backslash \{r=0\})$. The condition (4.9$)_2$ implies $u^{P}\in C(\mathbb{R}^{3}\backslash \{0\})$.

The condition (4.10) implies that 

\begin{align}
\frac{\chi'}{\sin\theta},\ \frac{f'}{\sin\theta}=\chi''+3\chi' \cot \theta-2\chi \in C[0,\pi].
\end{align}\\
We show $u^{P}\in C^{1}(\mathbb{R}^{3}\backslash \{0\})$. By $\rho^{2}\sin\theta e_{\theta}=(x_1x_3,x_2x_3,-r^{2})$ and $\rho e_{\rho}=(x_1,x_2,x_3)$, 

\begin{equation*}
\begin{aligned}
\rho^{2} ae_{\theta}=
\beta \chi 
\left(
\begin{array}{c}
x_1x_3 \\
x_2x_3\\
-r^{2}
\end{array}
\right),\quad 
\rho f e_{\rho}=f
\left(
\begin{array}{c}
x_1 \\
x_2\\
x_3
\end{array}
\right).
\end{aligned}
\end{equation*}\\
We show that $\rho^{2} ae_{\theta}\in C^{1}(\mathbb{R}^{3}\backslash \{0\})$ and $\rho f e_{\rho}\in C^{1}(\mathbb{R}^{3}\backslash \{0\})$. The first and second components of $\rho^{2} ae_{\theta}$ have the worst regularity on the $x_3$-axis. It suffices to show that $\chi x_1x_3\in C^{1}(\mathbb{R}^{3}\backslash \{0\})$. For the canonical basis $e_1$, $e_2$, $e_3$ on $\mathbb{R}^{3}$,

\begin{align}
\nabla (\chi x_1x_3)=\frac{1}{\rho^{3}}\left(\frac{\chi'}{\sin\theta}\right)
\left(
\begin{array}{c}
x_1x_3 \\
x_2x_3\\
-r^{2}
\end{array}
\right)x_1x_3
+\chi(x_3e_1+x_1e_3).
\end{align}\\
By (4.12$)_1$, $\nabla (\chi x_1x_3)\in C(\mathbb{R}^{3}\backslash \{0\})$. 

The last component of $\rho f e_{\rho}$ has the worst regularity on the $x_3$-axis. We show that $fx_3\in C^{1}(\mathbb{R}^{3}\backslash \{0\})$. By 

\begin{align}
\nabla (fx_3)=\frac{1}{\rho^{3}}\left(\frac{f'}{\sin\theta}\right)
\left(
\begin{array}{c}
x_1x_3 \\
x_2x_3\\
-r^{2}
\end{array}
\right)x_3
+fe_3,
\end{align}\\
and (4.12$)_2$, $\nabla (fx_3)\in C(\mathbb{R}^{3}\backslash \{0\})$. We conclude that $u^{P}\in C^{1}(\mathbb{R}^{3}\backslash \{0\})$.

It remains to show the $C^{2}$-regularity of $u^{P}$ under the condition (4.11). By (4.10), 

\begin{align*}
\left(\frac{\chi'}{\sin\theta}\right)'\sin\theta=\chi''-\chi'\cot\theta \in C_{0}[0,\pi].
\end{align*}\\
By (4.11),

\begin{align}
\left(\frac{f'}{\sin\theta}\right)'\sin\theta=\chi''' \sin\theta+3\chi'' \cos\theta-3\frac{\chi'}{\sin\theta}-2\chi' \sin\theta\in C_0[0,\pi].
\end{align}\\
We show that $\nabla (\chi x_1x_3)\in C^{1}(\mathbb{R}^{3}\backslash \{0\})$. The last term of (4.13) belongs to $C^{1}(\mathbb{R}^{3}\backslash \{0\})$ by (4.12$)_1$. The first and second components of the first term in (4.13) have the worst regularity on the $x_3$-axis. It suffices to show that $\chi' (x_1x_3)^{2}/\sin\theta\in C^{1}(\mathbb{R}^{3}\backslash \{0\})$. By 

\begin{align*}
\nabla \left(\frac{\chi'}{\sin\theta}(x_1x_3)^{2} \right)
=\frac{1}{\rho}\left(\frac{\chi'}{\sin\theta}\right)'\sin\theta
\left(
\begin{array}{c}
x_1x_3 \\
x_2x_3\\
-r^{2}
\end{array}
\right)\cos^{2}\phi x_3^{2}
+2\left(\frac{\chi'}{\sin\theta}\right)(x_1x_3^{2}e_1+x_1^{2}x_3e_3),
\end{align*}\\
we have $\chi' (x_1x_3)^{2}/\sin\theta\in C^{1}(\mathbb{R}^{3}\backslash \{0\})$.

The last term of (4.14) belongs to $C^{1}(\mathbb{R}^{3}\backslash \{0\})$ by (4.12$)_2$. The first and second components of the first term in (4.14) have the worst regularity on the $x_3$-axis. It suffices to show that $f'x_1x_3^{2}/\sin\theta \in C^{1}(\mathbb{R}^{3}\backslash \{0\})$. By 

\begin{align*}
\nabla \left(\frac{f'}{\sin\theta}x_1x_3^{2} \right)
=\left(\frac{f'}{\sin\theta}\right)'\sin\theta 
\cos\phi x_3^{2}e_{\theta}
+\left(\frac{f'}{\sin\theta}\right) \left(x_3^{2}e_1+2x_1x_3e_3\right),
\end{align*}\\
and (4.15), we have $f'x_1x_3^{2}/\sin\theta \in C^{1}(\mathbb{R}^{3}\backslash \{0\})$. We conclude that $u^{P}\in C^{2}(\mathbb{R}^{3}\backslash \{0\})$.
\end{proof}

\subsection{Regularity in the geodesic radial coordinate}

We show the conditions for $\chi$ in Lemma 4.3. The coordinate $\theta$ is also the geodesic radial coordinate on $\mathbb{S}^{4}$. The Laplace--Beltrami operator on $\mathbb{S}^{4}$ acting on the radial function $\chi$ is expressed as 

\begin{align}
\Delta_{\mathbb{S}^{4}}\chi =\frac{1}{\sin^{3}\theta}\partial_{\theta}\left(\sin^{3}{\theta}\ \partial_{\theta}\chi \right)=\chi''+\frac{3}{\tan\theta}\chi'.
\end{align}\\
By the transform (4.7),  

\begin{align}
\Delta_{\mathbb{S}^{4}}\chi=w''+\frac{2}{1-t^{2}}w,
\end{align}\\ 
and the equation $(3.4)$ is expressed as 

\begin{equation}
\begin{aligned}
-\Delta_{\mathbb{S}^{4}}\chi&=(\beta-1)(\beta+2)\chi+g,\quad \theta\in (0,\pi),\\
\chi'(0)&=\chi'(\pi)=0,
\end{aligned}
\end{equation}\\
for 

\begin{equation}
g=c_1\chi |\chi|^{\frac{4}{\beta}}\sin^{2+\frac{8}{\beta}}\theta+c_2\chi |\chi|^{\frac{2}{\beta}} \sin^{\frac{4}{\beta}}\theta. 
\end{equation}

\vspace{3pt}

\begin{prop}
The condition (4.9) holds for $w\in C^{2}(-1,1)\cap C^{1}[-1,1]\cap C_0[-1,1]$.
\end{prop}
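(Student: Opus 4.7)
The plan is to establish (4.9) in three steps, with the only subtlety being how to exploit the vanishing of $w$ at $t=\pm 1$ to cancel the $\sin\theta$ singularities at the poles.

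First, the interior regularity $\chi\in C^{2}(0,\pi)$ is immediate: on $(0,\pi)$, $\theta\mapsto \cos\theta$ is $C^{\infty}$ and $\sin^{2}\theta$ is $C^{\infty}$ and nonvanishing, so $\chi(\theta)=w(\cos\theta)/\sin^{2}\theta$ inherits $C^{2}$ regularity from $w\in C^{2}(-1,1)$ by the chain rule and quotient rule.

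Second, I would handle the continuous extension of $\chi$ to $\theta=0,\pi$ by factoring $\sin^{2}\theta=(1-\cos\theta)(1+\cos\theta)$ and using $w(\pm 1)=0$ together with $w\in C^{1}[-1,1]$. Writing $t=\cos\theta$ and Taylor expanding at $t=1$ gives $w(t)=w'(1)(t-1)+o(t-1)$, so
\begin{equation*}
\chi(\theta)=\frac{w(\cos\theta)}{(1-\cos\theta)(1+\cos\theta)}\longrightarrow -\frac{w'(1)}{2}\quad\text{as}\ \theta\to 0,
\end{equation*}
and symmetrically $\chi(\theta)\to w'(-1)/2$ as $\theta\to\pi$. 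Hence $\chi$ extends to a continuous function on $[0,\pi]$.

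Third, for $\chi'\sin\theta\in C[0,\pi]$, I would simply differentiate. A direct computation yields
\begin{equation*}
\chi'(\theta)=-\frac{w'(\cos\theta)}{\sin\theta}-\frac{2\cos\theta\, w(\cos\theta)}{\sin^{3}\theta},
\end{equation*}
so that
\begin{equation*}
\chi'(\theta)\sin\theta=-w'(\cos\theta)-2\cos\theta\,\chi(\theta).
\end{equation*}
The first term is continuous on $[0,\pi]$ because $w'\in C[-1,1]$, and the second term is continuous by the previous step. This closes the proof. There is no real obstacle here: the argument is essentially a single application of Taylor's theorem at the endpoints to absorb the $1/\sin^{2}\theta$ factor into the vanishing of $w$, and all remaining regularity is bookkeeping via the chain rule.
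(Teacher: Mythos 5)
Your proof is correct and follows essentially the same route as the paper: the paper likewise deduces $\chi\in C^{2}(0,\pi)\cap C[0,\pi]$ from $w\in C^{2}(-1,1)\cap C^{1}[-1,1]$ with $w(\pm1)=0$, and obtains $\chi'\sin\theta=-w'(\cos\theta)-2\chi\cos\theta\in C[0,\pi]$ by differentiating $w(\cos\theta)=\chi\sin^{2}\theta$. Your Taylor-expansion step at the endpoints merely spells out the detail the paper leaves implicit.
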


\vspace{3pt}

\begin{proof}
The conditions $w\in C^{2}(-1,1)\cap C^{1}[-1,1]$ and $w(1)=w(-1)=0$ imply 

\begin{align*}
\chi(\theta)=\frac{w(\cos\theta)}{\sin^{2}\theta}\in C^{2}(0,\pi)\cap C[0,\pi].
\end{align*}\\
By differentiating $w(\cos\theta)=\chi \sin^{2}\theta$, 

\begin{align*}
\chi'\sin\theta=-w'(\cos\theta)-2\chi \cos\theta\in C[0,\pi].
\end{align*}\\
Thus (4.9) holds.
\end{proof}

\vspace{3pt}

\begin{prop}
If in addition that $w\in C^{2}[-1,1]$, the condition (4.10) holds.
\end{prop}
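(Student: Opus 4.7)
The plan is to bootstrap the regularity of $\chi$ at the poles $\theta = 0,\pi$ by putting the identity (4.17) into divergence form and exploiting the new hypothesis $w \in C^2[-1,1]$. Proposition 4.4 already furnishes $\chi \in C^2(0,\pi) \cap C[0,\pi]$ together with $\chi'(\theta)\sin\theta = -w'(\cos\theta) - 2\chi(\theta)\cos\theta \in C[0,\pi]$; evaluating the latter at $\theta = 0$ pins down $\chi(0) = -w'(1)/2$ and shows that $\chi'\sin\theta$, hence also $\chi'\sin^3\theta$, vanishes at both endpoints. What remains is to prove that $\chi'$ and $\chi''$ themselves admit continuous extensions to $[0,\pi]$ with $\chi'(0) = \chi'(\pi) = 0$.

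Multiplying (4.17) by $\sin^3\theta$ and using $\chi\sin^2\theta = w(\cos\theta)$ yields the divergence form
\begin{equation*}
\bigl(\sin^3\theta\,\chi'(\theta)\bigr)' = w''(\cos\theta)\sin^3\theta + 2w(\cos\theta)\sin\theta,
\end{equation*}
whose right-hand side lies in $C[0,\pi]$ precisely when $w \in C^2[-1,1]$. Since $\chi'\sin^3\theta \to 0$ as $\theta \to 0$, integrating from $0$ yields the representation
\begin{equation*}
\chi'(\theta) = \frac{1}{\sin^3\theta}\int_{0}^{\theta} \bigl[w''(\cos s)\sin^3 s + 2w(\cos s)\sin s\bigr]\,\mathrm{d}s.
\end{equation*}

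Taylor expansion of $w$ at $t = 1$, using $w(1) = 0$, gives $w(\cos s) = -\tfrac{1}{2}w'(1)s^2 + O(s^4)$ near $s = 0$, so the integrand equals $[w''(1) - w'(1)]s^3 + o(s^3)$. Integration and division by $\sin^3\theta \sim \theta^3$ therefore produce $\chi'(\theta) = \tfrac{1}{4}[w''(1) - w'(1)]\theta + o(\theta)$, which simultaneously forces $\chi'(0) = 0$ and yields a finite limit for $\chi'(\theta)/\sin\theta$ as $\theta \to 0$. Substituting into the rearrangement $\chi''(\theta) = w''(\cos\theta) + 2\chi(\theta) - 3\chi'(\theta)\cot\theta$ of (4.17) then delivers a finite limit for $\chi''$ at $\theta = 0$, so $\chi$ is $C^2$ there. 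The analogous argument at $\theta = \pi$, integrating from $\pi$ and expanding $w$ at $t = -1$, handles the other pole and completes (4.10). The only nontrivial ingredient beyond routine Taylor expansion is the vanishing of the boundary term $\chi'\sin^3\theta$ at the endpoints, which is exactly what Proposition 4.4 provides, so no new estimates are needed.
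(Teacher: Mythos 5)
Your proof is correct and follows essentially the same route as the paper: both integrate the divergence form $(\sin^{3}\theta\,\chi')'=w''(\cos\theta)\sin^{3}\theta+2w(\cos\theta)\sin\theta$ from the pole using the vanishing boundary term, divide by $\sin^{3}\theta$ to control $\chi'/\sin\theta$, and then read off $\chi''$ from the rearranged identity (4.16)--(4.17). The only cosmetic difference is that you evaluate the endpoint limits by Taylor expansion while the paper shows $\chi'/\sin\theta\in C[0,\pi]$ via a change of variables in the averaged integral; both yield (4.10).
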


\vspace{3pt}

\begin{proof}
By $w(1)=w(-1)=0$ and $w\in C^{1}[-1,1]$, 

\begin{align*}
\chi'  \sin^{3}\theta=- w'(\cos\theta)\sin^{2}\theta-2 w(\cos\theta)\cos\theta,
\end{align*}\\
vanishes at $\theta=0$ and $\pi$. By integrating the first equation of (4.16),

\begin{align*}
\chi'\sin^{3}\theta=\int_{0}^{\theta}\Delta_{\mathbb{S}^{4}}\chi \sin^{3}\phi\dd \phi.
\end{align*}\\
By changing the variable,

\begin{align*}
\frac{1}{\sin\theta}\chi'=\frac{1}{4}\left(\frac{\theta}{\sin\theta}\right)^{4}\fint_{0}^{\theta^{4}}\left(\frac{\sin\phi^{\frac{1}{4}}}{\phi^{\frac{1}{4}}}\right)^{3}(\Delta_{\mathbb{S}^{4}}\chi)(\phi^{\frac{1}{4}}) \dd \phi.
\end{align*}\\
Since $\Delta_{\mathbb{S}^{4}}\chi\in C[0,\pi]$ by the identity (4.17), $\chi'/\sin\theta\in C[0,\pi]$. In particular, $\chi'(0)=\chi'(\pi)=0$. The second equation of (4.16) implies $\chi''\in C[0,\pi]$. 
\end{proof}

\vspace{3pt}

\begin{prop}
The condition (4.10) holds for solutions $w\in  C^{2}(-1,1)\cap C^{1}[-1,1]\cap C_0[-1,1]$ of (3.4) for $\beta<-4$ and $c_2=0$ and for $\beta>0$ and $c_2\geq 0$.
\end{prop}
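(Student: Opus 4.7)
The plan is to reduce Proposition 4.6 to Proposition 4.5 by upgrading the regularity of $w$ from $C^{2}(-1,1) \cap C^{1}[-1,1] \cap C_0[-1,1]$ to $C^{2}[-1,1]$. Once $w \in C^{2}[-1,1]$, Proposition 4.5 gives condition (4.10) for free. So the task is to show that the right-hand side of the ODE $(3.4)$, namely
\[
R(t) := \frac{\beta(\beta+1)}{1-t^{2}}w + c_1 w|w|^{\frac{4}{\beta}} + \frac{c_2}{1-t^{2}}w|w|^{\frac{2}{\beta}},
\]
extends continuously to the closed interval $[-1,1]$ under the hypotheses. Since $w''=-R(t)$ on $(-1,1)$, this will give $w'' \in C[-1,1]$, hence $w \in C^{2}[-1,1]$.

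First I would handle the linear singular term $\beta(\beta+1)w/(1-t^{2})$. Since $w \in C^{1}[-1,1]$ with $w(\pm 1)=0$, Taylor expansion at the endpoints yields $w(t) = w'(\pm 1)(t \mp 1) + o(t \mp 1)$. Dividing by $(1-t)(1+t)$ one obtains $w(t)/(1-t^{2}) \to \mp w'(\pm 1)/2$ as $t \to \pm 1$, so this term is continuous on $[-1,1]$.

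Next I would analyze the nonlinear terms case by case. In the case $\beta > 0$ and $c_2 \geq 0$, write $w|w|^{4/\beta} = \operatorname{sgn}(w)|w|^{1+4/\beta}$ and $w|w|^{2/\beta} = \operatorname{sgn}(w)|w|^{1+2/\beta}$; the exponents $1+4/\beta, 1+2/\beta > 1$ make these expressions continuous on $[-1,1]$, with vanishing of high order at $t=\pm 1$. In particular, near $t=1$ one has $|w|^{1+2/\beta} = O((1-t)^{1+2/\beta})$, so $w|w|^{2/\beta}/(1-t^{2}) = O((1-t)^{2/\beta}) \to 0$; similarly at $t=-1$. In the case $\beta < -4$ and $c_2 = 0$, only the $c_1$ term survives; the exponent $1 + 4/\beta \in (0,1)$ makes $w|w|^{4/\beta}=\operatorname{sgn}(w)|w|^{1+4/\beta}$ a continuous function of $w$ vanishing at $w=0$, so this term is continuous on $[-1,1]$ without any division by $1-t^{2}$.

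Combining the three estimates, $R \in C[-1,1]$, hence $w \in C^{2}[-1,1]$, and Proposition 4.5 yields (4.10). The mild obstacle is purely bookkeeping at the boundary: one must keep track of the precise order of vanishing of $|w|^{1+2/\beta}$ against the $(1-t^{2})^{-1}$ singularity, which is exactly the reason the borderline value $\beta=-4$ (where $1+2/\beta = 1/2$ against a first-order pole) is excluded from the statement when $c_2 \neq 0$. Under the hypotheses, however, either $c_2=0$ eliminates the dangerous term entirely, or $\beta > 0$ makes the exponent $2/\beta$ strictly positive, so the relevant quotient tends to zero.
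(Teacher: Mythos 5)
Your argument is correct and follows essentially the same route as the paper: the paper likewise upgrades $w$ to $C^{2}[-1,1]$ by noting that $w/(1-t^{2})\in C[-1,1]$ (it cites condition (4.9) from Proposition 4.4, which is exactly your endpoint Taylor expansion) and that the remaining nonlinear terms in $(3.4)_1$ are continuous on $[-1,1]$ under the stated restrictions on $\beta$ and $c_2$, after which Proposition 4.5 gives (4.10). Your case analysis of the nonlinear terms simply spells out the details the paper leaves implicit.
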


\vspace{3pt}

\begin{proof}
By Proposition 4.4, (4.9) holds for solutions to (3.4). Thus,

\begin{align*}
\frac{w}{1-t^{2}}\in C[-1,1].
\end{align*}\\
The equation $(3.4)_1$ implies that $w''\in C[-1,1]$ for $\beta<-4$ and $c_2=0$ or for $\beta>0$ and $c_2\geq 0$. Thus (4.10) holds by Proposition 4.5.
\end{proof}

\vspace{3pt}

\begin{prop}
The condition (4.11) holds for solutions $w\in C^{2}[-1,1]\cap C_0[-1,1]$ of (3.4) for $\beta>0$ and $c_2\geq 0$. 
\end{prop}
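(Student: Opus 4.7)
The plan is to build on Proposition 4.6 and then differentiate the transformed equation (4.18) once. Since the hypothesis $w\in C^{2}[-1,1]\cap C_{0}[-1,1]$ is strictly stronger than what Proposition 4.6 demands, I would invoke it directly to secure (4.10): $\chi\in C^{2}[0,\pi]$ with $\chi'(0)=\chi'(\pi)=0$. On $(0,\pi)$ the right-hand side of (4.18) is $C^{1}$ (using that for $\beta>0$ the map $x\mapsto x|x|^{k/\beta}$ has continuous derivative $(1+k/\beta)|x|^{k/\beta}$), so $\chi'''$ exists on $(0,\pi)$. Writing $\Delta_{\mathbb{S}^{4}}\chi=\chi''+3\chi'\cot\theta$, differentiating (4.18) in $\theta$, and multiplying through by $\sin\theta$ would yield
\begin{align*}
\chi'''\sin\theta=-3\chi''\cos\theta+\frac{3\chi'}{\sin\theta}-(\beta-1)(\beta+2)\chi'\sin\theta-g'\sin\theta.
\end{align*}
The task is then to show that each term on the right extends to an element of $C_{0}[0,\pi]$.

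The easy terms are straightforward: $-(\beta-1)(\beta+2)\chi'\sin\theta$ lies in $C_{0}[0,\pi]$ by (4.10). The crucial combination is $-3\chi''\cos\theta+3\chi'/\sin\theta$. Here I would use $\chi'(0)=\chi'(\pi)=0$ together with $\chi'\in C^{1}[0,\pi]$ and L'Hopital's rule to extend $\chi'/\sin\theta$ continuously to $[0,\pi]$ with boundary values $\chi''(0)$ at $\theta=0$ and $-\chi''(\pi)$ at $\theta=\pi$; these precisely cancel the boundary contributions $-3\chi''(0)\cos 0=-3\chi''(0)$ and $-3\chi''(\pi)\cos\pi=3\chi''(\pi)$ of the first term, so the combination belongs to $C_{0}[0,\pi]$. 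For $g'\sin\theta$, I would expand from (4.19) using $\partial_{\theta}(\chi|\chi|^{k/\beta})=(1+k/\beta)|\chi|^{k/\beta}\chi'$, which is continuous on $[0,\pi]$ for $\beta>0$ thanks to (4.10); this produces four terms of the form (continuous function)$\,\cdot\,\sin^{q}\theta$ with exponents
\begin{align*}
q\in\left\{3+\tfrac{8}{\beta},\ 2+\tfrac{8}{\beta},\ 1+\tfrac{4}{\beta},\ \tfrac{4}{\beta}\right\}.
\end{align*}
Since $\beta>0$ every $q$ is strictly positive, so each term vanishes at $\theta=0$ and $\theta=\pi$ and lies in $C_{0}[0,\pi]$.

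The main obstacle is the delicate cancellation in $-3\chi''\cos\theta+3\chi'/\sin\theta$ at the endpoints, which is powered exactly by the boundary condition $\chi'(0)=\chi'(\pi)=0$ already furnished by Proposition 4.6 under the present hypotheses; all other verifications are mechanical. The restriction $\beta>0$ (with $c_{2}\geq 0$ compatible) is precisely what makes every $\sin$-exponent in $g'\sin\theta$ strictly positive, so no refined analysis of the nonlinear term is required and the conclusion $\chi'''\sin\theta\in C_{0}[0,\pi]$ follows.
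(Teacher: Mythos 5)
Your proposal is correct and follows essentially the same route as the paper: differentiate the transformed equation (4.18), multiply by $\sin\theta$, and observe that $3\chi''\cos\theta-3\chi'/\sin\theta$ lies in $C_0[0,\pi]$ by the endpoint cancellation furnished by (4.10), while $g'\sin\theta\in C_0[0,\pi]$ because every resulting power of $\sin\theta$ is positive for $\beta>0$. You merely make explicit (via L'H\^opital and the list of exponents) what the paper states more tersely.
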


\vspace{3pt}

\begin{proof}
By differentiating $(4.16)$,

\begin{align*}
(\Delta_{\mathbb{S}^{4}} \chi)'\sin\theta
=\chi'''\sin\theta+3\chi''\cos\theta-\frac{3}{\sin\theta}\chi'.
\end{align*}\\
The sum of the last two terms is continuous in $[0,\pi]$ by (4.10) and vanishes at $\theta=0$ and $\pi$. The function $g$ in $(4.19)$ satisfies $g'\sin\theta\in C_0[0,\pi]$. By the equation $(4.18)_1$, $(\Delta_{\mathbb{S}^{4}} \chi)'\sin\theta\in C_0[0,\pi]$. Thus, $\chi'''\sin\theta \in C_0[0,\pi]$ and the condition (4.11) holds.
\end{proof}

\vspace{3pt}

\begin{proof}[Proof of Theorem 4.1]
We show (i). For $\alpha<0$, $\beta<-2$ by (4.2). By Proposition 4.4, the solution $w$ of (3.4) satisfies the condition (4.9). Thus $u^{P}\in C^{1}(\mathbb{R}^{3}\backslash \{r=0\})\cap C(\mathbb{R}^{3}\backslash \{0\})$ by Lemma 4.3. By (4.8$)_2$ and (4.8$)_3$, 

\begin{align*}
\rho^{\beta+2} u^{\phi}e_{\phi}&=C_2\chi |\chi|^{\frac{1}{\beta}}\sin^{1+\frac{2}{\beta}}\theta e_{\phi},\\
\rho^{2\beta+4} \left(p+\frac{1}{2}|u|^{2} \right)&=C_1 |\chi|^{2+\frac{4}{\beta}}\sin^{4+\frac{8}{\beta}}\theta,\\
\rho^{2\beta+4} |u^{\phi}|^{2}&=C_2^{2} |\chi|^{2+\frac{2}{\beta}}\sin^{2+\frac{4}{\beta}}\theta.
\end{align*}\\
The property (4.9) implies that

\begin{align*}
\chi |\chi|^{\frac{1}{\beta}}\sin^{1+\frac{2}{\beta}}\theta,\ |\chi|^{2+\frac{4}{\beta}}\sin^{4+\frac{8}{\beta}}\theta \in C_0[0,\pi].
\end{align*}\\
Thus $u^{\phi}e_{\phi}\in C(\mathbb{R}^{3}\backslash \{0\})$ and $p+|u|^{2}/2\in  C(\mathbb{R}^{3}\backslash \{0\})$. Hence $p\in  C(\mathbb{R}^{3}\backslash \{0\})$. Since $(u,p)$ vanish at the origin, $(u,p)\in C(\mathbb{R}^{3})$.

For $-2\leq \alpha <0$, $C_1=0$ and $-4\leq \beta <-2$. By 

\begin{align*}
|\chi|^{2+\frac{2}{\beta}}\sin^{2+\frac{4}{\beta}}\theta \in C^{1}(0,\pi),
\end{align*}\\
$|u^{\phi}|^{2}\in C^{1}(\mathbb{R}^{3}\backslash \{r=0\})$. By $p+|u|^{2}/2=0$ and $u^{P} \in C^{1}(\mathbb{R}^{3}\backslash \{r=0\})$, $p\in C^{1}(\mathbb{R}^{3}\backslash \{r=0\})$.

For $\alpha<-2$,  $\beta<-4$. The function $|\chi|^{2+4/\beta}\sin^{4+8/\beta}\theta$ is continuously differentiable and vanishes at $\theta=0$ and $\pi$ up to the first derivative. Namely, 

\begin{align*}
|\chi|^{2+\frac{4}{\beta}}\sin^{4+\frac{8}{\beta}}\theta\in C^{1}_{0}[0,\pi].
\end{align*}\\
Thus $p+|u|^{2}/2\in C^{1}(\mathbb{R}^{3}\backslash \{0\})$. By

\begin{align*}
|\chi|^{2+\frac{2}{\beta}}\sin^{2+\frac{4}{\beta}}\theta\in C^{1}_{0}[0,\pi], 
\end{align*}\\
$|u^{\phi}|^{2}\in C^{1}(\mathbb{R}^{3}\backslash \{0\})$. Since $u^{P}\in C^{1}(\mathbb{R}^{3}\backslash \{r=0\})$, we have $p\in C^{1}(\mathbb{R}^{3}\backslash \{r=0\})$.

We show (ii). For $\alpha<-2$ and $C_2=0$, $\beta<-4$ and $c_2=0$. Thus $u^{P}\in C^{1}(\mathbb{R}^{3}\backslash \{0\})$ by Proposition 4.6 and Lemma 4.3. Since $u^{\phi}=0$, $p\in C^{1}(\mathbb{R}^{3}\backslash \{0\})$.

It remains to show (iii). For $\alpha>2$, $\beta>0$ and $c_2\geq 0$ by (4.2). By Propositions 4.6, 4.7, and Lemma 4.3, $u^{P}\in C^{2}(\mathbb{R}^{3}\backslash \{0\})$. Since 

\begin{align*}
&\chi |\chi|^{\frac{1}{\beta}}\sin^{1+\frac{2}{\beta}}\theta e_{\phi}\in C^{1}_{0}[0,\pi],\\
&|\chi|^{2+\frac{4}{\beta}}\sin^{4+\frac{8}{\beta}}\theta,\ |\chi|^{2+\frac{2}{\beta}}\sin^{2+\frac{4}{\beta}}\theta\in C^{2}_{0}[0,\pi],
\end{align*}\\
$u^{\phi}e_{\phi}\in C^{1}(\mathbb{R}^{3}\backslash \{0\})$, $p+|u|^{2}/2\in C^{2}(\mathbb{R}^{3}\backslash \{0\})$, and $|u^{\phi}|^{2}\in C^{2}(\mathbb{R}^{3}\backslash \{0\})$. Thus $p \in C^{2}(\mathbb{R}^{3}\backslash \{0\})$.
\end{proof}

\vspace{3pt}

\subsection{Distributional solutions}

We show that $(u,p)$ in Theorem 4.1 are homogeneous solutions to (1.1). In the axisymmetric setting, the equation (1.1) for $u=u^{P}+u^{\phi}e_{\phi}$ is equivalent to 

\begin{equation}
\begin{aligned}
u^{P}\cdot \nabla u^{P}+\nabla p&=\frac{|u^{\phi}|^{2}}{r}e_{r},\\
\nabla \cdot u^{P}&=0,\\
u^{P}\cdot \nabla u^{\phi}+\frac{u^{r}}{r}u^{\phi}&=0.
\end{aligned}
\end{equation}

\vspace{3pt}

\begin{prop}
The functions $(u,p)\in C^{1}(\mathbb{R}^{3}\backslash \{0\})$ for $\alpha>2$ in Theorem 4.1 are ($-\alpha$)-homogeneous solutions to (1.1) in $\mathbb{R}^{3}\backslash \{0\}$.
\end{prop}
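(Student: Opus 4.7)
The plan is to verify directly that the Clebsch representation in (4.3), combined with the fact that $w$ solves (3.4), yields an axisymmetric Euler solution. I split the argument into three steps: \textbf{homogeneity}, \textbf{the PDE on the axial complement} $\mathbb{R}^{3}\setminus\{r=0\}$, and \textbf{extension to the axis}.

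For homogeneity, since $\psi=w(\cos\theta)/\rho^{\alpha-2}$ depends on the angular coordinate only through $t=\cos\theta$, it is $(-(\alpha-2))$-homogeneous. Then $\Pi=C_1|\psi|^{2+4/\beta}$ is $(-2\alpha)$-homogeneous, $\Gamma=C_2\psi|\psi|^{1/\beta}$ is $(-(\alpha-1))$-homogeneous, and since $\nabla\phi=e_{\phi}/r$ is $(-1)$-homogeneous, $u=\nabla\times(\psi\nabla\phi)+\Gamma\nabla\phi$ is $(-\alpha)$-homogeneous, while $p=\Pi-|u|^{2}/2$ is $(-2\alpha)$-homogeneous.

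For the PDE on $\mathbb{R}^{3}\setminus\{r=0\}$, the choice of $\Pi(\psi)$ and $\Gamma(\psi)$ in (4.3) matches (3.2), and the computation already carried out in subsection 3.1 shows that $w$ solving (3.4) is precisely the condition that $\psi$ satisfies the Grad--Shafranov equation (3.1) on $\mathbb{R}^{3}\setminus\{r=0\}$. From here, standard Clebsch identities convert Grad--Shafranov into the axisymmetric Euler system (4.20): using $u^{P}\times e_{\phi}=-\nabla\psi/r$ and $\nabla\times u^{P}=-(L\psi/r)e_{\phi}$, together with the vector identity $(u^{P}\cdot\nabla)u^{P}=-u^{P}\times(\nabla\times u^{P})+\nabla|u^{P}|^{2}/2$ and the rewriting
\begin{equation*}
\frac{|u^{\phi}|^{2}}{r}e_{r}=\frac{\Gamma(\psi)^{2}}{r^{3}}e_{r}=-\nabla\frac{\Gamma(\psi)^{2}}{2r^{2}}+\frac{\Gamma(\psi)\Gamma'(\psi)}{r^{2}}\nabla\psi,
\end{equation*}
the poloidal momentum equation reduces to $\nabla(p+|u|^{2}/2)=\Pi'(\psi)\nabla\psi$, which holds because $p+|u|^{2}/2=\Pi(\psi)$. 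The solenoidal condition $\nabla\cdot u^{P}=0$ is automatic from the Clebsch form, and the swirl equation reduces to $u^{P}\cdot\nabla\Gamma=\Gamma'(\psi)(u^{P}\cdot\nabla\psi)=0$ since $\psi$ is a first integral of $u^{P}$.

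For the extension to the axis, Theorem 4.1 (iii) provides $u,p\in C^{1}(\mathbb{R}^{3}\setminus\{0\})$, so both sides of (1.1) are continuous on $\mathbb{R}^{3}\setminus\{0\}$. Since the equation already holds on the open dense subset $\mathbb{R}^{3}\setminus\{r=0\}$, it extends to $\mathbb{R}^{3}\setminus\{0\}$ by continuity. I do not expect a serious obstacle here: the substantive work was performed in Section 3 (derivation of (3.4) from Grad--Shafranov) and in the proof of Theorem 4.1 (regularity up to the axis). The only delicate point is to perform all the algebraic manipulations in the middle step on $\mathbb{R}^{3}\setminus\{r=0\}$, where the factors of $1/r$ and the power $\Gamma=C_2\psi|\psi|^{1/\beta}$ are smooth off the set $\{\psi=0\}\cup\{r=0\}$, before invoking Theorem 4.1 (iii) to pass to the axis. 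Since $\beta=\alpha-2>0$ for $\alpha>2$, both $\Gamma$ and $\Pi$ are $C^{1}$ functions of $\psi$ even across $\{\psi=0\}$, so the chain-rule computation of $\nabla\Pi$ and $\nabla\Gamma$ is valid on all of $\mathbb{R}^{3}\setminus\{r=0\}$.
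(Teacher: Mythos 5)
Your proposal is correct and follows essentially the same route as the paper: both reduce the claim to the Grad--Shafranov equation (3.1) via the Clebsch representation and the specific choices of $\Pi(\psi)$ and $\Gamma(\psi)$, and then verify the axisymmetric system (4.20) using the triple-product identities. The only cosmetic differences are that the paper verifies the swirl equation by writing $u^{P}(ru^{\phi})$ as a curl and taking the divergence rather than by the chain rule $u^{P}\cdot\nabla\Gamma=\Gamma'(\psi)\,u^{P}\cdot\nabla\psi=0$, and that you make the continuity extension from $\mathbb{R}^{3}\setminus\{r=0\}$ to $\mathbb{R}^{3}\setminus\{0\}$ explicit where the paper leaves it implicit.
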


\vspace{3pt}

\begin{proof}
For the functions $\psi$, $\Pi$, and $\Gamma$ in (4.3),

\begin{align*}
\Pi'(\psi)&=c_1 \psi |\psi|^{\frac{4}{\beta}}=\frac{c_1}{\rho^{\beta+4}}w|w|^{\frac{4}{\beta}},\\
\left(\frac{1}{2}\Gamma^{2}(\psi)\right)'&=c_2 \psi |\psi|^{\frac{2}{\beta}}=\frac{c_2}{\rho^{\beta+2}}w|w|^{\frac{2}{\beta}}.
\end{align*}\\
Since $w\in C^{2}[-1,1]$ is a solution to (3.4), $\psi\in C^{2}(\overline{\mathbb{R}^{2}_{+}}\backslash (0,0))$ satisfies the Grad--Shafranov equation (3.1).

The poloidal component $u^{P}=\nabla \times (\psi\nabla \phi)\in C^{2}(\mathbb{R}^{3}\backslash \{0\})$ satisfies the divergence-free condition $(4.20)_2$ in $\mathbb{R}^{3}\backslash \{0\}$. By the triple product $(\nabla \psi \times \nabla \phi)\times \nabla \phi=-\nabla \psi/r^{2}$, 

\begin{align*}
(\nabla \times u^{P})\times u^{P}&=-\frac{1}{r^{2}}L\psi \nabla \psi.
\end{align*}\\
By $u^{\phi}=\Gamma/r$, the pressure gradient is expressed as 

\begin{align*}
\nabla \left(p+\frac{1}{2}|u^{P}|^{2}\right)=\nabla \left(\Pi-\frac{1}{2r^{2}}\Gamma^{2}\right)=\left(\Pi'-\frac{1}{2r^{2}}(\Gamma^{2})' \right)\nabla \psi+\frac{|u^{\phi}|^{2}}{r}e_r.
\end{align*}\\
By summing up two equations, 

\begin{align*}
u^{P}\cdot \nabla u^{P}+\nabla p
=(\nabla \times u^{P})\times u^{P}+\nabla \left(p+\frac{1}{2}|u^{P}|^{2} \right)=\frac{|u^{\phi}|^{2}}{r}e_r.
\end{align*}\\
Thus $(4.20)_1$ holds in $\mathbb{R}^{3}\backslash \{0\}$.

By the Clebsch representation of $u^{P}$, 

\begin{align*}
u^{P} (ru^{\phi})=\nabla \times (\psi\nabla \phi)C_2\psi|\psi|^{\frac{1}{\beta}}
= \nabla \times \left(\frac{C_2\beta}{2\beta+1}|\psi|^{2+\frac{1}{\beta}}\nabla \phi \right).
\end{align*}\\
By $(4.20)_2$ and taking divergence, $(4.20)_3$ holds in $\mathbb{R}^{3}\backslash \{0\}$.
\end{proof}

\vspace{3pt}

\begin{prop}
The functions $(u,p)\in C(\mathbb{R}^{3})$ for $\alpha<0$ in Theorem 4.1 are distributional ($-\alpha$)-homogeneous solutions to (1.1) in $\mathbb{R}^{3}$.
\end{prop}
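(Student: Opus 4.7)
The plan is to first establish (1.1) classically on $\mathbb{R}^{3}\setminus\{r=0\}$ by repeating the Grad--Shafranov computation of Proposition 4.8, and then to pass to the distributional formulation on all of $\mathbb{R}^{3}$ via a cylindrical cutoff that exploits the codimension-two structure of the $z$-axis.

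First, I would observe that off the axis one has $u^{P},p\in C^{1}(\mathbb{R}^{3}\setminus\{r=0\})$ by Theorem 4.1 (i), and $\psi=w(\cos\theta)/\rho^{\beta}\in C^{2}$ there since $w\in C^{2}(-1,1)$ and $\theta\in(0,\pi)$ when $r>0$. The momentum computation of Proposition 4.8 applies verbatim: the Grad--Shafranov equation solved by $\psi$ together with the triple product $(\nabla\psi\times\nabla\phi)\times\nabla\phi=-\nabla\psi/r^{2}$ yields (4.20)$_{1,2}$ classically in $\mathbb{R}^{3}\setminus\{r=0\}$. For the toroidal equation (4.20)$_{3}$ I would use the identity $u^{P}\cdot\nabla\psi\equiv 0$ (an immediate consequence of $u^{P}=\nabla\psi\times\nabla\phi$) together with $ru^{\phi}=\Gamma(\psi)$ and the Clebsch rewriting $u^{P}(ru^{\phi})=\nabla\times\bigl((C_{2}\beta/(2\beta+1))|\psi|^{2+1/\beta}\nabla\phi\bigr)$ from Proposition 4.8, whose right-hand side is $C^{1}$ since $2+1/\beta>3/2$ for $\beta<-2$; taking divergence and invoking $\nabla\cdot u^{P}=0$ verifies (4.20)$_{3}$ classically away from the axis.

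Next, given a test vector field $\Phi\in C_{c}^{\infty}(\mathbb{R}^{3};\mathbb{R}^{3})$, I would introduce a cutoff $\eta_{\varepsilon}\in C^{\infty}(\mathbb{R})$ with $\eta_{\varepsilon}(r)=0$ for $r\le\varepsilon$, $\eta_{\varepsilon}(r)=1$ for $r\ge 2\varepsilon$, and $|\eta_{\varepsilon}'|\le 2/\varepsilon$. Writing the classical momentum identity in conservation form $\partial_{j}(u_{j}u_{i}+p\,\delta_{ij})=0$ on $\{r>0\}$ and pairing with $\eta_{\varepsilon}(r)\Phi$ gives
\begin{equation*}
\int_{\mathbb{R}^{3}}\eta_{\varepsilon}\bigl(u_{j}u_{i}\partial_{j}\Phi^{i}+p\,\partial_{i}\Phi^{i}\bigr)\,\dd x
=-\int_{\mathbb{R}^{3}}\eta_{\varepsilon}'(r)\bigl(u_{j}u_{i}(e_{r})_{j}\Phi^{i}+p\,(e_{r})_{i}\Phi^{i}\bigr)\,\dd x.
\end{equation*}
Since $u,p\in C(\mathbb{R}^{3})$ are bounded on $\mathrm{supp}\,\Phi$, and since the cylindrical collar $\{\varepsilon\le r\le 2\varepsilon\}\cap\mathrm{supp}\,\Phi$ has Lebesgue volume $O(\varepsilon^{2})$ via $\int_{\varepsilon}^{2\varepsilon}r\,\dd r=O(\varepsilon^{2})$, while $|\eta_{\varepsilon}'|\le 2/\varepsilon$, the right-hand side is $O(\varepsilon)$ and vanishes as $\varepsilon\to 0$. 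Dominated convergence on the left (using $\eta_{\varepsilon}\to 1$ a.e.\ and boundedness of $u,p$ on compact sets) recovers the distributional momentum equation. The same cutoff applied to $\nabla\cdot u=0$ produces the distributional continuity equation.

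The hardest part, as I see it, is the classical verification of (4.20)$_{3}$ off the axis, because $\Gamma=C_{2}\psi|\psi|^{1/\beta}$ is merely continuous (not $C^{1}$) at the nodal set of $\psi$ when $\beta<-2$. The resolution is the Clebsch rewriting noted above, which shifts the required regularity to $|\psi|^{2+1/\beta}$; the exponent $2+1/\beta\in(3/2,2)$ makes this composition $C^{1}$ everywhere in $\mathbb{R}^{3}\setminus\{r=0\}$. The cutoff step is then routine, since $\{r=0\}$ has codimension two and the collar volume $O(\varepsilon^{2})$ beats the gradient blowup $|\eta_{\varepsilon}'|=O(1/\varepsilon)$, with continuity of $u,p$ through the axis being exactly what prevents a jump term from surviving in the limit.
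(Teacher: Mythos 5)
Your proposal is correct and follows essentially the same route as the paper: verify (4.20) classically off the axis via the Grad--Shafranov computation of Proposition 4.8, handle the toroidal equation through the Clebsch/curl rewriting of $u^{P}(ru^{\phi})$ (which is exactly how the paper circumvents the failure of $C^{1}$-regularity of $\Gamma$ at the nodal set of $\psi$), and then remove the axis by a cylindrical cutoff whose collar volume $O(\varepsilon^{2})$ dominates the $O(1/\varepsilon)$ gradient of the cutoff, using only local boundedness of $(u,p)\in C(\mathbb{R}^{3})$. The paper states the cutoff step without detail; your explicit estimate fills in precisely that gap.
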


\vspace{3pt}

\begin{proof}
The functions $(u,p)\in C(\mathbb{R}^{3})$ satisfy $(u^{P},p)\in C^{1}(\mathbb{R}^{3}\backslash \{r=0\})$ and $u^{\phi}\in C(\mathbb{R}^{3})$. The proof of Proposition 4.8 shows that $(4.20)_1$ and $(4.20)_2$ hold in $\mathbb{R}^{3}\backslash \{r=0\}$. It is not difficult  to see that $(4.20)_1$ and $(4.20)_2$ hold in $\mathbb{R}^{3}$ in the distributional sense by a cut-off function argument around the $x_3$-axis since $(u,p)\in C(\mathbb{R}^{3})$ is locally bounded. 

The vector field $u^{P} (ru^{\phi})$ is an image of rotation as in the proof of Proposition 4.8. Thus for arbitrary $\Psi\in C^{1}_{c}(\mathbb{R}^{3}\backslash \{r=0\})$,

\begin{align*}
0=\int_{\mathbb{R}^{3}}u^{P} (ru^{\phi}) \cdot \nabla \left(\frac{\Psi}{r}\right)\dd x=\int_{\mathbb{R}^{3}}\left(u^{P}\cdot \nabla \Psi u^{\phi}-\frac{u^{r}}{r}u^{\phi}\Psi\right)\dd x.
\end{align*}\\
This equality is extendable for all $\Psi\in C^{1}_{c}(\mathbb{R}^{3})$ by using the boundedness of $u\in C(\mathbb{R}^{3})$ and a cut-off function around the $x_3$-axis. Thus $(4.20)_3$ holds in $\mathbb{R}^{3}$ in the distributional sense. 
\end{proof}

\vspace{3pt}

\begin{proof}[Proof of (iii) and (iv) of Theorems 1.1, 1.4, and 1.5]
For constants $\alpha$, $C_1$, and $C_2$ satisfying (4.1), we obtain axisymmetric ($-\alpha$)-homogeneous solutions to (1.1) in $\mathbb{R}^{3}\backslash \{0\}$ for $\alpha>2$ and in $\mathbb{R}^{3}$ for $\alpha<0$ by Theorem 4.1 and Propositions 4.8 and 4.9.

For $C_1=0$ and $C_2\in \mathbb{R}\backslash\{0\}$, constructed solutions provide Beltrami flows in Theorem 1.1 (iii) and (iv). For $C_1<0$ and $C_2\in \mathbb{R}$, constructed solutions provide Euler flows with a nonconstant Bernoulli function in Theorem 1.5 (iii) and (iv). For $C_2=0$, they satisfy the regularity property (4.5) for $\alpha<-2$ and provide axisymmetric solutions without swirls in Theorem 1.4 (iii) and (iv). 
\end{proof}

\vspace{3pt}

\begin{rem}
The axisymmetric Beltrami ($-\alpha$)-homogeneous solutions in Theorem 1.1 (iii) and (iv) admit the proportionality factor (1.3) with $C=C_2$ since the Grad--Shafranov equation (3.1) for $C_1=0$ is $-L\psi=\Gamma'\Gamma$ and 

\begin{align*}
\nabla \times u
=\nabla \times (\Gamma\nabla \phi)+(-L\psi)\nabla \phi
=\Gamma'(\nabla \times(\psi\nabla \phi)+\Gamma \nabla \phi )=\Gamma'u.
\end{align*}
\end{rem}

\vspace{3pt}

\begin{proof}[Proof of Theorem 1.10]
For $2<\alpha<3$ and $C_1$, $C_2$ satisfying (4.1), there exist ($-\alpha$)-homogeneous solutions with a positive stream function $\psi=w(\cos\theta)/\rho^{\beta}$ by Theorem 3.1. For positive constants $C$, the level set 

\begin{align*}
\{(z,r)\in \mathbb{R}^{2}_{+}\ |\ \psi(z,r)=C\ \}
\end{align*}\\
is diffeomorphic to the level set in the polar coordinate 

\begin{align*}
\left\{(\rho,\theta)\ \middle|\ \rho=\frac{1}{C^{\frac{1}{\beta}}}w^{\frac{1}{\beta}}(\cos\theta)\ \right\}.
\end{align*}\\
By (1.10$)_1$, the stream function of the irrotational axisymmetric ($-3$)-homogeneous solution is $w_2(\cos\theta)/\rho=\sin^{2}\theta/2\rho$. Its level set in the polar coordinates is  

\begin{align*}
\left\{(\rho,\theta)\ \middle|\ \rho=\frac{1}{2C}\sin^{2}\theta\ \right\}.
\end{align*}\\
Since $w\in C^{3}(-1,1)$ is positive, the two level sets are homeomorphic. Thus the level set $\{(z,r)\ |\ \psi=C\}$ is homeomorphic to the level set $\{(z,r)\ |\ \sin^{2}\theta/2\rho=C\}$.
\end{proof}

\vspace{3pt}

\begin{rem}
The level sets of the stream function $\psi=w(\cos\theta)/\rho^{\beta}$ of solutions in Theorems 1.1, 1.4, and 1.5 for $\alpha\in \mathbb{R}\backslash [0,3)$ are different from those of solutions for $2<\alpha<3$. For $\alpha\geq 3$, $w$ has finite zero points by Remarks 3.2 (ii) and (iii). Thus the level sets $\{\psi=-C\}$ for $C>0$ exist in the $(z,r)$-upper half plane. The level sets $\{\psi=\pm C\}$ are unions of the Jordan curves sharing the origin (multifoils). For $\alpha<0$, the level sets $\{\psi=\pm C\}$ are unbounded near the rays $\{\theta=\theta_0\}$ for the zero point $t_0=\cos\theta_0$ of $w$. 
\end{rem}

\appendix

\section{The nonexistence of two-dimensional reflection symmetric homogeneous solutions}

We demonstrate the nonexistence of rotational two-dimensional ($2$D) reflection symmetric ($-\alpha$)-homogeneous solutions to the Euler equations (1.1) for $-1\leq \alpha\leq 1$ in Theorem 1.7 (i) and (ii) by using the equations on a semicircle.

\subsection{Equations on the circle}

We first derive the $2$D homogeneous solution's equations on a circle \cite{LuoShvydkoy}. We use the polar coordinates $(r,\phi)$ for $x=(x_1,x_2)$ and the associated orthogonal frame

\begin{equation*}
\mathbf{e}_{r}=\left(
\begin{array}{c}
\cos \phi \\
\sin \phi 
\end{array}
\right),\quad 
\mathbf{e}_\phi=\left(
\begin{array}{c}
-\sin\phi \\
\cos\phi 
\end{array}
\right).
\end{equation*}\\
The 2D Euler equation is expressed as 

\begin{equation}
\begin{aligned}
u\cdot \nabla u+\nabla p&=0,\\
\nabla \cdot u&=0.
\end{aligned}
\end{equation}\\
We denote the $\pi/2$ counterclockwise rotation of $u=(u^{1},u^{2})$ by $u^{\perp}=(-u^{2},u^{1})$ and express (A.1) with the rotation $\omega=\partial_{x_1}u^{2}-\partial_{x_2}u^{1}$ and the Bernoulli function $\Pi=p+|u|^{2}/2$ as 

\begin{equation}
\begin{aligned}
\omega u^{\perp}+\nabla \Pi&=0,\\
\nabla \cdot u&=0.
\end{aligned}
\end{equation}\\
By multiplying $u$ by (A.2$)_1$ and by taking rotation to (A.2$)_1$, respectively, 

\begin{align}
u\cdot \nabla \Pi=0,\quad u\cdot \nabla \omega=0.
\end{align}\\
We denote ($-\alpha$)-homogeneous solutions $u$ to (A.2) by 

\begin{align}
u&=\frac{1}{r^{\alpha}}(v+f\mathbf{e}_r),\quad v=a\mathbf{e}_{\phi},
\end{align}\\
and the functions $a(\phi)$ and $f(\phi)$. The rotation of $u$ is expressed as 

\begin{align}
\omega=\frac{1}{r^{\alpha+1}}\left(  (1-\alpha)a-f' \right).
\end{align}\\
Substituting $u$ into (A.2) implies the equations for $(a,f,p)$,  

\begin{align*}
&\omega a+2\alpha \Pi=0,\\
&\omega f+ \Pi'=0,\\
&(1-\alpha)f+a'=0.
\end{align*}\\
The second and third equations imply that $p$ is constant. Thus $(a,f)$ satisfy the equations on $\mathbb{S}^{1}$:

\begin{equation}
\begin{aligned}
&af'=a^{2}+\alpha f^{2}+2\alpha p,\\
&(1-\alpha)f+a'=0.
\end{aligned}
\end{equation}\\
The conditions (A.3) imply 

\begin{align}
a\Pi'=2\alpha f \Pi,\quad a\omega'=(\alpha+1) f \omega.
\end{align}

\vspace{3pt}

\subsection{Radially symmetric solutions}

The simplest homogeneous solutions to (A.1) are radially symmetric solutions. 

\vspace{5pt}

\begin{thm}
All radially symmetric ($-\alpha$)-homogeneous solutions $u\in C^{1}(\mathbb{R}^{2}\backslash \{0\})$ to (A.1) for $\alpha\in \mathbb{R}$ are expressed by (A.4) with the constants 

\begin{equation}
\begin{aligned}
&a,f\in \mathbb{R},\quad a^{2}+f^{2}+2p=0\quad \textrm{for}\ \alpha=1, \\
&a\in \mathbb{R}
,\quad f=0,\quad a^{2}+2\alpha p=0\quad \textrm{for}\ \alpha\in \mathbb{R}\backslash \{1\}.
\end{aligned}
\end{equation}\\
The solution for $\alpha=1$ is irrotational.
\end{thm}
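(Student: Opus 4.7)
The plan is to specialize the system on $\mathbb{S}^1$ derived in equations (A.6) to the radially symmetric case. Since $u$ is both $(-\alpha)$-homogeneous and radially symmetric, in the representation (A.4) the scalar functions $a(\phi)$ and $f(\phi)$ must be independent of $\phi$, i.e., constants. (Rotational invariance of $u$ forces its components in the orthonormal frame $(\mathbf{e}_r,\mathbf{e}_\phi)$ to depend only on $r$, and homogeneity then pins them down to constant multiples of $r^{-\alpha}$.) Since $p$ is a $(-2\alpha)$-homogeneous scalar that is radially symmetric, $p = c/r^{2\alpha}$ for some constant $c$, but (A.6) was derived under the conclusion that $p$ is constant; consistency forces $c = 0$ unless $\alpha = 0$, in which case $p$ is simply an arbitrary constant.

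With $a' = f' = 0$, the two equations in (A.6) reduce to the algebraic system
\begin{align*}
(1-\alpha)f &= 0,\\
a^{2} + \alpha f^{2} + 2\alpha p &= 0.
\end{align*}
I would then split into two cases. If $\alpha = 1$, the first equation is automatically satisfied and $a,f \in \mathbb{R}$ are free, while the second gives the constraint $a^{2} + f^{2} + 2p = 0$. If $\alpha \neq 1$, the first equation forces $f = 0$, and the second reduces to $a^{2} + 2\alpha p = 0$; here $a \in \mathbb{R}$ is free and $p$ is determined (with the understanding that for $\alpha = 0$ this in turn forces $a = 0$, with $p$ arbitrary). This reproduces exactly the two cases in (A.8).

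Finally, to see that the $\alpha = 1$ solutions are irrotational, I would just substitute the constants $a,f$ into the vorticity formula (A.5): $\omega = r^{-2}\bigl((1-\alpha)a - f'\bigr) = r^{-2}(0 - 0) = 0$. There is no real obstacle here — the theorem is essentially a direct evaluation of the ODE system (A.6) once the radial symmetry hypothesis has been correctly translated into the vanishing of $a'$ and $f'$ — so the only point requiring a sentence of care is the justification that radial symmetry of the $(-\alpha)$-homogeneous vector field $u$ indeed forces $a$ and $f$ to be constant on $\mathbb{S}^1$.
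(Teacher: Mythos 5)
Your argument is, in its core, exactly the paper's proof: radial symmetry makes $a$ and $f$ constant, $(\mathrm{A.6})_2$ gives $(1-\alpha)f=0$, the case split on $\alpha=1$ versus $\alpha\neq 1$ then reads off (A.8) from $(\mathrm{A.6})_1$, and irrotationality for $\alpha=1$ follows from (A.5). That part is correct and needs no change.

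One sentence of your preamble is wrong, however, and if taken seriously it would contradict the theorem. You write that since the pressure is $(-2\alpha)$-homogeneous and radially symmetric, $p=c/r^{2\alpha}$, and that ``(A.6) was derived under the conclusion that $p$ is constant; consistency forces $c=0$ unless $\alpha=0$.'' In the derivation of (A.6) the symbol $p$ denotes the angular profile of the pressure, i.e.\ the constant value of $r^{2\alpha}$ times the pressure (equivalently, the pressure restricted to the unit circle); ``$p$ is constant'' there means independent of $\phi$, not independent of $r$. There is no consistency requirement forcing $c=0$: the circular flows of Remarks 1.6 (ii), with pressure $-(a^{2}/2\alpha)r^{-2\alpha}$, are genuine radially symmetric $(-\alpha)$-homogeneous solutions with nonconstant pressure, and your own conclusion $a^{2}+2\alpha p=0$ with $a$ free relies on $p$ being a nonzero constant in general. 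So delete that remark (or replace it with the observation that $r^{2\alpha}p$ is a constant, which is what (A.8) refers to); the rest of the proof stands as written.
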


\vspace{3pt}

\begin{proof}
By radial symmetry, $a$ and $f$ in (A.4) are constant. By (A.6$)_2$, $(\alpha-1)f=0$. If $\alpha=1$, $a$ satisfies (A.8$)_1$ by (A.6$)_1$. If $\alpha\neq 1$, $f=0$ and $a$ satisfies (A.8$)_2$ by (A.6$)_1$. By (A.5), $u$ is irrotational for $\alpha=1$.
\end{proof}

\vspace{3pt}

\subsection{Reflection symmetric solutions}

The second simplest homogeneous solutions to (A.1) may be reflection symmetric solutions,

\begin{equation}
\begin{aligned}
u^{1}(x_1,x_2)&=u^{1}(x_1,-x_2), \\
u^{2}(x_1,x_2)&=-u^{2}(x_1,-x_2),\\
p(x_1,x_2)&=p(x_1,-x_2).
\end{aligned}
\end{equation}\\
This symmetry imposes the boundary condition,

\begin{align}
u^{2}(x_1,0)=0.
\end{align}\\
The rotation is an odd function for the $x_2$-variable,

\begin{align}
\omega(x_1,x_2)=-\omega(x_1,-x_2).
\end{align}\\
Thus continuous $\omega$ vanishes on the boundary,

\begin{align}
\omega(x_1,0)=0.
\end{align}\\
The functions $(a,f)$ satisfy equations on the semicircle for $\phi\in (0,\pi)$ with constant $p$:  

\begin{equation}
\begin{aligned}
&af'=a^{2}+\alpha f^{2}+2\alpha p,\\
&(1-\alpha)f+a'=0,\\
&a(0)=a(\pi)=0.
\end{aligned}
\end{equation}\\

\begin{thm}
\noindent
Irrotational reflection symmetric ($-\alpha$)-homogeneous solutions $u\in C^{1}(\mathbb{R}^{2}\backslash \{0\})$ to (A.1) exist if and only if $\alpha \in \mathbb{Z}$. They are given by (A.4) for 

\begin{align}
a=C\sin((\alpha-1)\phi),\quad f=C\cos \left((\alpha-1)\phi\right),\quad C\in \mathbb{R},\quad \alpha(C^{2}+2p)=0.
\end{align}
\end{thm}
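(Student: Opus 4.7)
The plan is to reduce the irrotational reflection-symmetric case of the semicircle equations (A.13) to a single constant-coefficient linear ODE. Imposing $\omega = 0$ in (A.5) yields $f' = (1-\alpha) a$; combining with $(A.13)_2$ gives the coupled first-order system
\begin{equation*}
a' = (\alpha-1) f,\qquad f' = -(\alpha-1) a,
\end{equation*}
and eliminating $f$ produces $a'' + (\alpha-1)^{2} a = 0$ on $(0,\pi)$ with boundary conditions $a(0) = a(\pi) = 0$ from $(A.13)_3$. The general solution is $a(\phi) = C_1 \sin((\alpha-1)\phi) + C_2 \cos((\alpha-1)\phi)$, and imposing $a(0)=0$ collapses it to $a = C \sin((\alpha-1)\phi)$.

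Next I would read off the existence dichotomy. For nontrivial $a$ (i.e.\ $C\neq 0$), the condition $a(\pi) = 0$ amounts to $\sin((\alpha-1)\pi) = 0$, which is equivalent to $\alpha \in \mathbb{Z}$. For such $\alpha \neq 1$, the relation $a' = (\alpha-1) f$ yields $f = C \cos((\alpha-1)\phi)$, matching (A.14). The degenerate case $\alpha = 1$ must be handled separately: the system collapses to $a' = f' = 0$, and $a(0)=0$ forces $a \equiv 0$, $f \equiv C$, again consistent with (A.14) evaluated at zero argument. Conversely, each $\alpha \in \mathbb{Z}$ produces a solution via (A.14), giving the ``if'' direction. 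I would also note, as a consistency check, that the reflection symmetry (A.9) forces $a$ odd and $f$ even under $\phi \mapsto -\phi$, so the formulas (A.14) extend to smooth $2\pi$-periodic fields on $\mathbb{S}^{1}$ precisely when $\alpha - 1 \in \mathbb{Z}$.

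Finally, to determine the pressure I would substitute (A.14) into the remaining nonlinear equation $(A.13)_1$. With $s = \sin((\alpha-1)\phi)$ and $c = \cos((\alpha-1)\phi)$, one has $a f' = -C^{2}(\alpha-1)\, s^{2}$ on the left and $C^{2} s^{2} + \alpha C^{2} c^{2} + 2\alpha p$ on the right; using $s^{2}+c^{2}=1$ telescopes this identity to $-\alpha C^{2} = 2\alpha p$, i.e.\ $\alpha(C^{2}+2p)=0$, as claimed. The only mildly delicate point is isolating $\alpha = 1$, where $\alpha - 1 = 0$ prevents recovering $f$ from $a'$; the remainder is routine ODE integration plus trigonometric bookkeeping.
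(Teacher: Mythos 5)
Your proposal is correct and follows essentially the same route as the paper: both reduce the irrotational condition $(1-\alpha)a=f'$ together with $(A.13)_2$ to a constant-coefficient oscillator on $(0,\pi)$ whose boundary conditions force $(\alpha-1)^2=n^2$, hence $\alpha\in\mathbb{Z}$, and then read off the pressure constraint from $(A.13)_1$. The only (immaterial) difference is that you eliminate $f$ and solve the Dirichlet problem $a''+(\alpha-1)^2a=0$, $a(0)=a(\pi)=0$, whereas the paper eliminates $a$ and solves the Neumann problem $-f''=(\alpha-1)^2f$, $f'(0)=f'(\pi)=0$; your separate treatment of the degenerate case $\alpha=1$ is appropriate and consistent with the paper's conclusion.
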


\vspace{5pt}

\begin{proof}
The equation (A.5) implies that $(1-\alpha)a-f'=0$. By (A.13$)_2$ and (A.13$)_3$,   

\begin{align*}
&-f''=(\alpha-1)^{2}f\quad \textrm{in}\ (0,\pi),\\
&f'(0)=f'(\pi)=0.
\end{align*}\\
Thus $\alpha\in \mathbb{Z}$ and $f=C\cos ((\alpha-1)\phi)$ for some constant $C$. By $(1-\alpha)a-f'=0$ and (A.13$)_3$, $a=C\sin((\alpha-1)\phi)$. By (A.13$)_1$, the constant $C$ satisfies (A.14$)_4$.
\end{proof}

\vspace{3pt}

\begin{rem}
For $\alpha=1$, the irrotational reflection symmetric ($-1$)-homogeneous solution is $u=(C/r)\mathbf{e}_{r}$ for $C\in \mathbb{R}$ satisfying $C^{2}+2p=0$ (The solution $(1.4)_1$ is the case $C=1$). For $\alpha\in \mathbb{Z}\backslash \{1\}$, the irrotational reflection symmetric ($-\alpha$)-homogeneous solution is $u=(C/r^{n+1})(\sin(n\phi) \mathbf{e}_{\phi}+\cos(n\phi) \mathbf{e}_{r} ) $ for $n=\alpha-1$ and $C\in \mathbb{R}$ satisfying $(n+1)(C^{2}+2p)=0$. The stream function of this solution is $\psi=(-C/n r^{n})\sin(n\phi)$ (The solution $(1.4)_2$ is the case $C=-n$).
\end{rem}

\vspace{3pt}

\subsection{The nonexistence for $-1\leq \alpha\leq 1$}

We demonstrate the nonexistence of rotational reflection symmetric ($-\alpha$)-homogeneous solutions to (A.1) for $-1\leq \alpha\leq 1$ by using the first integral conditions (A.7).

\vspace{3pt}

\begin{proof}[Proof of Theorem 1.7 (i) and (ii)]
For $\alpha=1$, the equations (A.13$)_2$ and (A.13$)_3$ imply $a=0$. By (A.13$)_1$, $f$ is constant. Thus $u$ is irrotational by (A.5). We show that reflection symmetric ($-\alpha$)-homogeneous solutions for $-1\leq \alpha< 1$ are irrotational by using the conditions (A.7$)_1$ for $0\leq \alpha<1$ and (A.7$)_2$ for $-1\leq \alpha<0$. 

For $0<\alpha<1$, we first show that $u\in C^{1}(\mathbb{R}^{2}\backslash \{0\})$ satisfies $a \Pi=0$ in $(0,\pi)$. Suppose that $a \Pi\neq 0$ on some interval $J\subset (0,\pi)$. The equations (A.13$)_1$ and (A.7$)_1$ imply 

\begin{align*}
|\Pi|^{1-\alpha}|a|^{2\alpha}=C,
\end{align*}\\
for some constant $C$. If $C\neq 0$, $J$ is extendable to $(0,\pi)$. However, the condition $0<\alpha<1$ and the boundary condition (A.13$)_3$ imply that the left-hand side vanishes as $\phi\to0$. Thus $C=0$. This contradicts $a\Pi\neq 0$ on $J$. Hence $a\Pi=0$ in $(0,\pi)$.

We show that $\Pi=0$ in $(0,\pi)$. Suppose that $\Pi\neq 0$ on some interval $J\subset (0,\pi)$. Then, $a=0$ and $f=0$ by (A.13$)_2$. Since $p=0$ by (A.13$)_1$, $\Pi=0$. This is a contradiction and hence $\Pi=0$ in $(0,\pi)$. By (A.2), $\omega u^{\perp}=0$. Thus $\omega=0$ in $(0,\pi)$ and $u$ is irrotational. 

For $\alpha=0$, the condition (A.7$)_1$ implies $a\Pi'=0$. Suppose that $\Pi'\neq 0$ on some interval $J\subset (0,\pi)$. Then, $a=0$ and $f=0$ by (A.13$)_2$. Hence $\Pi'=0$. This is a contradiction and hence $\Pi'=0$ in $(0,\pi)$. By (A.2), $\omega=0$ in $(0,\pi)$ and $u$ is irrotational.

For $-1< \alpha<0$, we first show that $u\in C^{2}(\mathbb{R}^{2}\backslash \{0\})$ satisfies $a \omega=0$ in $(0,\pi)$. Suppose that $a\omega\neq 0$ on some interval $J\subset (0,\pi)$. The equations (A.13)$_2$ and (A.7)$_2$ imply 

\begin{align*}
|\omega|^{1-\alpha}|a|^{1+\alpha}=C,
\end{align*}\\
for some constant $C$. By applying the same argument as that for $\Pi$ and $0<\alpha<1$, we conclude that $C=0$. This contradicts $a\omega\neq 0$ on $J$. Hence $a\omega=0$ in $(0,\pi)$.

We show that $\omega=0$ in $(0,\pi)$. Suppose that $\omega\neq 0$ on some interval $J\subset (0,\pi)$. Then, $a=0$ and $f=0$ by (A.13$)_2$. Hence $\omega=0$ by (A.5). This is a contradiction and hence $\omega=0$ in $(0,\pi)$. Thus $u$ is irrotational.

For $\alpha=-1$, the condition (A.7$)_2$ implies that $a\omega'=0$. If $\omega'\neq 0$ on some interval $J\subset (0,\pi)$, $a=0$ and $f=0$ by (A.13$)_2$ and hence $\omega=0$ by (A.5). This is a contradiction. Thus $\omega'=0$ in $(0,\pi)$. Since $\omega\in C^{1}(\mathbb{R}^{2}\backslash \{0\})$, the boundary condition (A.12) implies that $\omega=0$. Thus $u$ is irrotational.
\end{proof}

\vspace{3pt}

\section{The existence of two-and-a-half-dimensional reflection symmetric homogeneous solutions}

We demonstrate the existence of rotational two-dimensional ($2$D) reflection symmetric ($-\alpha$)-homogeneous solutions to (1.1) in Theorem 1.7 (iii) and (iv). We consider more general two-and-a-half-dimensional ($2\nicefrac{1}{2}$D) reflection symmetric solutions $u=(u^{1},u^{2},u^{3})$ to (1.1) satisfying 

\begin{equation}
\begin{aligned}
u^{1}(x_1,x_2)&=u^{1}(x_1,-x_2), \\
u^{2}(x_1,x_2)&=-u^{2}(x_1,-x_2), \\
u^{3}(x_1,x_2)&=-u^{3}(x_1,-x_2).
\end{aligned}
\end{equation}

\vspace{3pt}

\begin{thm}
The following holds for rotational $2\nicefrac{1}{2}$D reflection symmetric ($-\alpha$)-homogeneous solutions $u=u^{H}+u^{3}e_z$ and $r^{2\alpha}p=\textrm{const.}$ to (1.1):

\noindent
(i) For $\alpha>1$, solutions $u\in C^{1}(\mathbb{R}^{2}\backslash \{0\})$ such that $u^{H}\in C^{2}(\mathbb{R}^{2}\backslash \{0\})$ and $u^{3}\in C^{1}(\mathbb{R}^{2}\backslash \{0\})$ exist.\\
\noindent
(ii) For $\alpha<-1$, solutions $u\in C(\mathbb{R}^{2})$ such that $u^{H}\in C^{1}(\mathbb{R}^{2}\backslash \{0\})\cap C(\mathbb{R}^{2})$ and $u^{3}\in C(\mathbb{R}^{2})$ exist.
\end{thm}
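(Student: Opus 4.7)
The plan parallels Sections 3 and 4 and is in fact simpler because the associated one-dimensional Dirichlet problem (1.15) is autonomous. First I would represent a $2\nicefrac{1}{2}$D reflection symmetric $(-\alpha)$-homogeneous solenoidal field by the Clebsch-type ansatz
\begin{equation*}
u^{H}=\nabla^{\perp}\psi,\qquad u^{3}=\Gamma(\psi),\qquad \psi(x_{1},x_{2})=\frac{w(\phi)}{r^{\beta}},\qquad \Gamma(\psi)=C_{2}\,\psi|\psi|^{1/\beta},\qquad \beta=\alpha-1.
\end{equation*}
The $x_{3}$-component of (1.1) reads $u^{H}\cdot\nabla u^{3}=0$, which holds automatically because $u^{H}=\nabla^{\perp}\psi$ is tangent to the level sets of $\psi$. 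The horizontal part is two-dimensional Euler for $u^{H}$, which for stream-function solutions becomes $-\Delta\psi=F(\psi)$; choosing the homogeneous nonlinearity $F(\psi)=c\,\psi|\psi|^{2/\beta}$ and computing $-\Delta\psi=r^{-\beta-2}(-w''-\beta^{2}w)$ reduces it to (1.15), with the boundary condition $w(0)=w(\pi)=0$ forced by the reflection symmetry (A.10). The pressure $p=G(\psi)-|u^{H}|^{2}/2$ with $G(\psi)=c\beta/(2(\beta+1))|\psi|^{2+2/\beta}$ is then $(-2\alpha)$-homogeneous.

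Second, I would solve (1.15) by the same minimax scheme used in Section 3 for the nonautonomous equation (3.4), now applied to the functional
\begin{equation*}
J[w]=\frac{1}{2}\int_{0}^{\pi}\bigl(|w'|^{2}-\beta^{2}w^{2}\bigr)d\phi-\frac{c\beta}{2(\beta+1)}\int_{0}^{\pi}|w|^{2+2/\beta}d\phi
\end{equation*}
on $H^{1}_{0}(0,\pi)$. The Dirichlet eigenvalues of $-\partial_{\phi}^{2}$ on $(0,\pi)$ are $n^{2}$ for $n\geq 1$, so $-\partial_{\phi}^{2}-\beta^{2}$ has eigenvalues $n^{2}-\beta^{2}$: all positive for $0<\beta<1$, and with a finite-dimensional nonpositive span $Y=\textrm{span}\{\sin(n\phi):1\leq n\leq |\beta|\}$ for $\beta\in\mathbb{R}\setminus[-2,1)$. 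The Poincar\'e inequality gives the coercive estimate on the orthogonal complement $Z$ directly, without the weighted Hardy ingredient needed in Lemmas 3.14--3.16. The Palais--Smale condition at every level and the functional estimates $\inf_{N}J>\max_{M_{0}}J$ on the appropriate subsets are then verified by the arguments of Lemmas 3.22 and 3.23. Applying the mountain pass theorem for $0<\beta<1$, the linking theorem for $\beta\geq 1$, and the saddle-point theorem for $\beta<-2$ yields a classical solution $w\in C^{2}[0,\pi]$ to (1.15) for every $\beta\in\mathbb{R}\setminus[-2,0]$, i.e., for every $\alpha\in\mathbb{R}\setminus[-1,1]$.

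Finally, I would translate the regularity of $w$ into the regularity of $u$ claimed in Theorem B.1. The polar expressions for $u^{H}$ and $u^{3}$ involve only the smooth planar frame $(\mathbf{e}_{r},\mathbf{e}_{\phi})$ on $\mathbb{R}^{2}\setminus\{0\}$; there are none of the $\sin\theta$-degenerate coordinate factors that required separate care in Lemma 4.3. Hence $w\in C^{2}[0,\pi]$ immediately yields $u^{H}\in C^{2}(\mathbb{R}^{2}\setminus\{0\})$ and $u^{3}\in C^{1}(\mathbb{R}^{2}\setminus\{0\})$ for $\alpha>1$, while for $\alpha<-1$ the negative degree $\beta<-2$ provides continuous extension to the origin with $u^{H}\in C^{1}(\mathbb{R}^{2}\setminus\{0\})\cap C(\mathbb{R}^{2})$ and $u^{3}\in C(\mathbb{R}^{2})$. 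Verifying that these fields solve (1.1) classically on $\mathbb{R}^{2}\setminus\{0\}$ and distributionally on $\mathbb{R}^{2}$ for $\alpha<-1$ is the direct two-dimensional analogue of Propositions 4.8 and 4.9. Theorems 1.7 (iii) and (iv) follow by specializing to $C_{2}=0$, and Theorem 1.8 follows by applying the mountain pass argument to the truncated functional corresponding to (3.14) to obtain a positive $w$ for $1<\alpha<2$. I do not anticipate any genuine obstacle beyond those already resolved in Section 3; the one point requiring care is the boundedness of a Palais--Smale sequence in the sublinear range $\beta<-2$, but the argument of Proposition 3.21 transfers verbatim with $\{\sin(n\phi)\}$ replacing the eigenbasis $\{e_{n}\}$.
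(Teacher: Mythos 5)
Your proposal follows essentially the same route as the paper's Appendix B: the same Clebsch/stream-function ansatz reducing to the autonomous Dirichlet problem (B.4) (equivalently (1.15)), the same minimax trichotomy keyed to the explicit eigenvalues $n^{2}-\beta^{2}$, and the same transfer of one-dimensional regularity to $u$ followed by verification of (B.10) classically for $\alpha>1$ and distributionally for $\alpha<-1$. Two small points to tighten: $u^{H}\in C^{2}(\mathbb{R}^{2}\backslash\{0\})$ for $\alpha>1$ actually requires $w\in C^{3}[0,\pi]$, which is obtained by differentiating the autonomous equation (as the paper notes in Theorem B.2) rather than from $w\in C^{2}[0,\pi]$ alone, and one should record that the odd extension of $w$ to $[-\pi,0]$ stays $C^{2}$ across $\phi=0,\pi$ because the equation forces $w''$ to vanish there.
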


\vspace{3pt}

\subsection{The existence of solutions to the autonomous Dirichlet problem}

We construct solutions in Theorem B.1 by solutions of the automonous Dirichlet problem. We consider symmetric solenoidal vector fields (B.1) expressed by the Clebsch representation 

\begin{align}
u=\nabla \times (\psi\nabla z)+G \nabla z.
\end{align}\\
Here, $\nabla=\nabla_{\mathbb{R}^{3}}$ is the gradient in $\mathbb{R}^{3}$ and $(r,\phi,z)$ are the cylindrical coordinates. We denote the function $u^{3}$ by $G$. We assume that the stream function $\psi$ is an odd function for the $x_2$-variable and vanishes on the $x_1$-axis, 

\begin{align*}
\psi(x_1,0)=0.
\end{align*}\\
The rotation of $u$ is expressed as 

\begin{align*}
\nabla \times u=\nabla \times (G\nabla z)+(-\Delta \psi) \nabla z.
\end{align*}\\
For translation-reflection symmetric solutions to the Euler equations (2.1), the Bernoulli function $\Pi=p+|u|^{2}/2$ and the function $G$ are first integrals of $u$, i.e., 

\begin{align*}
u\cdot \nabla \Pi=0, \quad 
u\cdot \nabla G=0.
\end{align*}\\
We assume that they are globally functions of $\psi$, i.e., $\Pi=\Pi(\psi)$, $G=G(\psi)$. By using the triple product, 

\begin{align*}
(\nabla \times u)\times u&=-\frac{1}{2}\nabla G^{2}+(-\Delta \psi)\nabla \psi,\\
\nabla \Pi&=\Pi'(\psi)\nabla\psi.
\end{align*}\\
Here, $\Pi'(\psi)$ denotes the differentiation for the variable $\psi$. Thus $\psi$ satisfies the semilinear Dirichlet problem:

\begin{equation}
\begin{aligned}
-\Delta \psi&=-\Pi'(\psi)+G'(\psi)G(\psi),\quad (x_1,x_2)\in \mathbb{R}^{2}_{+}, \\ 
\psi(x_1,0)&=0,\quad x_1\in  \mathbb{R}.
\end{aligned}
\end{equation}\\
Solutions to the above 2D Dirichlet problem for prescribed $\Pi(\psi)$ and $G(\psi)$ provide $2\nicefrac{1}{2}$D reflection symmetric solutions to (1.1).

We choose particular $\Pi(\psi)$ and $G(\psi)$ to construct $2\nicefrac{1}{2}$D reflection symmetric homogeneous solutions to (1.1). For ($-\alpha$)-homogeneous solutions to (1.1), stream functions are ($-\alpha+1$)-homogeneous and expressed as  

\begin{align*}
\psi(x_1,x_2)=\frac{w(\phi)}{r^{\beta}},\quad \beta=\alpha-1.
\end{align*}\\
The left-hand side of (B.3$)_1$ is expressed as  

\begin{align*}
-\Delta \psi=-\frac{1}{r^{\beta+2}}(\beta^{2}w+w'').
\end{align*}\\
We choose the functions $\Pi(\psi)$ and $G(\psi)$ by 

\begin{equation*}
\begin{aligned}
\Pi(\psi)&=C_1|\psi|^{2+\frac{2}{\beta}}=C_1\frac{|w|^{2+\frac{2}{\beta}}}{r^{2\beta+2}},\\
G(\psi)&=C_2\psi |\psi|^{\frac{1}{\beta}}=C_2\frac{w|w|^{\frac{1}{\beta}}}{r^{\beta+1}},
\end{aligned}
\end{equation*}\\
for constants $C_1$, $C_2\in \mathbb{R}$. The right-hand side of (B.3$)_1$ is is expressed as  

\begin{align*}
-\Pi'(\psi)+G'(\psi)G(\psi)=c\frac{w|w|^{\frac{2}{\beta}}}{r^{\beta+2}}
\end{align*}\\
for the constant 

\begin{align*}
c=\left(-2C_1+C_{2}^{2}\right)\left(1+\frac{1}{\beta}\right).
\end{align*}\\
Then function $w$ satisfies the autonomous Dirichlet problem:

\begin{equation}
\begin{aligned}
-w''&=\beta^{2}w+cw|w|^{\frac{2}{\beta}},\quad \phi\in (0,\pi),\\
w(0)&=w(\pi)=0.
\end{aligned}
\end{equation}

\vspace{3pt}

\begin{thm}
For $\beta\in \mathbb{R}\backslash [-2,0]$ and $c>0$, there exists a solution $w\in C^{2}[0,\pi]$ to (B.4). For $\beta >0$, $w\in C^{3}[0,\pi]$. For $0<\beta<1$, there exists a positive solution to (B.4). 
\end{thm}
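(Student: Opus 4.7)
The plan is to adapt the minimax framework developed in Section 3 for the nonautonomous problem (3.4) to the simpler autonomous equation (B.4). The linear operator $-\partial_\phi^2$ on $H^1_0(0,\pi)$ with potential $-\beta^2$ has eigenvalues $\mu_n = n^2 - \beta^2$ with eigenfunctions $\sin(n\phi)$, so the principal eigenvalue sign follows at once: $\mu_1 > 0$ for $0 < \beta < 1$, and $\mu_1 \leq 0$ for $\beta \in \mathbb{R} \setminus [-2,1)$. Associate to (B.4) the functional
\begin{equation*}
I[w] = \frac{1}{2}\int_0^\pi (|w'|^2 - \beta^2 w^2)\, d\phi - \frac{c\beta}{2(\beta+1)} \int_0^\pi |w|^{2+\frac{2}{\beta}}\, d\phi
\end{equation*}
on $X = H^1_0(0,\pi)$. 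Arguing exactly as in Proposition 3.18, $I \in C^1(X;\mathbb{R})$ and critical points are weak solutions; the pointwise bound from the 1D Sobolev embedding makes the nonlinear term integrable for every $\beta \in \mathbb{R}\setminus[-2,0]$ since no weight singularity is present.

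Next I would decompose $H^1_0(0,\pi) = Y \oplus Z$ with $Y = \mathrm{span}\{\sin(n\phi) : \mu_n \leq 0\}$ finite dimensional (empty when $0 < \beta < 1$) and $Z$ its $L^2$-orthogonal complement, obtaining the coercive bound $\int_0^\pi (|z'|^2 - \beta^2 z^2)\, d\phi \geq \delta \|z\|_{H^1_0}^2$ on $Z$ via the spectral gap $\mu_{N+1} > 0$, as in Lemma 3.16 but without needing the Hardy estimate. The Palais--Smale condition at every level follows from the boundedness arguments of Propositions 3.20 and 3.21: for superlinear $\beta > 0$ I would use the identity $I[w_n] - \sigma\langle I'[w_n], w_n\rangle$ with $\sigma \in (\beta/(2\beta+2), 1/2)$, and for sublinear $\beta < -2$ I would test $I'[w_n]$ against $-y_n + z_n$; in both cases the compact embedding $H^1_0(0,\pi) \hookrightarrow C[0,\pi]$ upgrades weak to strong convergence as in Lemma 3.22.

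With these ingredients in place, I would apply the three minimax theorems of Section 3.3 in the same pattern as Lemma 3.23: the Mountain Pass theorem for $0 < \beta < 1$ (with $N = \{\|w\|_{H^1_0} = r_0\}$), the Linking theorem for $\beta \geq 1$ (with $z_0 = r_0 \sin((N+1)\phi)/\|\sin((N+1)\phi)\|_{H^1_0}$), and the Saddle Point theorem for $\beta < -2$ (with $N = Z$, $M_0 = \{\|y\|_{H^1_0} = \rho_0\} \subset Y$). The estimates $\inf_N I > \max_{M_0} I$ reduce to one-variable scaling $\lambda \mapsto I[\lambda w]$ and are strictly easier than their counterparts in Lemma 3.23. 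For positive solutions in the regime $0 < \beta < 1$, I would truncate to the modified problem with nonlinearity $c w_+^{1+2/\beta}$, apply the Mountain Pass theorem to the associated $\tilde I$, and deduce $w > 0$ via the strong maximum principle applied to $-w'' \geq 0$, mirroring the conclusion in the proof of Theorem 3.1.

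Finally, regularity comes almost for free: any critical point $w \in H^1_0 \subset C[0,\pi]$ solves $-w'' = \beta^2 w + c w |w|^{2/\beta}$ with continuous right-hand side, hence $w \in C^2[0,\pi]$. For $\beta > 0$, the map $s \mapsto s|s|^{2/\beta}$ is $C^1$ on $\mathbb{R}$ with derivative $(1+2/\beta)|s|^{2/\beta}$, so differentiating once yields $w \in C^3[0,\pi]$. The main obstacle I anticipate is not conceptual but bookkeeping: confirming that the linking geometry for $\beta \geq 1$ is correctly oriented when $Y$ is higher dimensional (i.e., $\beta \geq 2$), and verifying the Palais--Smale boundedness for $\beta < -2$ without $c_1$-type control. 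Both follow by direct adaptation of Section 3 since the autonomous structure removes the analytic burden of the weight $(1-t^2)^{-1}$.
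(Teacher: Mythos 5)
Your proposal is correct and follows essentially the same route as the paper: the paper's own proof of this theorem is precisely a transplant of the Section 3 machinery (eigenvalues $\mu_n=n^2-\beta^2$, the decomposition $H^1_0=Y\oplus Z$, the Palais--Smale verification, and the mountain pass/linking/saddle point trichotomy in $\beta$), followed by the same truncation-plus-maximum-principle argument for positivity and the same bootstrap for $C^2$ and $C^3$ regularity. The details you flag as bookkeeping (linking geometry for higher-dimensional $Y$, Palais--Smale for $\beta<-2$) are handled in the paper exactly by the adaptations you describe.
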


\vspace{3pt}

\begin{rems}
(i) Positive solutions to (B.4) for $0<\beta<1$ are symmetric with respect to $\phi=\pi/2$ and decreasing for $\phi>\pi/2$ since (B.4$)_1$ is an autonomous equation \cite[Theorem 1]{GNN}, cf. \cite[Theorem $1'$]{GNN}.

\noindent
(ii) No positive solutions to (B.4) exist for $\beta\in \mathbb{R}\backslash [-2,1)$ and $c>0$, cf. \cite[Remarks 6.3 (ii)]{QS}. In fact, the principal eigenvalue of the operator $-\partial_{\phi}^{2}$ is one and its associated eigenfunction is $\sin\phi$. By multiplying $\sin\phi$ by (B.4$)_1$ and integration by parts,

\begin{align*}
(1-\beta^{2})\int_{0}^{\pi}w \sin\phi \dd \phi=c\int_{0}^{\pi}w |w|^{\frac{2}{\beta}}\sin\phi\dd \phi.
\end{align*}\\
Thus $0<\beta<1$ for positive $w$.

\noindent
(iii) The number of zero points are finite for solutions to (B.4) for $\beta>0$ by the uniqueness of the ODE as observed for the nonautonomous case in Remarks 3.2 (iii).
\end{rems}

\vspace{3pt}

\begin{proof}[Proof of Theorem B.2]
The construction of solutions to (B.4) is easier than that of solutions to (3.4). In fact, the problem (B.4) is expressed as 

\begin{equation}
\begin{aligned}
-L_{\beta}w&=g(w)\quad \textrm{in}\ \Omega,\\
w&=0\qquad \textrm{on}\ \partial \Omega,
\end{aligned}
\end{equation}\\
with the symbols $L_{\beta}=\partial_{\phi}^{2}+\beta^{2}$, $g(w)= c w|w|^{\frac{2}{\beta}}$, and $\Omega=(0,\pi)$. The eigenvalues of the operator $-L_{\beta}$ are explicitly given by $\mu_{n}=n^{2}-\beta^{2}$ for $n=1,2,\dots$. Hence $\mu_1>0$ for $0<\beta<1$ and $\mu_1\leq 0$ for $\beta\in \mathbb{R}\backslash [-2,1)$. The orthonormal basis on $L^{2}(\Omega)$ are $e_n=\sqrt{2/\pi}\sin{n\phi}$. We take $N\in \mathbb{N}$ such that $\mu_1<\mu_2<\cdots<\mu_N\leq 0<\mu_{N+1}<\cdots$ and consider the direct sum decomposition $H^{1}_{0}(\Omega)=Y\oplus Z$ for $Y=\textrm{span}(e_1,\cdots,e_N)$ and $Z=\{z\in H^{1}_{0}(\Omega)\ |\ (z,y)_{L^{2}}=0,\ y \in Y\ \}$. The functional associated with (B.5) is the following:

\begin{align*}
I[w]&=\frac{1}{2}\int_{\Omega}(|w'|^{2}-\beta^{2}|w|^{2})\textrm{d}\phi
-\int_{\Omega}G(w)\textrm{d}\phi,\\
G(w)&=\int_{0}^{w}g(s)\textrm{d}s=\frac{c \beta}{2(\beta+1)}|w|^{2(1+\frac{1}{\beta})}.
\end{align*}\\
This functional satisfies the (PS$)_c$ condition for any $c\in \mathbb{R}$ and desired estimates on subsets. We apply the mountain pass theorem (Lemma 3.4) for $0<\beta<1$, the linking theorem (Lemma 3.6) for $1\leq \beta$, and the saddle point theorem (Lemma 3.7) for $\beta<-2$. Since the equation (B.4) is autonomous, critical points $w\in H^{1}_{0}(\Omega)$ belong to $w\in C^{2}[0,\pi]$. For $\beta>0$, differentiating (B.4$)_1$ by $\phi$ implies that $w\in C^{3}[0,\pi]$. The existence of positive solutions follows from an application of the mountain pass theorem to a modified problem for $0<\beta<1$ as we observed for the nonautonomous equation in (3.14).
\end{proof}

\vspace{3pt}

\subsection{Regularity of two-and-a-half-dimensional reflection symmetric homogeneous solutions}

We construct $2\nicefrac{1}{2}$D reflection symmetric homogeneous solutions by solutions of (B.4) in Theorem B.2. We choose constants $\alpha$, $C_1$, and $C_2$ satisfying 

\begin{align}
\alpha\in \mathbb{R}\backslash [-1,1],\quad C_1,C_2\in \mathbb{R},\quad -2C_1+C_2^{2}>0,
\end{align}\\
so that 

\begin{align}
\beta=\alpha-1\in \mathbb{R}\backslash [-2,0],\quad c=(-2C_1+C_2^{2})\left(1+\frac{1}{\beta}\right)>0.
\end{align}

\begin{lem}
Let $\alpha$, $C_1$, and $C_2$ satisfy (B.6). Let $w\in C^{2}[0,\pi]$ be a solution to (B.4) for $\beta$ and $c$ in (B.7). For the odd extension of $w$ to $[-\pi,0]$, set 

\begin{align*}
\psi&=\frac{w(\phi)}{r^{\beta}},\quad \Pi=C_1|\psi|^{2+\frac{2}{\beta}},\quad G=C_2\psi |\psi|^{\frac{1}{\beta}},\\
u&=\nabla \times (\psi\nabla z)+G\nabla z=u^{H}+u^{3}e_{z},\\
p&=\Pi-\frac{1}{2}|u|^{2}.
\end{align*}\\
Then $u$ is ($-\alpha$)-homogeneous and $r^{2\alpha}p$ is constant. They satisfy the following regularity properties:
 
\noindent
(i) For $\alpha<-1$, 

\begin{align}
u^{H}\in C^{1}(\mathbb{R}^{2}\backslash \{0\})\cap C(\mathbb{R}^{2}),\quad u^{3}\in C(\mathbb{R}^{2}).
\end{align}\\
(ii) For $\alpha>1$, 

\begin{align}
u^{H}\in C^{2}(\mathbb{R}^{2}\backslash \{0\}),\quad u^{3}\in C^{1}(\mathbb{R}^{2}\backslash \{0\}).
\end{align}
\end{lem}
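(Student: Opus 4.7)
The plan is to work entirely in the polar coordinates $(r,\phi)$ and extract every conclusion of the lemma from properties of the one-dimensional profile $w$ supplied by Theorem B.2. Starting from $\psi = w(\phi)/r^{\beta}$ and using $\nabla\times(\psi\nabla z) = (1/r)\partial_{\phi}\psi\,\mathbf{e}_{r} - \partial_{r}\psi\,\mathbf{e}_{\phi}$, I obtain
\begin{equation*}
u^{H} = \frac{w'(\phi)}{r^{\beta+1}}\mathbf{e}_{r} + \frac{\beta\,w(\phi)}{r^{\beta+1}}\mathbf{e}_{\phi},\qquad u^{3} = \frac{C_{2}\,w|w|^{1/\beta}}{r^{\beta+1}}.
\end{equation*}
Since $\beta+1=\alpha$, $u$ is immediately $(-\alpha)$-homogeneous, and then $\Pi$, $|u|^{2}$ and $p = \Pi - |u|^{2}/2$ are $(-2\alpha)$-homogeneous. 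The reflection symmetry (B.1) comes from the odd $2\pi$-periodic extension of $w$: the boundary conditions $w(0)=w(\pi)=0$ together with (B.4$)_{1}$ force $w''(0)=w''(\pi)=0$, which is precisely the parity condition guaranteeing that the extension is $C^{2}(\mathbb{S}^{1})$ when $\beta<-2$ and $C^{3}(\mathbb{S}^{1})$ when $\beta>0$.

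The constancy of $r^{2\alpha}p$ follows from the first integral of the autonomous ODE (B.4$)_{1}$. Multiplying by $w'$ and integrating gives
\begin{equation*}
\tfrac{1}{2}|w'|^{2}+\tfrac{1}{2}\beta^{2}w^{2}+\tfrac{c\beta}{2(\beta+1)}|w|^{2+2/\beta}=\text{const.}
\end{equation*}
Using the identity $c\beta/(2(\beta+1)) = (C_{2}^{2}-2C_{1})/2$ from (B.7), together with the explicit formulas $r^{2\alpha}\Pi = C_{1}|w|^{2+2/\beta}$ and $r^{2\alpha}|u|^{2} = |w'|^{2}+\beta^{2}w^{2}+C_{2}^{2}|w|^{2+2/\beta}$, a short algebraic computation identifies $r^{2\alpha}p$ with the negative of the conserved quantity above, hence constant.

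For the regularity claims I treat the two cases separately; no Chandrasekhar-type transform is required because the coordinate $\phi$ is already the natural one. In case (ii), $\alpha>1$ and $\beta>0$, Theorem B.2 provides $w\in C^{3}[0,\pi]$ whose odd extension is $C^{3}(\mathbb{S}^{1})$, so the formula above gives $u^{H}\in C^{2}(\mathbb{R}^{2}\setminus\{0\})$. In case (i), $\alpha<-1$ and $\beta<-2$, the extension of $w\in C^{2}[0,\pi]$ is $C^{2}(\mathbb{S}^{1})$, so $u^{H}\in C^{1}(\mathbb{R}^{2}\setminus\{0\})$; continuity of both $u^{H}$ and $u^{3}$ at the origin follows from the uniform bound $|u|\lesssim r^{-(\beta+1)}$ with exponent $-(\beta+1)>1$.

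The main obstacle is the $C^{1}$ regularity of $u^{3} = C_{2}w|w|^{1/\beta}/r^{\beta+1}$ in case (ii), because the map $s\mapsto s|s|^{1/\beta}$ is not smooth at $s=0$ and solutions of (B.4) produced by the linking theorem for $\beta\geq 1$ need not be sign-definite. By Remarks B.3 (iii), such $w$ has only finitely many zeros, each simple by ODE uniqueness; near such a zero $\phi_{0}$ the local expansion $w(\phi) = w'(\phi_{0})(\phi-\phi_{0}) + O((\phi-\phi_{0})^{2})$ yields $w|w|^{1/\beta}$ of the form $t|t|^{1/\beta}$ up to smooth factors, whose derivative $(1+1/\beta)|t|^{1/\beta}$ is continuous and vanishes at $t=0$ since $1+1/\beta>1$ for $\beta>0$. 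Combined with the smoothness of $r^{-(\beta+1)}$ on $\mathbb{R}^{2}\setminus\{0\}$, this delivers $u^{3}\in C^{1}(\mathbb{R}^{2}\setminus\{0\})$ and completes the proof.
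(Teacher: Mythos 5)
Your proof is correct and follows essentially the same route as the paper: express $u$ and $p$ in polar coordinates through $w$, use the first integral of the autonomous ODE (B.4) to see that $r^{2\alpha}p=-\tfrac{1}{2}|w'|^{2}-\tfrac{\beta^{2}}{2}w^{2}-\tfrac{c\beta}{2(\beta+1)}|w|^{2+2/\beta}$ is constant, and read the regularity off the smoothness of the extended profile. You in fact supply two details the paper leaves implicit — that $w(0)=w(\pi)=0$ together with the ODE forces $w''(0)=w''(\pi)=0$, so the odd $2\pi$-periodic extension really is $C^{2}$ (resp.\ $C^{3}$) across the $x_1$-axis, and that $s\mapsto s|s|^{1/\beta}$ is globally $C^{1}$ for $\beta>0$ so $u^{3}\in C^{1}(\mathbb{R}^{2}\backslash\{0\})$ — both of which are correct and welcome.
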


\vspace{3pt}

\begin{proof}
The functions $u$ and $p$ are expressed in terms of $w$ as

\begin{equation*}
\begin{aligned}
r^{\beta+1}u&=w'e_r+\beta we_{\phi}+C_2w|w|^{\frac{1}{\beta}}e_{z}, \\
r^{2\beta+2}p&=-\frac{c\beta }{2(\beta+1)}|w|^{2+\frac{2}{\beta}}-\frac{1}{2}\left(|w'|^{2}+\beta^{2}w^{2} \right).
\end{aligned}
\end{equation*}\\
Differentiating the second equation by $\phi$ implies that $r^{2\beta+2}p$ is independent of $\phi$. For $\beta<-2$, $w\in C^{2}[-\pi,\pi]$ implies that $u^{H}\in C^{1}(\mathbb{R}^{2}\backslash \{0\})$. Since $u$ vanishes at the origin, $u\in C(\mathbb{R}^{2})$. Thus (B.8) holds. For $\beta>0$, $w\in C^{3}[-\pi,\pi]$ implies (B.9).
\end{proof}

\vspace{3pt}

We show that $(u,p)$ in Lemma B.4 are homogeneous solutions to (1.1). In the translation reflection symmetric setting, the equations (1.1) for $u=u^{H}+u^{3}e_{z}$ are equivalent to 

\begin{equation}
\begin{aligned}
u^{H}\cdot \nabla_{\mathbb{R}^{2}} u^{H}+\nabla_{\mathbb{R}^{2}} p&=0,\\
\nabla_{\mathbb{R}^{2}} \cdot u^{H}&=0,\\
u^{H}\cdot \nabla_{\mathbb{R}^{2}} u^{3}&=0.
\end{aligned}
\end{equation}\\

\begin{prop}
The functions $(u,p)$ in Lemma B.4 are ($-\alpha$)-homogeneous solutions to (B.10) in $\mathbb{R}^{2}\backslash \{0\}$ for $\alpha>1$ and in $\mathbb{R}^{2}$ for $\alpha<-1$.
\end{prop}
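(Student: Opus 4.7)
The strategy closely mirrors Propositions 4.8 and 4.9: first verify that the Clebsch ansatz turns the Euler system into the semilinear equation (B.3) in the classical range $\alpha>2$, and then extend the identity across the origin by a cut-off argument when $\alpha<-1$. Throughout, the crucial structural fact is that the pair $\Pi=\Pi(\psi)$ and $G=G(\psi)$ are first integrals of $u^{H}$ by construction, because $u^{H}=\nabla_{\mathbb{R}^{2}}^{\perp}\psi$ is perpendicular to $\nabla_{\mathbb{R}^{2}}\psi$.

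First, consider $\alpha>1$, where $u^{H}\in C^{2}(\mathbb{R}^{2}\backslash\{0\})$ and $u^{3}\in C^{1}(\mathbb{R}^{2}\backslash\{0\})$. The representation $u^{H}=\nabla\times(\psi\nabla z)$ written in $\mathbb{R}^{2}$ coordinates is $u^{H}=(\partial_{2}\psi,-\partial_{1}\psi)$, which gives $\nabla_{\mathbb{R}^{2}}\cdot u^{H}=0$ automatically, so (B.10)$_{2}$ holds. Since $u^{3}=G(\psi)$ and $u^{H}\cdot\nabla_{\mathbb{R}^{2}}\psi=0$, one has $u^{H}\cdot\nabla_{\mathbb{R}^{2}}u^{3}=G'(\psi)\,u^{H}\cdot\nabla_{\mathbb{R}^{2}}\psi=0$, yielding (B.10)$_{3}$. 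For the momentum equation, the standard 2D identity $u^{H}\cdot\nabla_{\mathbb{R}^{2}}u^{H}=\omega (u^{H})^{\perp}+\nabla_{\mathbb{R}^{2}}(|u^{H}|^{2}/2)$ combined with $(u^{H})^{\perp}=\nabla_{\mathbb{R}^{2}}\psi$ and $\omega=-\Delta\psi$ reduces (B.10)$_{1}$ to
\[
(-\Delta\psi)\nabla_{\mathbb{R}^{2}}\psi+\nabla_{\mathbb{R}^{2}}\!\left(p+\tfrac{1}{2}|u^{H}|^{2}\right)=0.
\]
Using $p+|u^{H}|^{2}/2=\Pi(\psi)-G(\psi)^{2}/2$, the gradient collapses to $\bigl(\Pi'(\psi)-G'(\psi)G(\psi)\bigr)\nabla_{\mathbb{R}^{2}}\psi$, and the semilinear equation (B.3) — which is precisely what (B.4) encodes after writing $\psi=w(\phi)/r^{\beta}$ — shows that the coefficient in front of $\nabla_{\mathbb{R}^{2}}\psi$ vanishes. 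This establishes (B.10) classically in $\mathbb{R}^{2}\backslash\{0\}$ and the $(-\alpha)$-homogeneity is immediate from the scaling of $\psi$, $\Pi$, $G$.

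For $\alpha<-1$, the same calculation is valid pointwise on $\mathbb{R}^{2}\backslash\{0\}$ because $u^{H}\in C^{1}(\mathbb{R}^{2}\backslash\{0\})$ and $r^{2\alpha}p$ is constant. To upgrade to a distributional solution on $\mathbb{R}^{2}$ one exploits that $u\in C(\mathbb{R}^{2})$, hence $u\otimes u$ and $p$ lie in $L^{1}_{\mathrm{loc}}(\mathbb{R}^{2})$. Given $\Phi\in C^{1}_{c}(\mathbb{R}^{2};\mathbb{R}^{2})$, a standard cut-off $\eta_{\varepsilon}(x)=\eta(x/\varepsilon)$ vanishing near the origin reduces the distributional formulation to the classical one on $\mathrm{supp}(1-\eta_{\varepsilon})$; the error terms $\int u\otimes u:\nabla\eta_{\varepsilon}\otimes\Phi$ and $\int p\,\nabla\eta_{\varepsilon}\cdot\Phi$ vanish as $\varepsilon\to0$ because $|u|^{2}$ and $|p|$ are uniformly bounded near the origin while $|\nabla\eta_{\varepsilon}|$ concentrates on a set of measure $O(\varepsilon^{2})$. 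The same argument handles (B.10)$_{2}$ and (B.10)$_{3}$; for the latter one notes that $u^{H}u^{3}$ can be written as a curl, namely $u^{H}u^{3}=(u^{H})^{\perp\!\!\perp}G(\psi)=\nabla_{\mathbb{R}^{2}}^{\perp}\!\bigl(\tfrac{C_{2}\beta}{2\beta+1}|\psi|^{2+1/\beta}\bigr)$ on $\mathbb{R}^{2}\backslash\{0\}$, so testing against $\nabla_{\mathbb{R}^{2}}\Psi$ for $\Psi\in C^{1}_{c}(\mathbb{R}^{2})$ yields a boundary integral that disappears after cut-off removal.

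The only step that is not purely formal is the removal of the singularity at the origin in the distributional formulation for $\alpha<-1$; the main obstacle is checking that the localized defect terms produced by the cut-off vanish in the limit, and this is precisely where $u\in C(\mathbb{R}^{2})$ (rather than only locally $L^{p}$) is used to pass to the limit uniformly. Once this is handled, Theorem B.1 follows by specializing to the 2D reflection symmetric case treated in Theorems 1.7 (iii), (iv), and 1.8, using the regularity statements of Lemma B.4.
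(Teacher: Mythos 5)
Your argument is correct and follows essentially the same route as the paper: reduce the momentum equation to the semilinear equation (B.3) via the Clebsch representation and the identity $u^{H}\cdot\nabla u^{H}=(\nabla^{\perp}\cdot u^{H})(u^{H})^{\perp}+\nabla\tfrac{1}{2}|u^{H}|^{2}$, write $u^{H}u^{3}$ as a perpendicular gradient for (B.10)$_{3}$, and remove the origin by a cut-off using the boundedness of $u$ and $p$ for $\alpha<-1$. The only slip is the phrase ``classical range $\alpha>2$'' in your opening sentence, which should read $\alpha>1$ for this two-dimensional setting.
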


\vspace{3pt}

\begin{proof}
Since $w\in C^{2}[-\pi,\pi]$, the stream function $\psi\in C^{2}(\mathbb{R}^{2}\backslash \{0\})$ is a reflection symmetric solution to the elliptic equation (B.3$)_1$ in $\mathbb{R}^{2}\backslash \{0\}$ for 

\begin{align*}
\Pi'(\psi)&=2C_1\left(1+\frac{1}{\beta}\right)\frac{w|w|^{\frac{2}{\beta}}}{r^{\beta+2}},\\
\left(\frac{1}{2}G^{2}(\psi) \right)'&=C_2^{2}\left(1+\frac{1}{\beta}\right)\frac{w|w|^{\frac{2}{\beta}}}{r^{\beta+2}}.
\end{align*}\\
We suppress the subscript for $\nabla=\nabla_{\mathbb{R}^{2}}$ and use the symbol $\nabla^{\perp}=(-\partial_2,\partial_1)$. By multiplying $\nabla \psi$ by (B.3$)_1$, 

\begin{align*}
-\Delta \psi \nabla \psi+\nabla \left(\Pi-\frac{1}{2}G^{2}\right)=0.
\end{align*}\\
Since $u^{H}=-\nabla^{\perp} \psi$, $-\Delta \psi=\nabla^{\perp} \cdot u^{H}$, and $p=\Pi-|u^{H}|^{2}/2-|G|^{2}/2$, 

\begin{align*}
\nabla^{\perp} \cdot u^{H}(u^{H})^{\perp}+\nabla \left(p+\frac{1}{2}|u^{H}|^{2}\right)=0.
\end{align*}\\
By the identity, 

\begin{align*}
u^{H}\cdot \nabla u^{H}=\nabla^{\perp} \cdot u^{H}(u^{H})^{\perp}+\nabla \frac{1}{2}|u^{H}|^{2},
\end{align*}\\
(B.10$)_1$ and (B.10$)_2$ hold in $\mathbb{R}^{2}\backslash \{0\}$. For $\alpha>1$, taking divergence to 

\begin{align*}
u^{H}u^{3}=-\nabla^{\perp} \left(C_2\frac{\beta}{2\beta+1}|\psi|^{2+\frac{1}{\beta}}\right),
\end{align*}\\
implies that (B.10$)_3$ holds in $\mathbb{R}^{2}\backslash \{0\}$. Thus $(u,p)$ is a ($-\alpha$)-homogeneous solution to (B.10) in $\mathbb{R}^{2}\backslash \{0\}$ for $\alpha>1$. 

For $\alpha<-1$, (B.10$)_1$ and (B.10$)_2$ in $\mathbb{R}^{2}\backslash \{0\}$ hold for $u^{H}\in C^{1}(\mathbb{R}^{2}\backslash \{0\})\cap C(\mathbb{R}^{2})$ and (B.10$)_3$ in $\mathbb{R}^{2}\backslash \{0\}$ holds for $u^{3}\in C(\mathbb{R}^{2})$ in the distributional sense. Since $(u,p)$ vanishes at the origin, by the cut-off function argument around the origin, $(u,p)$ satisfies (B.10) in $\mathbb{R}^{2}$ in the distributional sense.
\end{proof}

\vspace{3pt}

\begin{proof}[Proof of Theorems B.1, 1.7 (iii) and (iv) and 1.10]
For the constants $\alpha$, $C_1$ and $C_2$ satisfying (B.6), Lemma B.4 and Proposition B.5 imply the existence of rotational reflection symmetric $2\nicefrac{1}{2}$D ($-\alpha$)-homogeneous solutions to (1.1) in Theorem B.1. In particular, taking $C_2=0$ implies the existence of rotational $2$D reflection symmetric ($-\alpha$)-homogeneous solutions in Theorem 1.7 (iii) and (iv). For $1<\alpha<2$, the existence of positive solutions to (B.4) implies the existence of $2$D rotational reflection symmetric ($-\alpha$)-homogeneous solutions whose stream function level sets are homeomorphic to those of the irrotational ($-2$)-homogeneous solution in Theorem 1.8.  
\end{proof}

\vspace{3pt}

\begin{rem}
For $\alpha\in \mathbb{R}\backslash [-1,1]$, $C_1=0$ and $C_2\neq0$, the solutions in Theorem B.1 are $2\nicefrac{1}{2}$D reflection symmetric Beltrami ($-\alpha$)-homogeneous solutions to (1.1). By $-\Delta\psi=G'G$ and the Clebsch representation for $\nabla=\nabla_{\mathbb{R}^{3}}$,

\begin{align*}
\nabla \times u=G'(\nabla \times (\psi \nabla z)+G\nabla z)=G'u.
\end{align*}\\
For $\alpha>1$, $u$ admits the proportionality factor $\varphi=G'=C_2(1+1/\beta)|\psi|^{1/\beta}$. 
\end{rem}

%ref
\bibliographystyle{alpha}
\bibliography{ref}

\newcommand{\etalchar}[1]{$^{#1}$}
\begin{thebibliography}{DVEPS21}

\bibitem[ABC{\etalchar{+}}]{ABCDGK}
D.~Albritton, E.~Bru{\'e}, M.~Colombo, C.~De Lellis, V.~Giri, M.~Janisch, and
  H.~Kwon.
\newblock {I}nstability and nonuniqueness for the 2d {E}uler equations in
  vorticity form, after {M}. {V}ishik.
\newblock arXiv:2112.04943.

\bibitem[ABC22]{ABC}
D.~Albritton, E.~Bru\'{e}, and M.~Colombo.
\newblock Non-uniqueness of {L}eray solutions of the forced {N}avier-{S}tokes
  equations.
\newblock {\em Ann. of Math. (2)}, 196:415--455, (2022).

\bibitem[Abe22]{A8}
K.~Abe.
\newblock Existence of vortex rings in {B}eltrami flows.
\newblock {\em Commun. Math. Phys.}, 391:873--899, (2022).

\bibitem[AK21]{AK21}
V.~I. Arnold and B.~A. Khesin.
\newblock Topological methods in hydrodynamics.
\newblock 125:xx+455, 2021.

\bibitem[Ale71]{Al}
R.~C. Alexander.
\newblock Family of similarity flows with vortex sheets.
\newblock {\em Phys. Fluids}, 14:231--239, (1971).

\bibitem[Aly94]{Aly94}
J.~J. Aly.
\newblock Asymptotic formation of a current sheet in an indefinitely sheared
  force-free field: an analytical example.
\newblock {\em Astronomy and Astrophysics}, 288:p.1012--1020, (1994).

\bibitem[AW01]{AW01}
G.~B. Arfken and H.~J. Weber.
\newblock Mathematical methods for physicists.
\newblock 2001.

\bibitem[Bal]{Baldi}
P.~Baldi.
\newblock {N}early toroidal, periodic and quasi-periodic motions of fluid
  particles driven by the {G}avrilov solutions of the {E}uler equations.
\newblock arXiv:2302.02982v1.

\bibitem[Bar96]{Bare96}
G.~I. Barenblatt.
\newblock Scaling, self-similarity, and intermediate asymptotics.
\newblock 14:xxii+386, 1996.

\bibitem[Bat99]{Bat}
G.~K. Batchelor.
\newblock {\em An introduction to fluid dynamics}.
\newblock Cambridge Mathematical Library. Cambridge University Press,
  Cambridge, 1999.

\bibitem[BGM19]{BPM19}
J.~Bedrossian, P.~Germain, and N.~Masmoudi.
\newblock Stability of the {C}ouette flow at high {R}eynolds numbers in two
  dimensions and three dimensions.
\newblock {\em Bull. Amer. Math. Soc. (N.S.)}, 56:373--414, (2019).

\bibitem[BL96]{BL96}
O.~P. Bruno and P.~Laurence.
\newblock Existence of three-dimensional toroidal {MHD} equilibria with
  nonconstant pressure.
\newblock {\em Comm. Pure Appl. Math.}, 49:717--764, (1996).

\bibitem[BL06]{BrezisLi06}
H.~Brezis and Y.~Li.
\newblock Some nonlinear elliptic equations have only constant solutions.
\newblock {\em J. Partial Differential Equations}, 19:208--217, (2006).

\bibitem[BM15]{BM15}
J.~Bedrossian and N.~Masmoudi.
\newblock Inviscid damping and the asymptotic stability of planar shear flows
  in the 2{D} {E}uler equations.
\newblock {\em Publ. Math. Inst. Hautes \'{E}tudes Sci.}, 122:195--300, (2015).

\bibitem[BM20]{Bressan20}
A.~Bressan and R.~Murray.
\newblock On self-similar solutions to the incompressible {E}uler equations.
\newblock {\em J. Differential Equations}, 269:5142--5203, (2020).

\bibitem[BP06]{Brezis06}
H.~Brezis and L.~A. Peletier.
\newblock Elliptic equations with critical exponent on spherical caps of
  {$S^3$}.
\newblock {\em J. Anal. Math.}, 98:279--316, (2006).

\bibitem[BS15]{BronziShvydkoy}
A.~Bronzi and R.~Shvydkoy.
\newblock On the energy behavior of locally self-similar blowup for the {E}uler
  equation.
\newblock {\em Indiana Univ. Math. J.}, 64:1291--1302, (2015).

\bibitem[BS21]{BS21}
A.~Bressan and W.~Shen.
\newblock A posteriori error estimates for self-similar solutions to the
  {E}uler equations.
\newblock {\em Discrete Contin. Dyn. Syst.}, 41:113--130, (2021).

\bibitem[BT17]{BT17}
Z.~Bradshaw and T.-P. Tsai.
\newblock Forward discretely self-similar solutions of the {N}avier-{S}tokes
  equations {II}.
\newblock {\em Ann. Henri Poincar\'{e}}, 18:1095--1119, (2017).

\bibitem[BW08]{Bandle08}
C.~Bandle and J.~Wei.
\newblock Multiple clustered layer solutions for semilinear elliptic problems
  on {$S^n$}.
\newblock {\em Comm. Partial Differential Equations}, 33:613--635, (2008).

\bibitem[CC15]{CC15}
D.~Chae and P.~Constantin.
\newblock Remarks on a {L}iouville-type theorem for {B}eltrami flows.
\newblock {\em Int. Math. Res. Not. IMRN}, pages 10012--10016, (2015).

\bibitem[CDG]{CDG22}
P.~Constantin, T.~D. Drivas, and D.~Ginsberg.
\newblock Flexibility and rigidity of free boundary {M}{H}{D} equilibria.
\newblock arXiv:2108.05977v2.

\bibitem[CDG21a]{CDG21b}
P.~Constantin, T.~D. Drivas, and D.~Ginsberg.
\newblock Flexibility and rigidity in steady fluid motion.
\newblock {\em Commun. Math. Phys.}, 385:521--563, (2021).

\bibitem[CDG21b]{CDG21}
P.~Constantin, T.~D. Drivas, and D.~Ginsberg.
\newblock On quasisymmetric plasma equilibria sustained by small force.
\newblock {\em J. Plasma Phys.}, 87:905870111, (2021).

\bibitem[CGIM22]{CGIM22}
C.~Collot, T.-E. Ghoul, S.~Ibrahim, and N.~Masmoudi.
\newblock On singularity formation for the two-dimensional unsteady {P}randtl
  system around the axis.
\newblock {\em J. Eur. Math. Soc.}, 24:3703--3800, (2022).

\bibitem[CGM21]{CGM21}
C.~Collot, T.-E. Ghoul, and N.~Masmoudi.
\newblock Singularities and unsteady separation for the inviscid
  two-dimensional {P}randtl system.
\newblock {\em Arch. Ration. Mech. Anal.}, 240:1349--1430, (2021).

\bibitem[CGM22]{CGM22}
C.~Collot, T.-E. Ghoul, and N.~Masmoudi.
\newblock Singularity formation for {B}urgers' equation with transverse
  viscosity.
\newblock {\em Ann. Sci. \'{E}c. Norm. Sup\'{e}r. (4)}, 55:1047--1133, (2022).

\bibitem[Cha07]{Chae07}
D.~Chae.
\newblock Nonexistence of self-similar singularities for the 3{D}
  incompressible {E}uler equations.
\newblock {\em Comm. Math. Phys.}, 273:203--215, (2007).

\bibitem[Cha15a]{Chae15}
D.~Chae.
\newblock Euler's equations and the maximum principle.
\newblock {\em Math. Ann.}, 361:51--66, (2015).

\bibitem[Cha15b]{Chae15c}
D.~Chae.
\newblock Remarks on the asymptotically discretely self-similar solutions of
  the {N}avier-{S}tokes and the {E}uler equations.
\newblock {\em Nonlinear Anal.}, 125:251--259, (2015).

\bibitem[Cha15c]{Chae15b}
D.~Chae.
\newblock Unique continuation type theorem for the self-similar {E}uler
  equations.
\newblock {\em Adv. Math.}, 283:143--154, (2015).

\bibitem[CK04]{CK04}
M.~Cannone and G.~Karch.
\newblock Smooth or singular solutions to the {N}avier-{S}tokes system?
\newblock {\em J. Differential Equations}, 197:247--274, (2004).

\bibitem[CKL09]{CKL}
D.~Chae, K.~Kang, and J.~Lee.
\newblock Notes on the asymptotically self-similar singularities in the euler
  and the navier-stokes equations.
\newblock {\em Discret. Contin. Dyn. Syst.}, 25:1181--1193, (2009).

\bibitem[CKOa]{Cieslak2}
T.~Cie\'{s}lak, P.~Kokocki, and W.~S. O\.{z}a\'{n}ski.
\newblock {E}xistence of nonsymmetric logarithmic spiral vortex sheet solutions
  to the 2{D} {E}uler equations.
\newblock arXiv:2207.06056.

\bibitem[CKOb]{Cieslak1}
T.~Cie\'{s}lak, P.~Kokocki, and W.~S. O\.{z}a\'{n}ski.
\newblock {W}ell-posedness of logarithmic spiral vortex sheets.
\newblock arXiv:2110.07543.

\bibitem[CLV19]{CLV}
P.~Constantin, J.~La, and V.~Vicol.
\newblock Remarks on a paper by {G}avrilov: {G}rad-{S}hafranov equations,
  steady solutions of the three dimensional incompressible {E}uler equations
  with compactly supported velocities, and applications.
\newblock {\em Geom. Funct. Anal.}, 29:1773--1793, (2019).

\bibitem[CP]{CP22}
P.~Constantin and F.~Pasqualotto.
\newblock Magnetic relaxation of a voigt--mhd system.
\newblock ar{X}iv:2208.11109v1.

\bibitem[CS13]{ChaeShvydkoy13}
D.~Chae and R.~Shvydkoy.
\newblock On formation of a locally self-similar collapse in the incompressible
  {E}uler equations.
\newblock {\em Arch. Ration. Mech. Anal.}, 209:999--1017, (2013).

\bibitem[CS14a]{CS14}
A.~Cheskidov and R.~Shvydkoy.
\newblock Euler equations and turbulence: analytical approach to intermittency.
\newblock {\em SIAM J. Math. Anal.}, 46:353--374, (2014).

\bibitem[CS14b]{Cho14}
A.~Choffrut and L.~Szekelyhidi.
\newblock Weak solutions to the stationary incompressible euler equations.
\newblock {\em SIAM J. Math. Anal.}, 46:4060--4074, (2014).

\bibitem[CT14]{CT14}
D.~Chae and T.P. Tsai.
\newblock On discretely self-similar solutions of the euler equations.
\newblock {\em Math. Res. Lett.}, 21:437--447, (2014).

\bibitem[Cv12]{CS12}
A.~Choffrut and V.~\v{S}ver\'{a}k.
\newblock Local structure of the set of steady-state solutions to the 2{D}
  incompressible {E}uler equations.
\newblock {\em Geom. Funct. Anal.}, 22:136--201, (2012).

\bibitem[CW16]{CW16}
D.~Chae and J.~Wolf.
\newblock On the {L}iouville theorem for weak {B}eltrami flows.
\newblock {\em Nonlinearity}, 29:3417--3425, (2016).

\bibitem[CW20]{CW20}
D.~Chae and J.~Wolf.
\newblock Energy concentrations and {T}ype {I} blow-up for the 3{D} {E}uler
  equations.
\newblock {\em Comm. Math. Phys.}, 376:1627--1669, (2020).

\bibitem[DE]{DE}
T.~D. Drivas and T.~M. Elgindi.
\newblock Singularity formation in the incompressible euler equation in finite
  and infinite time.
\newblock arXiv:2203.17221v1.

\bibitem[DLS13]{DS13}
C.~De~Lellis and L.~Sz\'{e}kelyhidi, Jr.
\newblock Dissipative continuous {E}uler flows.
\newblock {\em Invent. Math.}, 193:377--407, (2013).

\bibitem[DM]{DM18}
Y.~Deng and N.~Masmoudi.
\newblock {L}ong time instability of the {C}ouette flow in low {G}evrey spaces.
\newblock arXiv:1803.01246.

\bibitem[DVEPS21]{DEPS21}
M.~Dom\'{\i}nguez-V\'{a}zquez, A.~Enciso, and D.~Peralta-Salas.
\newblock Piecewise smooth stationary {E}uler flows with compact support via
  overdetermined boundary problems.
\newblock {\em Arch. Ration. Mech. Anal.}, 239:1327--1347, (2021).

\bibitem[EF09]{EF09}
J.~Eggers and M.A. Fontelos.
\newblock The role of self-similarity in singularities of partial differential
  equations.
\newblock {\em Nonlinearity}, 22:1--44, (2009).

\bibitem[EG19]{EG}
V.~Elling and M.~V. Gnann.
\newblock Variety of unsymmetric multibranched logarithmic vortex spirals.
\newblock {\em European J. Appl. Math.}, 30:23--38, (2019).

\bibitem[EGM21]{EGM2}
T.~M. Elgindi, T.-E. Ghoul, and N.~Masmoudi.
\newblock On the stability of self-similar blow-up for {$C^{1,\alpha}$}
  solutions to the incompressible {E}uler equations on {$\Bbb R^3$}.
\newblock {\em Camb. J. Math.}, 9:1035--1075, (2021).

\bibitem[EJ20]{EJ20b}
T.~M. Elgindi and I.-J. Jeong.
\newblock Symmetries and critical phenomena in fluids.
\newblock {\em Comm. Pure Appl. Math.}, 73:257--316, (2020).

\bibitem[Elg21]{Elgindi}
T.~M. Elgindi.
\newblock Finite-time singularity formation for {$C^{1,\alpha}$} solutions to
  the incompressible {E}uler equations on {$\Bbb R^3$}.
\newblock {\em Ann. of Math. (2)}, 194:647--727, (2021).

\bibitem[Ell16]{Elling}
V.~Elling.
\newblock Self-similar 2d {E}uler solutions with mixed-sign vorticity.
\newblock {\em Comm. Math. Phys.}, 348:27--68, (2016).

\bibitem[ELPS]{ELP21}
A.~Enciso, A.~Luque, and D.~Peralta-Salas.
\newblock {MHD} equilibria with nonconstant pressure in nondegenerate toroidal
  domains.
\newblock arXiv:2104.08149v1.

\bibitem[EMS]{Elgindi2022}
T.~M. Elgindi, R.~W. Murray, and A.~R. Said.
\newblock On the long-time behavior of scale-invariant solutions to the 2d
  {E}uler equation and applications.
\newblock arXiv:2211.08418v1.

\bibitem[EPS12]{EP12}
A.~Enciso and D.~Peralta-Salas.
\newblock Knots and links in steady solutions of the {E}uler equation.
\newblock {\em Ann. of Math. (2)}, 175:345--367, (2012).

\bibitem[EPS15]{EP15}
A.~Enciso and D.~Peralta-Salas.
\newblock Existence of knotted vortex tubes in steady {E}uler flows.
\newblock {\em Acta Math.}, 214:61--134, (2015).

\bibitem[EPS16]{EP16}
A.~Enciso and D.~Peralta-Salas.
\newblock Beltrami fields with a nonconstant proportionality factor are rare.
\newblock {\em Arch. Ration. Mech. Anal.}, 220:243--260, (2016).

\bibitem[Eva10]{E}
L.~C. Evans.
\newblock {\em Partial differential equations}, volume~19 of {\em Graduate
  Studies in Mathematics}.
\newblock American Mathematical Society, Providence, RI, second edition, 2010.

\bibitem[Fra00]{Fra00}
L.~E. Fraenkel.
\newblock {\em An introduction to maximum principles and symmetry in elliptic
  problems}, volume 128.
\newblock Cambridge University Press, Cambridge, 2000.

\bibitem[Gav19]{Gav}
A.~V. Gavrilov.
\newblock A steady {E}uler flow with compact support.
\newblock {\em Geom. Funct. Anal.}, 29:190--197, (2019).

\bibitem[GGS]{Serrano22}
C.~Garc{\'\i}a and J.~G{\'o}mez-Serrano.
\newblock {S}elf-similar spirals for the generalized surface quasi-geostrophic
  equations.
\newblock arXiv:2207.12363.

\bibitem[GHPWar]{GHPW}
Y.~Guo, C.~Huang, B.~Pausader, and K.~Widmayer.
\newblock On the stabilizing effect of rotation in the 3d {E}uler equations.
\newblock {\em Comm. Pure Appl. Math.}, to appear.
\newblock arXiv:2010.10460.

\bibitem[GHvV15]{GSV15}
N.~Glatt-Holtz, V.~\v{S}ver\'{a}k, and V.~Vicol.
\newblock On inviscid limits for the stochastic {N}avier-{S}tokes equations and
  related models.
\newblock {\em Arch. Ration. Mech. Anal.}, 217:619--649, (2015).

\bibitem[GNN79]{GNN}
B.~Gidas, W.~M. Ni, and L.~Nirenberg.
\newblock Symmetry and related properties via the maximum principle.
\newblock {\em Comm. Math. Phys.}, 68:209--243, (1979).

\bibitem[Gou08]{Gourgouliatos}
K.N. Gourgouliatos.
\newblock Self-similar magnetic arcades.
\newblock {\em Mon. Not. R. Astron. Soc.}, 385:875--882, (2008).

\bibitem[GPW23]{GPW}
Y.~Guo, B.~Pausader, and K.~Widmayer.
\newblock Global axisymmetric {E}uler flows with rotation.
\newblock {\em Invent. math.}, 231:169--262, (2023).

\bibitem[GR58]{Grad}
H.~Grad and H.~Rubin.
\newblock Hydromagnetic equilibria and force-free fields.
\newblock {\em Proceedings of the Second United Nations Conference on the
  Peaceful Uses of Atomic Energy}, 31:190--197, (1958).

\bibitem[Gra67]{Grad67}
H.~Grad.
\newblock Toroidal containment of a plasma.
\newblock {\em The Physics of Fluids}, 10:137--154, (1967).

\bibitem[Gra85]{Grad85}
H.~Grad.
\newblock Theory and applications of the nonexistence of simple toroidal plasma
  equilibrium.
\newblock {\em Int. J. Fusion Energy}, 3:33--46, (1985).

\bibitem[G{\v{S}}]{GuS}
J.~Guillod and V.~{\v{S}}ver{\'a}k.
\newblock {N}umerical investigations of non-uniqueness for the
  {N}avier-{S}tokes initial value problem in borderline spaces.
\newblock arXiv:1704.00560.

\bibitem[GSPS]{GPS22}
J.~G\'{o}mez-Serrano, J.~Park, and J.~Shi.
\newblock Existence of non-trivial non-concentrated compactly supported
  stationary solutions of the {2D} {E}uler equation with finite energy.
\newblock arXiv:2112.03821v1.

\bibitem[HN17]{HN17}
F.~Hamel and N.~Nadirashvili.
\newblock Shear flows of an ideal fluid and elliptic equations in unbounded
  domains.
\newblock {\em Comm. Pure Appl. Math.}, 70:590--608, (2017).

\bibitem[HN19]{HN19}
F.~Hamel and N.~Nadirashvili.
\newblock A {L}iouville theorem for the {E}uler equations in the plane.
\newblock {\em Arch. Ration. Mech. Anal.}, 233:599--642, (2019).

\bibitem[HN23]{HN22}
F.~Hamel and N.~Nadirashvili.
\newblock {C}ircular flows for the {E}uler equations in two-dimensional annular
  domains, and related free boundary problems.
\newblock {\em J. Eur. Math. Soc. (JEMS)}, 25:323--368, (2023).

\bibitem[IJ]{IJ22b}
A.~D. Ionescu and H.~Jia.
\newblock Nonlinear inviscid damping near monotonic shear flows.
\newblock arXiv:2001.03087.

\bibitem[IJ20]{IJ20}
A.~D. Ionescu and H.~Jia.
\newblock Inviscid damping near the {C}ouette flow in a channel.
\newblock {\em Comm. Math. Phys.}, 374:2015--2096, (2020).

\bibitem[IJ22]{IJ22}
A.~D. Ionescu and H.~Jia.
\newblock Axi-symmetrization near point vortex solutions for the 2{D} {E}uler
  equation.
\newblock {\em Comm. Pure Appl. Math.}, 75:818--891, (2022).

\bibitem[Ise13]{Isett13}
P.~Isett.
\newblock {\em H\"{o}lder continuous {E}uler flows with compact support in
  time}.
\newblock PhD thesis, Princeton University, 2013.

\bibitem[Jeo17]{Jeong17}
I.-J. Jeong.
\newblock {\em Dynamics of the incompressible {E}uler equations at critical
  regularity}.
\newblock PhD thesis, Princeton University, 2017.

\bibitem[JS]{Jeong23}
I.-J. Jeong and A.~R. Said.
\newblock {L}ogarithmic spirals in 2d perfect fluids.
\newblock arXiv:2302.09447v1.

\bibitem[J{\v{S}}14]{JS}
H.~Jia and V.~{\v{S}}ver{\'a}k.
\newblock Local-in-space estimates near initial time for weak solutions of the
  {N}avier-{S}tokes equations and forward self-similar solutions.
\newblock {\em Invent. Math.}, 196(1):233--265, (2014).

\bibitem[J{\v{S}}15]{JS15}
H.~Jia and V.~{\v{S}}ver{\'a}k.
\newblock Are the incompressible 3d {N}avier--{S}tokes equations locally
  ill-posed in the natural energy space?
\newblock {\em J. Funct. Anal.}, 268:3734--3766, (2015).

\bibitem[JvT18]{JST18}
H.~Jia, V.~\v{S}ver\'{a}k, and T.-P. Tsai.
\newblock Self-similar solutions to the nonstationary {N}avier-{S}tokes
  equations.
\newblock In {\em Handbook of mathematical analysis in mechanics of viscous
  fluids}, pages 461--507. Springer, Cham, 2018.

\bibitem[KMT12]{KMT12}
K.~Kang, H.~Miura, and T.-P. Tsai.
\newblock Asymptotics of small exterior {N}avier-{S}tokes flows with
  non-decaying boundary data.
\newblock {\em Comm. Partial Differential Equations}, 37:1717--1753, (2012).

\bibitem[KN]{Nadirashvili2015}
H.~Koch and N.~Nadirashvili.
\newblock Partial analyticity and nodal sets for nonlinear elliptic systems.
\newblock arXiv:1506.06224.

\bibitem[KT16]{KT16}
M.~Korobkov and T.-P. Tsai.
\newblock Forward self-similar solutions of the {N}avier-{S}tokes equations in
  the half space.
\newblock {\em Anal. PDE}, 9:1811--1827, (2016).

\bibitem[KT21]{KwonTsai}
H.~Kwon and T.-P. Tsai.
\newblock On bifurcation of self-similar solutions of the stationary
  {N}avier-{S}tokes equations.
\newblock {\em Commun. Math. Sci.}, 19:1703--1733, (2021).

\bibitem[Kv11]{KorolevSverak2011}
A.~Korolev and V.~\v{S}ver\'{a}k.
\newblock On the large-distance asymptotics of steady state solutions of the
  {N}avier-{S}tokes equations in 3{D} exterior domains.
\newblock {\em Ann. Inst. H. Poincar\'{e} C Anal. Non Lin\'{e}aire},
  28:303--313, (2011).

\bibitem[Lan44]{Landau}
L.D. Landau.
\newblock A new exact solution of the {N}avier-{S}tokes equations.
\newblock {\em Dokl. Akad. Nauk SSSR}, 43(299), (1944).

\bibitem[LBB94]{LB94}
D.~Lynden-Bell and C.~Boily.
\newblock Self-similar solutions up to flashpoint in highly wound
  magnetostatics.
\newblock {\em Mon. Not. R. Astron. Soc.}, 267:146--152, 1994.

\bibitem[LBM15]{Lynden-Bell15}
D.~Lynden-Bell and H.~K. Moffatt.
\newblock Flashpoint.
\newblock {\em Mon. Not. R. Astron. Soc.}, 452:902--909, (2015).

\bibitem[Ler34]{Leray1934}
J.~Leray.
\newblock Sur le mouvement d'un liquide visqueux emplissant l'espace.
\newblock {\em Acta Math.}, 63:193--248, (1934).

\bibitem[Ler14]{Lerche}
I.~Lerche.
\newblock Some notes on self-similar axisymmetric force-free magnetic fields
  and rotating magnetospheres.
\newblock {\em Res. Astron. Astrophys.}, 1:007--012, (2014).

\bibitem[LL59]{LL}
L.~D. Landau and E.~M. Lifshitz.
\newblock {\em Fluid mechanics}.
\newblock Translated from the Russian by J. B. Sykes and W. H. Reid. Course of
  Theoretical Physics, Vol. 6. Pergamon Press, London-Paris-Frankfurt;
  Addison-Wesley Publishing Co., Inc., Reading, Mass., 1959.

\bibitem[LL90]{LL90}
B.~C. Low and Y.~Q. Lou.
\newblock Modeling solar force-free magnetic fields.
\newblock {\em Astrophysical Journal}, 352:343--352, 1990.

\bibitem[LLY18a]{YanYanLi18}
L.~Li, Y.~Li, and X.~Yan.
\newblock Homogeneous solutions of stationary {N}avier-{S}tokes equations with
  isolated singularities on the unit sphere. {II}. {C}lassification of
  axisymmetric no-swirl solutions.
\newblock {\em J. Differential Equations}, 264:6082--6108, (2018).

\bibitem[LLY18b]{YanYanLi}
Li~Li, Y.~Li, and X.~Yan.
\newblock Homogeneous solutions of stationary {N}avier-{S}tokes equations with
  isolated singularities on the unit sphere. {I}. {O}ne singularity.
\newblock {\em Arch. Ration. Mech. Anal.}, 227:1091--1163, (2018).

\bibitem[LLY19]{YanYanLi19}
L.~Li, Y.~Li, and X.~Yan.
\newblock Homogeneous solutions of stationary {N}avier-{S}tokes equations with
  isolated singularities on the unit sphere. {III}. {T}wo singularities.
\newblock {\em Discrete Contin. Dyn. Syst.}, 39:7163--7211, (2019).

\bibitem[LPMI18]{Luna}
M.~Luna, E.~Priest, and F.~Moreno-Insertis.
\newblock Self-similar approach for rotating magnetohydrodynamic solar and
  astrophysical structures.
\newblock {\em Astrophys. J.}, 863:14pp, (2018).

\bibitem[LS15]{LuoShvydkoy}
X.~Luo and R.~Shvydkoy.
\newblock 2{D} homogeneous solutions to the {E}uler equation.
\newblock {\em Comm. Partial Differential Equations}, 40:1666--1687, (2015).

\bibitem[LS18]{LeslieShvydkoy2018}
T.~M. Leslie and R.~Shvydkoy.
\newblock The energy measure for the {E}uler and {N}avier-{S}tokes equations.
\newblock {\em Arch. Ration. Mech. Anal.}, 230:459--492, (2018).

\bibitem[Lyu20]{Lyutikov}
M.~Lyutikov.
\newblock Nonlinear force-free configurations in cylindrical geometry.
\newblock {\em J. Plasma Phys.}, 905860210, (2020).

\bibitem[Maz11]{Mazya}
V.~Maz'ya.
\newblock {\em Sobolev spaces with applications to elliptic partial
  differential equations}, volume 342 of {\em Grundlehren der Mathematischen
  Wissenschaften [Fundamental Principles of Mathematical Sciences]}.
\newblock Springer, Heidelberg, 2011.

\bibitem[Men]{Mengual23}
F.~Mengual.
\newblock {N}on-uniqueness of admissible solutions for the 2{D} {E}uler
  equation with $l^p$ vortex data.
\newblock arXiv:2304.09578v1.

\bibitem[MMP{\etalchar{+}}22]{Maiewski}
E.~Maiewski, H.~Malova, V.~Popov, D.~Sokoloff, and E.~Yushkov.
\newblock Migrating dynamo waves and consequences for stellar current sheets.
\newblock {\em Solar Physics}, 297:150, (2022).

\bibitem[Mof69]{Moffatt}
H.~K. Moffatt.
\newblock Degree of knottedness of tangled vortex lines.
\newblock {\em J. Fluid Mech.}, 35:117--129, (1969).

\bibitem[MS23]{Mengual}
F.~Mengual and L.~Sz\'{e}kelyhidi, Jr.
\newblock Dissipative {E}uler flows for vortex sheet initial data without
  distinguished sign.
\newblock {\em Comm. Pure Appl. Math.}, 76:163--221, (2023).

\bibitem[MT12]{MiuraTsai}
H.~Miura and T.-P. Tsai.
\newblock Point singularities of 3{D} stationary {N}avier-{S}tokes flows.
\newblock {\em J. Math. Fluid Mech.}, 14:33--41, (2012).

\bibitem[MYK{\etalchar{+}}18]{Moore18}
K.~M. Moore, R.~K. Yadav, L.~Kulowski, H.~Cao, J.~Bloxham, J.~E.~P. Connerney,
  S.~Kotsiaros, J.~L. J{\o}rgensen, J.~M.~G. Merayo, D.~J. Stevenson, S.~J.
  Bolton, and S.~M. Levin.
\newblock A complex dynamo inferred from the hemispheric dichotomy of jupiter's
  magnetic field.
\newblock {\em Nature}, 561:76--78, (2018).

\bibitem[MZ]{MZ20}
N.~Masmoudi and W.~Zhao.
\newblock {N}onlinear inviscid damping for a class of monotone shear flows in
  finite channel.
\newblock arXiv: 2001.08564.

\bibitem[Nad14]{Na14}
N.~Nadirashvili.
\newblock Liouville theorem for {B}eltrami flow.
\newblock {\em Geom. Funct. Anal.}, 24:916--921, (2014).

\bibitem[NR{\v{S}}96]{NRS}
J.~Ne{\v{c}}as, M.~R{{u}}{\v{z}}i{\v{c}}ka, and V.~{\v{S}}ver{\'a}k.
\newblock On {L}eray's self-similar solutions of the {N}avier-{S}tokes
  equations.
\newblock {\em Acta Math.}, 176(2):283--294, (1996).

\bibitem[Ons49]{Onsager}
L.~Onsager.
\newblock Statistical hydrodynamics.
\newblock {\em Nuovo Cimento (9)}, (Supplemento, 2 (Convegno Internazionale di
  Meccanica Statistica)):279--287, (1949).

\bibitem[Pas20]{Pasqualotto}
F.~Pasqualotto.
\newblock {\em Nonlinear {W}aves in {G}eneral {R}elativity and {F}luid
  {D}ynamics}.
\newblock PhD thesis, Princeton University, 2020.

\bibitem[Pra24]{Prandtl}
L.~Prandtl.
\newblock {\"U}ber die entstehung von wirbeln in der idealen fl{\"u}ssigkeit,
  mit anwendung auf die tragfl{\"u}geltheorie und andere aufgaben.
\newblock In Th. v.~K{\'a}rm{\'a}n and T.~Levi-Civita, editors, {\em
  Vortr{\"a}ge aus dem Gebiete der Hydro- und Aerodynamik (Innsbruck 1922)},
  pages 18--33. Springer Berlin Heidelberg, Berlin, Heidelberg, (1924).

\bibitem[Pul78]{Pullin78}
D.~I. Pullin.
\newblock The large-scale structure of unsteady self-similar rolled-up vortex
  sheets.
\newblock {\em J. Fluid Mech.}, 88:401--430, (1978).

\bibitem[Pul89]{Pullin89}
D.~I. Pullin.
\newblock On similarity flows containing two-branched vortex sheets.
\newblock pages 97--106, 1989.

\bibitem[QS07]{QS}
P.~Quittner and P.~Souplet.
\newblock {\em Superlinear parabolic problems}.
\newblock Birkh\"auser Advanced Texts: Basler Lehrb\"ucher. [Birkh\"auser
  Advanced Texts: Basel Textbooks]. Birkh\"auser Verlag, Basel, (2007).
\newblock Blow-up, global existence and steady states.

\bibitem[Rab86]{Rab}
P.~H. Rabinowitz.
\newblock {\em Minimax methods in critical point theory with applications to
  differential equations}, volume~65.
\newblock American Mathematical Society, Providence, RI, 1986.

\bibitem[RBM{\etalchar{+}}15]{Brun}
V.~R{\'e}ville, A.~S. Brun, S.~P. Matt, A.~Strugarek, and R.~F. Pinto.
\newblock The effect of magnetic topology on thermally driven wind: Toward a
  general formulation of the braking law.
\newblock {\em Astrophys. J.}, 798:116, (2015).

\bibitem[Sha58]{Shafranov}
V.~D. Shafranov.
\newblock On magnetohydrodynamical equilibrium configurations.
\newblock {\em Soviet Physics JETP}, 6:545--554, (1958).

\bibitem[Shn13]{Sh13}
A.~Shnirelman.
\newblock On the long time behavior of fluid flows.
\newblock {\em Procedia IUTAM}, 7:151--160, 2013.

\bibitem[Shv10]{Shv10}
R.~Shvydkoy.
\newblock Lectures on the {O}nsager conjecture.
\newblock {\em Discrete Contin. Dyn. Syst. Ser. S}, 3:473--496, (2010).

\bibitem[Shv13]{Shvydkoy13}
R.~Shvydkoy.
\newblock A study of energy concentration and drain in incompressible fluids.
\newblock {\em Nonlinearity}, 26:425--436, (2013).

\bibitem[Shv18]{Shv}
R.~Shvydkoy.
\newblock Homogeneous solutions to the 3{D} {E}uler system.
\newblock {\em Trans. Amer. Math. Soc.}, 370:2517--2535, (2018).

\bibitem[SMY20]{Sokoloff}
D.~Sokoloff, H.~Malova, and E.~Yushkov.
\newblock Symmetries of magnetic fields driven by spherical dynamos of
  exoplanets and their host stars.
\newblock {\em Symmetry}, 12:2085, (2020).

\bibitem[Squ51]{Squire51}
H.B. Squire.
\newblock The round laminar jet.
\newblock {\em Q. J. Mech. Appl. Math.}, 4:321--329, (1951).

\bibitem[{\v{S}}ve]{SverakLec}
V.~{\v{S}}ver{\'a}k.
\newblock Lecture notes on ``topics in mathematical physics".
\newblock http://math.umn.edu/ sverak/course-notes2011.

\bibitem[Tsa98]{Tsai98}
T.-P. Tsai.
\newblock On {L}eray's self-similar solutions of the {N}avier-{S}tokes
  equations satisfying local energy estimates.
\newblock {\em Arch. Rational Mech. Anal.}, 143(1):29--51, (1998).

\bibitem[Tsa14]{Tsai14}
T.-P. Tsai.
\newblock Forward discretely self-similar solutions of the navier-stokes
  equations.
\newblock {\em Commun. Math. Phys.}, 328:29--44, (2014).

\bibitem[Tsa18]{Tsaibook}
T.-P. Tsai.
\newblock {\em Lectures on {N}avier-{S}tokes equations}, volume 192 of {\em
  Graduate Studies in Mathematics}.
\newblock American Mathematical Society, Providence, RI, 2018.

\bibitem[Tur89]{Tu89}
B.~Turkington.
\newblock Vortex rings with swirl: axisymmetric solutions of the {E}uler
  equations with nonzero helicity.
\newblock {\em SIAM J. Math. Anal.}, 20:57--73, (1989).

\bibitem[TX98]{TX98}
G.~Tian and Z.~Xin.
\newblock One-point singular solutions to the {N}avier-{S}tokes equations.
\newblock {\em Topol. Methods Nonlinear Anal.}, 11:135--145, (1998).

\bibitem[UY68]{UI}
M.~R. Ukhovskii and V.~I. Yudovich.
\newblock Axially symmetric flows of ideal and viscous fluids filling the whole
  space.
\newblock {\em J. Appl. Math. Mech.}, 32:52--61, (1968).

\bibitem[Visa]{Vishik18}
M.~Vishik.
\newblock {I}nstability and non-uniqueness in the {C}auchy problem for the
  {E}uler equations of an ideal incompressible fluid. {P}art {I}.
\newblock arXiv:1805.09426.

\bibitem[Visb]{Vishik18b}
M.~Vishik.
\newblock {I}nstability and non-uniqueness in the {C}auchy problem for the
  {E}uler equations of an ideal incompressible fluid. {P}art {II}.
\newblock arXiv:1805.09440.

\bibitem[\v{S}11]{Sverak2011}
V.~\v{S}ver\'{a}k.
\newblock On {L}andau's solutions of the {N}avier-{S}tokes equations.
\newblock {\em J. Math. Sci. (N.Y.)}, 179:208--228, (2011).

\bibitem[Wil96]{Willem}
M.~Willem.
\newblock {\em Minimax theorems}, volume~24 of {\em Progress in Nonlinear
  Differential Equations and their Applications}.
\newblock Birkh\"{a}user Boston, Inc., Boston, MA, 1996.

\bibitem[WL92]{Wolfson92}
R.~Wolfson and B.~C. Low.
\newblock Energy buildup in sheared force-free magnetic fields.
\newblock {\em Astrophysical Journal}, 391:353--358, (1992).

\bibitem[ZFL12]{Low12}
M.~Zhang, N.~Flyer, and B.~Low.
\newblock Magnetic helicity of self-similar axisymmetric force-free fields.
\newblock {\em Astrophys. J.}, 755:pp.78, (2012).

\end{thebibliography}

\end{document}